\newcommand{\R}{\mathbb R}
\newcommand{\N}{\mathbb N}
\newcommand{\Z}{\mathbb Z}
\newcommand{\al}{\alpha}
\newcommand{\ve}{\varepsilon}
\newcommand{\vt}{\vartheta}
\newcommand{\vp}{\varphi}
\newcommand{\TT}{\mathbb{S}} 
\newcommand{\Ddt}{\dfrac{D}{dt}} 
\newcommand{\sset}{\subseteq} 
\newcommand{\uguale}{\stackrel{.}{=}} 
\newcommand{\wconv}{\rightharpoonup} 
\theoremstyle{plain}
\newtheorem{thm}{Theorem}[section]
\newtheorem{lem}[thm]{Lemma}
\newtheorem{prop}[thm]{Proposition}
\theoremstyle{definition}
\newtheorem{defn}[thm]{Definition}
\newtheorem{rmk}[thm]{Remark}
\newtheorem{rem}[thm]{Remark}
\theoremstyle{remark}
\definecolor{ceruleanblue}{rgb}{0.16, 0.32, 0.75}
\definecolor{cocoabrown}{rgb}{0.82, 0.41, 0.12}
\definecolor{crimsonglory}{rgb}{0.75, 0.0, 0.2}
\definecolor{aqua}{rgb}{0, 0.51, 0.5}
\title{Chaotic phenomena for generalised $N$-centre problems}
\author{Stefano Baranzini and Gian Marco Canneori}
\address{Dipartimento di Matematica ``G. Peano''
	\newline\indent
	Universit\`a degli Studi di Torino
	\newline\indent
	Via Carlo Alberto 10, 10123 Torino, Italy\\}
\email{stefano.baranzini@unito.it}
\email{gianmarco.canneori@unito.it}
\date{\today}
\keywords{$N$-centre problem, chaos, symbolic dynamics, variational methods}
\thanks{The second author is partially supported by by the 2022 INdAM-GNAMPA Project {\sl Dinamica simbolica e soluzioni periodiche per problemi singolari della Meccanica Celeste} and the 2023 INdAM-GNAMPA Project {\sl Biliardi galattici rifrattivi: dalla teoria dei Sistemi Dinamici alla Meccanica Celeste}}
\subjclass[2020] {
	70F10, 
	34C28, 
	70G75, 
	37B10, 
	37N05
}
\date{\today}
\begin{document}

	\begin{abstract}
		We study a class of singular dynamical systems which generalise the classical $N$-centre problem of Celestial Mechanics to the case in which the configuration space is a Riemannian surface. We investigate the existence of topological conjugation with the archetypal chaotic dynamical system, the Bernoulli shift. After providing infinitely many geometrically distinct and collision-less periodic solutions, we encode them in bi-infinite sequences of symbols. Solutions are obtained as minimisers of the Maupertuis functional in  suitable free homotopy classes of the punctured surface, without any collision regularisation. For any sufficiently large value of the energy, we prove that the generalised $N$-centre problem admits a symbolic dynamics. Moreover, when the Jacobi-Maupertuis metric curvature is negative, we construct chaotic invariant subsets.
	\end{abstract}
	
	\maketitle
	\begingroup
	\hypersetup{linkcolor=black}
	\tableofcontents
	\endgroup 
	
	\section{Introduction and main results}
	
	The Euclidean $N$-centre problem has been the object of extensive investigations, starting from the pioneering papers \cite{Kna1987,Bol1984,KleKna1992,BolNeg2001,Kna2002} which emphasised the intricate nature of this problem. In the most classical setting, the singularity set is made of $N$ \emph{heavy} poles $c_1,\ldots,c_N$ in the configuration space $\R^d$ (here $d=2,3$), with associated masses $m_1,\ldots,m_N>0$. For a test particle $x(t)\in\R^d$, the equation of motion reads
	\[
	\ddot{x}(t)=-\sum\limits_{j=1}^N\dfrac{m_j(x(t)-c_j)}{|x(t)-c_j|^{\al+2}}, 
	\]
	which can be also written as $\ddot{x}=-\nabla \mathcal{V}(x)$, where 
	\begin{equation}\label{eq:classical_ncentre}
		\mathcal{V}(x)=-\sum\limits_{j=1}^N\dfrac{m_j}{\al|x-c_j|^\al},
	\end{equation}
	with $\al\geq 1$. 	
	The main focus of this paper is the analysis of complex and chaotic behaviours for a class of $2D$ dynamical systems driven by singular homogeneous potentials. We study some \emph{generalised} $N$-centre problems, whose most relevant instances include the motion of a test particle on a Riemannian surface $(M,g)$, gravitating under the attraction of $N$ fixed heavy bodies. In analogy with the classical Keplerian gravitational laws with flat metric $g$, the particle is subjected to an attractive force depending on the Riemannian distance induced by $g$.
	
	 As a consequence of the presence of singularities, the flow associated to the motion equation for the generalized $N$-centre problems is not complete. Indeed, the point particle $x(t)$ may cross a centre $c_j$ in finite time and usually we refer to this phenomenon as a \emph{collision}. Another common feature of the generalized $N$-centre problems (and \eqref{eq:classical_ncentre}) is  how the homogeneity degree $\al$ drastically affects the orbits structure, already when $N=1$ (see \cite{Gor1977,Gor1975}).  We say that the singularities of $\mathcal{V}$ are \emph{Newtonian} when $\al  = 1$, \emph{weak force} when $\al\in(1,2)$ and \emph{strong force} when $\al\ge 2$. 
	 
	 As a further indicator of complexity of these systems, note that,  except  for the classical completely integrable cases  $N=1$ (the Kepler problem) or $N=2$ (the 2 centre problem, solved by Euler and Jacobi), the analytic integrability of \eqref{eq:classical_ncentre} is destroyed  as the number of non-linear interactions between the particle and the centres increases (see \cite{Bol1984}).  This fact, together with the relevant number of applications in Celestial Mechanics,  has fostered many different approaches to investigate the rise of chaotic behaviours. In particular, topological methods relying on global regularisation of collisions and classical perturbative approaches have brought to light chaotic invariant subsets of the phase space for the flat $N$-centre problem, as discussed at the end of this section.
	
	\subsection{Problem setting} 
	
	In this paper we consider a family of singular dynamical systems -- the \emph{generalised $N$-centre problem} -- defined on an orientable and complete Riemannian surface $(M,g)$. The metric $g$ provides a natural way of measuring the length of regular curves on $M$ and a natural distance function $d_g(p,q)$ for any $p,q\in M$. 
	Let us introduce the $N$-centre problem on the surface $M$.  Consider $ \mathcal{C}\uguale\{c_1, \ldots, c_N\} \subset M$, the set of \emph{centres}, and let $\widehat{M}\uguale M\setminus \mathcal{C}$ be the \emph{configuration surface}. We wish to define a \emph{potential energy} $V$ on $\widehat{M}$ depending on the reciprocal Riemannian distance $d_g(\cdot,c_j)$. Recalling the classical Euclidean potential defined in \eqref{eq:classical_ncentre}, a natural way to introduce one on $(\widehat{M},g)$ could be the following:
	\begin{equation}\label{eq:def_potential}
		\tilde{V}(q)= - \sum_{j=1}^N \dfrac{m_j}{\alpha_j d_g(q,c_j)^{\alpha_j}},
	\end{equation}
	where $\alpha_j \in [1, 2)$ and $m_1, \ldots, m_N\in\R^+$ stand for the masses associated to each centre. However,
	such a function $\tilde{V}$ may fail to be differentiable in $\widehat{M}$ if, for instance, $M$ is compact or the curvature of $g$ is positive somewhere in $M$. Indeed, if we fix $q\in M$, the distance function $p\mapsto d_g(p,q)$ is smooth only as long as there is a unique minimiser of $\ell$ joining $p$ and $q$. We are thus brought to consider potentials $V$ which behave as \eqref{eq:def_potential} only locally around the singularities. More formally, we will assume that $V \in \mathscr{C}^2(\widehat{M})$ and that  there exists $r>0$ such that, in every metric ball $B_r(c_j)$, the potential $V$ has the form
	\begin{equation}\label{eq:potential}
		V(q)\sim -\dfrac{m_j}{\alpha_j d_g(q,c_j)^{\alpha_j}}+W_j(q),
	\end{equation}
	where $W_j$ is a smooth function in $B_r(c_j)$ (cf \cite{BolKoz2017}). This means that, close to every centre $c_j$, the particle $q$ is under the attraction of a perturbed $-\al_j$-homogeneous potential, with $\al_j\in[1,2)$. In addition, we require the function $V$ to be bounded away from the centres
	\[
	\sup\limits_{q\in M\setminus\bigcup B_r(c_j)} \vert V(q)\vert<+\infty.
	\]
    The $N$-centre problem in $\widehat{M}$ is then the following Lagrangian system, defined on the tangent bundle
	\begin{equation}\label{eq:Newton}
		\Ddt \dot{u} = -\nabla V(u),
	\end{equation}
	where the gradient $\nabla$ and the covariant derivative $\frac{D}{dt}$ are defined by the Riemannian metric $g$. The associated Lagrangian function reads
	\[
	L(u,\dot{u})=\dfrac{1}{2}|\dot{u}|^2_g-V(u).
	\]      
	Formally, we say that $u\colon J\to\widehat{M}$ is a \emph{classical solution} of \eqref{eq:Newton} if $u(t)$ solves \eqref{eq:Newton} for any $t\in J$. Note that, from the Hamiltonian viewpoint, any classical solution $u$ of \eqref{eq:Newton} verifies the following conservation of energy law:
	\begin{equation}\label{eq:energy}
		\frac12|\dot{u}(t)|^2_g +V(u(t))=h,\quad\forall\,t\in J,
	\end{equation}
	and thus it makes sense to study \eqref{eq:Newton} in fixed energy levels. For the purposes of this paper, we will consider only energy levels $h$ above a certain threshold, namely:
	\begin{equation}\label{hyp:energy_bound}
		h>\sup\limits_M V.
	\end{equation}
	This is the natural extension to $(M,g)$ of the \emph{positive energy} $N$-centre problem on $\R^2$ with standard flat metric $g_e$ (see \cite{Kna1987,Bol1984,KleKna1992,Cas2017}).
	
	\begin{rem}
	So far, we have made no assumptions on the compactness of $M$ and the non-compact case is also an object of our study. However, in this case, some control on the metric $g$ is needed. To be precise, we will assume that, together with $M$, an embedding $\psi:M \to \mathbb{R}^3$ is given, and that the Riemannian metric $g$ can be controlled with the pull-back of the Euclidean metric through $\psi$, which we have already denoted by $g_e$. Namely, we will assume that there exist constants $\Lambda, \lambda>0$ such that:
	\begin{equation}
		\label{hyp:bounds_metric}
		\lambda g \le g_e \le \Lambda g.
	\end{equation}
	\end{rem}
	The purpose of this paper is twofold: as a first result we provide infinitely many distinct collision-less periodic orbits for \eqref{eq:Newton} with constant positive energy satisfying \eqref{hyp:energy_bound} and prescribed homotopy class. Then, we relate this result with the presence of invariant subsets of the phase space on which the first return map acts in a (possibly) chaotic way. More precisely, we construct a topological semi-conjugation (which in some cases is actually a conjugation) with  the paradigmatic chaotic dynamical system: the Bernoulli shift on bi-infinite sequences.
	
    \subsection{Infinitely many periodic orbits} 
   At first we provide some multiplicity results for periodic solutions of the generalised $N$-centre problem. We say that two periodic solutions $\gamma_1$ and $\gamma_2$ of \eqref{eq:Newton} (defined respectively on $[0,T_i]$) are \emph{geometrically distinct} if their supports do not coincide, namely $\gamma_1([0,T_1])\ne \gamma_2([0,T_2])$. In particular, non homotopic loops are geometrically distinct.
   
   Our first main result (Theorem \ref{thm:technical_thm}) states that there are infinitely many periodic trajectories belonging to suitable homotopy classes which, according to Definition \ref{def:admissible-class}, we call \emph{admissible}.    
   An application of Theorem \ref{thm:technical_thm} yields the following illustrative statement which generalises for instance \cite[Theorems 1.2 and 1.3]{Cas2017}.
   \begin{thm}\label{thm:main_theorem}
   	In every energy level $h$ satisfying \eqref{hyp:energy_bound}, there are infinitely many geometrically distinct, periodic and classical solutions to equation \eqref{eq:Newton} in each of the following cases:
   	\begin{itemize}
   		\item $(M,g)=(\R^2,g_e)$, where $g_e$ stands for the Euclidean metric and $N\ge 3$;
   		\item $(M,g)=(\TT^2,g)$ and $N\ge 5$;
   		\item $M$ has genus greater than or equal to 1 and $N\ge 1$.
   	\end{itemize}
   \end{thm}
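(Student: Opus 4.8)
The plan is to obtain Theorem \ref{thm:main_theorem} as a corollary of Theorem \ref{thm:technical_thm}: the latter attaches to every admissible free homotopy class of the punctured surface $\widehat M = M\setminus\mathcal C$ a collision-less periodic classical solution of \eqref{eq:Newton} at any prescribed energy $h$ obeying \eqref{hyp:energy_bound}. Hence all that remains is to produce, in each of the three geometries, infinitely many pairwise non-conjugate admissible classes, no two of which are powers of a common element. For each such class Theorem \ref{thm:technical_thm} gives a periodic solution, and, since two loops that are not simultaneously iterates of a common class must have distinct supports --- recall that non freely homotopic loops are already geometrically distinct --- the resulting solutions are pairwise geometrically distinct. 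So the whole statement reduces to a counting problem in $\pi_1(\widehat M)$ governed by Definition \ref{def:admissible-class}.

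I would begin by recording that $\pi_1(\widehat M)$ is a free group in all three cases --- of rank $N$ for $(\R^2,g_e)$, of rank $N+1$ for $\TT^2$, and of rank $2\mathfrak g+N-1\ge 2$ when $M$ has genus $\mathfrak g\ge 1$ --- with generators $\sigma_1,\dots,\sigma_N$ the small loops around the centres and, in the last two cases, additional generators $a_1,b_1,\dots$ carried by the handles, which can be represented by loops lying in a fixed compact region of $\widehat M$ away from $\mathcal C$. Then I would write down one explicit infinite family of cyclically reduced words per case and check that it satisfies Definition \ref{def:admissible-class}. For $(\R^2,g_e)$ with $N\ge 3$ the family $w_k \uguale \sigma_1\sigma_2(\sigma_3\sigma_1)^k$, $k\in\N$, looks like the right candidate: each $w_k$ is cyclically reduced, contains $\sigma_2$ exactly once --- so it is primitive, and no two distinct $w_i,w_j$ are powers of a common element --- and winds non-trivially around at least three distinct centres, which is precisely the topological feature that, in the proof of Theorem \ref{thm:technical_thm}, forbids the Maupertuis minimiser from being pinched onto a pole; this makes the $w_k$ admissible. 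For $\TT^2$ the admissibility condition on a closed surface is stricter --- a representative must also be prevented from being swept across the surface onto something shorter --- and this, I expect, is what pushes the threshold up to $N\ge 5$; a family interleaving five of the loops $\sigma_j$, e.g. $w_k=\sigma_1\sigma_2\sigma_3(\sigma_4\sigma_5)^k$ up to cyclic reduction, should work. For $M$ of genus $\mathfrak g\ge 1$ and every $N\ge 1$ one can instead ride on the topology of $M$ itself: taking $w_k=\sigma_1 a_1^k b_1$, the handle factor makes the class essential of unbounded complexity and distinguishes the classes, while $\sigma_1$ records the interaction with $c_1$; since the handle loops sit away from $\mathcal C$, collisions are not an issue and $N\ge 1$ already suffices.

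The step I expect to be the real obstacle is the verification of Definition \ref{def:admissible-class} for the proposed families, that is, ruling out that the Maupertuis minimiser in the class $[w_k]$ degenerates into an ejection--collision trajectory. This is a combinatorial question about reduced words in a free group: one must show that neither $[w_k]$ itself nor any loop obtained from it by a local surgery near a centre can be shortened, in the Jacobi--Maupertuis length, by collapsing part of it onto a pole, and carrying this out pins down how many distinct generators $\sigma_j$ must occur and how they must be arranged along the word. I expect this bookkeeping --- rather than the elementary free-group arithmetic or the (here black-boxed) application of Theorem \ref{thm:technical_thm} --- to be exactly where the numerical thresholds $N\ge 3$ and $N\ge 5$ come from, while in the genus $\ge 1$ case the presence of topologically essential curves disjoint from $\mathcal C$ makes admissibility essentially automatic, explaining why $N\ge 1$ is enough there.
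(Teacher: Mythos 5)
Your overall reduction is the paper's: Theorem \ref{thm:technical_thm} plus an infinite supply of admissible classes, then distinguish supports. But the execution has two genuine gaps. First, you have misread the second case: in this paper $\TT=\mathbb{S}$, so $(\TT^2,g)$ is the \emph{sphere}, not the torus. The fundamental group of the $N$-punctured sphere is free of rank $N-1$ (not $N+1$), subject to the relation $\sigma_1\cdots\sigma_N=1$, and under that relation your family $w_k=\sigma_1\sigma_2\sigma_3(\sigma_4\sigma_5)^k$ degenerates (for $N=5$, $\sigma_1\sigma_2\sigma_3$ is the inverse of $\sigma_4\sigma_5$ up to conjugation, so the $w_k$ become powers of a single element), violating your own ``no common power'' requirement and producing no new supports. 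The actual source of the threshold $N\ge5$ is Definition \ref{def:admissible-class} on the sphere: every innermost contractible sub-loop of a taut representative must leave at least two centres on \emph{both} sides, which the paper satisfies by splitting $\mathbb{S}^2$ into a disk with three centres and a complementary disk with the remaining $N-3\ge2$, and re-running the planar construction inside the first disk.

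Second, your admissibility criterion ``the word winds around at least three distinct centres'' is not Definition \ref{def:admissible-class} and is insufficient; several of your words are in fact inadmissible. For instance $w_1=\sigma_1\sigma_2(\sigma_3\sigma_1)$ is cyclically $\sigma_1^2\sigma_2\sigma_3$, whose taut representative is a simple loop around all three centres together with one small lobe around $c_1$ alone: that lobe is an innermost contractible sub-loop whose contractible component contains a single centre, exactly the forbidden configuration of Figure \ref{fig:example_non_admissible_sphere} (and $w_0=\sigma_1\sigma_2$ is the figure-eight, also inadmissible). The same defect appears in your genus case: $\sigma_1a_1^kb_1$ is not conjugate to a simple handle class, and its taut representative carries a boundary-parallel lobe around $c_1$. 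The paper avoids this by never using single-centre lassos: in the planar/3-centre situation it takes $\omega^i_{n,m}=[\beta_i^n\eta^m]$ built from simple loops $\beta_i$ around \emph{pairs} of centres and $\eta$ around all three, and Lemma \ref{lem:infinite_class_3centre} checks by induction that products never create a one-centre sub-loop; for genus $\ge1$ (Subsection \ref{subsec:genus_greater_1}) it uses only the semigroup generated by a meridian and a longitude disjoint from $\mathcal C$, for which admissibility is immediate because no sub-loop of a taut representative is contractible in $M$ --- your extra $\sigma_1$ factor is not needed and is precisely what breaks admissibility. Relatedly, verifying Definition \ref{def:admissible-class} is a purely topological check on taut representatives (1-gons/2-gons and which centres the innermost lobes enclose), not a Jacobi--Maupertuis length-shortening argument as you anticipate; and for the non-compact case you also leave untouched the compact-intersection clause of the definition that underlies coercivity.
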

   For the proof, we opt for a variational argument, in which solutions are obtained as critical points of a suitable functional. We introduce the so-called \emph{Maupertuis functional}, defined as
   \[
   \mathcal{M}_h(\gamma)\uguale\int_0^1|\dot{\gamma}(t)|_g^2\int_0^1\left[h-V(\gamma(t))\right]\,dt,
   \]  
    where $|\cdot|_g$ is the norm induced by $g$ on the tangent bundle (rigorous definitions are given in Section \ref{sec:variational_frame}). Non-constant critical points of $\mathcal{M}_h$ are collision-less solutions of \eqref{eq:Newton} at energy $h$ (\emph{Maupertuis principle}, see \cite{AmbCotZel1993}). Let us mention another useful characterization of critical points of $\mathcal{M}_h$: minimisers of the Maupertuis functional are re-parametrised minimising geodesics of the so-called \emph{Jacobi-Maupertuis metric}
    \[
    g_J(v,v)\uguale (h-V(x))g(v,v),\quad x\in T_xM,
    \]
    which is conformal to the ambient metric $g$ whenever $h$ satisfies \eqref{eq:energy}. 
   
    The first step in the proof of Theorem \ref{thm:main_theorem} is to minimise $\mathcal{M}_h$ over those closed $H^1$ paths which belong to suitable homotopy classes (see Section \ref{sec:variational_frame}). To exclude possible collisions, a blow-up analysis and a refinement of the classical obstacle technique for singular problems (see \cite{TerVen2007, SoaTer2012}) is developed in Section \ref{sec:obstacle}. Not every homotopy class on $\widehat{M}$ matches our purposes, especially when more than one centre is Newtonian. In Definition \ref{def:admissible-class}, we give a notion of \emph{admissible classes} which extends the one introduced in \cite{Cas2017, Yu2016} and we prove that infinitely many admissible classes, which contain collision-less periodic minimisers, exist in any of the situations listed in Theorem \ref{thm:technical_thm}.
   
    The existence proof of minimisers can be replicated for any lower semi-continuous functional on a weakly closed set. In this sense, it is reasonable to consider potentials of the form \eqref{eq:def_potential} which may correspond to continuous, but \emph{non-differentiable} functionals $\mathcal{M}_h$. In this case, the dynamical system \eqref{eq:Newton} has additional singularities on the cut locus of $d_g(\cdot,c_j)$, which have to be treated separately. This is addressed in Section \ref{sec:true_kepler}, under some additional regularity assumptions on $d_g(.,c_j)$. In Theorem \ref{thm:weak_solutions} we construct periodic $\mathscr{C}^1$ weak solutions of \eqref{eq:Newton} with prescribed energy $h$ in infinitely many homotopy classes.

    \subsection{Invariant chaotic subsets, conjugation, symbolic dynamics} In this work we will use the following definition of a chaotic dynamical system:
    \begin{defn}[Devaney \cite{Dev_book}]
   	\label{def:chaos}
   	If $(X,d)$ is a metric space, we say that a continuous map $f\colon X\to X$ is \emph{chaotic} if
   	\begin{itemize}
   		\item periodic points are dense in $X$;
   		\item $f$ is \emph{transitive};
   		\item $f$ has \emph{sensitive dependence on initial conditions}. 
    \end{itemize}
    \end{defn}
    For a continuous dynamical system, a straightforward verification of these three properties is usually highly difficult and mostly unfeasible. This is where the tool of conjugation becomes very useful. As a matter of fact, there is a prototypical dynamical system which easily verifies the above definition of chaos, the so-called \emph{Bernoulli shift}. It is a discrete dynamical system, which acts on bi-infinite sequences of symbols, chosen in a finite set. Let $S=\{s_1,\ldots,s_n\}$ be a finite set endowed with the usual discrete metric $\rho(s_k,s_j)=\delta_{kj}$, where $\delta_{kj}$ stands for the classical Kronecker delta. We define the set of bi-infinite sequences in $S$ as 
    \[
    S^\Z\uguale\{(s_k)_{k\in\Z}:\,s_k\in S\},
    \]
    and we endow $S^\Z$ with the following distance:
    \begin{equation}
   	\label{eq:def_metric_bernoulli}
   	d_1((s_k),(t_k))\uguale\sum\limits_{k\in\Z}\dfrac{\rho(s_k,t_k)}{4^{|k|}},
    \end{equation}
    so that $(S^\Z,d_1)$ is a metric space. The Bernoulli shift is then the discrete dynamical system $(S^\Z,\sigma)$, where the map $\sigma$ acts in the following way
    \begin{equation}
   	\label{eq:def_bernoulli_shift}
   	\begin{aligned}
   		\sigma\colon &\mathcal{S}^\Z\longrightarrow \mathcal{S}^\Z \\
   		&(s_k)\mapsto \sigma((s_k))\uguale (s_{k+1}),
   	\end{aligned}
    \end{equation}
    which means that the whole sequence $(s_k)$ is \emph{shifted} on the right. It is well-known that the Bernoulli shift is a chaotic map (for a proof see \cite{Dev_book,KatHas1995}), but it can also be used to prove that other dynamical systems $(X,f)$ possess invariant subsets on which the restriction of the map $f$ is chaotic. 
    \begin{defn}\label{def:semi_conj}
   	Let $X,Y$ be two metric spaces. A map $g\colon Y\to Y$ is \emph{topologically semi-conjugate} to a map $f\colon X\to X$ if there exists a continuous and surjective map $\pi\colon X\to Y$ such that $g\circ \pi = \pi\circ f$. In addition, if $\pi$ is a homeomorphism, we say that the maps $f$ and $g$ are \emph{topologically conjugated}.
    \end{defn}
   
    \begin{defn}\label{def:symb_dyn}
   	Let $S$ be a finite set, $\Sigma$ be a metric space and $\mathcal{R}\colon\Sigma\to\Sigma$ be a continuous map. We say that the dynamical system $(\Sigma,\mathcal{R})$ admits a \emph{symbolic dynamics} with set of symbols $S$ if there exists a $\mathcal{R}$-invariant subset $\Pi$ of $\Sigma$ such that the map $\mathcal{R}|_\Pi$ is semi-conjugated to the Bernoulli shift map $\sigma$. Furthermore, if the map $\mathcal{R}|_{\Pi}$ is conjugated to $\sigma$, we say that $(\Sigma,\mathcal{R})$ admits a \emph{chaotic symbolic dynamics}. 
    \end{defn}
    Our first result on this direction is the following:
    \begin{thm}\label{thm:symb_dyn}
   	The $N$-centre problem on $\R^2$ displays a symbolic dynamics on every energy level satisfying \eqref{hyp:energy_bound} for $N\ge3$.
    \end{thm}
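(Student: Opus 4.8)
The plan is to realise every bi-infinite symbol sequence by a genuine, collision-free solution of \eqref{eq:Newton} at energy $h$, obtained as a limit of periodic Maupertuis minimisers, and then to recover the sequence from the solution via a first-return map to a transversal section of the energy surface.

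First I would fix a finite alphabet $S$ of \emph{elementary blocks}. Since $N\ge 3$, the configuration surface $\widehat M=\R^2\setminus\{c_1,\dots,c_N\}$ has free fundamental group on $N\ge 3$ generators, and I would designate one centre as a transit hub: for each $s\in S$, with $2\le |S|\le N-1$, let $\beta_s$ be a simple loop based near the hub that winds once around a prescribed other centre. These can be chosen so that every finite concatenation $\beta_{s_{-n}}\cdots\beta_{s_n}$ is a reduced word in $\pi_1(\widehat M)$ — hence a non-trivial free-homotopy class — and so that every such concatenation is \emph{admissible} in the sense of Definition \ref{def:admissible-class}. The existence of such a family of pairwise concatenable admissible classes precisely when $N\ge 3$ is what the analysis behind Theorem \ref{thm:technical_thm} provides; this is also where the hypothesis $N\ge 3$ enters, cf.\ the $\R^2$ case of Theorem \ref{thm:main_theorem}.

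Next, for each finite word $w=(s_{-n},\dots,s_n)$ I would take a minimiser $\gamma_w$ of $\mathcal{M}_h$ in the admissible class of the closed loop $\beta_{s_{-n}}\cdots\beta_{s_n}$; by the Maupertuis principle together with the collision-exclusion of Section \ref{sec:obstacle}, $\gamma_w$ is, up to reparametrisation, a collision-free periodic classical solution at energy $h$. The crucial point is that the estimates of Section \ref{sec:obstacle} are uniform over the finitely many block types, hence over all admissible words: there are constants $0<\delta<\Delta$ such that the Jacobi-Maupertuis length of the sub-arc of $\gamma_w$ corresponding to a single block lies in $[\delta,\Delta]$, while $\gamma_w$ crosses the hub circle transversally and stays a definite distance from every centre outside the prescribed close passages. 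Reparametrising each $\gamma_w$ by Jacobi-Maupertuis arc length, centred at the hub-crossing between $\beta_{s_{-1}}$ and $\beta_{s_0}$, I would then use $\mathscr{C}^1_{\mathrm{loc}}$-compactness on the energy surface away from the centres to extract, for any $(s_k)_{k\in\Z}\in S^\Z$, a complete collision-free classical solution $\Gamma_{(s_k)}$ whose successive hub-crossings enclose exactly the centres dictated by $(s_k)$. Finally I would let $\Sigma$ be the set of states on the energy surface lying on the hub circle with inward velocity, $\mathcal R$ the first-return map, $\Pi\sset\Sigma$ the $\mathcal R$-invariant closure of the set of hub-crossing states of the orbits $\Gamma_{(s_k)}$, and $\pi\colon\Pi\to S^\Z$ the itinerary map. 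By construction $\pi$ is surjective and $\pi\circ\mathcal R=\sigma\circ\pi$; continuity of $\pi$ follows because transversality of the crossings and the uniform separation from the centres make each finite portion of the itinerary locally constant on $\Pi$, which matches the product topology encoded by the metric $d_1$. This produces a symbolic dynamics in the sense of Definition \ref{def:symb_dyn} (no injectivity of $\pi$ is needed here, as only a semi-conjugation is claimed).

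I expect the genuine obstacle to be the limiting argument of the third step: one must rule out that, as $n\to\infty$, some block collapses to a point, two consecutive hub-crossings merge, a winding number is lost, or a collision appears in the limit — all of which is controlled only if the collision-exclusion and length bounds of Section \ref{sec:obstacle} hold in a quantitative form uniform across \emph{every} admissible concatenation, rather than class by class. Equivalently, the heart of the matter is that the combinatorial type (itinerary) of a Maupertuis minimiser is a stable and readable invariant, which is precisely the property that simultaneously secures the limiting construction and makes $\pi$ continuous and surjective onto all of $S^\Z$.
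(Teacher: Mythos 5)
Your overall architecture coincides with the paper's (periodic Maupertuis minimisers in admissible classes, uniform bounds, a diagonal/limit argument to realise prescribed itineraries, a Poincar\'e-type section with an itinerary map whose continuity follows from continuous dependence and transversality), but the combinatorial input on which everything rests contains a genuine gap. Your elementary blocks $\beta_s$ are loops winding once around a \emph{single} centre, and you claim that every finite concatenation $\beta_{s_{-n}}\cdots\beta_{s_n}$ is admissible. This is false: Definition \ref{def:admissible-class} requires every innermost contractible sub-loop of a taut representative to enclose at least \emph{two} centres, so already the constant words $\beta_s^{2n+1}$ (and any word with a syllable $\beta_s^k$ isolated in the taut representative) are inadmissible. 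For such classes Theorem \ref{thm:technical_thm} gives no collision exclusion -- a ``minimiser'' winding around a single Newtonian centre degenerates to a collision--ejection arc -- so the periodic orbits you need do not exist for all words, the uniform block-length estimates cannot be ``uniform across every admissible concatenation'', and surjectivity of $\pi$ onto the full shift $S^{\Z}$ collapses. The paper avoids exactly this trap: its generators $\beta_i$ (enclosing two centres) and $\eta$ (enclosing all three) generate a semigroup consisting entirely of admissible classes (Lemma \ref{lem:infinite_class_3centre}), and the symbolic model is not the full shift but the subshift $X$ of sequences of type \eqref{eq:periodic_sequences}, precisely because certain transitions (e.g.\ consecutive crossings $-i,i$, which would create a $2$-gon) are forbidden for minimisers.

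Two further points you gloss over are supplied in the paper by specific devices you would need substitutes for. First, compact confinement: all minimisers and their limits must stay in a fixed compact region for the return map to be defined and for the $\mathscr{C}^1_{\mathrm{loc}}$ compactness to work; the paper gets this from the outer minimiser $\gamma$ bounding a disk $D$ and the no-$2$-gon argument of Proposition \ref{prop:bound_on_minimisers}, not from block-by-block estimates. Second, the section: a ``hub circle'' is not obviously crossed transversally, and exactly once per block, by every relevant orbit; the paper instead takes three arcs $\ell_i$ joining each centre to $\partial D$ which are themselves reparametrised (collision) solutions, so that every crossing of a solution with $\ell_i$ is automatically transversal, and then proves the time-between-crossings bounds (Lemma \ref{lem:estimates_time}) and the approximation/diagonal lemma (Lemma \ref{lemma:approximating_minimisers}) on which both surjectivity and continuity of $\pi$ rest. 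With your alphabet replaced by the paper's (or any family of blocks each enclosing at least two centres and closed under concatenation within admissibility), and with the target shift weakened from $S^{\Z}$ to the corresponding subshift, your outline becomes essentially the paper's proof.
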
 
    The proof is given in Section \ref{sec:symbolic_dynamics} and the main idea behind this construction is to encode all information about a given homotopy class into a proper sequence of intersection numbers. This approach traces back to the seminal works of \cite{morse_recurrent,hedlund_geodesic} for geodesics flow on negatively curved surfaces. Let us remark that the symbolic dynamics we build is collision-less, i.e., all admissible sequences are realised by non collision solutions. Compared to the results given in \cite{BolKoz2017}, our construction is completely explicit and elementary. Moreover, let us recall that it is known that the presence of a semi-conjugation with a chaotic map is enough to conclude that our dynamical system possesses positive topological entropy (see \cite[Proposition 3.1.6]{KatHas1995}).
   
    Notice that Theorem \ref{thm:symb_dyn} is local in nature. Starting from a surface $(M,g)$, whenever we can prove the existence of a closed solution of \eqref{eq:Newton} which bounds at least three centres, our construction provides an invariant compact subset topologically semi-conjugated to the Bernoulli shift. This, for instance, allows us to prove an analogous statement for the sphere $\mathbb{S}^2$ with at least $5$ centres, as discussed in Section \ref{sec:symbolic_dynamics}.   
    A similar construction can be carried out on any surface of genus greater than or equal to $1$ with at least one singularity (we refer in particular to Subsection \ref{subsec:genus_greater_1}).
   
    At last, we present the most relevant application of Theorems \ref{thm:main_theorem} and \ref{thm:symb_dyn}. Under some suitable assumptions on the curvature of $g$, we prove that our dynamical system is \emph{conjugated} to the Bernoulli shift. The proof relies strongly on the uniqueness of minimisers of $\mathcal{M}_h$ in each homotopy class. This is strictly related to the sign of the scalar curvature of the Jacobi-Maupertuis metric $g_J$ (see in particular Theorem \ref{thm:conjugation_neg_curvature}). 
    \begin{thm}\label{thm:chaotic_sym_dyn}
   	The following holds:
   	\begin{itemize}
   		\item If $M=\R^2$, $N\ge 3$ and $g_e$ is the Euclidean metric, then the $N$-centre problem on $(\R^2,g_e)$ admits a chaotic symbolic dynamics on any energy level $h$ as in \eqref{hyp:energy_bound}.
   		\item Let $(M,g)$ be such that $g$ has negative curvature and assume that $N\ge3$ when $M=\R^2$; then, there exists $h^*=h^*(M,g,V)>\sup_M V$ for which the $N$-centre problem on $(M,g)$ has a chaotic symbolic dynamics on any energy level $h>h^*$.
   	\end{itemize}
    \end{thm}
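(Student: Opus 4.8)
The plan is to upgrade the semi-conjugation already produced by Theorem~\ref{thm:symb_dyn} — and, on a negatively curved $(M,g)$, by the parallel construction underlying Theorem~\ref{thm:conjugation_neg_curvature} — into a genuine topological conjugation, the only missing ingredient being \emph{injectivity} of the coding map. Concretely, one has a compact set $\Pi$, invariant under the first return map $\mathcal R$ to a Poincar\'e section, together with a continuous surjection $\pi\colon\Pi\to S^\Z$ satisfying $\pi\circ\mathcal R|_\Pi=\sigma\circ\pi$. Since $\Pi$ is compact and $(S^\Z,d_1)$ is Hausdorff, an injective $\pi$ is automatically a homeomorphism, so $\mathcal R|_\Pi$ becomes topologically conjugate to $\sigma$. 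Density of periodic points and transitivity pass to conjugate systems, and on an infinite metric space these two properties already force sensitive dependence, so $\mathcal R|_\Pi$ inherits the chaos of $\sigma$ in the sense of Definition~\ref{def:chaos}; by Definition~\ref{def:symb_dyn} this is exactly a chaotic symbolic dynamics.

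Injectivity of $\pi$ will follow from one uniqueness statement: \emph{for every admissible free homotopy class of $\widehat M$, and for the finite concatenations of admissible classes that appear in the coding, the minimiser of $\mathcal M_h$ is unique up to time translation}. Granting it, suppose $\pi(x)=\pi(y)=(s_k)_{k\in\Z}$. By the construction of the coding in Section~\ref{sec:symbolic_dynamics}, the collision-less solutions $u_x,u_y$ through $x$ and $y$ are built, on every finite window of symbols, as minimisers of $\mathcal M_h$ in the same homotopy class; the uniqueness statement forces the corresponding arcs to coincide, and letting the window grow, together with the compactness of $\Pi$ and the continuity of $\pi$, gives $u_x\equiv u_y$ up to reparametrisation. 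Since the symbol string is read off starting from the transverse intersection of the orbit with the section, this fixes the phase as well, so $x=y$.

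The uniqueness statement is where the curvature hypothesis enters. Recall (see the discussion after \eqref{eq:energy}) that minimisers of $\mathcal M_h$ in a homotopy class are, after reparametrisation, minimising geodesics of the Jacobi--Maupertuis metric $g_J=(h-V)g$, conformal to $g$. On a surface with $K_{g_J}<0$, a minimising closed geodesic in a free homotopy class is unique up to parametrisation (and likewise for minimising arcs with endpoints on the section in a fixed relative class): lifting to the universal cover, nonpositive curvature makes the $g_J$-energy strictly convex along the geodesic homotopy joining two competitors, forcing them to agree. Thus everything reduces to checking $K_{g_J}<0$ on $\widehat M$. For $M=\R^2$ with the Euclidean metric (and $V$ the classical potential \eqref{eq:classical_ncentre}) this holds for \emph{every} $h>\sup_M V\,(=0)$: with $v_j:=m_j/(\al_j|x-c_j|^{\al_j})>0$ one has $\Delta\log v_j=0$ off $c_j$, i.e.\ $\Delta v_j=|\nabla v_j|^2/v_j$, whence a single Cauchy--Schwarz inequality yields $(h-V)\Delta(h-V)-|\nabla(h-V)|^2\ge h\sum_j|\nabla v_j|^2/v_j>0$ on $\widehat M$, that is $\Delta\log(h-V)>0$ and $K_{g_J}=-\tfrac12(h-V)^{-1}\Delta\log(h-V)<0$; this explains why no energy threshold appears in the flat case. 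When instead $K_g<0$ (with $N$ as in Theorem~\ref{thm:main_theorem}), Theorem~\ref{thm:conjugation_neg_curvature} supplies $h^*>\sup_M V$ so that $K_{g_J}<0$ on $\widehat M$ for all $h>h^*$: for large $h$ one has $g_J/h\to g$ in $C^2_{\mathrm{loc}}$ off the centres, so the strict negativity of $K_g$ persists, while near each $c_j$ the homogeneous singular term again contributes a nonnegative amount to $\Delta\log(h-V)$.

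The main obstacle is to run this negative-curvature uniqueness argument on $(\widehat M,g_J)$, which is neither compact nor complete — since $\al_j\in[1,2)$, each centre lies at finite $g_J$-distance — so that, a priori, a second minimiser could be a collision trajectory or could run off towards a puncture or an end of $M$. This is handled by combining the convexity argument with the collision-exclusion and compactness estimates of Section~\ref{sec:obstacle}: minimisers in admissible classes (and in the finite concatenations, whose $g_J$-lengths are uniformly bounded) are collision-free and confined to a fixed compact region of $\widehat M$ on which $g_J$ is a genuine smooth metric of strictly negative curvature, where the universal-cover computation applies verbatim and yields the required uniqueness. Passing to the limit in the coding windows then closes the argument for injectivity, and with it Theorem~\ref{thm:chaotic_sym_dyn}.
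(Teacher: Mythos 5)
Your overall route coincides with the paper's: reduce the statement to a conjugation theorem valid whenever the Jacobi--Maupertuis curvature is negative, and then verify $\kappa_J<0$ on the relevant compact region (the paper does exactly this via Theorem \ref{thm:conjugation_neg_curvature} together with Lemma \ref{lemma:curvature_jacobi_metric}). Your flat-case computation, $\Delta\log(h-V)>0$ via harmonicity of $\log|x-c_j|$ and Cauchy--Schwarz, is correct and in fact slightly sharper than the paper's Lemma, which only records $\kappa_J\le 0$ when $\kappa\equiv0$; your large-$h$ argument in the negatively curved case also matches the Lemma (note the threshold $h^*$ comes from that curvature Lemma, not from Theorem \ref{thm:conjugation_neg_curvature} itself).

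The genuine gap is in the injectivity step, which is the heart of the paper's Theorem \ref{thm:conjugation_neg_curvature}. The invariant set $\Pi=\Sigma'$ consists of \emph{all} section points whose orbits stay in $D$ and have itinerary in $X$; these orbits are solutions of \eqref{eq:Newton}, hence reparametrised $g_J$-geodesics, but they are not known to be minimisers of $\mathcal{M}_h$ on each coding window, so your reduction ``same window of symbols $\Rightarrow$ same homotopy class $\Rightarrow$ same minimiser'' does not apply to a general pair $x,y\in\Pi$. Even granting minimality, two orbits with the same bi-infinite itinerary do not share endpoints on the segments $\ell_i$, so uniqueness of fixed-endpoint minimisers (or of closed minimisers in a free homotopy class, which your universal-cover convexity argument does give) does not force the corresponding arcs to coincide; ``letting the window grow, together with compactness and continuity of $\pi$'' is precisely the missing argument, not a proof. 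What is required is a rigidity statement for two \emph{distinct bi-infinite} $g_J$-geodesics confined to the compact disk $D$ and having identical itineraries. This is the non-periodic case in the paper's proof, handled by a Gauss--Bonnet argument on the strip bounded by the two lifted geodesics: one takes the geodesics $\eta_t$ issuing orthogonally from one of them, shows via Gauss--Bonnet that the angles $\beta_t$ at the second geodesic are strictly decreasing, and contradicts the uniform area bound coming from $-C^2<\kappa_J<-c^2$. Without an argument of this type, your construction only yields the semi-conjugation of Theorem \ref{thm:symb_dyn}, not the conjugation claimed in Theorem \ref{thm:chaotic_sym_dyn}. (The remaining steps --- a continuous bijection from the compact set $\Pi$ onto the Hausdorff space $X$ is a homeomorphism, and chaoticity transfers under conjugation --- are fine and agree with the paper, which alternatively proves that $\pi\vert_{\Sigma'}$ is closed.)
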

    As a consequence of the topological conjugation, in all the situations identified by this result, the $N$-centre problem has a chaotic first return map acting on an invariant subset of the energy shell. It is important to notice that, when $M=\mathbb{R}^2$ is endowed with a non negatively curved metric, we can consider potentials of the form \eqref{eq:def_potential}, for there exist no conjugate points nor closed geodesics. Moreover, we notice that our arguments are flexible enough to extend also Theorem \ref{thm:chaotic_sym_dyn} to the case of a surface with genus $g\geq 1$ and $N\geq 1$ centres. 

    There are several important contributions in the literature which are strictly related to our results. For instance, in \cite{BolKoz2017,Kna1987} (and in \cite{BolNeg2001} for the spatial case) several collisions regularisation schemes have been proposed, yielding positive \emph{topological entropy} and  existence of invariant chaotic subsets. A deep study of high energy scattering phenomena for Newtonian potentials have been carried out in \cite{KleKna1992} and in \cite{Kna2002} for the tridimensional case. 
   
    A significant difference between the aforementioned results and ours is that we make use of variational methods alone, we build a symbolic dynamic using non collision orbits and describe chaotic behaviours for these \emph{singular} systems without employing any regularisation argument. Almost all the arguments presented here naturally extend to more singular situations in which regularization is not possible at all, for instance due to the presence of anisotropy in the asymptotic expansion of the potential (see for instance \cite{BarCanTer2021}). Indeed, one of the  advantages of our arguments is that we work directly with the homotopy classes of the punctured surfaces, without using any compactification procedure. Moreover, our results considerably extend the ones presented in \cite{Cas2017}, on which a different and preliminary construction of a semi-conjugation is illustrated in the case $(M,g)=(\R^2,g_e)$. 
    
    It is also interesting to compare our results to the classical ones for complete compact manifolds. The study of the geodesic flow on higher genus surfaces has attracted a lot of interest in the past century, starting from the seminal works of Morse, Hedlund, Hopf and many others. The main reason behind these investigations is a subtle interplay between topological and dynamical properties of the geodesic flow. The configuration spaces we consider display a similar topological complexity, but lack the regularity and compactness properties of closed manifolds. Nevertheless, many of the phenomena arising in the classical case persist. For instance, it is known that the geodesic of flow on any higher genus surface has positive topological entropy (\cite{katok_1982}). Moreover, the geodesic flow on negatively curved surfaces is the prototypical example of Anosov flows (see \cite{Anosov1969GeodesicFO,Kling_Anosov_type}): it possesses periodic dense orbits, which are hyperbolic, and it is \emph{ergodic}.  
   
    \subsection{Outline of the paper} The structure of the paper is the following. In Section \ref{sec:top_framework} we recall some basic facts about loops on surfaces and their intersections properties. In Section \ref{sec:variational_frame} we present the variational framework and prove some elementary properties of Maupertuis minimisers. Section \ref{sec:obstacle} contains all the basic information about the \emph{obstacle} technique and its adaptation to the non Euclidean setting. 
   Section \ref{sec:proof_thm} contains the proof of Theorem \ref{thm:main_theorem} and its more general version (Theorem \ref{thm:technical_thm}). Section \ref{sec:symbolic_dynamics}  is devoted to the construction of symbolic dynamics and to the proofs of semi-conjugation and conjugation (see Theorem  \ref{thm:technical_symbolic_dyn} and \ref{thm:conjugation_neg_curvature}). Finally, in Section \ref{sec:true_kepler} we consider non regular potentials of the form \eqref{eq:def_potential}.
   
	\section{Topological framework and admissible loops}
	\label{sec:top_framework}
	
	In this section we recall some basic definitions and results about the geometrical self-intersection index, in order to fix our notation and to introduce a notion of {\em admissible classes of loops}.
	
	A {\em curve} or {\em path} on the configuration surface $\widehat{M}$ is a continuous map $\gamma\colon[0,1]\to\widehat M$. If $\gamma(0)=\gamma(1)$, we refer to $\gamma$ as a {\em closed curve} or a {\em loop}, which can also be seen as a continuous map from $\TT^1$ into $\widehat M$, if $\TT^1$ denotes the oriented unit circle. 
	
	Roughly speaking, two loops are \emph{equivalent} if they can be deformed continuously one into another, as it is made rigorous in the following:
	\begin{defn}
		Given two loops $\gamma, \tau: \TT^1 \to \widehat M$, we say that $\gamma$ and $\tau$ are {\em homotopic}, and we write $\gamma \sim \tau$, if there is a continuous map  $h\colon[0,1]\times \TT^1 \to \widehat M$ such that
		\begin{itemize}
		\item $h(0,t)= \gamma(t)$; \item $h(1,t)= \tau(t)$,
		\end{itemize}
		for all $t\in \TT^1$.
		
		We say that a loop is {\em contractible} if it is {\em homotopically trivial}, i.e., if it is homotopic to a constant loop. If $\gamma, \tau\colon[0,1] \to \widehat M $ are continuous paths with the same endpoints, we say that $\gamma$ and $\tau$ are \emph{homotopic} (or \emph{homotopic rel boundary}) if there exists $h\colon[0,1]\times[0,1]\to \widehat M$ such that 
		\begin{itemize}
		\item $\gamma(0) = h(s,0) = \tau(0)$;
		\item $\gamma(1) = h(s,1) = \tau(1)$;
		\item $h(0,t) = \gamma(t)$; \item $h(1,t) = \tau(t)$,
     	\end{itemize}
        for any $t\in[0,1]$.
	\end{defn}
	
	\begin{defn}
		Let $q \in \widehat M$ and denote by $\pi_1(\widehat M,q)$ the set of homotopy equivalence classes of loops with base point $q$. We call $\pi_1(\widehat M,q)$  the \emph{fundamental group} of $\widehat M$ based at $q$.	
	\end{defn}
	
	\begin{defn}[Concatenation of paths]
		Given two paths $\gamma\colon[a,b]\to M$ and $\tau\colon[b,c]\to M$ such that $\gamma(b)=\tau(b)$, we define their \emph{concatenation} as the path $\gamma\#\tau\colon[a,c]\to M$.
	\end{defn}

	\subsection{Intersection indices and minimal position loops}
	
	In this paragraph we briefly recall the notion of geometric self-intersection number for homotopy classes as well as the definition of \emph{taut loop} that is required in the next sections. Our basic reference is \cite{HasSco1985} and references therein.

	Given two closed curves in $\widehat M$, there are two natural ways to count the number of intersection points between them: {\em signed and unsigned.} 
	Unless otherwise specified, we will mainly refer to {\em unsigned intersections}. 
	\begin{defn}\label{def:number-intersections}
		For two loops $\gamma$ and $\tau$ in $\widehat M$, their  {\em number of intersections} is 
		\[
		|\gamma \cap \tau|\uguale \{(t,t'): t, t' \in \TT^1 \textrm{ and } \gamma(t)=\tau(t')\}| \in \N\cup\{+\infty\}.
		\]
		The {\em number of self-intersections} of $\tau$ is given by 
		\[
		|\tau|\uguale\dfrac12\left\vert\{(t, t'): t \neq t' \in \TT^1 \textrm{ and } \tau(t)=\tau(t')\}\right\vert \in \N\cup\{+\infty\}.
		\]
	\end{defn}
	\begin{rem}
		Notice that the factor $1/2$ appearing in Definition~\ref{def:number-intersections} comes from the identification of $(t,t')$ with $(t',t)$. Moreover, the number of intersections or self-intersections is  always finite if the curves are in {\em general position}, i.e., their intersections are always transversal. 
	\end{rem} 
	
	\begin{defn}\label{em:geometric-intersection-number}
		The {\em geometric intersection number} between two homotopy classes $[\tau]$ and $[\gamma]$ of simple closed curves in a surface $\widehat M$ is defined to be the minimal number of intersection points between a representative curve in the class $[\tau]$ and a representative curve in the class $[\gamma]$: 
		\[
		i([\tau],[\gamma])\uguale \min\{|\tau' \cap \gamma'|: \tau' \in [\tau], \ \gamma' \in [\gamma]\}.
		\]	
		The {\em geometric self-intersection number} $i([\tau])$ is defined to be the minimal number of self-intersection points over all closed curves  in the class $[\tau]$:
		\[
		i([\tau])\uguale \min\{|\tau'|: \tau' \in [\tau]\}.
		\]	
		Given two loops  $\tau$ and $\gamma$, we say that they are in {\em minimal position} if they realise the intersection number of their homotopy class. Similarly, a loop $\tau$ is in {\em minimal position} (or \emph{taut}) if it realises the self-intersection number of its homotopy class. 
	\end{defn}
    
	If a loop $\tau$ is not in minimal position it is usually said that it \emph{exceeds the number of self-intersections}. It is straightforward to check that the minimum in the definition of intersection number above is always achieved by curves that intersect transversally. 
	
	However, unlike what happens for the \emph{signed} intersection number, $i([\tau])$ cannot be computed directly using any representative in general position. Thanks to some results from \cite{HasSco1985}, which we will recall below, it is possible to compute $i([\tau])$ starting from any representative in general position and using just a finite set of moves. To state the result we need, we recall the following definitions (see Figure \ref{fig:examples_1gon_2gon} for some examples).
	\begin{defn}\label{def:monogon-bigon}
		Let $\gamma: \TT^1 \to \widehat M$ be a loop. 
		\begin{itemize}
			\item We say that $\gamma$ has a  \emph{singular  1-gon} if there exists a sub-arc $[a,b] \subset \TT^1$  such that $\gamma(a) = \gamma(b)$ and  $\gamma \vert_{[a,b]}$ is contractible.
			\item We say that $\gamma$ has a \emph{singular 2-gon} if there exist two disjoint sub-arcs $[a,b],[c,d] \subset \TT^1$  such that $\gamma(a) = \gamma(c)$ , $\gamma(b ) = \gamma(d)$ and $\gamma \vert_{[a,b]\cup [c,d]}$ is contractible.
		\end{itemize}
	\end{defn}
	
	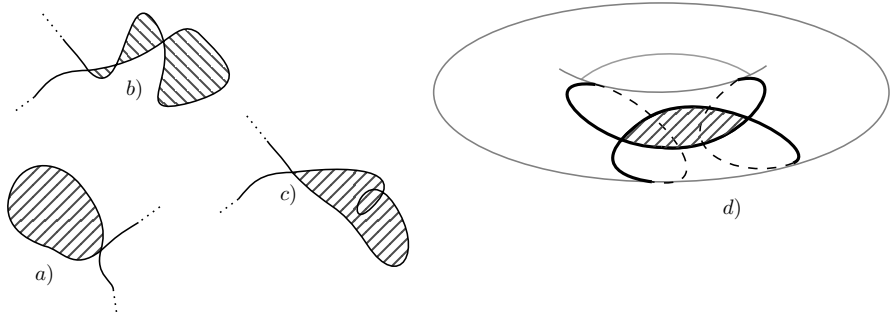
\begin{figure}[t]
	\resizebox{.82\textwidth}{!}{

\tikzset{
	pattern size/.store in=\mcSize, 
	pattern size = 5pt,
	pattern thickness/.store in=\mcThickness, 
	pattern thickness = 0.3pt,
	pattern radius/.store in=\mcRadius, 
	pattern radius = 1pt}
\makeatletter
\pgfutil@ifundefined{pgf@pattern@name@_8irt2onid}{
	\pgfdeclarepatternformonly[\mcThickness,\mcSize]{_8irt2onid}
	{\pgfqpoint{0pt}{0pt}}
	{\pgfpoint{\mcSize+\mcThickness}{\mcSize+\mcThickness}}
	{\pgfpoint{\mcSize}{\mcSize}}
	{
		\pgfsetcolor{\tikz@pattern@color}
		\pgfsetlinewidth{\mcThickness}
		\pgfpathmoveto{\pgfqpoint{0pt}{0pt}}
		\pgfpathlineto{\pgfpoint{\mcSize+\mcThickness}{\mcSize+\mcThickness}}
		\pgfusepath{stroke}
}}
\makeatother


\tikzset{
	pattern size/.store in=\mcSize, 
	pattern size = 5pt,
	pattern thickness/.store in=\mcThickness, 
	pattern thickness = 0.3pt,
	pattern radius/.store in=\mcRadius, 
	pattern radius = 1pt}
\makeatletter
\pgfutil@ifundefined{pgf@pattern@name@_jl6pmgtow}{
	\pgfdeclarepatternformonly[\mcThickness,\mcSize]{_jl6pmgtow}
	{\pgfqpoint{0pt}{0pt}}
	{\pgfpoint{\mcSize+\mcThickness}{\mcSize+\mcThickness}}
	{\pgfpoint{\mcSize}{\mcSize}}
	{
		\pgfsetcolor{\tikz@pattern@color}
		\pgfsetlinewidth{\mcThickness}
		\pgfpathmoveto{\pgfqpoint{0pt}{0pt}}
		\pgfpathlineto{\pgfpoint{\mcSize+\mcThickness}{\mcSize+\mcThickness}}
		\pgfusepath{stroke}
}}
\makeatother


\tikzset{
	pattern size/.store in=\mcSize, 
	pattern size = 5pt,
	pattern thickness/.store in=\mcThickness, 
	pattern thickness = 0.3pt,
	pattern radius/.store in=\mcRadius, 
	pattern radius = 1pt}
\makeatletter
\pgfutil@ifundefined{pgf@pattern@name@_ymez1otdo}{
	\pgfdeclarepatternformonly[\mcThickness,\mcSize]{_ymez1otdo}
	{\pgfqpoint{0pt}{-\mcThickness}}
	{\pgfpoint{\mcSize}{\mcSize}}
	{\pgfpoint{\mcSize}{\mcSize}}
	{
		\pgfsetcolor{\tikz@pattern@color}
		\pgfsetlinewidth{\mcThickness}
		\pgfpathmoveto{\pgfqpoint{0pt}{\mcSize}}
		\pgfpathlineto{\pgfpoint{\mcSize+\mcThickness}{-\mcThickness}}
		\pgfusepath{stroke}
}}
\makeatother


\tikzset{
	pattern size/.store in=\mcSize, 
	pattern size = 5pt,
	pattern thickness/.store in=\mcThickness, 
	pattern thickness = 0.3pt,
	pattern radius/.store in=\mcRadius, 
	pattern radius = 1pt}
\makeatletter
\pgfutil@ifundefined{pgf@pattern@name@_3f979zaxb}{
	\pgfdeclarepatternformonly[\mcThickness,\mcSize]{_3f979zaxb}
	{\pgfqpoint{0pt}{0pt}}
	{\pgfpoint{\mcSize+\mcThickness}{\mcSize+\mcThickness}}
	{\pgfpoint{\mcSize}{\mcSize}}
	{
		\pgfsetcolor{\tikz@pattern@color}
		\pgfsetlinewidth{\mcThickness}
		\pgfpathmoveto{\pgfqpoint{0pt}{0pt}}
		\pgfpathlineto{\pgfpoint{\mcSize+\mcThickness}{\mcSize+\mcThickness}}
		\pgfusepath{stroke}
}}
\makeatother
\tikzset{every picture/.style={line width=0.75pt}} 

\begin{tikzpicture}[x=0.75pt,y=0.75pt,yscale=-1,xscale=1]
	
	\draw  [dash pattern={on 0.84pt off 2.51pt}]  (210,144) -- (196.33,154) ;
	\draw  [dash pattern={on 0.84pt off 2.51pt}]  (215.33,91) -- (228.33,107) ;
	\draw  [color={rgb, 255:red, 128; green, 128; blue, 128 }  ,draw opacity=1 ] (334.33,76) .. controls (334.33,44.52) and (399.25,19) .. (479.33,19) .. controls (559.41,19) and (624.33,44.52) .. (624.33,76) .. controls (624.33,107.48) and (559.41,133) .. (479.33,133) .. controls (399.25,133) and (334.33,107.48) .. (334.33,76) -- cycle ;
	\draw [color={rgb, 255:red, 155; green, 155; blue, 155 }  ,draw opacity=1 ]   (428,68) .. controls (452.33,47) and (508.67,46) .. (536.33,65) ;
	\draw [color={rgb, 255:red, 128; green, 128; blue, 128 }  ,draw opacity=1 ]   (414.33,61) .. controls (452.33,84) and (519.67,78) .. (546.33,59) ;
	\draw [line width=1.5]    (479.33,111) .. controls (532.33,117) and (568.33,59) .. (528,68) ;
	\draw  [dash pattern={on 4.5pt off 4.5pt}]  (528,68) .. controls (470.33,111) and (530.33,134) .. (563.33,121) ;
	\draw [line width=1.5]    (466.33,95) .. controls (506.33,65) and (584.33,113) .. (563.33,121) ;
	\draw [line width=1.5]    (473.33,133) .. controls (464.33,132) and (426.33,125) .. (466.33,95) ;
	\draw [line width=1.5]    (437.33,71) .. controls (394.33,66) and (436.33,111) .. (479.33,111) ;
	\draw  [dash pattern={on 4.5pt off 4.5pt}]  (437.33,71) .. controls (474.33,79) and (525.33,141) .. (473.33,133) ;
	\draw  [pattern=_8irt2onid,pattern size=4.5pt,pattern thickness=0.75pt,pattern radius=0pt, pattern color={rgb, 255:red, 74; green, 74; blue, 74}] (466.33,95) .. controls (474.33,91) and (494.33,76) .. (534.33,93) .. controls (524.33,103) and (514.33,107) .. (500.33,110) .. controls (495.67,111.55) and (489.95,112.05) .. (484.15,111.89) .. controls (471.3,111.56) and (458.09,108.07) .. (455.33,106) .. controls (448.33,112) and (458.33,99) .. (466.33,95) -- cycle ;
	\draw    (228.33,107) .. controls (232.33,112.53) and (237.33,116.53) .. (244.33,127) ;
	\draw  [pattern=_jl6pmgtow,pattern size=4.5pt,pattern thickness=0.75pt,pattern radius=0pt, pattern color={rgb, 255:red, 74; green, 74; blue, 74}] (244.33,127) .. controls (294.33,121) and (315.33,129) .. (295.33,149) .. controls (284.46,159.87) and (283.64,149.62) .. (288.36,142.82) .. controls (292.33,137.12) and (300.21,133.84) .. (309.33,147.53) .. controls (329.33,177.53) and (312.33,203.53) .. (292.33,173.53) .. controls (272.33,143.53) and (268.67,153) .. (244.33,127) -- cycle ;
	\draw    (210,144) .. controls (218.33,134) and (223.33,127) .. (244.33,127) ;
	\draw  [dash pattern={on 0.84pt off 2.51pt}]  (80,79) -- (66.33,89) ;
	\draw  [dash pattern={on 0.84pt off 2.51pt}]  (85.33,26) -- (98.33,42) ;
	\draw    (98.33,42) .. controls (102.33,47.53) and (107.33,51.53) .. (114.33,62) ;
	\draw  [pattern=_ymez1otdo,pattern size=4.5pt,pattern thickness=0.75pt,pattern radius=0pt, pattern color={rgb, 255:red, 74; green, 74; blue, 74}] (114.33,62) .. controls (164.33,56) and (168.33,27) .. (184.33,38) .. controls (203.33,57) and (219.33,71) .. (179.33,82.53) .. controls (137.91,93.58) and (174.45,64.28) .. (159.29,35.3) .. controls (158.71,34.2) and (158.06,33.1) .. (157.33,32) .. controls (137.33,2) and (138.67,88) .. (114.33,62) -- cycle ;
	\draw    (80,79) .. controls (88.33,69) and (93.33,62) .. (114.33,62) ;
	\draw  [pattern=_3f979zaxb,pattern size=4.5pt,pattern thickness=0.75pt,pattern radius=0pt, pattern color={rgb, 255:red, 74; green, 74; blue, 74}] (65,139) .. controls (71.41,120.67) and (88.99,118.96) .. (102.18,128.23) .. controls (104.77,130.06) and (107.2,132.3) .. (109.33,134.93) .. controls (122.33,150.93) and (126.33,162.93) .. (122.33,176.93) .. controls (104.67,190.67) and (97.6,176.48) .. (86.94,174.06) .. controls (86.74,174.01) and (86.54,173.97) .. (86.33,173.93) .. controls (75.33,169.93) and (57.33,160.93) .. (65,139) -- cycle ;
	\draw    (122.33,176.93) .. controls (133.33,166.93) and (133.33,166.93) .. (146.33,158.93) ;
	\draw    (122.33,176.93) .. controls (118.33,192.93) and (130.33,195.93) .. (130.33,201.93) ;
	\draw  [dash pattern={on 0.84pt off 2.51pt}]  (160,148.93) -- (146.33,158.93) ;
	\draw  [dash pattern={on 0.84pt off 2.51pt}]  (132.33,215.93) -- (130.33,201.93) ;
	
	\draw (79,184) node [anchor=north west][inner sep=0.75pt]   [align=left] {$\displaystyle a)$};
	\draw (137,66) node [anchor=north west][inner sep=0.75pt]   [align=left] {$\displaystyle b)$};
	\draw (235,133) node [anchor=north west][inner sep=0.75pt]   [align=left] {$\displaystyle c)$};
	\draw (517,142) node [anchor=north west][inner sep=0.75pt]   [align=left] {$\displaystyle d)$};

\end{tikzpicture}}
	\caption{The striped areas of this picture represent the regions bounded by some examples of singular 1-gons and 2-gons. In picture $a)$ we have an innermost singular 1-gon, while the ones showed in $b)$ and $c)$ are not innermost. In $d)$ we can see a singular 2-gon in the case $M=\mathbb{T}^2$.}\label{fig:examples_1gon_2gon}
	\end{figure}
    The next result contains some necessary conditions for a loop to be taut in its homotopy class.
	\begin{thm}{\bf (\cite[Theorem 4.2]{HasSco1985})}
		\label{thm:hass_scott} Let $\gamma: \TT^1 \to \widehat M$ be a general position loop. If $\gamma$ is not taut, then $\gamma$ has a singular 1-gon or a singular 2-gon. 
	\end{thm}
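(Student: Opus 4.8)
\emph{Proof proposal (sketch).} The plan is to argue by contraposition: I would assume that the general position loop $\gamma$ has \emph{no} singular $1$-gon and \emph{no} singular $2$-gon, and deduce that $\gamma$ is taut, i.e. that $|\gamma|=i([\gamma])$ (the reverse inequality being trivial). The first move is a reduction to the case in which $\widehat M$ carries a complete metric of non-positive curvature. If $\pi_1(\widehat M)$ is trivial the claim is immediate, since then every subarc $\gamma|_{[a,b]}$ with $\gamma(a)=\gamma(b)$ is automatically null-homotopic, so any self-intersection already produces a singular $1$-gon. Otherwise every orientable configuration surface supports such a metric — a complete hyperbolic one when $\widehat M$ is neither a torus nor an open cylinder, which covers all the cases relevant here, $(\R^2,\,N\ge3)$, $(\TT^2,\,N\ge1)$ and genus $\ge1$, and a complete flat one in the two remaining topological types. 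Fixing such a metric, the universal cover $(\widetilde M,\tilde g)$ is a $\mathrm{CAT}(0)$ plane: two distinct complete geodesics meet at most once, and any two properly embedded arcs crossing at least twice cobound an embedded bigon.

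The heart of the argument is to translate the two hypotheses into statements about the lift. Let $\tilde\gamma\colon\R\to\widetilde M$ be a lift of $\gamma$ (precomposed with $\R\to\TT^1$), so that the full preimage of $\gamma(\TT^1)$ is $\bigcup_g g\cdot\tilde\gamma(\R)$ over the deck group, and self-intersection points of $\gamma$ correspond bijectively to deck-orbits of transverse crossings among these translates. \textbf{Absence of a singular $1$-gon} makes every translate injective: a self-crossing $\tilde\gamma(s)=\tilde\gamma(s')$, $s<s'$, projects to a null-homotopic loop $\gamma|_{[s,s']}$, i.e. a singular $1$-gon; in particular a translate by the cyclic stabiliser $\langle[\gamma]\rangle$ cannot cross $\tilde\gamma$ transversally, so genuine crossings occur only between translates in distinct cosets $g\langle[\gamma]\rangle$. \textbf{Absence of a singular $2$-gon} then forces any two such translates to cross at most once: two of them crossing twice bound an innermost bigon $D\subset\widetilde M$ whose boundary is the union of a subarc of $g\cdot\tilde\gamma$ and a subarc of $g'\cdot\tilde\gamma$; these project to two subarcs of $\gamma$ whose union is a loop that is null-homotopic in $\widehat M$ (its lift $\partial D$ bounds the embedded disk $D$), and — after checking that these subarcs may be chosen to correspond to \emph{disjoint} sub-arcs of $\TT^1$ — one obtains a singular $2$-gon, a contradiction.

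Finally I would compare with the geodesic representative $\gamma^{*}$ of the free homotopy class of $\gamma$. Each translate $g\cdot\tilde\gamma$ is now a properly embedded line with two distinct ideal endpoints on $\partial\widetilde M$ (distinct because $[\gamma]\ne1$), and lifting a free homotopy from $\gamma$ to $\gamma^{*}$ keeps $g\cdot\tilde\gamma$ within bounded Hausdorff distance of the corresponding lift of $\gamma^{*}$, so the two share the same pair of endpoints at infinity. Since two properly embedded lines in a disk cross an odd number of times exactly when their endpoint pairs link on the boundary circle, the bound ``at most one crossing'' upgrades to: two translates of $\tilde\gamma$ cross (then exactly once) if and only if the corresponding translates of $\widetilde{\gamma^{*}}$ cross. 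This deck-equivariant bijection between the two crossing sets descends to orbits and yields $|\gamma|=|\gamma^{*}|=i([\gamma])$, so $\gamma$ is taut.

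The step I expect to be the genuine obstacle is the middle one: manufacturing from an innermost bigon in $\widetilde M$ a bona fide singular $2$-gon of $\gamma$ in the sense of Definition~\ref{def:monogon-bigon}, which requires a careful case analysis when the two boundary sub-arcs overlap in $\TT^1$ (one must then instead extract a singular $1$-gon) and when $[\gamma]$ is a proper power of a primitive class. The surrounding ingredients — the $\mathrm{CAT}(0)$ geometry of $\widetilde M$, the linking/parity criterion for crossings of lines in a disk, and the passage from crossings to deck-orbits — are routine once the configuration of lifts of $\gamma$ has been pinned down.
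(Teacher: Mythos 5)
Note first that the paper does not prove this statement: it is quoted, with attribution, as Theorem~4.2 of Hass--Scott \cite{HasSco1985}, so what you are proposing is a proof of the cited theorem itself. Your outline follows the classical route (lift to the universal cover of a complete non-positively curved metric on $\widehat M$, study crossings of the translates of $\tilde\gamma$, and compare with the geodesic representative via linking of endpoints at infinity). As a strategy this is sound, but as written the argument has genuine gaps exactly at the steps that carry the content of the theorem, only some of which you flag.

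Concretely: (1) the very first reduction, ``absence of a singular $1$-gon makes every translate injective,'' is not justified by Definition~\ref{def:monogon-bigon}. If $\tilde\gamma(s)=\tilde\gamma(s'')$ with $s''-s\ge 1$, the null-homotopic loop you obtain downstairs is the sub-arc loop concatenated with one or more full traversals of $\gamma$; the sub-arc loop itself is then homotopic to a nontrivial power of $[\gamma]^{-1}$ (which is nontrivial, since $\pi_1(\widehat M)$ is torsion-free), so neither complementary sub-arc of $\TT^1$ gives a singular $1$-gon. Ruling this wrapping configuration out requires an innermost argument of its own (Hass--Scott work in the annular cover associated with $\langle[\gamma]\rangle$ precisely to handle it). (2) The step you yourself identify as the obstacle --- promoting an innermost bigon between two translates to a singular $2$-gon with \emph{disjoint} parameter sub-arcs, or alternatively extracting a $1$-gon when the arcs overlap --- is where the real work of the theorem lies; without it the contrapositive never closes. (3) The proper-power case is not a peripheral nuisance: if $[\gamma]=\beta^k$ with $\beta$ primitive and $k\ge2$, the translates of $\tilde\gamma$ by elements of $\langle\beta\rangle\setminus\langle[\gamma]\rangle$ share their pair of endpoints at infinity, so the linking/parity criterion detects none of the at least $k-1$ crossings every representative must have (for $\beta$ simple, $i([\beta^k])=k-1$ while the number of linked pairs is $0$, and the geodesic representative is a multiple cover, not in general position); hence the claimed bijection with crossings of lifts of $\gamma^*$, and the identity $|\gamma^*|=i([\gamma])$ you invoke, both fail as stated in this case. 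Since (1)--(3) are missing ideas rather than routine verifications, the proposal does not yet constitute a proof; for the purposes of this paper the statement is legitimately used as a citation.
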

	
	\begin{defn}
		A loop $\gamma$ is called \emph{simple} if it has no self-intersections. 
		A sub-loop of $\gamma$ is said to be {\em innermost} if it is simple. 
		
		In a similar fashion, a singular 1-gon (resp. 2-gon) is {\em innermost} if, regarded as a loop, it does not contain a singular 1-gon or 2-gon. 	 
	\end{defn}
	
	By the well-known \emph{Jordan curve theorem} in $\R^2$, any simple curve $\gamma$ divides the plane in exactly two connected components: one is bounded and contractible, while the other is unbounded and not contractible. If $M$ is an orientable surface, this is no longer true in general. However, if a simple loop $\gamma$ is  contractible and $M \ne \TT^2$, $\gamma$ divides $M$ in exactly two components, one of which is contractible and the other is not. If $M = \TT^2$, $\gamma$ divides it into two disks. 
	
	These last remarks are crucial for our purposes, since we need to refine the possible choices for a homotopy class to work in. Indeed, facing the problem of avoiding collisions, we need to require some \emph{nice behaviours} inside the homotopy class. A rigorous notion of admissible class of loops is given in the next definition and concludes this section. Some remarkable examples are depicted in Figures \ref{fig:examples_admissible_class} and \ref{fig:example_non_admissible_sphere}.
	
	\begin{defn}\label{def:admissible-class}
		Given a non trivial homotopy class
		$[\tau] \in \pi_1(\widehat M)$, let $\gamma$ be a taut representative in $[\tau]$. We say that $[\tau]$ is {\em admissible} if $\gamma$ satisfies one of the following:
		\begin{itemize}
			\item if $M \sim \TT^2$, then any innermost and contractible (in $M$) sub-loop $\tilde \gamma$ of $\gamma$ contains at least two centres in \emph{both} the bounded components of $M\setminus \tilde \gamma$;
			\item if $M \not \sim \TT^2$, then any innermost and contractible (in $M$) sub-loop $\tilde \gamma$ of $\gamma$ contains at least two centres in the contractible component of $M\setminus \tilde\gamma$.
		\end{itemize} 
		Moreover, if $M$ is non compact, we require that there exists a compact subset $K\sset M$ such that any representative $\gamma$ of $[\tau]$ satisfies $\gamma \cap K \ne \emptyset.$
	\end{defn}

    \begin{figure}[t]
    	\centering
    	\resizebox{.8\linewidth}{!}{\input{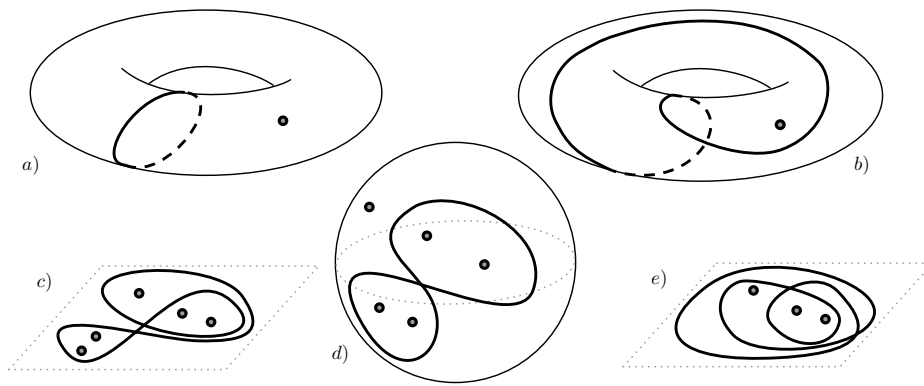}}
    	\caption{In this picture we see some examples of admissible homotopy classes of loops on $M$. Pictures $a)$ and $b)$ show that when $M=\mathbb{T}^2$ it is enough to have just one centre to define an admissible class. In $c)$ and $e)$ we have two examples of admissible classes when $M=\R^2$, while in picture $d)$ we see that at least 4 centres are required to put this definition in the context of a two-dimensional sphere.}\label{fig:examples_admissible_class}	
    \end{figure}
	
	\begin{figure}[t]
	\centering
	\resizebox{.7\linewidth}{!}{\tikzset{every picture/.style={line width=0.75pt}} 

\begin{tikzpicture}[x=0.75pt,y=0.75pt,yscale=-1,xscale=1]
	
	\draw   (97,104.5) .. controls (97,58.38) and (134.38,21) .. (180.5,21) .. controls (226.62,21) and (264,58.38) .. (264,104.5) .. controls (264,150.62) and (226.62,188) .. (180.5,188) .. controls (134.38,188) and (97,150.62) .. (97,104.5) -- cycle ;
	\draw  [color={rgb, 255:red, 155; green, 155; blue, 155 }  ,draw opacity=1 ][dash pattern={on 0.84pt off 2.51pt}] (137.64,81.01) .. controls (176.81,71.7) and (227.75,74.67) .. (251.42,87.64) .. controls (275.09,100.62) and (262.53,118.68) .. (223.36,127.99) .. controls (184.19,137.3) and (133.25,134.33) .. (109.58,121.36) .. controls (85.91,108.38) and (98.47,90.32) .. (137.64,81.01) -- cycle ;
	\draw  [fill={rgb, 255:red, 0; green, 0; blue, 0 }  ,fill opacity=1 ] (149,86.5) .. controls (149,84.01) and (151.01,82) .. (153.5,82) .. controls (155.99,82) and (158,84.01) .. (158,86.5) .. controls (158,88.99) and (155.99,91) .. (153.5,91) .. controls (151.01,91) and (149,88.99) .. (149,86.5) -- cycle ;
	\draw  [fill={rgb, 255:red, 155; green, 155; blue, 155 }  ,fill opacity=1 ] (150.67,86.5) .. controls (150.67,84.94) and (151.94,83.67) .. (153.5,83.67) .. controls (155.06,83.67) and (156.33,84.94) .. (156.33,86.5) .. controls (156.33,88.06) and (155.06,89.33) .. (153.5,89.33) .. controls (151.94,89.33) and (150.67,88.06) .. (150.67,86.5) -- cycle ;
	\draw  [fill={rgb, 255:red, 0; green, 0; blue, 0 }  ,fill opacity=1 ] (199,109.5) .. controls (199,107.01) and (201.01,105) .. (203.5,105) .. controls (205.99,105) and (208,107.01) .. (208,109.5) .. controls (208,111.99) and (205.99,114) .. (203.5,114) .. controls (201.01,114) and (199,111.99) .. (199,109.5) -- cycle ;
	\draw  [fill={rgb, 255:red, 155; green, 155; blue, 155 }  ,fill opacity=1 ] (200.67,109.5) .. controls (200.67,107.94) and (201.94,106.67) .. (203.5,106.67) .. controls (205.06,106.67) and (206.33,107.94) .. (206.33,109.5) .. controls (206.33,111.06) and (205.06,112.33) .. (203.5,112.33) .. controls (201.94,112.33) and (200.67,111.06) .. (200.67,109.5) -- cycle ;
	\draw  [fill={rgb, 255:red, 0; green, 0; blue, 0 }  ,fill opacity=1 ] (133,145.5) .. controls (133,143.01) and (135.01,141) .. (137.5,141) .. controls (139.99,141) and (142,143.01) .. (142,145.5) .. controls (142,147.99) and (139.99,150) .. (137.5,150) .. controls (135.01,150) and (133,147.99) .. (133,145.5) -- cycle ;
	\draw  [fill={rgb, 255:red, 155; green, 155; blue, 155 }  ,fill opacity=1 ] (134.67,145.5) .. controls (134.67,143.94) and (135.94,142.67) .. (137.5,142.67) .. controls (139.06,142.67) and (140.33,143.94) .. (140.33,145.5) .. controls (140.33,147.06) and (139.06,148.33) .. (137.5,148.33) .. controls (135.94,148.33) and (134.67,147.06) .. (134.67,145.5) -- cycle ;
	\draw [line width=1.5]    (101.33,78.37) .. controls (78.33,112.28) and (220.33,158.37) .. (260.33,128.37) ;
	\draw [line width=1.5]  [dash pattern={on 5.63pt off 4.5pt}]  (260.33,128.37) .. controls (271.33,87.37) and (146.33,33.28) .. (114.33,54.28) ;
	\draw [line width=1.5]    (114.33,54.28) .. controls (112.33,57.28) and (102.33,73.37) .. (145.33,107.37) .. controls (188.33,141.37) and (221.33,181.7) .. (246.33,156.7) ;
	\draw [line width=1.5]  [dash pattern={on 5.63pt off 4.5pt}]  (246.33,156.7) .. controls (254.77,140.27) and (239.14,118.62) .. (213.81,101.57) .. controls (182.12,80.23) and (135.24,66.09) .. (101.33,78.37) ;
	\draw   (345,106.5) .. controls (345,60.38) and (382.38,23) .. (428.5,23) .. controls (474.62,23) and (512,60.38) .. (512,106.5) .. controls (512,152.62) and (474.62,190) .. (428.5,190) .. controls (382.38,190) and (345,152.62) .. (345,106.5) -- cycle ;
	\draw  [color={rgb, 255:red, 155; green, 155; blue, 155 }  ,draw opacity=1 ][dash pattern={on 0.84pt off 2.51pt}] (385.64,83.01) .. controls (424.81,73.7) and (475.75,76.67) .. (499.42,89.64) .. controls (523.09,102.62) and (510.53,120.68) .. (471.36,129.99) .. controls (432.19,139.3) and (381.25,136.33) .. (357.58,123.36) .. controls (333.91,110.38) and (346.47,92.32) .. (385.64,83.01) -- cycle ;
	\draw  [fill={rgb, 255:red, 0; green, 0; blue, 0 }  ,fill opacity=1 ] (397,88.5) .. controls (397,86.01) and (399.01,84) .. (401.5,84) .. controls (403.99,84) and (406,86.01) .. (406,88.5) .. controls (406,90.99) and (403.99,93) .. (401.5,93) .. controls (399.01,93) and (397,90.99) .. (397,88.5) -- cycle ;
	\draw  [fill={rgb, 255:red, 155; green, 155; blue, 155 }  ,fill opacity=1 ] (398.67,88.5) .. controls (398.67,86.94) and (399.94,85.67) .. (401.5,85.67) .. controls (403.06,85.67) and (404.33,86.94) .. (404.33,88.5) .. controls (404.33,90.06) and (403.06,91.33) .. (401.5,91.33) .. controls (399.94,91.33) and (398.67,90.06) .. (398.67,88.5) -- cycle ;
	\draw  [fill={rgb, 255:red, 0; green, 0; blue, 0 }  ,fill opacity=1 ] (447,111.5) .. controls (447,109.01) and (449.01,107) .. (451.5,107) .. controls (453.99,107) and (456,109.01) .. (456,111.5) .. controls (456,113.99) and (453.99,116) .. (451.5,116) .. controls (449.01,116) and (447,113.99) .. (447,111.5) -- cycle ;
	\draw  [fill={rgb, 255:red, 155; green, 155; blue, 155 }  ,fill opacity=1 ] (448.67,111.5) .. controls (448.67,109.94) and (449.94,108.67) .. (451.5,108.67) .. controls (453.06,108.67) and (454.33,109.94) .. (454.33,111.5) .. controls (454.33,113.06) and (453.06,114.33) .. (451.5,114.33) .. controls (449.94,114.33) and (448.67,113.06) .. (448.67,111.5) -- cycle ;
	\draw  [fill={rgb, 255:red, 0; green, 0; blue, 0 }  ,fill opacity=1 ] (381,147.5) .. controls (381,145.01) and (383.01,143) .. (385.5,143) .. controls (387.99,143) and (390,145.01) .. (390,147.5) .. controls (390,149.99) and (387.99,152) .. (385.5,152) .. controls (383.01,152) and (381,149.99) .. (381,147.5) -- cycle ;
	\draw  [fill={rgb, 255:red, 155; green, 155; blue, 155 }  ,fill opacity=1 ] (382.67,147.5) .. controls (382.67,145.94) and (383.94,144.67) .. (385.5,144.67) .. controls (387.06,144.67) and (388.33,145.94) .. (388.33,147.5) .. controls (388.33,149.06) and (387.06,150.33) .. (385.5,150.33) .. controls (383.94,150.33) and (382.67,149.06) .. (382.67,147.5) -- cycle ;
	\draw  [line width=1.5]  (401.33,67) .. controls (435.33,40) and (496.33,87) .. (482.33,126) .. controls (468.33,165) and (339.33,111) .. (359.33,141) .. controls (379.33,171) and (402.33,164) .. (395.33,128) .. controls (388.33,97) and (381.33,87) .. (401.33,67) -- cycle ;
	
	\draw (85,170.4) node [anchor=north west][inner sep=0.75pt]    {$a)$};
	\draw (334,173.4) node [anchor=north west][inner sep=0.75pt]    {$b)$};

\end{tikzpicture}}
	\caption{An example of non admissible class in the case $M=\TT^2$. It is not difficult to check that the loops in $a)$ and $b)$ are in the same homotopy class. The one in $b)$ has a sub-loop which encloses a unique centre.}\label{fig:example_non_admissible_sphere}
	\end{figure}
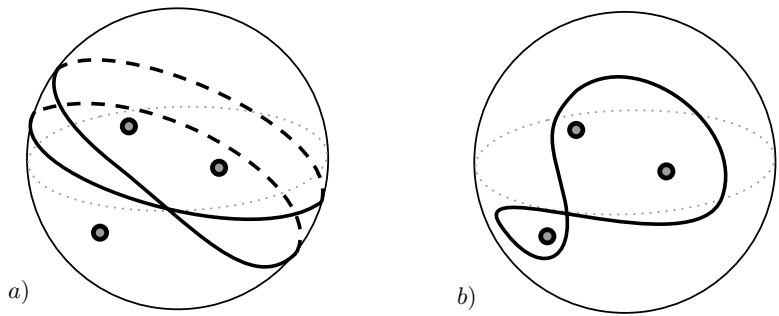
	
	In the next sections, we are going to stress how is it possible to find periodic solutions of the $N$-centres problem on $M$. Then, it will be clear that the admissibility notion introduced in the previous definition is translated into a topological constraint which we impose on such trajectories to avoid collisions with the centres. 
	
	\begin{rem}
		The requirement that $\gamma \cap K \ne \emptyset$ for all $\gamma \in [\tau]$ will be needed in Section \ref{sec:variational_frame} to prove the coercivity of the Maupertuis functional. Intuitively, this means that we are excluding those curves that can be continuously deformed into one of the ends of the manifold $M$. It is worth stressing that this is \emph{not} a purely topological issue. Indeed, the condition of Definition \ref{def:admissible-class} ensures the coercivity of the Maupertuis functional in many situations, but it is not necessary in general (consider for instance the plane $\mathbb{R}^2$ and a cylinder $C$). An example of this phenomenon is given in Figure \ref{fig:remark_coercivity}.
    \end{rem}
    
	\begin{figure}[t]
		\centering
		\resizebox{.8\textwidth}{!}{\tikzset{every picture/.style={line width=0.75pt}} 

\begin{tikzpicture}[x=0.75pt,y=0.75pt,yscale=-1,xscale=1]
	
	\draw  [color={rgb, 255:red, 74; green, 74; blue, 74 }  ,draw opacity=1 ] (42.84,188.41) -- (116.85,158.03) -- (229.16,187.59) -- (155.15,217.97) -- cycle ;
	\draw  [color={rgb, 255:red, 74; green, 74; blue, 74 }  ,draw opacity=1 ] (42.84,107.41) -- (116.85,77.03) -- (229.16,106.59) -- (155.15,136.97) -- cycle ;
	\draw [color={rgb, 255:red, 74; green, 74; blue, 74 }  ,draw opacity=1 ]   (139.33,133) .. controls (135.33,153) and (172.33,163.32) .. (156.33,188) ;
	\draw  [color={rgb, 255:red, 74; green, 74; blue, 74 }  ,draw opacity=1 ] (121.63,96.13) .. controls (132.41,93.85) and (146.49,97.06) .. (153.08,103.31) .. controls (159.66,109.55) and (156.26,116.46) .. (145.48,118.74) .. controls (134.69,121.02) and (120.61,117.81) .. (114.03,111.56) .. controls (107.44,105.32) and (110.85,98.41) .. (121.63,96.13) -- cycle ;
	\draw [color={rgb, 255:red, 74; green, 74; blue, 74 }  ,draw opacity=1 ]   (156.33,188) .. controls (144.33,199) and (103.33,195) .. (114.33,176) .. controls (116.33,151) and (97.33,151) .. (103,123) ;
	\draw [color={rgb, 255:red, 74; green, 74; blue, 74 }  ,draw opacity=1 ] [dash pattern={on 0.84pt off 2.51pt}]  (114.33,176) .. controls (118.33,163) and (167.33,172) .. (156.33,188) ;
	\draw [color={rgb, 255:red, 74; green, 74; blue, 74 }  ,draw opacity=1 ] [dash pattern={on 0.84pt off 2.51pt}]  (103,123) .. controls (105.33,114) and (105.33,117) .. (110.33,104) ;
	\draw [color={rgb, 255:red, 74; green, 74; blue, 74 }  ,draw opacity=1 ] [dash pattern={on 0.84pt off 2.51pt}]  (139.33,133) .. controls (141.67,124) and (151.33,125) .. (156.33,112) ;
	\draw  [fill={rgb, 255:red, 0; green, 0; blue, 0 }  ,fill opacity=1 ] (120,144.33) .. controls (120,143.14) and (120.97,142.17) .. (122.17,142.17) .. controls (123.36,142.17) and (124.33,143.14) .. (124.33,144.33) .. controls (124.33,145.53) and (123.36,146.5) .. (122.17,146.5) .. controls (120.97,146.5) and (120,145.53) .. (120,144.33) -- cycle ;
	\draw  [fill={rgb, 255:red, 155; green, 155; blue, 155 }  ,fill opacity=1 ] (120.8,144.41) .. controls (120.8,143.66) and (121.41,143.05) .. (122.17,143.05) .. controls (122.92,143.05) and (123.53,143.66) .. (123.53,144.41) .. controls (123.53,145.17) and (122.92,145.78) .. (122.17,145.78) .. controls (121.41,145.78) and (120.8,145.17) .. (120.8,144.41) -- cycle ;
	\draw  [fill={rgb, 255:red, 0; green, 0; blue, 0 }  ,fill opacity=1 ] (132,179.33) .. controls (132,178.14) and (132.97,177.17) .. (134.17,177.17) .. controls (135.36,177.17) and (136.33,178.14) .. (136.33,179.33) .. controls (136.33,180.53) and (135.36,181.5) .. (134.17,181.5) .. controls (132.97,181.5) and (132,180.53) .. (132,179.33) -- cycle ;
	\draw  [fill={rgb, 255:red, 155; green, 155; blue, 155 }  ,fill opacity=1 ] (132.8,179.41) .. controls (132.8,178.66) and (133.41,178.05) .. (134.17,178.05) .. controls (134.92,178.05) and (135.53,178.66) .. (135.53,179.41) .. controls (135.53,180.17) and (134.92,180.78) .. (134.17,180.78) .. controls (133.41,180.78) and (132.8,180.17) .. (132.8,179.41) -- cycle ;
	\draw [line width=1.5]    (113.33,166) .. controls (89.33,171.52) and (82.33,184.52) .. (101.33,192.52) .. controls (120.33,200.52) and (155.33,205.52) .. (171.33,195.52) .. controls (187.33,185.52) and (166.83,177) .. (159.33,174) ;
	\draw [line width=1.5]  [dash pattern={on 5.63pt off 4.5pt}]  (113.33,166) .. controls (139.33,163.32) and (145.33,168.32) .. (159.33,174) ;
	\draw  [color={rgb, 255:red, 74; green, 74; blue, 74 }  ,draw opacity=1 ] (432.69,101.14) -- (364.3,209.33) .. controls (362.37,212.38) and (352.56,209.64) .. (342.38,203.21) .. controls (332.2,196.78) and (325.52,189.09) .. (327.45,186.03) -- (395.84,77.85) .. controls (397.77,74.79) and (407.58,77.53) .. (417.76,83.97) .. controls (427.94,90.4) and (434.62,98.09) .. (432.69,101.14) .. controls (430.76,104.2) and (420.95,101.46) .. (410.77,95.02) .. controls (400.59,88.59) and (393.91,80.9) .. (395.84,77.85) ;
	\draw [line width=1.5]    (348.33,153) .. controls (332.33,176) and (376.33,191) .. (382.33,181) ;
	\draw [line width=1.5]  [dash pattern={on 5.63pt off 4.5pt}]  (348.33,153) .. controls (355.33,148) and (389.33,165) .. (382.33,181) ;
	\draw  [fill={rgb, 255:red, 0; green, 0; blue, 0 }  ,fill opacity=1 ] (394,134.33) .. controls (394,133.14) and (394.97,132.17) .. (396.17,132.17) .. controls (397.36,132.17) and (398.33,133.14) .. (398.33,134.33) .. controls (398.33,135.53) and (397.36,136.5) .. (396.17,136.5) .. controls (394.97,136.5) and (394,135.53) .. (394,134.33) -- cycle ;
	\draw  [fill={rgb, 255:red, 155; green, 155; blue, 155 }  ,fill opacity=1 ] (394.8,134.41) .. controls (394.8,133.66) and (395.41,133.05) .. (396.17,133.05) .. controls (396.92,133.05) and (397.53,133.66) .. (397.53,134.41) .. controls (397.53,135.17) and (396.92,135.78) .. (396.17,135.78) .. controls (395.41,135.78) and (394.8,135.17) .. (394.8,134.41) -- cycle ;
	\draw  [fill={rgb, 255:red, 0; green, 0; blue, 0 }  ,fill opacity=1 ] (368,143.33) .. controls (368,142.14) and (368.97,141.17) .. (370.17,141.17) .. controls (371.36,141.17) and (372.33,142.14) .. (372.33,143.33) .. controls (372.33,144.53) and (371.36,145.5) .. (370.17,145.5) .. controls (368.97,145.5) and (368,144.53) .. (368,143.33) -- cycle ;
	\draw  [fill={rgb, 255:red, 155; green, 155; blue, 155 }  ,fill opacity=1 ] (368.8,143.41) .. controls (368.8,142.66) and (369.41,142.05) .. (370.17,142.05) .. controls (370.92,142.05) and (371.53,142.66) .. (371.53,143.41) .. controls (371.53,144.17) and (370.92,144.78) .. (370.17,144.78) .. controls (369.41,144.78) and (368.8,144.17) .. (368.8,143.41) -- cycle ;
	
	\draw (193,138) node [anchor=north west][inner sep=0.75pt]   [align=left] {\footnotesize $ a)$};
	\draw (440,141) node [anchor=north west][inner sep=0.75pt]   [align=left] {\footnotesize $ b)$};

\end{tikzpicture}}
		\caption{In this picture, two topologically indistinguishable situations are shown. On the right, we have a standard cylinder, while on the left two planes are bridged. Both surfaces are endowed with the metric induced by $\R^3$. In $a)$, following the original argument of Gordon (see \cite{Gor1977}), one can show that the Maupertuis functional in the depicted homotopy class is coercive. This fails in $b)$, since one can have representatives with arbitrary large $L^2$ norm and bounded length.}
		\label{fig:remark_coercivity}
	\end{figure}
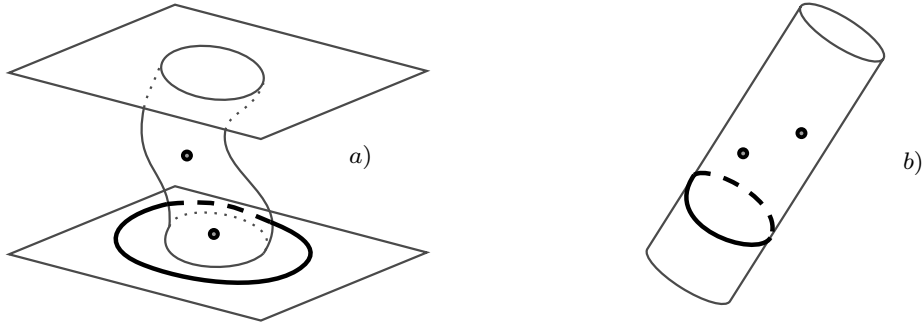

	\section{The variational framework}
	\label{sec:variational_frame}
	This paper relies on some variational techniques which succeed in the task of producing periodic solutions to our problem of study. For this reason, the present section is devoted to fix the abstract variational setting and to recall some basic properties of the {\em Maupertuis functional}. In particular, we are going to state a variational principle which guarantees that critical points of this functional actually corresponds to trajectories which solve the $N$-centre problem.

	\subsection{The Maupertuis functional on surfaces}
	
	As a first step, to introduce a topology in our loops space, we consider an embedding of our orientable surface $M$ in $\R^3$. With a slight abuse of notation, we identify $M$ with its image under this embedding. 
	
	Given $J=[t_0,t_1] \subset \R$ we define
	\begin{equation}\label{eq:def_banach_manifold}
	\mathcal{H}\uguale\left\lbrace \gamma\in H^1(J; \R^3):\,\gamma(t)\in M,\ \forall\,t\in[t_0,t_1]\right\rbrace,
	\end{equation}
	which is a Banach manifold modelled on the Sobolev space $H^1(J;\R^2)$. If $g$ is a Riemannian metric on $M$ (not necessarily the one induced by the ambient space), the tangent space of the manifold $\mathcal{H}$ can be endowed with the scalar product 
	\[
	\langle \xi, \eta \rangle_1 = \int_J \left\langle \Ddt \xi(t), \Ddt \eta(t) \right\rangle_g \,dt + \int_J \left\langle \xi(t), \eta(t)\right\rangle_g\, dt.
	\]
	
	When the surface $M$ is compact, all the metrics $g$ induce equivalent scalar products. If $M$ is non compact, however, this is no longer true. The assumption given in \eqref{hyp:bounds_metric} ensures precisely that this is the case. 
	
	We denote by $\widehat{\mathcal{H}}$ the subset of $\mathcal{H}$ made of those paths which do not intersect the singular set $\mathcal{C}$. In particular, $\widehat{\mathcal{H}}$ is an open dense submanifold of $\mathcal{H}$: its weak $H^1$-closure is exactly $\mathcal{H}$. The boundary $\partial\widehat{\mathcal{H}}$ is given by all those paths whose  preimage of the singular set $\mathcal{C}$ is not empty. The following result is well-known. 
	\begin{lem}{\bf (\cite[Proposition 2.4.1]{Kli1995})}\label{lem:evaluation}
		Let $M$ be a smooth manifold and $J=[t_0,t_1]$. The evaluation map 
		\[
		\begin{aligned}
		ev\colon&\mathcal{H}\longrightarrow M \times M \\
		&\gamma \longmapsto ev(\gamma)\uguale \left(\gamma(t_0), \gamma(t_1)\right)
		\end{aligned}
		\]
		is a submersion. As a consequence, if $N \subset M \times M$ is a submanifold of codimension $k$, $ev^{-1}(N)$ is a submanifold of $\mathcal{H}$ of codimension $k$.
	\end{lem}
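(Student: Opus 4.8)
The plan is to verify the two standard ingredients behind such a statement: first, that the differential $d(ev)_\gamma$ is surjective with a complemented kernel at every $\gamma\in\mathcal{H}$ (which is precisely the meaning of ``$ev$ is a submersion''); and second, that a submersion pulls back submanifolds to submanifolds of the same codimension, which is a black-box application of the implicit function theorem on Banach manifolds, as in \cite{Kli1995}.

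First I would identify the relevant tangent spaces. Using the embedding $M\hookrightarrow\R^3$ fixed above (in the general case, any Whitney embedding $M\hookrightarrow\R^n$) together with a tubular neighbourhood of $M$, the manifold $\mathcal{H}$ is modelled near a point $\gamma$ on the Banach space of $H^1$ sections of the pulled-back bundle $\gamma^*TM$, so that
\[
T_\gamma\mathcal{H}=\left\{\xi\in H^1(J;\R^3):\ \xi(t)\in T_{\gamma(t)}M\ \text{for all }t\in J\right\}.
\]
Since $J$ is a compact interval, the Sobolev embedding $H^1(J)\hookrightarrow C^0(J)$ makes the endpoint evaluations $\xi\mapsto\xi(t_0)$ and $\xi\mapsto\xi(t_1)$ bounded linear maps; in these local coordinates $ev$ reads $\xi\mapsto(\exp_{\gamma(t_0)}\xi(t_0),\exp_{\gamma(t_1)}\xi(t_1))$, whose differential at $\xi=0$ is $d(ev)_\gamma\xi=(\xi(t_0),\xi(t_1))\in T_{\gamma(t_0)}M\times T_{\gamma(t_1)}M$.

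Next I would prove surjectivity of $d(ev)_\gamma$ by exhibiting an explicit bounded right inverse. Let $\pi_q\colon\R^3\to T_qM$ be the $g$-orthogonal projection, so $q\mapsto\pi_q$ is smooth on $M$, and fix a smooth cut-off $s\colon J\to[0,1]$ with $s\equiv0$ near $t_0$ and $s\equiv1$ near $t_1$. Given $(v_0,v_1)\in T_{\gamma(t_0)}M\times T_{\gamma(t_1)}M\subset\R^3\times\R^3$, set
\[
\xi(t)\uguale\pi_{\gamma(t)}\bigl((1-s(t))v_0+s(t)v_1\bigr).
\]
Then $\xi$ is an $H^1$ section of $\gamma^*TM$ (it is $\gamma\in H^1$ composed with smooth maps and multiplied by smooth scalars), and since $v_i\in T_{\gamma(t_i)}M$ one gets $\xi(t_0)=\pi_{\gamma(t_0)}v_0=v_0$ and $\xi(t_1)=v_1$. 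Hence $d(ev)_\gamma$ is onto; its kernel is the closed subspace of sections vanishing at both endpoints, and the linear map $(v_0,v_1)\mapsto\xi$ just built is a bounded right inverse whose (finite-dimensional, hence closed and complemented) image is a topological complement of $\ker d(ev)_\gamma$. Thus $d(ev)_\gamma$ is a split surjection for every $\gamma$, i.e. $ev$ is a submersion.

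For the consequence: if $N\subset M\times M$ has codimension $k$, then near any of its points there is a chart of $M\times M$ in which $N$ is the zero set of a submersion $F$ onto an open subset of $\R^k$; composing, $F\circ ev$ is a submersion defined near $ev^{-1}(N)$ with $(F\circ ev)^{-1}(0)=ev^{-1}(N)$, so the regular-value theorem on the Banach manifold $\mathcal{H}$ gives that $ev^{-1}(N)$ is a split submanifold of codimension $k$. I expect the only point requiring care to be the bookkeeping in the second and third paragraphs: checking that the local model for $\mathcal{H}$ and the formula for $d(ev)_\gamma$ are valid for merely $H^1$ curves (it is the embedding $H^1(J)\hookrightarrow C^0(J)$ that legitimises endpoint evaluation) and that the complement produced is genuinely closed — both routine once one uses that the target of $d(ev)_\gamma$ is finite-dimensional. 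The surjectivity construction is elementary and the pullback step is standard Banach-manifold calculus.
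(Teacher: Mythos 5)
Your argument is correct: the paper does not prove this lemma but simply quotes it from Klingenberg's book, and your proof is the standard one behind that citation — compute $d(ev)_\gamma\xi=(\xi(t_0),\xi(t_1))$ on $H^1$ sections of $\gamma^*TM$ (endpoint evaluation being legitimate by $H^1(J)\hookrightarrow C^0(J)$), exhibit a bounded right inverse via projection and cut-off so that the kernel splits against its finite-dimensional complement, and then pull back $N$ through the submersion by the regular-value/preimage theorem on Banach manifolds. No gaps worth flagging; the only implicit points (composition of an $H^1$ curve with smooth maps stays $H^1$, and closed finite-codimension subspaces are complemented) are routine and you essentially acknowledge them.
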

	For the special case of $N=\{p,q\}$ we get the (closed) submanifold of all $H^1$ paths  starting at the point $p$ and ending at the point $q$. For $N= \Delta$, the diagonal of $M \times M$, we get the space of parametrised loops in $M$.
	
	In analogy with the introduction notation, we will often use $\widehat{M} = M \setminus \mathcal{C}$ to denote the configuration surface (without the centres). Moreover, given $N \sset M \times M$, $\widehat{N}$ denotes $N \cap \left(\widehat M \times \widehat M\right)$. We set 
	\[
	\widehat{\mathcal{H}}_N\uguale ev^{-1}(\widehat N)
	\]
	which is a $k$-codimensional submanifold of $\widehat{\mathcal{H}}$ and $\mathcal{H}_N $ its $H^1$ closure.	Looking for periodic solutions, we are interested in the case $N=\Delta$. Since we want to work with loops in a fixed homotopy class, we introduce the following definition.
	\begin{defn}
		For any $[\tau]\in\pi_1(\widehat M)$ we set
		\[
		\widehat{\mathcal{H}}_\Delta(\tau)\uguale\{\gamma \in \widehat{\mathcal{H}}_\Delta:\,\gamma\in[\tau]\}
		\]
		and $\mathcal{H}_\Delta(\tau)$ its weak $H^1$ closure. 
	\end{defn}
	
	For any fixed energy level $h$ satisfying \eqref{hyp:energy_bound}, let us consider the {\em Maupertuis functional} 
	\begin{equation}
		\label{eq:def_Maupertuis}
		\mathcal{M}_h: \mathcal{H} \to \R\cup\{+\infty\} \quad \textrm{ defined by } \quad 
		\mathcal{M}_h(\gamma)=\int_J |\dot\gamma(t)|_g^2\, dt \cdot \int_J\left[h-V(\gamma(t))\right]\, dt,
	\end{equation}	
	and, if $\gamma$ belongs to a positive level of $\mathcal{M}_h$, we set
	\begin{equation}\label{eq:omega}
		\omega^2\uguale \dfrac{\int_J \left[h-V(\gamma(t))\right]\, dt}{\frac12\int_J |\dot\gamma(t)|_g^2\,dt} \quad \textrm{ and } \quad J_\omega\uguale\left[\dfrac{t_0}{\omega}, \dfrac{t_1}{\omega}\right].
	\end{equation}
	 It is well-known that  the functional $\mathcal{M}_h$ is differentiable over $\widehat{\mathcal{H}}$. Moreover, as a direct consequence of Lemma \ref{lem:evaluation}, if $\widehat N\subset \widehat{M} \times \widehat{M}$ is a $k$-codimensional submanifold, then the restriction of the Maupertuis functional to this set is differentiable as well. Moreover, up to reparametrisation, its critical points are solutions of the fixed energy problem \eqref{eq:Newton}-\eqref{eq:energy}. More precisely, the following result holds true:
	\begin{prop}[Maupertuis principle]\label{prop:maupertuis_principle}
		Let $\gamma \in \widehat{\mathcal{H}}$ be a non-constant critical point of $\mathcal{M}_h$ at a positive level. Then $\gamma$ is a classical solution of  
		\[
		\begin{cases}
			\omega^2 \Ddt \gamma'(t)= -\nabla V(\gamma(t)) & \qquad t \in J \\[4pt]
			\dfrac12|\gamma'(t)|_g^2 +\dfrac{V(\gamma(t))}{\omega^2}= \dfrac{h}{\omega^2} & \qquad t \in J
		\end{cases},
		\]
		with boundary conditions
		\[
		\begin{cases}
			\left(\gamma(t_0),\gamma(t_1)\right) \in N & \\[4pt]
			D_v L(\gamma(t_0), \gamma'(t_0))[\xi_0]= D_v L(\gamma(t_1), \gamma'(t_1))[\xi_1] & \qquad \forall\, (\xi_0, \xi_1) \in T_{\gamma(t_0), \gamma(t_1)} N\\[4pt]
		\end{cases}
		\]
		where $D_vL(x,v)$ is the covariant derivative of the Lagrangian $L$ with respect to $v$. 
		
		In the same way, the function $\psi$ defined by $\psi(t)\uguale\gamma(\omega t)$ is a
		classical  solution of 
		\[
		\begin{cases}
			\Ddt \psi'(t)= -\nabla V(\psi(t)) & \qquad t \in J_\omega \\[4pt]
			\dfrac12|\psi'(t)|_g^2 +V(\psi(t))= h & \qquad t \in J_\omega
		\end{cases},
		\]
		with boundary conditions
		\[
		\begin{cases}
			\big(\psi(t_0/\omega), \psi(t_1/\omega) \big) \in N &\\[4pt]
			D_v L(\psi(t_0/\omega), \psi'(t_0/\omega))[\xi_{0}]= D_v L(\psi(t_1/\omega), \psi'(t_1/\omega))[\xi_{1}] & \qquad \forall\, (\xi_{0}, \xi_{1}) \in T_{\psi(t_0/\omega), \psi(t_1/\omega)} N
		\end{cases}
		\]
	\end{prop}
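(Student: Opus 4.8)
The plan is to follow the classical variational proof of the Maupertuis principle (in the spirit of \cite{AmbCotZel1993}), carefully adapted to curves on the Riemannian surface $M$ and to the mixed boundary condition encoded by $N$. Write $\mathcal{M}_h = A\cdot B$ with $A(\gamma)\uguale\int_J|\dot\gamma(t)|_g^2\,dt$ and $B(\gamma)\uguale\int_J[h-V(\gamma(t))]\,dt$. First I would record that $B(\gamma)>0$ by \eqref{hyp:energy_bound}, and that, since $\gamma$ sits at a positive level of $\mathcal{M}_h$, also $A(\gamma)>0$; hence $\omega^2=2B(\gamma)/A(\gamma)$ from \eqref{eq:omega} is a well-defined positive number. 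Next I would compute the first variation of $\mathcal{M}_h$ at $\gamma$ along a vector field $\xi$ along $\gamma$ (i.e.\ a section of $\gamma^*TM$, which is what the tangent vectors to $\mathcal{H}$ at $\gamma$ reduce to thanks to the constraint $\gamma(t)\in M$). Using the symmetry of the Levi--Civita connection and one integration by parts, this should give
\[
d\mathcal{M}_h(\gamma)[\xi]=2B(\gamma)\Big[\langle\xi,\dot\gamma\rangle_g\Big]_{t_0}^{t_1}-\int_J\Big\langle 2B(\gamma)\,\Ddt\dot\gamma+A(\gamma)\nabla V(\gamma),\,\xi\Big\rangle_g\,dt,
\]
which is legitimate since $\gamma\in\widehat{\mathcal{H}}$, where $\mathcal{M}_h$ is differentiable.

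Then I would exploit this formula twice. Taking $\xi$ compactly supported in the interior of $J$ kills the boundary term and forces the Euler--Lagrange equation $2B(\gamma)\Ddt\dot\gamma=-A(\gamma)\nabla V(\gamma)$, that is $\Ddt\dot\gamma=-\omega^{-2}\nabla V(\gamma)$, which is the first displayed equation. A routine bootstrap --- the right-hand side is continuous because $V\in\mathscr{C}^2$ and $H^1(J)\hookrightarrow\mathscr{C}^0(J)$ --- then upgrades $\gamma$ to a genuine classical ($\mathscr{C}^2$, in fact $\mathscr{C}^3$) solution. For the energy identity I would differentiate $t\mapsto\frac{\omega^2}{2}|\dot\gamma(t)|_g^2+V(\gamma(t))$ along the solution: the Euler--Lagrange equation makes the derivative vanish, so this quantity equals a constant $E$; integrating over $J$ and using $\frac{\omega^2}{2}A(\gamma)=B(\gamma)=\int_J[h-V(\gamma)]\,dt$ gives $E\,|J|=h\,|J|$, hence $E=h$, and dividing by $\omega^2$ yields exactly the second equation. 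Finally, letting $\xi$ range over fields with $(\xi(t_0),\xi(t_1))\in T_{(\gamma(t_0),\gamma(t_1))}N$ and using that $\gamma$ already solves the interior equation, only the boundary term $2B(\gamma)[\langle\xi,\dot\gamma\rangle_g]_{t_0}^{t_1}$ survives in $d\mathcal{M}_h(\gamma)[\xi]=0$; since $B(\gamma)>0$ and $D_vL(x,v)[\cdot]=\langle v,\cdot\rangle_g$ for $L(x,v)=\frac12|v|_g^2-V(x)$, its vanishing is precisely the transversality condition in the statement, while $(\gamma(t_0),\gamma(t_1))\in N$ is built into $\widehat{\mathcal{H}}_N=ev^{-1}(\widehat N)$.

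The second block of the statement, concerning $\psi(t)\uguale\gamma(\omega t)$, is then a pure chain-rule check: $\psi'(t)=\omega\,\dot\gamma(\omega t)$ and $\Ddt\psi'(t)=\omega^2\,\Ddt\dot\gamma(\omega t)=-\nabla V(\psi(t))$, the energy identity rescales to $\frac12|\psi'|_g^2+V(\psi)=h$, the time interval changes from $J$ to $J_\omega$, and the boundary and transversality conditions transform accordingly. The main obstacle, such as it is, is essentially bookkeeping: performing the first-variation computation intrinsically with covariant derivatives on $M$ rather than in the ambient $\R^3$ chart used to define $\mathcal{H}$ (checking that the second fundamental form terms drop because $\xi$ is tangent to $M$), and making sure the regularity bootstrap stays inside the open collisionless set $\widehat{\mathcal{H}}$; the remaining analytic content is elementary.
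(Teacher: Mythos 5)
Your proposal is correct and is essentially the argument the paper itself delegates to the citation: its proof of Proposition \ref{prop:maupertuis_principle} is a one-line reference to \cite{AmbCotZel1993} ``with a slight modification'', and the modifications you carry out (the covariant first variation $d\mathcal{M}_h(\gamma)[\xi]$ with $\omega^2=2B/A$, the natural transversality condition at the boundary manifold $N$, and the time rescaling $\psi(t)=\gamma(\omega t)$) are exactly those needed. The only point to tidy is order of operations: derive the weak Euler--Lagrange relation and the regularity bootstrap first (du Bois--Reymond), and only then perform the integration by parts producing $\Ddt\dot\gamma$, as your closing remark already suggests.
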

	\begin{proof}
		The proof follows from a slight modification of the one given in \cite[Theorem 4.1]{AmbCotZel1993}.
	\end{proof}
	\begin{rem}
		Unfortunately, the manifold $\widehat{\mathcal{H}}_N$ is not the right choice to employ direct variational methods, since it is not weakly closed. Naturally, we can overcome this problem by working in its weak $H^1$ closure $\mathcal{H}_N$ and find minimisers of $\mathcal{M}_h$. However, the price to pay is that some {\em ad hoc} arguments have to be developed in order to get rid of those minimisers which interact with the singular set $\mathcal{C}$. In particular, any loop $\gamma \in \mathcal{H}_\Delta(\tau)$ which does not collide with any of the centre is an interior point of $\mathcal{H}_{\Delta}(\tau)$. Thus any such point is a \emph{true} critical point of $\mathcal{M}_h$ and thus satisfies the conditions of Proposition \ref{prop:maupertuis_principle}.
	\end{rem}

	We recall some useful properties of the Maupertuis functional and we put them in the context of the minimising space $\mathcal{H}_\Delta(\tau)$. As a general remark, it is straightforward to check that $\mathcal{M}_h$
	\begin{enumerate}[label = \emph{\roman*)}]
		\item  is invariant with respect to  {\em affine time rescaling},
		\item  is \emph{not} additive with respect to concatenation, meaning that 
		\[
		\mathcal{M}_h(\gamma_1\# \gamma_2) \neq\mathcal{M}_h(\gamma_1) +\mathcal{M}_h(\gamma_2).
		\]
	\end{enumerate}
	
	Even so, the lack of additivity of the Maupertuis functional is partially recovered by exploiting the following two properties.
	
	\begin{lem}\label{lem:restriction}
		The Maupertuis functional $\mathcal{M}_h$ is super-additive: if $\gamma\in\mathcal{H}_\Delta(\tau)$ and $[a,b]\subset J$, then 
		\[
		\mathcal{M}_h(\gamma)\geq\mathcal{M}_h(\gamma|_{[a,b]})+\mathcal{M}_h(\gamma|_{J\setminus[a,b]}).
		\]
		Moreover, if $\gamma$ is a minimiser of $\mathcal{M}_h$ in $\mathcal{H}_\Delta(\tau)$, then the path $\gamma|_{[a,b]}$ is a minimiser in the space
		\[
		\mathcal{H}_{(\gamma(a),\gamma(b))}(\tau)=\{\eta\in \mathcal{H}:\,\eta(a)=\gamma(a),\ \eta(b)=\gamma(b),\ \eta\#\gamma|_{J\setminus[a,b]}\in\mathcal{H}_\Delta(\tau)\}.
		\] 
	\end{lem}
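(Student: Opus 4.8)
The plan is to prove the two assertions in turn. Super-additivity is a direct computation; the minimality statement is where the only real difficulty lies, namely the \emph{non-additivity} of $\mathcal{M}_h$ with respect to concatenation.

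\emph{Super-additivity.} Writing $A_1=\int_{[a,b]}|\dot\gamma|_g^2$, $A_2=\int_{J\setminus[a,b]}|\dot\gamma|_g^2$, $B_1=\int_{[a,b]}[h-V(\gamma)]$ and $B_2=\int_{J\setminus[a,b]}[h-V(\gamma)]$ (reading $\gamma|_{J\setminus[a,b]}$ as the single arc obtained via the loop identification $\gamma(t_0)=\gamma(t_1)$), one expands the product defining $\mathcal{M}_h$ and gets
\[
\mathcal{M}_h(\gamma)-\mathcal{M}_h(\gamma|_{[a,b]})-\mathcal{M}_h(\gamma|_{J\setminus[a,b]})=(A_1+A_2)(B_1+B_2)-A_1B_1-A_2B_2=A_1B_2+A_2B_1\ge 0,
\]
since $A_i\ge 0$ and, crucially, $B_i\ge 0$ because $h-V>0$ everywhere by \eqref{hyp:energy_bound}. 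No minimality is used here.

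\emph{Minimality of the restriction.} The key computation concerns how $\mathcal{M}_h$ reacts to rescaling the parametrisation of \emph{one} piece of a concatenation. Given arcs $\sigma$ on $[a,b]$ and $\rho$ on $J\setminus[a,b]$ with matching endpoints, let $\sigma^{(\mu)}$ denote the affine reparametrisation that stretches $[a,b]$ by a factor $\mu>0$; since $\mathcal{M}_h$ is invariant under rescaling the whole time interval, one computes
\[
\mathcal{M}_h\bigl(\sigma^{(\mu)}\#\rho\bigr)=\mathcal{M}_h(\sigma)+\mathcal{M}_h(\rho)+\frac{a_1b_2}{\mu}+\mu\,a_2b_1 ,
\]
with $a_1,b_1,a_2,b_2$ the integrals of $|\dot\sigma|_g^2$, $[h-V(\sigma)]$, $|\dot\rho|_g^2$, $[h-V(\rho)]$ over the respective intervals. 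By the arithmetic--geometric mean inequality the last two terms are minimised over $\mu>0$ when $\mu^2a_2b_1=a_1b_2$, and then $\mathcal{M}_h(\sigma^{(\mu)}\#\rho)=(\sqrt{\mathcal{M}_h(\sigma)}+\sqrt{\mathcal{M}_h(\rho)})^2$. Now let $\gamma$ be a minimiser of $\mathcal{M}_h$ in $\mathcal{H}_\Delta(\tau)$, set $\delta:=\gamma|_{J\setminus[a,b]}$, $p:=\mathcal{M}_h(\gamma|_{[a,b]})$ and $q:=\mathcal{M}_h(\delta)$. Each reparametrised loop $(\gamma|_{[a,b]})^{(\mu)}\#\delta$ is again in $\mathcal{H}_\Delta(\tau)$, because an affine reparametrisation of a sub-arc does not affect the image, hence the homotopy class, nor the collisions; so $\mu\mapsto\mathcal{M}_h((\gamma|_{[a,b]})^{(\mu)}\#\delta)$ has a minimum at $\mu=1$, and vanishing of its derivative there forces the relation $a_1b_2=a_2b_1$ for $\gamma$, whence $\mathcal{M}_h(\gamma)=(\sqrt{p}+\sqrt{q})^2$. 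Finally, for any competitor $\eta\in\mathcal{H}_{(\gamma(a),\gamma(b))}(\tau)$, writing $p':=\mathcal{M}_h(\eta)$, the loops $\eta^{(\mu)}\#\delta$ all lie in $\mathcal{H}_\Delta(\tau)$ (since $\eta\#\delta$ does, by definition of that space), so minimality of $\gamma$ and the infimum above yield
\[
(\sqrt{p}+\sqrt{q})^2=\mathcal{M}_h(\gamma)\le\inf_{\mu>0}\mathcal{M}_h\bigl(\eta^{(\mu)}\#\delta\bigr)=(\sqrt{p'}+\sqrt{q})^2 ,
\]
whence $p\le p'$, i.e.\ $\gamma|_{[a,b]}$ minimises $\mathcal{M}_h$ in $\mathcal{H}_{(\gamma(a),\gamma(b))}(\tau)$.

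The only genuine obstacle is the non-additivity recorded in the super-additivity step: naively pasting a better arc into $\gamma$ need not lower $\mathcal{M}_h$, because the two bracketed factors in the definition of $\mathcal{M}_h$ couple the arc to the rest of the loop. The rescaling computation is exactly what resolves this: after optimising the relative parametrisation the cross terms recombine into $2\sqrt{\mathcal{M}_h(\sigma)\mathcal{M}_h(\rho)}$, so that $\sqrt{\mathcal{M}_h}$ behaves additively along concatenations — up to the square root this is nothing but the length in the Jacobi--Maupertuis metric $g_J=(h-V)g$. The points to tidy up in the full write-up are the degenerate cases $p=0$ or $p'=0$ (an arc collapsing to a point, for which the relevant inequality is trivial) and the fact that a competitor $\eta$ may a priori hit $\mathcal{C}$, in which case $p'=+\infty$ and there is nothing to prove.
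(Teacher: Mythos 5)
Your argument is correct, and its engine is the same one the paper uses: both halves of the proof exploit the invariance of each factor of $\mathcal{M}_h$ under affine time-rescaling of the glued-in arc, and your super-additivity computation is literally the paper's. For the minimality statement the paper argues by contradiction, rescaling a hypothetical better competitor $v$ so that its kinetic integral matches that of $\gamma|_{[a,b]}$ (so $\mathcal{M}_h(v)<\mathcal{M}_h(\gamma|_{[a,b]})$ becomes $U_v<U_1$) and then comparing the pasted loop term by term; you instead keep the rescaling parameter $\mu$ explicit, optimise it by AM--GM, and pass through the identity $\inf_{\mu>0}\mathcal{M}_h\bigl(\sigma^{(\mu)}\#\rho\bigr)=\bigl(\sqrt{\mathcal{M}_h(\sigma)}+\sqrt{\mathcal{M}_h(\rho)}\bigr)^2$, i.e.\ additivity of $\sqrt{\mathcal{M}_h}$ (the Jacobi--Maupertuis length) under optimally parametrised concatenation — a slightly longer but more structural route, which also hands you the exact value $\mathcal{M}_h(\gamma)=(\sqrt{p}+\sqrt{q})^2$ for free (in fact the stationarity-at-$\mu=1$ step is not even needed: $\mathcal{M}_h(\gamma)\le\inf_\mu f(\mu)\le f(1)=\mathcal{M}_h(\gamma)$ already forces equality). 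One aside to correct in the write-up: a competitor $\eta$ that meets $\mathcal{C}$ need \emph{not} have $\mathcal{M}_h(\eta)=+\infty$ — for $\alpha_j<2$ collision arcs can have finite Maupertuis action, which is precisely why the paper's obstacle/blow-up analysis exists — but this causes no gap, since nothing in your computation requires $\eta$ to avoid the centres; the inequality $p\le p'$ is obtained for every $\eta$ with finite action and is trivial otherwise.
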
 
	\begin{proof}
		The first assertion follows directly from the definition of $\mathcal{M}_h$. To see that, set $\gamma_1 = \gamma\vert_{J_1}$ and $\gamma_2 = \gamma\vert_{J_2}$, for $J_1=[a,b]$ and $J_2=J \setminus[a,b]$, and define 
		\[
		K_i \uguale \frac{1}{2} \int_{J_i} \vert \dot{\gamma}_i(t)\vert^2_g dt, \quad U_i =  \int_{J_i} h-V(\gamma_i(t))dt.
		\]
		With a slight abuse of notation on $\mathcal{M}_h$, we have
		\[
		\mathcal{M}_h(\gamma) = (K_1+K_2)(U_1+U_2) = K_1U_1+K_2U_2 + K_1U_2+K_2U_1 \ge\mathcal{M}_h(\gamma_1) + \mathcal{M}_h(\gamma_2),
		\]
		since the terms $U_i\ge0$.
		
		Now, assume by contradiction that $\gamma$ is a minimiser in $\mathcal{H}_\Delta(\tau)$ but $\gamma_1$ is not a minimiser in $\mathcal{H}_{(\gamma(a),\gamma(b))}(\tau)$. Take $v$ in the same space with $\mathcal{M}_h(v)<\mathcal{M}_h(\gamma_1)$ and define $K_v$ and $U_v$ as the kinetic and potential integrals of $v$, as before. By time-rescaling invariance, we can assume that $K_1 = K_v$ and the inequality on $\mathcal{M}_h$ turns then into $U_1>U_v$. At this point, if we set $\tilde \gamma = v \#\gamma_2$ we have:
		\[
			\mathcal{M}_h( \gamma) = K_vU_1+K_2U_2 + K_vU_2+K_2U_1 >K_vU_v+K_2U_2 + K_vU_2+K_2U_v = 	\mathcal{M}_h(\tilde \gamma),
		\]
		which is clearly against the minimality of $\gamma$ in $\mathcal{H}_\Delta(\tau)$. 
	\end{proof}

	\subsection{Existence of minimisers}
	
	As a starting point, by means of direct methods in the calculus of variations, we prove the existence of a minimiser of the Maupertuis functional, possibly interacting with the singularity set $\mathcal{C}$. We work in $\mathcal{H}_\Delta(\tau)$, which is a weakly closed subset of $H^1(J,\R^3)$.  Whenever we will use the symbols $\|\cdot\|_2$ and $\|\cdot\|_{H^1}$ we will mean the norms with respect to the ambient Sobolev space, while $\vert\cdot\vert$ stands for the Euclidean norm in $\R^3$. The proof is based on this preliminary result. 
	\begin{lem}
		Assume that $\tau$ is an admissible homotopy class as in \emph{Definition \ref{def:admissible-class}}. Then, the functional $\mathcal{M}_h$ is coercive and weakly lower semi-continuous on $\mathcal{H}_\Delta(\tau)$.
	\end{lem}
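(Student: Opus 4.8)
The plan is to factor the functional and handle the two factors separately. Write $\mathcal M_h(\gamma)=A(\gamma)\,B(\gamma)$, where $A(\gamma)\uguale\int_J|\dot\gamma|_g^2\,dt$ and $B(\gamma)\uguale\int_J[h-V(\gamma)]\,dt$, and set $c_0\uguale h-\sup_M V>0$ by \eqref{hyp:energy_bound}, so that $B(\gamma)\ge c_0|J|>0$ for every $\gamma\in\mathcal H$ (extending $h-V$ to $M$ by the value $+\infty$ at each $c_j$). The two factors will be shown to be nonnegative, weakly lower semicontinuous, and together coercive; existence of a minimiser then follows by the direct method on the weakly closed set $\mathcal H_\Delta(\tau)$.

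\emph{Coercivity.} From the metric comparison \eqref{hyp:bounds_metric} one has $|\dot\gamma|_g^2\ge\Lambda^{-1}|\dot\gamma|^2$, hence
\[
\mathcal M_h(\gamma)\ \ge\ c_0|J|\int_J|\dot\gamma|_g^2\,dt\ \ge\ \frac{c_0|J|}{\Lambda}\,\|\dot\gamma\|_2^2 ,
\]
so on any sublevel set $\{\mathcal M_h\le C\}$ the norm $\|\dot\gamma\|_2$ is bounded. To bound $\|\gamma\|_2$ I would invoke the last clause of Definition \ref{def:admissible-class}: there is a compact $K\sset M$ meeting every representative of $[\tau]$, so each such $\gamma$ has a point $\gamma(t_*)\in K$ and therefore $|\gamma(t)|\le\max_{q\in K}|q|+|J|^{1/2}\|\dot\gamma\|_2$ for all $t\in J$; this bounds $\|\gamma\|_\infty$, hence $\|\gamma\|_{H^1}$, uniformly on the sublevel set. (When $M$ is compact this is automatic.) It is worth stressing that only the compactness clause of admissibility enters here, not the one about centres inside innermost subloops — consistently with the Remark and Figure \ref{fig:remark_coercivity}.

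\emph{Weak lower semicontinuity.} Let $\gamma_n\wconv\gamma$ in $H^1(J;\R^3)$; by compact embedding $\gamma_n\to\gamma$ uniformly on $J$ and $\dot\gamma_n\wconv\dot\gamma$ in $L^2$. For the first factor I would split
\[
A(\gamma_n)=\int_J g_{\gamma}(\dot\gamma_n,\dot\gamma_n)\,dt+\int_J\big(g_{\gamma_n}-g_{\gamma}\big)(\dot\gamma_n,\dot\gamma_n)\,dt ,
\]
where the second integral vanishes in the limit because $g_{\gamma_n}\to g_{\gamma}$ uniformly and $\|\dot\gamma_n\|_2$ is bounded, while the first is the quadratic form of $\dot\gamma_n$ with a fixed positive-definite weight, hence weakly lower semicontinuous on $L^2$ by the classical convex semicontinuity theorem; thus $A(\gamma)\le\liminf_n A(\gamma_n)$. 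For the second factor, the extended function $h-V$ is lower semicontinuous on $M$ (since $V(q)\to-\infty$ as $q\to c_j$) and $\ge c_0>0$, so Fatou's lemma applied to the nonnegative integrands $h-V(\gamma_n(\cdot))$ together with pointwise convergence gives $B(\gamma)\le\liminf_n B(\gamma_n)$. Finally, since $B\ge c_0|J|>0$ and $A(\gamma)>0$ (an element of $\mathcal H_\Delta(\tau)$ with $A(\gamma)=0$ is constant, impossible for nontrivial $[\tau]$), an elementary $\varepsilon$-argument shows the product of two nonnegative weakly lsc functionals, one bounded away from zero, is again weakly lsc, i.e. $\mathcal M_h(\gamma)\le\liminf_n\mathcal M_h(\gamma_n)$.

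The main obstacles are the two places where the singularity of $V$ and the geometry of $M$ interact with the abstract scheme: first, ensuring that the sublevel sets of $\mathcal M_h$ are genuinely $H^1$-bounded, which is exactly why the compact-set clause of Definition \ref{def:admissible-class} is needed and not merely the homotopy type of $[\tau]$; and second, showing that $h-V$, although unbounded near $\mathcal C$, still produces a Fatou-amenable second factor and that the product of the two factors remains lower semicontinuous — a step for which the strict inequality $h>\sup_M V$ in \eqref{hyp:energy_bound} is indispensable.
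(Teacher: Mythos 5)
Your proof is correct and follows essentially the same route as the paper's: coercivity from the bound $\mathcal{M}_h(\gamma)\ge c_0|J|\Lambda^{-1}\|\dot\gamma\|_2^2$ (using \eqref{hyp:energy_bound} and \eqref{hyp:bounds_metric}) combined with the compact-set clause of Definition \ref{def:admissible-class} to control $\|\gamma\|_\infty$ in the non-compact case, and weak lower semicontinuity of the product of the nonnegative lsc kinetic and potential factors, with the degenerate $0\cdot\infty$ case excluded via admissibility (the paper phrases this as a uniform lower bound on $\|\dot\gamma_n\|_2^2$, you as $A(\gamma)>0$ for the limit). The remaining differences (sublevel sets versus divergent sequences, a Euclidean rather than Riemannian-distance estimate to $K$) are purely cosmetic.
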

	\begin{proof}
		We first prove coercivity. Take $(\gamma_n)\sset\mathcal{H}_\Delta(\tau)$ such that $\|\gamma_n\|_{H^1}\to+\infty$; if $\|\dot{\gamma}_n\|_2\to+\infty$ we are done, since by \eqref{hyp:energy_bound} and \eqref{hyp:bounds_metric} we have
		\[
		\mathcal{M}_h(\gamma_n)\ge\frac{1}{2\Lambda}\|\dot{\gamma}_n\|_2^2\int_J\left[h-V(\gamma_n)\right]\ge C_1\|\dot\gamma_n\|_2^2,
		\]
	    for some $C_1>0$. 
	    
		Now assume that $\|\gamma_n\|_2\to+\infty$ and, without loss of generality, that every $\gamma_n$ is defined on the same interval $[0,1]$. Clearly, if the surface $M$ is compact, we have nothing to prove since $\|\gamma_n\|_2$ goes to infinity if and only if $\|\dot \gamma_n\|_2$ does so. If $M$ is non-compact, recalling Definition \ref{def:admissible-class},  without loss of generality we can assume that $\tau$ is such that there exists a compact subset $K$ of $M$ for which  $K \cap \gamma_n \ne \emptyset$ for any $n$. Recall that, for fixed $s\in[0,1]$, the distance between $\gamma_n(s)$ and $K$ is defined as
		\[
		d_g(\gamma_n(s),K) = \min_{p \in K} d_g(\gamma(s),p).
	    \]
        Since $K$ is compact, the minimum is obtained and there exists $p^*\in K$ (depending on s) such that 
        \[
        d_g(\gamma_n(s),K) = d_g(\gamma_n(s),p^*).
        \]
        Moreover $d_g(\gamma_n(s),p^*) \ge \lambda \vert \gamma_n(s)-p^*\vert$ for any $s$ (where $\lambda$ is given in \eqref{hyp:bounds_metric}) and thus:
	    \[	    \max_{s\in[0,1]}d_g(\gamma_n(s),K) \ge \lambda \max_{s\in[0,1]} \vert \gamma(s)-p^*\vert \ge \lambda\vert  \|\gamma_n\|_{\infty} -\vert p^*\vert \vert \ge \lambda \vert \|\gamma_n\|_2-\vert p^*\vert \vert
	    \]
        On the other hand, there exists $t_n \in \gamma_n^{-1}K$ and thus $d_g(\gamma_n(s),K) \le d_g(\gamma_n(s),\gamma_n(t_n))$. Moreover:
        \[
      	d_g(\gamma_n(s),\gamma_n(t_n)) \le \ell(\gamma_n\vert_{[s,t_n]}) \le \int_0^1 \vert \dot{\gamma}_n\vert_g \le C \mathcal{M}_h(\gamma_n).
        \]
        Combining the two inequalities we get the desired conclusion.
        
		Concerning the weakly lower semi-continuity, the product of two positive \emph{$\mathbb{R}-$valued} lower semi-continuous functions is a lower semi-continuous function. In principle $\mathcal{M}_h$ is $\bar{\mathbb{R}}$-valued.  However, the admissibility of the homotopy class $\tau$ guarantees that $\|\dot\gamma_n\|_2^2\geq C>0$ uniformly. Similarly, the potential part is uniformly bounded from below and lower semi-continuous. This is enough to conclude the proof.  
	\end{proof}
	As a direct consequence, we have the following result:
	\begin{prop}\label{prop:existence}
		Assume that $\tau$ is an admissible homotopy class as in \emph{Definition \ref{def:admissible-class}}. For every $h$ satisfying \eqref{hyp:energy_bound}, the functional $\mathcal{M}_h$ attains its minimum on  $\mathcal{H}_\Delta(\tau)$.
	\end{prop}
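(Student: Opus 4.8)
The plan is to run the direct method of the calculus of variations, the previous lemma supplying all the nontrivial analytic input.

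First I would check that the infimum $m \uguale \inf_{\mathcal{H}_\Delta(\tau)}\mathcal{M}_h$ is a finite, nonnegative real number. Nonnegativity is immediate: by \eqref{hyp:energy_bound} one has $h - V(q) \ge h - \sup_M V > 0$ for every $q \in M$, so both factors in \eqref{eq:def_Maupertuis} are nonnegative and $\mathcal{M}_h \ge 0$ on $\mathcal{H}$. Finiteness of $m$ follows by exhibiting a competitor: an admissible class is in particular nontrivial, hence it possesses a smooth representative $\gamma_0 \colon [0,1] \to \widehat M$ avoiding the finite set $\mathcal{C}$ (and meeting the prescribed compact set $K$, if $M$ is non-compact); for such a loop $\dot\gamma_0 \in L^2$ and $V \circ \gamma_0$ is bounded, so $\gamma_0 \in \mathcal{H}_\Delta(\tau)$ and $\mathcal{M}_h(\gamma_0) < +\infty$.

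Next I would take a minimising sequence $(\gamma_n)\sset\mathcal{H}_\Delta(\tau)$ with $\mathcal{M}_h(\gamma_n) \to m$. Exploiting the invariance of $\mathcal{M}_h$ under affine time rescaling, I can assume that all the $\gamma_n$ are parametrised on the common interval $J = [0,1]$. Since the values $\mathcal{M}_h(\gamma_n)$ are bounded and $\mathcal{M}_h$ is coercive on $\mathcal{H}_\Delta(\tau)$ by the previous lemma, $(\gamma_n)$ is bounded in $H^1(J;\R^3)$; as $H^1$ is a Hilbert space, after passing to a subsequence we get $\gamma_n \wconv \bar\gamma$ weakly in $H^1$ for some $\bar\gamma$. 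Since $\mathcal{H}_\Delta(\tau)$ is weakly closed in $H^1(J;\R^3)$ --- being by construction the weak $H^1$-closure of $\widehat{\mathcal{H}}_\Delta(\tau)$ --- we have $\bar\gamma \in \mathcal{H}_\Delta(\tau)$. Finally, by the weak lower semi-continuity of $\mathcal{M}_h$ on $\mathcal{H}_\Delta(\tau)$ (again from the lemma),
\[
m \le \mathcal{M}_h(\bar\gamma) \le \liminf_{n \to \infty} \mathcal{M}_h(\gamma_n) = m,
\]
so that $\mathcal{M}_h(\bar\gamma) = m$ and the minimum is attained at $\bar\gamma$. Moreover, $m > 0$, thanks to the uniform positive lower bound on the Dirichlet integral over $\mathcal{H}_\Delta(\tau)$ used in the proof of the lemma together with $h - V > 0$; hence $\bar\gamma$ lies at a positive level of $\mathcal{M}_h$, which is the form in which it will be combined with Proposition \ref{prop:maupertuis_principle}.

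The main obstacle has in fact already been handled inside the previous lemma: it is the coercivity of $\mathcal{M}_h$ in the non-compact case, which is delicate and rests crucially on the admissibility requirement $\gamma \cap K \ne \emptyset$, together with the uniform bound $\|\dot\gamma\|_2^2 \ge C > 0$ that prevents $\mathcal{M}_h$ from being genuinely $\bar{\R}$-valued and excludes degeneration of a minimising sequence to a constant. Granting those, the remaining points that deserve a moment's care are only the normalisation of the time interval via rescaling invariance and the (implicitly used) fact that weak $H^1$ convergence on a bounded interval entails uniform convergence --- which is precisely what makes working in the weak closure $\mathcal{H}_\Delta(\tau)$ unproblematic.
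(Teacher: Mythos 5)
Your argument is exactly the one the paper intends: Proposition \ref{prop:existence} is stated there as a direct consequence of the coercivity and weak lower semi-continuity lemma, i.e.\ the standard direct method on the weakly closed set $\mathcal{H}_\Delta(\tau)$, which is precisely what you carry out (with the extra, correct, observations on finiteness of the infimum and positivity of the minimal level). No gaps; your proof matches the paper's route.
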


	\subsection{Properties of the minimisers}
	In this section we collect some qualitative properties of minimisers of the Maupertuis functional. First of all, we deal with points in the boundary of $\mathcal{H}_\Delta(\tau)$. 
	
	\begin{prop}[Collisions are isolated]
		\label{prop:collision_isolated}
		Assume that $\gamma \in \mathcal{H}_\Delta(\tau)$ is a minimiser of the Maupertuis functional \eqref{eq:def_Maupertuis} and has collisions. Then, the set
		\[
		\mathcal{I}_c = \{t\in J : \gamma(t) = c_j,\ j \in \{1,\dots, N \}\}
		\]
		is a finite set.
		\begin{proof}
			Assume by contradiction that $\vert \mathcal{I}_c\vert = \infty$. Since $\mathcal{M}_h(\gamma)<+\infty$, $\mathcal{I}_c$ has measure zero and does not contain any proper subinterval. Pick a strictly monotone sequence $(t_n)\sset\mathbb{N}$ contained in  $\mathcal{I}_c$ and label the corresponding collisions by $c_n$. As a consequence of \eqref{hyp:bounds_metric}, \eqref{hyp:energy_bound} and since $\mathcal{M}_h$ is finite on $\gamma$, we see that
			\[
			\sum_n d_g (c_{n},c_{n+1}) \le\sum_n \int_{t_n}^{t_{n+1}} \vert \dot \gamma \vert_g^2 \le \int_J\vert \dot \gamma \vert_g^2 \le C \mathcal{M}_h(\gamma) < +\infty,
			\]
			and so the sequence $(c_n)$ is definitively constant. 
			
			Since $\gamma$ is uniformly continuous, any accumulation point of $\mathcal{I}_c$ has a neighbourhood which contains only collisions with a fixed centre.
			
		    Arguing similarly and applying Proposition \ref{lem:restriction}, we see that if $s_0$ is an accumulation point of $\mathcal{I}_c$ and $(s_n)$ a strictly increasing sequence converging to it, we have that:
		    \[
		    \sum_n \mathcal{M}_h(\gamma\vert_{[s_n,s_{n+1}]})\le \mathcal{M}_h (\gamma)\ \text{ which implies }\ \mathcal{M}_h(\gamma\vert_{[s_n,s_{n+1}]}) \to 0.
		    \]		
		    Moreover, the image of the curves $\gamma\vert_{[s_n,s_{n+1}]}$ is arbitrarily close to the collision centre. Recall that since $\gamma$ is a minimiser, we must have that $\mathcal{M}_h(\gamma\vert_{[s_n,s_{n+1}]})$ are definitely all equal (and thus all equal to $0$) since we can exchange the segments $\gamma\vert_{[s_n,s_{n+1}]}$. Thus $\gamma$ is constant in a neighbourhood of $s_0$, which contradicts $\mathcal{M}_h(\gamma)<+\infty$.	    
		\end{proof} 
	\end{prop}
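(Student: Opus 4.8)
I would argue by contradiction: if $\mathcal{I}_c$ were infinite, then minimality would force $\gamma$ to be constant (and equal to a centre) on a whole subinterval of $J$, which is incompatible with $\mathcal{M}_h(\gamma)<+\infty$. First I would record the consequences of finite action. Since $\gamma$ minimises $\mathcal{M}_h$ over $\mathcal{H}_\Delta(\tau)$, which contains collision-free admissible loops, Proposition~\ref{prop:existence} gives $\mathcal{M}_h(\gamma)<+\infty$; hence $\gamma\in H^1(J;\R^3)$ — in particular $\gamma$ is continuous, so $\mathcal{I}_c=\gamma^{-1}(\mathcal{C})$ is closed. Using \eqref{hyp:bounds_metric} and Cauchy--Schwarz, $\gamma$ has finite length, $\ell(\gamma)\le |J|^{1/2}\bigl(\int_J|\dot\gamma|_g^2\,dt\bigr)^{1/2}<+\infty$, and since $\gamma$ is non-constant, $\int_J[h-V(\gamma)]\,dt<+\infty$; because $V(q)\to-\infty$ as $q\to c_j$, this forces $\mathcal{I}_c$ to be Lebesgue-null and to contain no subinterval. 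Assuming $\mathcal{I}_c$ infinite, being a closed subset of the compact set $J$ it has an accumulation point $s_0$, and I set $c:=\gamma(s_0)\in\mathcal{C}$.

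\textbf{The clustering arcs collapse.} Next I would pick a strictly monotone sequence $(s_n)\subset\mathcal{I}_c$ with $s_n\to s_0$ and set $P_n:=\gamma|_{[s_n,s_{n+1}]}$. From $d_g(\gamma(s_n),\gamma(s_{n+1}))\le\ell(P_n)$ and $\sum_n\ell(P_n)\le\ell(\gamma)<+\infty$ one gets $d_g(\gamma(s_n),\gamma(s_{n+1}))\to0$; as the centres are finitely many and pairwise distinct, $\gamma(s_n)=c$ for all large $n$, and by uniform continuity $P_n\subset B_r(c)$ for $n$ large. Super-additivity of $\mathcal{M}_h$ (Lemma~\ref{lem:restriction}) gives $\sum_n\mathcal{M}_h(P_n)\le\mathcal{M}_h(\gamma)<+\infty$, hence $\mathcal{M}_h(P_n)\to0$; re-parametrising each $P_n$ on $[0,1]$ and using $\int_0^1[h-V(P_n)]\,dt\ge h-\sup_M V>0$ by \eqref{hyp:energy_bound}, I would deduce $\int_0^1|\dot P_n|_g^2\,dt\to0$, so the arcs $P_n$ shrink uniformly to $c$.

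\textbf{Minimality kills the arcs.} By the second part of Lemma~\ref{lem:restriction}, each $P_n$ is a minimiser of $\mathcal{M}_h$ in the constrained class $\mathcal{H}_{(\gamma(s_n),\gamma(s_{n+1}))}(\tau)$ of paths from $c$ to $c$ whose concatenation with $\gamma|_{J\setminus[s_n,s_{n+1}]}$ stays in $\mathcal{H}_\Delta(\tau)$. For $n$ large these arcs all share endpoints and are supported in the topological disk $B_r(c)$, so they may be interchanged inside $\gamma$, or replaced by one another, without leaving $\mathcal{H}_\Delta(\tau)$; minimality of $\gamma$ then forces $\mathcal{M}_h(P_n)$ to equal a single value $\mu\ge0$ for all large $n$. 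Since infinitely many copies of $\mu$ sum to at most $\mathcal{M}_h(\gamma)<+\infty$, we must have $\mu=0$. But $\mathcal{M}_h(P_n)=0$ together with $\int_{s_n}^{s_{n+1}}[h-V(\gamma)]\,dt\ge(h-\sup_M V)(s_{n+1}-s_n)>0$ forces $\int_{s_n}^{s_{n+1}}|\dot\gamma|_g^2\,dt=0$, so $\gamma\equiv c$ on each $[s_n,s_{n+1}]$, hence on a whole subinterval accumulating at $s_0$. This contradicts the first step — either directly, since $\mathcal{I}_c$ contains no subinterval, or because $\gamma\equiv c$ on an interval makes $\int_J[h-V(\gamma)]=+\infty$ — and finiteness of $\mathcal{I}_c$ follows.

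\textbf{Main obstacle.} The delicate point is the interchangeability claim in the last step: a priori, rearranging two loops based at the puncture $c$ inside $\gamma$ may alter its free homotopy class, since $\pi_1(\widehat M)$ is not abelian. The resolution rests on the fact that once $n$ is large each $P_n$ lies in the disk $B_r(c)$, and that for $\gamma$ to belong to the weak closure of a \emph{fixed} class $[\tau]$ these infinitely many shrinking arcs cannot each carry a nontrivial winding around $c$ (infinitely many such windings could not accumulate); hence for large $n$ the $P_n$ are homotopically inessential and freely interchangeable. Making this rigorous is exactly the local-near-a-centre analysis that the blow-up/obstacle technique of Section~\ref{sec:obstacle} is designed for, together with the elementary $1$-gon/$2$-gon moves of Theorem~\ref{thm:hass_scott}.
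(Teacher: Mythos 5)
Your argument is correct and is essentially the paper's own proof: finite action excludes any interval of collision times, super-additivity (Lemma \ref{lem:restriction}) forces the Maupertuis values of the arcs between consecutive collisions near an accumulation point to tend to zero, and minimality together with the interchangeability of these arcs makes those values all equal, hence zero, so $\gamma$ would be constant on such an arc, contradicting $\mathcal{M}_h(\gamma)<+\infty$. The exchange step you single out as the main obstacle is precisely the point the paper dispatches with ``since we can exchange the segments'' and no further justification, so your closing discussion only spells out a subtlety that the published proof leaves implicit.
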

	
	\begin{rmk}[Lagrange-Jacobi inequality]\label{rem:lagrange_jacobi}
		Usually, Proposition \ref{prop:collision_isolated} is proved using the convexity of the function $d_g(c_j,\gamma(t))^2$ along collision solutions (see for instance \cite[Lemma 4.25]{SoaTer2012} for the case $M=\R^2$). In particular, it applies also to critical points. The proof given above has a different flavour and does not rely on the explicit form of the potential, just on the structure of the variations space and on minimality. In any case, a version of the classical Lagrange-Jacobi inequality can be easily proved in this context too. To lighten the notation, assume that $\gamma$ collides with a centre $c$ and that the homogeneity degree of $V$ close to $c$ is $\al$. By direct computation one gets:
		\[
		\begin{aligned}
			\frac12\frac{d^2}{dt^2} d_g(c,\gamma(t))^2 &= g(X,\dot\gamma)^2+ d_g(c,\gamma)\left(g \left(\Ddt X,\dot\gamma\right)+g\left(X,\Ddt\dot\gamma\right)\right) \\
			&=g\left(X,\dot\gamma\right)^2 + d_g(c,\gamma)\left(g\left(\Ddt X,\dot\gamma\right)-g\left(X,\nabla V(\gamma)\right)\right),
		\end{aligned}
	    \]
		where $X$ stands for the velocity of the unit speed geodesic joining $c$ and $\gamma(t)$. Let us consider the term $g\left(\Ddt X,\dot{\gamma}\right)$. Since integral curves of $X$ are geodesics, we have that $\nabla_X X =0$. We can extend $X$ to a orthonormal frame $\{X,Y\}$ on a small annulus around $c$. It follows that $k = g(\nabla_YX,Y)$ is the \emph{mean curvature} of the Riemannian balls around $c$. Close to $c$, we have the asymptotic relation $k(p) \sim d_g(c,p)^{-1} + O(d_g(c,p))$. It follows that: 
		\[
		\frac{D}{dt} X = g(\dot{\gamma},Y)\nabla_Y X \vert_\gamma= g(\dot{\gamma},Y) k(\gamma) Y\vert_\gamma,
		\]
		from which we see that:
		\[
		d_g(c,\gamma) g \left( \Ddt X,\dot\gamma\right) = d_g(c,\gamma) \, k(\gamma)\,  g(Y,\dot \gamma)^2 \sim  g(Y,\dot \gamma)^2.
		\]
		Now, recalling the definition of the potential $V$ (see \eqref{eq:potential}), close to $c$ we can write $V$ as a singular plus a regular part $U$ as follows:
		\[
		V(\gamma(t)) = -\frac{m_i}{\alpha d_g(c,\gamma(t))^{\alpha}} + U(\gamma(t)).
		\]
		Computing the gradient of $V$ and substituting in the equation above, it yields:
		\begin{equation}
			\label{eq:lagrange_jacobi_exponential_coordinates}
			\begin{aligned}
			\frac12\frac{d^2}{dt^2} d_g(c,\gamma(t))^2 &= g(\dot{\gamma},X)^2 - \frac{m_i}{d_g(c,\gamma(t))^{\alpha}} + d_g(c,\gamma(t))\left( k(\gamma)\,  g(Y,\dot \gamma)^2+g( \nabla U(\gamma(t)), X)\right)\\
			&= \vert\dot\gamma(t)\vert_g^2- \frac{m_i}{d_g(c,\gamma(t))^{\alpha}} +o(1) =\frac{2-\alpha}{\alpha} \frac{m_i}{d_g(c,\gamma(t))^\alpha} +O(1),
			\end{aligned}
		\end{equation} 
		where, in the last equality, we have used the conservation of energy \eqref{eq:energy}. This shows that $\frac{d^2}{dt^2} d_g(c,\gamma(t))$ blows up to $+\infty$ as $\gamma(t)$ approaches $c$, providing strict convexity.
	\end{rmk}
	
	It is well known that outside the collision set $\mathcal{I}_c$, minimisers of the Maupertuis functional \eqref{eq:def_Maupertuis} are $\mathscr C^2$ and satisfy a non linear system of ODEs. The following proposition holds as an application of Proposition \ref{prop:maupertuis_principle}.
	\begin{prop}[Regularity outside the collision set]
		\label{prop:regularity_outside_collision_set}
		Assume that $\gamma$ is a minimiser of \eqref{eq:def_Maupertuis}. For every subinterval $I\sset J\setminus \mathcal{I}_c$, the restriction $\gamma|_I$ belongs to $\mathscr C^2(I,\widehat{M})$. Moreover, it is a re-parametrisation of a solution of the following system:
		\begin{equation}
			\label{eq:euler_lagrage_equations}
			\frac{D \dot \eta}{dt} = -\nabla V(\eta), \quad \eta(t) = \gamma(\omega t),
		\end{equation}
		where the parameter $\omega>0$ is determined as in \eqref{eq:omega}.
	\end{prop}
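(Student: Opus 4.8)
The plan is to \emph{localise}: away from the collision times the restrictions of $\gamma$ are minimisers with fixed endpoints, hence genuine critical points of $\mathcal{M}_h$, so the Maupertuis principle (Proposition~\ref{prop:maupertuis_principle}) applies directly. First I would dispose of the case in which $\gamma$ has no collisions at all: then $\gamma$ is an interior point of $\mathcal{H}_\Delta(\tau)$, hence a true critical point of $\mathcal{M}_h$, and Proposition~\ref{prop:maupertuis_principle} with $N=\Delta$ gives the statement on all of $J$. In general, Proposition~\ref{prop:collision_isolated} makes $\mathcal{I}_c$ a finite set, so $J\setminus\mathcal{I}_c$ is a finite union of open intervals, and it is enough to prove that $\gamma\in\mathscr{C}^2([a,b],\widehat M)$ and is a re-parametrisation of a solution of \eqref{eq:euler_lagrage_equations} for every closed interval $[a,b]\sset J\setminus\mathcal{I}_c$.

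Before that I would record an observation that pins down the rescaling parameter. Given any closed $[a,b]\sset J$, decompose $J$ into $[a,b]$ together with the (at most two) intervals forming $J\setminus[a,b]$; call them $I_0,\dots,I_m$ and put $K_j\uguale\tfrac12\int_{I_j}|\dot\gamma|_g^2\,dt$ and $U_j\uguale\int_{I_j}[h-V(\gamma)]\,dt$, with $U_j>0$ by \eqref{hyp:energy_bound}. Reparametrising each piece $\gamma|_{I_j}$ affinely and independently one realises, for any $\lambda_0,\dots,\lambda_m>0$, a competitor in $\mathcal{H}_\Delta(\tau)$ whose kinetic and potential integrals over the $j$-th piece are $\lambda_jK_j$ and $\lambda_j^{-1}U_j$; for it $\mathcal{M}_h$ equals $\bigl(\sum_j\lambda_jK_j\bigr)\bigl(\sum_j\lambda_j^{-1}U_j\bigr)$. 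Since $\gamma$ is a minimiser, $(\lambda_0,\dots,\lambda_m)=(1,\dots,1)$ is a critical point of this function, which forces $K_j/U_j$ to be the same for every $j$; summing numerators and denominators and invoking \eqref{eq:omega}, this common value is $\omega^{-2}$. In particular $\tfrac12\int_a^b|\dot\gamma|_g^2\,dt=\omega^{-2}\int_a^b[h-V(\gamma)]\,dt>0$, so $\mathcal{M}_h(\gamma|_{[a,b]})>0$.

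Then I would fix a closed $[a,b]\sset J\setminus\mathcal{I}_c$, so that $\gamma([a,b])$ is a compact subset of $\widehat M$ and $\gamma|_{[a,b]}$ is collision-free. By Lemma~\ref{lem:restriction}, $\gamma|_{[a,b]}$ minimises $\mathcal{M}_h$ over $\mathcal{H}_{(\gamma(a),\gamma(b))}(\tau)$; being collision-free, it is an interior point of this set and lies on the Banach submanifold of $\widehat{\mathcal{H}}$ furnished by Lemma~\ref{lem:evaluation} for $N=\{(\gamma(a),\gamma(b))\}$, on which $\mathcal{M}_h$ is differentiable. Hence $\gamma|_{[a,b]}$ is a true critical point of $\mathcal{M}_h$ under these endpoint constraints, and it sits at a positive level by the previous paragraph; Proposition~\ref{prop:maupertuis_principle} with $N=\{(\gamma(a),\gamma(b))\}$ then gives $\gamma|_{[a,b]}\in\mathscr{C}^2([a,b],\widehat M)$ and that $\psi(t)=\gamma(\omega_{[a,b]}t)$ solves $\tfrac{D\dot\psi}{dt}=-\nabla V(\psi)$ with $\tfrac12|\dot\psi|_g^2+V(\psi)=h$, where $\omega_{[a,b]}$ is formula~\eqref{eq:omega} evaluated on $\gamma|_{[a,b]}$. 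The energy relation gives $\tfrac12\,\omega_{[a,b]}^2\,|\dot\gamma(t)|_g^2=h-V(\gamma(t))$ on $[a,b]$; integrating and comparing with the previous paragraph forces $\omega_{[a,b]}=\omega$. Letting $[a,b]$ exhaust the finitely many open intervals of $J\setminus\mathcal{I}_c$ yields $\mathscr{C}^2$ regularity there, and shows that on each of them $\gamma$ is a re-parametrisation, with the single parameter $\omega$, of a solution of \eqref{eq:euler_lagrage_equations}, as required.

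The hard part will be the middle step together with the identification $\omega_{[a,b]}=\omega$: recognising that minimality of $\gamma$ under reparametrisations of its pieces enforces the pointwise energy conservation $\tfrac12\omega^2|\dot\gamma|_g^2=h-V(\gamma)$ with $\omega$ \emph{exactly} that of \eqref{eq:omega} (this is also what legitimises the ``positive level'' hypothesis in the application of Proposition~\ref{prop:maupertuis_principle}). Everything else is a direct assembly of Proposition~\ref{prop:collision_isolated}, Lemma~\ref{lem:restriction}, Lemma~\ref{lem:evaluation} and Proposition~\ref{prop:maupertuis_principle}.
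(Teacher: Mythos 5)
Your proposal is correct and follows the route the paper intends: the paper states this proposition without proof, presenting it as a direct application of Proposition \ref{prop:maupertuis_principle}, and your localisation via Lemma \ref{lem:restriction} together with the observation that a collision-free restriction is an interior point of the fixed-endpoint space (cf.\ the remark following Proposition \ref{prop:maupertuis_principle}) is precisely the argument being invoked. Your preliminary piecewise-affine reparametrisation step, which guarantees the positive level and forces $\omega_{[a,b]}=\omega$, is just an integrated version of the paper's own time-reparametrisation argument in Proposition \ref{prop:conservation_energy_through_collisions}, so the two treatments are essentially the same.
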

	
	Note that \eqref{eq:euler_lagrage_equations} is a Lagrangian system. Thus the associated Lagrangian $\frac{1}{2}\vert \dot \eta \vert -V(\eta)$ is locally constant on $J \setminus \mathcal{I}_c$. It is known that the total energy is conserved through collisions too, as the following shows.
	
	\begin{prop}[Conservation of energy through collisions]
		\label{prop:conservation_energy_through_collisions}
		Assume that $\gamma$ is a minimiser of \eqref{eq:def_Maupertuis} on $\mathcal{H}_\Delta(\tau)$. Then, we have that
		\[
		\frac12|\dot{\gamma}(t)|_g^2+V(\gamma(t))=h,\quad\text{for a.e.}\ t\in J.
		\]
		\begin{proof}
			Even if $\gamma$ may be on the boundary of $\mathcal{H}_\Delta(\tau)$, we can still exploit extremality of $\gamma$ with respect to time reparametrisations. Take $\varphi\in\mathscr{C}_c^\infty(\mathring{J})$ and define the function $f_\lambda(t)=t+\lambda\varphi(t)$. It is easy to see that if $|\lambda|$ is sufficiently small, say $\lambda\in[-\delta,\delta]$, then $f_\lambda$ is a change of variable on the interval $J$. Now, let us define the loop 
			\[
			\gamma_\lambda(t)=\gamma(f_\lambda(t))=\gamma(t+\lambda\varphi(t)).
			\]
			It clearly belongs to the space $\mathcal{H}_\Delta(\tau)$ and thus $\mathcal{M}_h(\gamma)\leq\mathcal{M}_h(\gamma_\lambda)$, for every $\lambda\in[-\delta,\delta]$. Defining the new variable $s=f_\lambda(t)=t+\lambda\varphi(t)$, we can write		
			\[
			\begin{aligned}
				\mathcal{M}_h(\gamma_\lambda)&=\frac12\int_J\left\lvert\frac{d}{dt}\gamma_\lambda(t)\right\rvert_g^2\,dt\int_J\left[h-V(\gamma_\lambda(t))\right]\,dt\\&=\frac12\int_J|\dot{\gamma}(s)|_g^2(1+\lambda\dot{\varphi}(f_\lambda^{-1}(s)))\,ds\int_J\frac{h-V(\gamma(s))}{1+\lambda\dot{\varphi}(f_\lambda^{-1}(s))}\,ds.
			\end{aligned}
	     	\]
			On the other hand, we can write the time variable $t$ as an implicit function of $s$ in this way
			\[
			t(s)=f_\lambda^{-1}(s)=s-\lambda\varphi(f_\lambda^{-1}(s));
			\]
			moreover, we can provide the following estimate 
			\[
			|f_\lambda^{-1}(s)-s|=|s-\lambda\varphi(f_\lambda^{-1}(s))-s|\leq|\lambda|\|\varphi\|_{\infty}\to 0^+,\quad\text{as}\ \lambda\to 0
			\]
			so that $f_\lambda^{-1}(s)$ uniformly converge to $s$ in $J$ as $\lambda\to 0$. For this reason, the minimality condition can be written as follows
			\[
			\begin{aligned}
				0&=\frac{d}{d\lambda}\mathcal{M}_h(\gamma_\lambda)\Big\rvert_{\lambda=0} \\
				&=\frac12\int_J|\dot{\gamma}(s)|_g^2\dot\varphi(s)\,ds\int_J\left[h-V(\gamma(s))\right]\,ds-\frac12\int_J|\dot{\gamma}(s)|_g^2\,ds\int_J\left[h-V(\gamma(s))\right]\dot\varphi(s)\,ds \\
				&=\int_J\left\lbrace\frac12\left(\int_J\left[h-V(\gamma(s))\right]\,ds\right)|\dot{\gamma}(s)|_g^2-\left(\frac12\int_J|\dot{\gamma}(s)|_g^2\,ds\right)\left[h-V(\gamma(s))\right]\right\rbrace\dot\varphi(s)\,ds.
			\end{aligned}
	    	\]
			Since the previous holds for any $\varphi\in\mathscr{C}_c^\infty(\mathring{J})$, we have that there exists $k\in\R$ such that
			\[
			\frac12\left(\int_J\left[h-V(\gamma(s))\right]\,ds\right)|\dot{\gamma}(s)|_g^2-\left(\frac12\int_J|\dot{\gamma}(s)|_g^2\,ds\right)\left[h-V(\gamma(s))\right]=k,\quad\text{for a.e.}\ s\in J.
			\]
			Recalling the expression \eqref{eq:omega} of $\omega^2$, dividing both sides by $\frac12\int_J|\dot{\gamma}(s)|_g^2\,ds$ we obtain
			\[
			\frac{\omega^2}{2}|\dot{\gamma}(s)|_g^2-h+V(\gamma(s))=k;
			\]
			note that here we have used the fact that the minimum is attained at a positive level and we have used the same constant $k$. 
			Integrating both sides in $J$, we obtain that
			\[
			\omega^2=\frac{\int_J\left[h+k-V(\gamma(s))\right]\,ds}{\frac12\int_J|\dot{\gamma}(s)|_g^2\,ds}
			\]
			and definition \eqref{eq:omega} gives $k=0$. 
		\end{proof}
	\end{prop}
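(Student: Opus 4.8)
The plan is to extract the statement from the one piece of first‑order information that survives even when $\gamma$ touches the singular set: minimality of $\gamma$ with respect to \emph{time reparametrisations}. Even if $\gamma$ has collisions — so that it sits on the boundary of $\mathcal{H}_\Delta(\tau)$ and is not a critical point of $\mathcal{M}_h$ in the usual sense — precomposing $\gamma$ with any orientation‑preserving diffeomorphism of $J$ fixing the endpoints $t_0,t_1$ leaves the image, the collision set and the free homotopy class unchanged, hence the reparametrised loop is still an element of $\mathcal{H}_\Delta(\tau)$. Therefore $\gamma$ minimises $\mathcal{M}_h$ along this family of competitors, and the corresponding ``inner variation'' (Noether-type) identity holds unconditionally. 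This is the crux: it bypasses the fact that $\mathcal{M}_h$ cannot be freely differentiated at a boundary point.

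Concretely, I would fix $\varphi\in\mathscr{C}_c^\infty(\mathring{J})$, set $f_\lambda(t)\uguale t+\lambda\varphi(t)$ — a diffeomorphism of $J$ for $|\lambda|$ small — and let $\gamma_\lambda\uguale\gamma\circ f_\lambda\in\mathcal{H}_\Delta(\tau)$, so $\mathcal{M}_h(\gamma)\le\mathcal{M}_h(\gamma_\lambda)$ for every small $\lambda$. Changing variables $s=f_\lambda(t)$ in each of the two factors of $\mathcal{M}_h(\gamma_\lambda)$ rewrites it as
\[
\tfrac12\int_J|\dot\gamma(s)|_g^2\bigl(1+\lambda\dot\varphi(f_\lambda^{-1}(s))\bigr)\,ds\cdot\int_J\frac{h-V(\gamma(s))}{1+\lambda\dot\varphi(f_\lambda^{-1}(s))}\,ds,
\]
and since $f_\lambda^{-1}\to\mathrm{id}$ uniformly on $J$ while the integrands are smooth in $\lambda$ and dominated — uniformly in $\lambda$ — by fixed $L^1(J)$ functions built from $|\dot\gamma|_g^2$ and $h-V(\gamma)$ (both integrable because $\mathcal{M}_h(\gamma)<+\infty$, using \eqref{hyp:energy_bound} and \eqref{hyp:bounds_metric}), one may differentiate under the integral sign. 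Imposing $\frac{d}{d\lambda}\mathcal{M}_h(\gamma_\lambda)\big|_{\lambda=0}=0$ and using the arbitrariness of $\varphi$ then yields a constant $k\in\R$ with
\[
\tfrac12\Bigl(\int_J[h-V(\gamma)]\Bigr)|\dot\gamma(s)|_g^2-\Bigl(\tfrac12\int_J|\dot\gamma|_g^2\Bigr)[h-V(\gamma(s))]=k\qquad\text{for a.e. }s\in J.
\]

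To conclude, I would divide by $\tfrac12\int_J|\dot\gamma|_g^2$ and use the definition \eqref{eq:omega} of $\omega^2$, so that the identity becomes $\tfrac{\omega^2}{2}|\dot\gamma(s)|_g^2=h+k-V(\gamma(s))$ a.e.\ on $J$ (with $k$ still denoting the rescaled constant); integrating over $J$ and comparing with \eqref{eq:omega} once more forces $k=0$. This is precisely the asserted conservation of energy — $\tfrac{\omega^2}{2}|\dot\gamma|_g^2+V(\gamma)=h$ a.e.\ on $J$, equivalently $\tfrac12|\dot\psi|_g^2+V(\psi)=h$ for the energy‑$h$ reparametrisation $\psi(t)=\gamma(\omega t)$. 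I expect the only genuinely delicate point to be the justification of differentiating under the integral in the presence of collisions, but this follows from $f_\lambda$ and $f_\lambda^{-1}$ being smooth diffeomorphisms whose derivatives are uniformly close to $1$ for $|\lambda|\le\delta$, so that all $\lambda$‑derivatives of the integrands stay bounded by fixed integrable functions.
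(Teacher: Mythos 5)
Your proposal is correct and follows essentially the same route as the paper's own proof: the inner variation $\gamma_\lambda=\gamma\circ f_\lambda$ with $f_\lambda(t)=t+\lambda\varphi(t)$, the change of variables $s=f_\lambda(t)$, the vanishing of $\frac{d}{d\lambda}\mathcal{M}_h(\gamma_\lambda)\big|_{\lambda=0}$ giving the constant-$k$ identity, and the comparison with \eqref{eq:omega} forcing $k=0$. Your extra care in justifying differentiation under the integral (uniform domination for $|\lambda|\le\delta$) and your remark that the conclusion naturally reads $\tfrac{\omega^2}{2}|\dot\gamma|_g^2+V(\gamma)=h$, i.e.\ the stated identity for the reparametrisation $\psi(t)=\gamma(\omega t)$, are both consistent with what the paper does.
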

	
	Another important feature of minimisers of the Maupertuis functional on $\mathcal{H}_\Delta(\tau)$ is that they \emph{tend to be} taut, meaning that they tend to minimise the number of self-intersections in their homotopy class. Intuitively, there can be no $1-$gon or $2-$gon in the regular portion of a minimiser (recall Definition \ref{def:monogon-bigon}). However, much more attention should be paid when the minimisers lie on the boundary of $\widehat{\mathcal{H}}_\Delta(\tau)$ and have some collisions with the centres. For such singular curves the word \emph{taut} does not make much sense. However, the number of self-intersections of these \emph{singular} minimisers can be reduced by strongly exploiting the minimality of $\mathcal{M}_h(\gamma)$ and employing some careful surgical procedures.
	
	\begin{prop}
		\label{prop:no_excess_intesections_regular}
		Suppose that $\gamma$ is a minimiser of \eqref{eq:def_Maupertuis} on $\mathcal{H}_\Delta(\tau)$. Then:
		\begin{enumerate}[label = \roman*)]
			\item $\gamma$ has no singular 1-gon in $\widehat{M}$;
			\item $\gamma$ has no singular 2-gon in $\widehat{M}$;    
			\item $\gamma$ has no singular 1-gon in $\widehat{M}\cup \{c_j\}$ for any $j$;
			\item $\gamma$ has no $2-$gon colliding with just one centre, i.e., no singular $2-$gon in $\widehat{M}\cup \{c_j\}$ for any $j$;
			\item all isolated self-intersections are transversal.
		\end{enumerate}
		\begin{proof}
			Property $i)$ follows from super-additivity of the Maupertuis functional. In fact, if $[a,b]\subset J$ is an interval such that $\gamma(a) = \gamma(b)$ and $\gamma\vert_{[a,b]}$ is null-homotopic, the loop $\eta \uguale\gamma\vert_{J\setminus(a,b)/(a\sim b)}$ is still continuous and in the same homotopy class as $\gamma$. However, $\mathcal{M}_h(\gamma) >\mathcal{M}_h(\eta)$.
			
			To see $ii)$ we use regularity. Suppose that we can find two intervals $[a,b]$ and $[c,d]$ such that, for instance,  $\gamma(a) = \gamma(c)$, $\gamma(b) = \gamma(d) $ and $\gamma\vert_{[a,b]} \#\gamma\vert_{[c,d]}^{-1} $ is null-homotopic. Then we can obtain a curve $\eta$ exchanging $\gamma\vert_{[a,b]}$ and $\gamma\vert_{[c,d]}$. This curve has the same Maupertuis value as $\gamma$, but it is no longer $\mathscr C^2$ and thus it cannot be a minimiser (see Proposition \ref{prop:regularity_outside_collision_set}). 
			
			The argument for $iii)$ is very similar to case $ii)$. Assume first that there exists $[a,b]$ as in $i)$ and $\gamma(a) \ne c_j$, but $\gamma\vert_{[a,b]}$ is null-homotopic in $\mathbb{R}\setminus \mathcal{C} \cup \{c_j\}$. Then, if we run twice the portion between $\gamma(a)$ and $c_j$, we end up with another minimiser which is not $\mathscr C^2$. The case in which $\gamma(a) =c_j$ follows from super-additivity as in $i)$ (see Figure \ref{fig:1gon_regular}). Indeed, if we remove the sub-loop $\gamma\vert_{[a,b]}$ from $\gamma$, we obtain a curve still lying in the boundary of $\mathcal{H}_\Delta(\tau)$, but on which the Maupertuis functional takes a lower value.
			
			Point $iv)$ is again a matter of regularity. Switching between two possible branches gives a non $\mathscr C^2$ minimiser exactly as in the previous point (see Figure \ref{fig:2gon_regular}).
						
			Point $v)$ is obvious given Proposition \ref{prop:regularity_outside_collision_set}. In fact, by uniqueness for Cauchy problems, if position and velocity coincide at some instant, we must be dealing with two pieces of the same trajectory. Conservation of energy implies that the norm of the velocity is a function of the position and initial condition alone. Thus, if $\gamma$ self intersects, the velocities cannot be multiples unless they coincide or are opposite, and in this latter case we are dealing with a time inversion. 
		\end{proof}
	\end{prop}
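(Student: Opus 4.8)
The plan is to combine three structural properties of a Maupertuis minimiser $\gamma$ on $\mathcal{H}_\Delta(\tau)$: super-additivity of $\mathcal{M}_h$ under concatenation (Lemma \ref{lem:restriction}); the fact that off the collision set $\mathcal{I}_c$ the curve is, up to reparametrisation, a $\mathscr{C}^2$ solution of $\frac{D}{dt}\dot\eta=-\nabla V(\eta)$ (Proposition \ref{prop:regularity_outside_collision_set}), so that two arcs of $\gamma$ sharing one point and one velocity coincide; and conservation of energy (Proposition \ref{prop:conservation_energy_through_collisions}), which forces $|\dot\gamma|_g^2=2(h-V(\gamma))$, in particular $|\dot\gamma|_g>0$ a.e. since $h>\sup_M V$. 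Two elementary remarks will be used throughout: $\mathcal{M}_h$ depends only on the additive quantities $\int_J|\dot\gamma|_g^2$ and $\int_J[h-V(\gamma)]$, hence is unchanged when the pieces of a curve are reordered or run backwards with their own parametrisations; and a minimiser has no constant sub-arc (removing one would leave $\int_J|\dot\gamma|_g^2$ unchanged while strictly decreasing $\int_J[h-V]$), so $\mathcal{M}_h$ is strictly positive on every sub-arc of positive length.

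Item \emph{i)} is immediate: if $\gamma\vert_{[a,b]}$ is a sub-loop contractible in $\widehat M$, collapsing $[a,b]$ to a point gives $\eta\in\mathcal{H}_\Delta(\tau)$ with $\eta^{-1}(\mathcal{C})\subseteq\gamma^{-1}(\mathcal{C})$, and super-additivity yields $\mathcal{M}_h(\gamma)\ge\mathcal{M}_h(\gamma\vert_{[a,b]})+\mathcal{M}_h(\eta)>\mathcal{M}_h(\eta)$, against minimality. For item \emph{ii)} I would argue as follows: a singular $2$-gon in $\widehat M$ along disjoint arcs $[a,b],[c,d]$ makes $\gamma\vert_{[a,b]}$ and $\gamma\vert_{[c,d]}$ homotopic rel endpoints in $\widehat M$, so exchanging them produces $\eta\in\mathcal{H}_\Delta(\tau)$ with $\mathcal{M}_h(\eta)=\mathcal{M}_h(\gamma)$, hence a minimiser, hence $\mathscr{C}^2$ off its collisions; but then at the regular vertex $\gamma(a)$ the unchanged incoming velocity must equal the new outgoing one, namely the initial velocity of the re-inserted arc. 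Conservation of energy already gives equal speeds there, so $\mathscr{C}^1$-matching forces the velocity vectors to coincide, and uniqueness for the ODE then forces $\gamma\vert_{[a,b]}$ and $\gamma\vert_{[c,d]}$ to be the same solution arc; this degenerate alternative would exhibit $\gamma$ as a non-primitive loop, incompatible with $[\tau]$ being admissible.

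Items \emph{iii)} and \emph{iv)} follow the same pattern, with a case split according to whether the vertices of the monogon/bigon are centres. If a vertex equals $c_j$, the offending sub-loop starts and ends at the collision, so removing it decreases $\mathcal{M}_h$ by super-additivity while keeping the curve in $\partial\widehat{\mathcal{H}}\cap\mathcal{H}_\Delta(\tau)$. If no vertex is a centre, I would replace the sub-arc enclosing $c_j$ by a path that runs into $c_j$ and back along itself (for \emph{iii)}), or exchange the two branches of the bigon (for \emph{iv)}): the new curve lies in the weak closure of $[\tau]$, has no larger Maupertuis value, hence is again a minimiser, and it develops a corner at a \emph{regular} vertex — the turning point at the singularity being harmless — contradicting $\mathscr{C}^2$-regularity there; the degenerate case of matching velocities is dealt with by uniqueness, which drives $\gamma$ straight into $c_j$ and reduces to the previous subcase. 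Finally, for item \emph{v)}: on $J\setminus\mathcal{I}_c$ the curve solves the ODE, so two branches crossing at a point $p$ at times $t_1\ne t_2$ have equal speeds by conservation of energy; were their velocities parallel they would be equal or opposite, and in the first case uniqueness makes the branches coincide as trajectories — so the crossing is not isolated — while in the second $\gamma$ would be symmetric under the time reversal $t\mapsto t_1+t_2-t$ and would thus have a point of vanishing velocity, impossible under \eqref{hyp:energy_bound}. Hence isolated self-intersections are transversal.

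I expect the main obstacle to be the bookkeeping in items \emph{iii)} and \emph{iv)}, precisely the step of checking that the surgically modified curves stay in $\mathcal{H}_\Delta(\tau)$, i.e. that absorbing a loop into a collision, or running into $c_j$ and back, does not change the free homotopy class in the weak $H^1$ limit; this is where the sub-criticality $\alpha_j\in[1,2)$ enters, through the facts that a short loop around $c_j$ has arbitrarily small Maupertuis value and that a path meeting $c_j$ still has finite $\int[h-V]$. Organising the possible vertex positions and the admissible replacements is the technical heart of the statement, whereas \emph{i)}, \emph{ii)} and \emph{v)} are essentially formal consequences of super-additivity, $\mathscr{C}^2$-regularity, and conservation of energy.
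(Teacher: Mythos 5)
Your overall strategy is the paper's: \emph{i)} by super-additivity, \emph{ii)} and \emph{iv)} by exchanging the two branches and invoking the $\mathscr{C}^2$ regularity of minimisers away from collisions, \emph{iii)} by splitting according to whether the vertex is the centre (super-additivity in one case, a doubling surgery through $c_j$ in the other), and \emph{v)} by conservation of energy plus uniqueness for the Cauchy problem. Where you go beyond the paper is in trying to close the degenerate cases in which the surgery produces no corner, and both of the arguments you supply there are incorrect as stated. In \emph{ii)} you rule out the velocity-matching case by saying that the two arcs would then coincide and $\gamma$ would be a non-primitive loop, ``incompatible with $[\tau]$ being admissible''. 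Admissibility in Definition \ref{def:admissible-class} does not exclude proper powers: the square of the class of a loop enclosing all three centres is admissible (innermost sub-loops of its taut representatives still enclose at least two centres), and its minimiser may perfectly well be a doubly covered loop. The degenerate configuration has to be excluded for a different reason: when the two arcs coincide the ``2-gon'' is not a genuine one — the self-intersections are neither isolated nor transversal — and in the paper (and in its use in Section 5) singular 2-gons are only ever produced from curves in general position, whose branches cross transversally at the vertices; that transversality is exactly what makes the exchanged curve fail to be $\mathscr{C}^1$.

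The second flawed step is in \emph{v)}: you claim the opposite-velocity case is impossible because the symmetry $\gamma(t_1+t)=\gamma(t_2-t)$ would force $\dot\gamma$ to vanish at the midpoint, contradicting \eqref{hyp:energy_bound}. That argument only works if the midpoint is a regular point; the symmetry is entirely compatible with a collision at the midpoint, where the speed blows up rather than vanishes — this is precisely the collision-reflection behaviour of Definition \ref{def:collision_reflection}, which the paper shows can genuinely occur (Theorem \ref{thm:technical_thm}, \emph{ii)}). The correct conclusion in the opposite-velocity case, as in the paper, is that one branch is the time inversion of the other, so the two branches coincide as point sets and the self-intersection is not isolated; the statement ``isolated self-intersections are transversal'' survives, but not via your impossibility claim. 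The remaining items (\emph{i)}, \emph{iii)}, \emph{iv)}) are argued essentially as in the paper and are fine, modulo the same caveat that the corner/regularity contradiction presupposes the non-degenerate (transversal) configuration.
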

	
	\begin{figure}[t]
		\centering
		\resizebox{.9\textwidth}{!}{\tikzset{every picture/.style={line width=0.75pt}} 

\begin{tikzpicture}[x=0.75pt,y=0.75pt,yscale=-1,xscale=1]
	
	\draw    (490.45,60.64) .. controls (509.34,75.98) and (579.36,123.99) .. (606.04,103.99) .. controls (632.71,83.98) and (634.71,41.3) .. (610.7,46.64) .. controls (586.7,51.97) and (534.68,121.32) .. (522.01,144) ;
	\draw  [dash pattern={on 0.84pt off 2.51pt}]  (490.45,60.64) .. controls (479.33,51.97) and (486.67,58.64) .. (473.33,45.3) ;
	\draw  [dash pattern={on 0.84pt off 2.51pt}]  (522.01,144) .. controls (517.35,153.33) and (518.68,152.67) .. (514.68,156) ;
	\draw  [fill={rgb, 255:red, 0; green, 0; blue, 0 }  ,fill opacity=1 ] (549.8,98.98) .. controls (549.8,97.33) and (551.14,95.98) .. (552.8,95.98) .. controls (554.46,95.98) and (555.8,97.33) .. (555.8,98.98) .. controls (555.8,100.64) and (554.46,101.99) .. (552.8,101.99) .. controls (551.14,101.99) and (549.8,100.64) .. (549.8,98.98) -- cycle ;
	\draw    (81.16,36.04) .. controls (84.12,61.06) and (101.45,147.24) .. (135.69,151.63) .. controls (169.93,156.03) and (202.22,125.79) .. (180.55,112.43) .. controls (167.67,104.49) and (131.15,106.19) .. (98.22,110.49) .. controls (75.72,113.43) and (54.9,117.58) .. (44.44,120.7) ;
	\draw  [dash pattern={on 0.84pt off 2.51pt}]  (81.16,36.04) .. controls (79.17,21.58) and (79.8,31.83) .. (79.53,12.3) ;
	\draw  [dash pattern={on 0.84pt off 2.51pt}]  (44.44,120.7) .. controls (34.24,124.26) and (35.71,124.73) .. (30.33,124.32) ;
	\draw  [fill={rgb, 255:red, 0; green, 0; blue, 0 }  ,fill opacity=1 ] (122.41,83.8) .. controls (122.41,82.09) and (123.8,80.7) .. (125.52,80.7) .. controls (127.23,80.7) and (128.62,82.09) .. (128.62,83.8) .. controls (128.62,85.52) and (127.23,86.91) .. (125.52,86.91) .. controls (123.8,86.91) and (122.41,85.52) .. (122.41,83.8) -- cycle ;
	\draw  [fill={rgb, 255:red, 155; green, 155; blue, 155 }  ,fill opacity=1 ] (550.91,98.98) .. controls (550.91,97.94) and (551.76,97.1) .. (552.8,97.1) .. controls (553.84,97.1) and (554.69,97.94) .. (554.69,98.98) .. controls (554.69,100.03) and (553.84,100.87) .. (552.8,100.87) .. controls (551.76,100.87) and (550.91,100.03) .. (550.91,98.98) -- cycle ;
	\draw  [fill={rgb, 255:red, 155; green, 155; blue, 155 }  ,fill opacity=1 ] (123.56,83.8) .. controls (123.56,82.72) and (124.44,81.85) .. (125.52,81.85) .. controls (126.6,81.85) and (127.47,82.72) .. (127.47,83.8) .. controls (127.47,84.89) and (126.6,85.76) .. (125.52,85.76) .. controls (124.44,85.76) and (123.56,84.89) .. (123.56,83.8) -- cycle ;
	\draw    (373.21,27.43) .. controls (359.45,47.38) and (317.21,120.63) .. (339.15,145.56) .. controls (361.08,170.49) and (403.61,169.17) .. (396.45,145.75) .. controls (389.29,122.32) and (316.39,76.06) .. (292.9,65.24) ;
	\draw  [dash pattern={on 0.84pt off 2.51pt}]  (373.21,27.43) .. controls (380.96,15.72) and (374.9,23.52) .. (387.11,9.24) ;
	\draw  [dash pattern={on 0.84pt off 2.51pt}]  (292.9,65.24) .. controls (283.26,61.33) and (284.03,62.6) .. (280.41,58.89) ;
	\draw  [fill={rgb, 255:red, 0; green, 0; blue, 0 }  ,fill opacity=1 ] (361.96,109.37) .. controls (363.05,108.13) and (364.94,108.01) .. (366.18,109.1) .. controls (367.42,110.19) and (367.54,112.07) .. (366.45,113.31) .. controls (365.36,114.55) and (363.47,114.67) .. (362.23,113.58) .. controls (360.99,112.49) and (360.87,110.6) .. (361.96,109.37) -- cycle ;
	\draw  [fill={rgb, 255:red, 155; green, 155; blue, 155 }  ,fill opacity=1 ] (362.79,110.1) .. controls (363.48,109.32) and (364.67,109.24) .. (365.45,109.93) .. controls (366.23,110.61) and (366.31,111.8) .. (365.62,112.58) .. controls (364.93,113.36) and (363.75,113.44) .. (362.96,112.75) .. controls (362.18,112.07) and (362.11,110.88) .. (362.79,110.1) -- cycle ;
	
	\draw (530.79,92.99) node [anchor=north west][inner sep=0.75pt]   [align=left] {$\displaystyle c_j$};
	\draw (565.98,113.66) node [anchor=north west][inner sep=0.75pt]   [align=left] {$\displaystyle \gamma  \vert_{[a,b]}$};
	\draw (144.2,150.41) node [anchor=north west][inner sep=0.75pt]   [align=left] {$\displaystyle \gamma  \vert_{[a,b]}$};
	\draw (132.67,64.79) node [anchor=north west][inner sep=0.75pt]   [align=left] {$\displaystyle c_{j}$};
	\draw (280.9,106.69) node [anchor=north west][inner sep=0.75pt]   [align=left] {$\displaystyle \gamma \vert_{[a,b]}$};
	\draw (377.66,98.19) node [anchor=north west][inner sep=0.75pt]   [align=left] {$\displaystyle c_{j}$};
	\draw (38,144.4) node [anchor=north west][inner sep=0.75pt]    {$a)$};
	\draw (258,144.4) node [anchor=north west][inner sep=0.75pt]    {$b)$};
	\draw (478,144.4) node [anchor=north west][inner sep=0.75pt]    {$c)$};

\end{tikzpicture}}
		\caption{Picture $a)$ shows the 1-gon described in point $i)$ of Proposition \ref{prop:no_excess_intesections_regular}, while pictures $b)$ and $c)$ depict two examples of the behaviour excluded by point $iii)$, namely when $\gamma(a)\neq c_j$ and $\gamma(a)=c_j$.}\label{fig:1gon_regular}
	\end{figure}
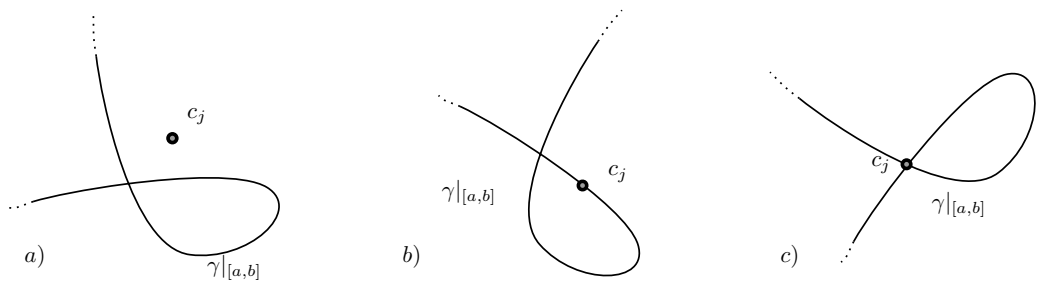

	\begin{figure}[t]
		\centering
		\resizebox{.7\textwidth}{!}{\begin{tikzpicture}[x=0.75pt,y=0.75pt,yscale=-1,xscale=1]
	
	\draw    (100,126) .. controls (113.33,39) and (187.33,13) .. (350.33,71) ;
	\draw    (84.33,80) .. controls (183.33,158) and (299.33,70) .. (339.33,40) ;
	\draw  [dash pattern={on 0.84pt off 2.51pt}]  (100,126) -- (97.33,145) ;
	\draw  [dash pattern={on 0.84pt off 2.51pt}]  (70.33,65) -- (84.33,80) ;
	\draw  [dash pattern={on 0.84pt off 2.51pt}]  (354.33,30) -- (339.33,40) ;
	\draw  [dash pattern={on 0.84pt off 2.51pt}]  (350.33,71) -- (367.33,77) ;
	\draw  [fill={rgb, 255:red, 0; green, 0; blue, 0 }  ,fill opacity=1 ] (105,95.5) .. controls (105,93.01) and (107.01,91) .. (109.5,91) .. controls (111.99,91) and (114,93.01) .. (114,95.5) .. controls (114,97.99) and (111.99,100) .. (109.5,100) .. controls (107.01,100) and (105,97.99) .. (105,95.5) -- cycle ;
	\draw  [fill={rgb, 255:red, 0; green, 0; blue, 0 }  ,fill opacity=1 ] (301,139.5) .. controls (301,137.01) and (303.01,135) .. (305.5,135) .. controls (307.99,135) and (310,137.01) .. (310,139.5) .. controls (310,141.99) and (307.99,144) .. (305.5,144) .. controls (303.01,144) and (301,141.99) .. (301,139.5) -- cycle ;
	\draw    (520.97,100.96) .. controls (506.94,187.85) and (432.73,213.25) .. (270.21,153.94) ;
	\draw    (536.27,147.08) .. controls (437.9,68.29) and (321.2,155.35) .. (280.96,185.03) ;
	\draw  [dash pattern={on 0.84pt off 2.51pt}]  (520.97,100.96) -- (523.79,81.98) ;
	\draw  [dash pattern={on 0.84pt off 2.51pt}]  (550.15,162.19) -- (536.27,147.08) ;
	\draw  [dash pattern={on 0.84pt off 2.51pt}]  (265.88,194.91) -- (280.96,185.03) ;
	\draw  [dash pattern={on 0.84pt off 2.51pt}]  (270.21,153.94) -- (253.25,147.81) ;
	\draw  [fill={rgb, 255:red, 155; green, 155; blue, 155 }  ,fill opacity=1 ] (106.67,95.33) .. controls (106.67,93.77) and (107.94,92.5) .. (109.5,92.5) .. controls (111.06,92.5) and (112.33,93.77) .. (112.33,95.33) .. controls (112.33,96.9) and (111.06,98.17) .. (109.5,98.17) .. controls (107.94,98.17) and (106.67,96.9) .. (106.67,95.33) -- cycle ;
	\draw  [fill={rgb, 255:red, 155; green, 155; blue, 155 }  ,fill opacity=1 ] (302.67,139.5) .. controls (302.67,137.94) and (303.94,136.67) .. (305.5,136.67) .. controls (307.06,136.67) and (308.33,137.94) .. (308.33,139.5) .. controls (308.33,141.06) and (307.06,142.33) .. (305.5,142.33) .. controls (303.94,142.33) and (302.67,141.06) .. (302.67,139.5) -- cycle ;
	
	\draw (79,92) node [anchor=north west][inner sep=0.75pt]   [align=left] {$\displaystyle c$};
	\draw (318,114) node [anchor=north west][inner sep=0.75pt]   [align=left] {$\displaystyle c_{j}$};
	\draw (154,84) node [anchor=north west][inner sep=0.75pt]   [align=left] {$\displaystyle \gamma \vert_{[a,b]}$};
	\draw (243.05,14.6) node [anchor=north west][inner sep=0.75pt]  [rotate=-10.48] [align=left] {$\displaystyle \gamma \vert_{[c,d]}$};
	\draw (456.43,187.39) node [anchor=north west][inner sep=0.75pt]  [rotate=-339.37] [align=left] {$\displaystyle \gamma \vert_{[a,b]}$};
	\draw (377.5,133.8) node [anchor=north west][inner sep=0.75pt]  [rotate=-344.32] [align=left] {$\displaystyle \gamma \vert_{[c,d]}$};
	
	\draw (96,34.4) node [anchor=north west][inner sep=0.75pt]    {$a)$};
	\draw (309,186) node [anchor=north west][inner sep=0.75pt]   [align=left] {$\displaystyle b)$};

\end{tikzpicture}}
		\caption{An example of the 2-gon excluded by $iv)$ of Proposition \ref{prop:no_excess_intesections_regular} is shown in picture $a)$, while $b)$ shows the 2-gons treated in $ii)$.}\label{fig:2gon_regular}
	\end{figure}
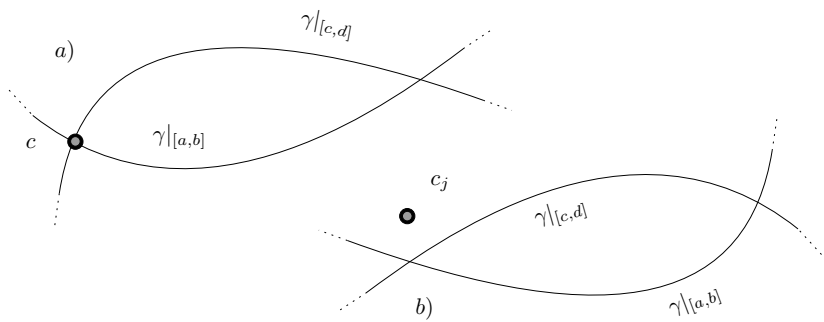
	
	We can extend points $iii)$ and $iv)$ of the previous proposition also to those minimisers which collide with more then one centre. However, in this case, we have to keep track of the homotopy class $\tau$ we are starting in.
	\begin{prop}
		\label{prop:no_excess_intesections_singular}
		Suppose that $\gamma$ is a minimiser of \eqref{eq:def_Maupertuis} on $\mathcal{H}_\Delta(\tau)$. Then:
		\begin{enumerate}[label = \roman*)]
			\item there is no interval $[a,b] \sset J$ and no subset of centres $\mathcal{C}'$ such that 
			\begin{itemize}
				\item  $\gamma(a) = \gamma(b)$,
				\item $\mathcal{C}' \sset \gamma([a,b])$,
				\item there exists a null homotopic curve $\eta$ in $\widehat{M}\cup\mathcal{C}' \cup \{\gamma(a)\}$ with the property that \[\gamma\vert_{J\setminus[a,b]}\#\eta\in\mathcal{H}_\Delta(\tau).\]
			\end{itemize}
			\item there are no intervals $[a,b],[c,d] \subset J$, with $b<c$, and no subset of centres $\mathcal{C}'$ such that:
			\begin{itemize}
				\item  $\gamma(a) = \gamma(c)$ and $\gamma(b) = \gamma(d)$ and at least one between these two points is not a centre,
				\item $\mathcal{C}' \sset \gamma([a,b]) \cup \gamma([c,d])$,
				\item there exists a curve $\eta$ in $\widehat{M} \cup \{\gamma(a),\gamma(b)\}$ joining $\gamma(a)$ and $\gamma(b)$, with the property that 
				\[
				\gamma\vert_{J \cap (-\infty,a]}\#\eta\#\gamma\vert_{[b,c]} \# \eta\# \gamma\vert_{J \cap [d,+\infty)}\in\mathcal{H}_\Delta(\tau).
				\]
			\end{itemize}
			\item if in point $ii)$ above $\gamma(a)$ and $\gamma(b)$ are both centres, we can build a minimiser $\tilde{\gamma}$ in $\mathcal{H}_\Delta(\tau)$ which coincides with $\gamma$ on $J \setminus\left([a,b]\cup [c,d]\right)$ and such that $\tilde{\gamma}([a,b]) = \tilde \gamma([c,d]) = \gamma([a,b]).$ 
		\end{enumerate}
		\begin{proof}
			The proof is completely analogous to the one of Proposition \ref{prop:no_excess_intesections_regular}. The assumptions guarantee that performing the same type of surgeries as before does not change the property of belonging to the boundary of $\widehat{\mathcal{H}}_\Delta(\tau)$. Point $i)$ follows from super-additivity, point $ii)$ from regularity and minimality and fro point $iii)$ minimality is enough: the Maupertuis functional must coincide on $\gamma\vert_{[a,b]}$ and $\gamma\vert_{[c,d]}$ (see also Figures \ref{fig:1gon_singular}-\ref{fig:2gon_singular}).
		\end{proof}
	\end{prop}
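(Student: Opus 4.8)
The plan is to argue by contradiction in each item, reproducing the surgical constructions from the proof of Proposition~\ref{prop:no_excess_intesections_regular} in the presence of collisions. Two points need care. First, the competitors we build will again collide with centres, hence will sit on the boundary of $\widehat{\mathcal H}_\Delta(\tau)$; this is harmless, since the only properties exploited are minimality of $\gamma$ and the interior $\mathscr C^2$ regularity of minimisers along the collision-free portions (Propositions~\ref{prop:regularity_outside_collision_set} and~\ref{prop:conservation_energy_through_collisions}). Second, and more delicate, each surgery must leave the curve in the fixed free homotopy class $[\tau]$ of $\widehat M$; this is exactly what the hypotheses on $\eta$ and $\mathcal{C}'$ encode, so that in practice one only has to exhibit, for the given configuration, the auxiliary curve $\eta$ realising the displayed concatenation.

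For item $i)$, suppose such an interval $[a,b]$, a centre subset $\mathcal{C}'\sset\gamma([a,b])$, and a curve $\eta$ (null-homotopic in $\widehat M\cup\mathcal{C}'\cup\{\gamma(a)\}$, with $\gamma\vert_{J\setminus[a,b]}\#\eta\in\mathcal{H}_\Delta(\tau)$) exist. Super-additivity (Lemma~\ref{lem:restriction}) gives
\[
\mathcal{M}_h(\gamma)\ \ge\ \mathcal{M}_h\!\left(\gamma\vert_{[a,b]}\right)+\mathcal{M}_h\!\left(\gamma\vert_{J\setminus[a,b]}\right),
\]
with $\mathcal{M}_h(\gamma\vert_{[a,b]})>0$ because $\gamma$ is non-constant on $[a,b]$ (it reaches $\mathcal{C}'$ and returns to $\gamma(a)$). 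On the other hand, since $\eta$ becomes contractible once the finitely many points $\mathcal{C}'\cup\{\gamma(a)\}$ are filled in, it can be homotoped within $\widehat M$ — keeping $\gamma\vert_{J\setminus[a,b]}\#\eta$ in $\mathcal{H}_\Delta(\tau)$ — so as to concentrate near this finite point set; by the weak-force estimate underlying the obstacle analysis (the exponents $\alpha_j\in[1,2)$ of \eqref{eq:potential}, cf.\ the computation in \eqref{eq:lagrange_jacobi_exponential_coordinates}), the Maupertuis budget of the $\eta$-portion is then arbitrarily small, so $\mathcal{M}_h(\gamma\vert_{J\setminus[a,b]}\#\eta)$ gets arbitrarily close to $\mathcal{M}_h(\gamma\vert_{J\setminus[a,b]})$. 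Choosing $\eta$ concentrated enough produces a competitor in $\mathcal{H}_\Delta(\tau)$ with strictly smaller Maupertuis value than $\gamma$, contradicting minimality.

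For items $ii)$ and $iii)$ I would run the arc exchange of case $ii)$ of Proposition~\ref{prop:no_excess_intesections_regular}: in the situation of $ii)$, substitute $\eta$ for each of the two arcs $\gamma\vert_{[a,b]}$ and $\gamma\vert_{[c,d]}$, forming the concatenation displayed in the statement. Since $\gamma(a)=\gamma(c)$, $\gamma(b)=\gamma(d)$, and $\eta$ can be chosen (by the same concentration argument together with Lemma~\ref{lem:restriction}) so as not to increase the total kinetic--potential budget of the two arcs, the resulting curve $\tilde\gamma$ lies in $\mathcal{H}_\Delta(\tau)$ with $\mathcal{M}_h(\tilde\gamma)\le\mathcal{M}_h(\gamma)$, hence equality by minimality, so $\tilde\gamma$ is again a minimiser. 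In the hypotheses of $ii)$ at least one gluing point, say $\gamma(b)=\gamma(d)$, is a regular point of $\widehat M$; by the uniqueness part of Proposition~\ref{prop:regularity_outside_collision_set} the two branches of $\tilde\gamma$ through it must then be pieces of the same trajectory, and since $\gamma$ is itself a minimiser this forces the configuration to be degenerate — one is in fact in case $i)$ after shrinking the intervals — as otherwise $\tilde\gamma$ would fail to be the $\mathscr C^2$ solution of \eqref{eq:euler_lagrage_equations} at a regular point, a contradiction. In the hypotheses of $iii)$ both $\gamma(a),\gamma(b)$ are centres, the regularity obstruction is vacuous, $\tilde\gamma$ is a genuine minimiser coinciding with $\gamma$ off $[a,b]\cup[c,d]$, and Lemma~\ref{lem:restriction} applied to $\gamma$ and $\tilde\gamma$ yields $\mathcal{M}_h(\gamma\vert_{[a,b]})=\mathcal{M}_h(\gamma\vert_{[c,d]})$; by the affine time-rescaling invariance of $\mathcal{M}_h$, replacing $\gamma\vert_{[c,d]}$ with a reparametrised copy of $\gamma\vert_{[a,b]}$ produces the asserted minimiser with $\tilde\gamma([a,b])=\tilde\gamma([c,d])=\gamma([a,b])$.

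The analytic ingredients are precisely those already used for Proposition~\ref{prop:no_excess_intesections_regular}: super-additivity, the weak-force smallness of tiny loops around centres, conservation of energy, and uniqueness for the Euler--Lagrange Cauchy problem off the collision set. The real obstacle, as in the regular case, is the topological bookkeeping — checking that excising a sub-loop through several centres, or exchanging two such arcs through a centre, does not alter the class $[\tau]$ in $\widehat M$. This is absorbed into the hypotheses on $\eta$ and $\mathcal{C}'$, so the burden reduces to identifying, in each concrete configuration, the $\eta$ (and $\mathcal{C}'$) realising the displayed concatenation; the admissible moves and the degenerate cases are those pictured in Figures~\ref{fig:1gon_singular}--\ref{fig:2gon_singular}.
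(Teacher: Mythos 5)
Your overall frame (surgeries as in Proposition~\ref{prop:no_excess_intesections_regular}, with the hypotheses on $\eta$ and $\mathcal{C}'$ providing the homotopy bookkeeping) is the right one, but the step your argument actually rests on in $i)$ and $ii)$ is a genuine gap: you claim that $\eta$ can be homotoped so as to ``concentrate near $\mathcal{C}'\cup\{\gamma(a)\}$'' with arbitrarily small Maupertuis budget while keeping $\gamma\vert_{J\setminus[a,b]}\#\eta\in\mathcal{H}_\Delta(\tau)$. This fails on two counts. First, in the situations this proposition is designed for (see Figure~\ref{fig:1gon_singular}), $\eta$ passes through the centres of $\mathcal{C}'$ and ``keeps trace of the homotopy class'', so it cannot be homotoped \emph{within} $\widehat M$ at all; and a homotopy inside $\widehat M\cup\mathcal{C}'\cup\{\gamma(a)\}$ does not preserve membership of the concatenation in the weak closure $\mathcal{H}_\Delta(\tau)$: moving a sub-arc across, or off, a centre changes which classes admit collision-free approximants converging to the curve (already with a single centre $c$, a loop followed by a degenerate back-and-forth segment through $c$ lies in the closure of the class winding one extra time around $c$, while the same loop with that segment contracted to a point does not). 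Second, even granting some freedom, ``concentrated near a finite point set'' is not the same as ``cheap'': the points of $\mathcal{C}'\cup\{\gamma(a)\}$ are at mutually positive distance, and by Cauchy--Schwarz the contribution of the $\eta$-portion to \eqref{eq:def_Maupertuis} satisfies $\bigl(\int_{J_\eta}\vert\dot\eta\vert_g^2\bigr)\bigl(\int_{J_\eta}[h-V(\eta)]\bigr)\ge (h-\sup_M V)\,\ell(\eta)^2$, which by \eqref{hyp:energy_bound} is a positive multiple of $\ell(\eta)^2$ independent of the parametrisation; since $\eta$ must keep joining/encircling those points to play its bookkeeping role, $\ell(\eta)$ stays bounded away from zero. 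The exponents $\alpha_j<2$ only make $\mathcal{M}_h$ finite across collisions; they do not make macroscopic loops cheap. Consequently your competitor in $i)$ is not shown to beat $\gamma$, and in $ii)$ the inequality $\mathcal{M}_h(\tilde\gamma)\le\mathcal{M}_h(\gamma)$ for the curve with $\eta$ substituted for both arcs is unsupported (and the ensuing ``the configuration is degenerate, hence case $i)$'' step is too vague to conclude).

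The paper's proof never needs to control the cost of $\eta$: $\eta$ appears only in the hypotheses, as a certificate that the surgery performed on pieces of $\gamma$ \emph{itself} does not leave $\mathcal{H}_\Delta(\tau)$. For $i)$ the competitor is the excised loop $\gamma\vert_{J\setminus[a,b]}$, and super-additivity (Lemma~\ref{lem:restriction}) gives a strictly smaller value because $\mathcal{M}_h(\gamma\vert_{[a,b]})>0$. For $ii)$ the competitor is obtained by exchanging the two arcs $\gamma\vert_{[a,b]}$ and $\gamma\vert_{[c,d]}$ (the displayed concatenation with two copies of $\eta$ is the hypothesis guaranteeing the exchanged curve is still in the class closure, not the competitor); the exchanged curve has the same Maupertuis value, hence is a minimiser, but it has a corner at the gluing point which is not a centre, contradicting Proposition~\ref{prop:regularity_outside_collision_set} together with uniqueness for the Cauchy problem, exactly as in Proposition~\ref{prop:no_excess_intesections_regular}. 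For $iii)$ both exchanges are admissible, so minimality directly forces $\mathcal{M}_h(\gamma\vert_{[a,b]})=\mathcal{M}_h(\gamma\vert_{[c,d]})$, and replacing one arc by a reparametrised copy of the other yields the asserted $\tilde\gamma$; your final step for $iii)$ is in this spirit, but as written it inherits the flawed construction from your $ii)$. Reworking $i)$ and $ii)$ along these lines, with $\eta$ used only for the topological bookkeeping, closes the gap.
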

	
	\begin{figure}[t]
		\centering
		\resizebox{.7\textwidth}{!}{\begin{tikzpicture}[x=0.75pt,y=0.75pt,yscale=-1,xscale=1]
	
	\draw    (88,59) .. controls (93.06,63.1) and (99.63,69.76) .. (107.34,77.94) .. controls (142.82,115.61) and (202.33,185.71) .. (248.33,189) .. controls (304.33,193) and (344.33,63) .. (268.33,38) .. controls (192.33,13) and (88.33,146) .. (69.33,180) ;
	\draw  [dash pattern={on 0.84pt off 2.51pt}]  (73.33,45) -- (88,59) ;
	\draw  [dash pattern={on 0.84pt off 2.51pt}]  (62.33,192) -- (69.33,180) ;
	\draw  [dash pattern={on 4.5pt off 4.5pt}]  (130,103) .. controls (192.33,72) and (177.33,56) .. (210.33,61) .. controls (243.33,66) and (210.33,36) .. (258.33,50) .. controls (306.33,64) and (308.33,121) .. (298.33,134) .. controls (288.33,147) and (283.33,130) .. (282.33,159) .. controls (281.33,188) and (202.33,167) .. (206.33,150) .. controls (210.33,133) and (194.33,136) .. (182.33,141) .. controls (170.33,146) and (176.33,103) .. (130.5,103) ;
	\draw  [fill={rgb, 255:red, 0; green, 0; blue, 0 }  ,fill opacity=1 ] (126,103) .. controls (126,100.51) and (128.01,98.5) .. (130.5,98.5) .. controls (132.99,98.5) and (135,100.51) .. (135,103) .. controls (135,105.49) and (132.99,107.5) .. (130.5,107.5) .. controls (128.01,107.5) and (126,105.49) .. (126,103) -- cycle ;
	\draw  [fill={rgb, 255:red, 0; green, 0; blue, 0 }  ,fill opacity=1 ] (181,155) .. controls (181,152.51) and (183.01,150.5) .. (185.5,150.5) .. controls (187.99,150.5) and (190,152.51) .. (190,155) .. controls (190,157.49) and (187.99,159.5) .. (185.5,159.5) .. controls (183.01,159.5) and (181,157.49) .. (181,155) -- cycle ;
	\draw  [fill={rgb, 255:red, 0; green, 0; blue, 0 }  ,fill opacity=1 ] (200,46) .. controls (200,43.51) and (202.01,41.5) .. (204.5,41.5) .. controls (206.99,41.5) and (209,43.51) .. (209,46) .. controls (209,48.49) and (206.99,50.5) .. (204.5,50.5) .. controls (202.01,50.5) and (200,48.49) .. (200,46) -- cycle ;
	\draw  [fill={rgb, 255:red, 0; green, 0; blue, 0 }  ,fill opacity=1 ] (293,153) .. controls (293,150.51) and (295.01,148.5) .. (297.5,148.5) .. controls (299.99,148.5) and (302,150.51) .. (302,153) .. controls (302,155.49) and (299.99,157.5) .. (297.5,157.5) .. controls (295.01,157.5) and (293,155.49) .. (293,153) -- cycle ;
	\draw    (537.8,80.45) .. controls (532.03,83.46) and (523.32,86.86) .. (512.75,90.69) .. controls (502.18,94.53) and (382.33,107) .. (354.9,176.12) .. controls (327.47,245.25) and (463.33,315) .. (496.33,239) .. controls (529.33,163) and (457.79,46.27) .. (434.11,15.35) ;
	\draw  [dash pattern={on 0.84pt off 2.51pt}]  (556.47,72.53) -- (537.8,80.45) ;
	\draw  [dash pattern={on 0.84pt off 2.51pt}]  (425.87,4.17) -- (434.11,15.35) ;
	\draw  [dash pattern={on 4.5pt off 4.5pt}]  (480.33,96) .. controls (484.09,165.52) and (508.15,164.65) .. (490.48,192.97) .. controls (472.81,221.28) and (513.43,202.88) .. (481.55,241.4) .. controls (449.68,279.92) and (396.55,259.16) .. (388.58,244.83) .. controls (380.61,230.49) and (398.2,232.64) .. (371.97,220.23) .. controls (345.74,207.82) and (387.31,135.59) .. (401.33,146) .. controls (415.36,156.41) and (426.17,141) .. (426.33,128) .. controls (426.5,115) and (462.58,139.5) .. (480.75,97.42) ;
	\draw  [fill={rgb, 255:red, 0; green, 0; blue, 0 }  ,fill opacity=1 ] (401,125) .. controls (401,122.51) and (403.01,120.5) .. (405.5,120.5) .. controls (407.99,120.5) and (410,122.51) .. (410,125) .. controls (410,127.49) and (407.99,129.5) .. (405.5,129.5) .. controls (403.01,129.5) and (401,127.49) .. (401,125) -- cycle ;
	\draw  [fill={rgb, 255:red, 0; green, 0; blue, 0 }  ,fill opacity=1 ] (363,235) .. controls (363,232.51) and (365.01,230.5) .. (367.5,230.5) .. controls (369.99,230.5) and (372,232.51) .. (372,235) .. controls (372,237.49) and (369.99,239.5) .. (367.5,239.5) .. controls (365.01,239.5) and (363,237.49) .. (363,235) -- cycle ;
	\draw  [fill={rgb, 255:red, 0; green, 0; blue, 0 }  ,fill opacity=1 ] (501,201) .. controls (501,198.51) and (503.01,196.5) .. (505.5,196.5) .. controls (507.99,196.5) and (510,198.51) .. (510,201) .. controls (510,203.49) and (507.99,205.5) .. (505.5,205.5) .. controls (503.01,205.5) and (501,203.49) .. (501,201) -- cycle ;
	\draw  [fill={rgb, 255:red, 155; green, 155; blue, 155 }  ,fill opacity=1 ] (127.67,103) .. controls (127.67,101.44) and (128.94,100.17) .. (130.5,100.17) .. controls (132.06,100.17) and (133.33,101.44) .. (133.33,103) .. controls (133.33,104.56) and (132.06,105.83) .. (130.5,105.83) .. controls (128.94,105.83) and (127.67,104.56) .. (127.67,103) -- cycle ;
	\draw  [fill={rgb, 255:red, 155; green, 155; blue, 155 }  ,fill opacity=1 ] (182.67,155) .. controls (182.67,153.44) and (183.94,152.17) .. (185.5,152.17) .. controls (187.06,152.17) and (188.33,153.44) .. (188.33,155) .. controls (188.33,156.56) and (187.06,157.83) .. (185.5,157.83) .. controls (183.94,157.83) and (182.67,156.56) .. (182.67,155) -- cycle ;
	\draw  [fill={rgb, 255:red, 155; green, 155; blue, 155 }  ,fill opacity=1 ] (294.67,153) .. controls (294.67,151.44) and (295.94,150.17) .. (297.5,150.17) .. controls (299.06,150.17) and (300.33,151.44) .. (300.33,153) .. controls (300.33,154.56) and (299.06,155.83) .. (297.5,155.83) .. controls (295.94,155.83) and (294.67,154.56) .. (294.67,153) -- cycle ;
	\draw  [fill={rgb, 255:red, 155; green, 155; blue, 155 }  ,fill opacity=1 ] (201.67,46.17) .. controls (201.67,44.6) and (202.94,43.33) .. (204.5,43.33) .. controls (206.06,43.33) and (207.33,44.6) .. (207.33,46.17) .. controls (207.33,47.73) and (206.06,49) .. (204.5,49) .. controls (202.94,49) and (201.67,47.73) .. (201.67,46.17) -- cycle ;
	\draw  [fill={rgb, 255:red, 155; green, 155; blue, 155 }  ,fill opacity=1 ] (364.67,235) .. controls (364.67,233.44) and (365.94,232.17) .. (367.5,232.17) .. controls (369.06,232.17) and (370.33,233.44) .. (370.33,235) .. controls (370.33,236.56) and (369.06,237.83) .. (367.5,237.83) .. controls (365.94,237.83) and (364.67,236.56) .. (364.67,235) -- cycle ;
	\draw  [fill={rgb, 255:red, 155; green, 155; blue, 155 }  ,fill opacity=1 ] (402.67,125) .. controls (402.67,123.44) and (403.94,122.17) .. (405.5,122.17) .. controls (407.06,122.17) and (408.33,123.44) .. (408.33,125) .. controls (408.33,126.56) and (407.06,127.83) .. (405.5,127.83) .. controls (403.94,127.83) and (402.67,126.56) .. (402.67,125) -- cycle ;
	\draw  [fill={rgb, 255:red, 155; green, 155; blue, 155 }  ,fill opacity=1 ] (502.67,201) .. controls (502.67,199.44) and (503.94,198.17) .. (505.5,198.17) .. controls (507.06,198.17) and (508.33,199.44) .. (508.33,201) .. controls (508.33,202.56) and (507.06,203.83) .. (505.5,203.83) .. controls (503.94,203.83) and (502.67,202.56) .. (502.67,201) -- cycle ;
	\draw  [fill={rgb, 255:red, 0; green, 0; blue, 0 }  ,fill opacity=1 ] (479.33,97.42) .. controls (479.33,96.63) and (479.97,96) .. (480.75,96) .. controls (481.53,96) and (482.17,96.63) .. (482.17,97.42) .. controls (482.17,98.2) and (481.53,98.83) .. (480.75,98.83) .. controls (479.97,98.83) and (479.33,98.2) .. (479.33,97.42) -- cycle ;
	
	\draw (177,198) node [anchor=north west][inner sep=0.75pt]   [align=left] {$\displaystyle \gamma \vert_{[a,b]}$};
	\draw (505,118) node [anchor=north west][inner sep=0.75pt]   [align=left] {$\displaystyle \gamma\vert_{[a,b]}$};
	\draw (87,94) node [anchor=north west][inner sep=0.75pt]   [align=left] {$\displaystyle \gamma ( a)$};
	\draw (480,67) node [anchor=north west][inner sep=0.75pt]   [align=left] {$\displaystyle \gamma ( a)$};
	\draw (180,17) node [anchor=north west][inner sep=0.75pt]   [align=left] {$\displaystyle c_{j_1}$};
	\draw (299.5,160.5) node [anchor=north west][inner sep=0.75pt]   [align=left] {$\displaystyle c_{j_2} $};
	\draw (154,158) node [anchor=north west][inner sep=0.75pt]   [align=left] {$\displaystyle c_{j_3} $};
	\draw (392,92) node [anchor=north west][inner sep=0.75pt]   [align=left] {$\displaystyle c_{j_1} $};
	\draw (520,205) node [anchor=north west][inner sep=0.75pt]   [align=left] {$\displaystyle c_{j_2} $};
	\draw (334,237) node [anchor=north west][inner sep=0.75pt]   [align=left] {$\displaystyle c_{j_3} $};
	\draw (188,75) node [anchor=north west][inner sep=0.75pt]   [align=left] {$\displaystyle \eta $};
	\draw (442,229) node [anchor=north west][inner sep=0.75pt]   [align=left] {$\displaystyle \eta $};

\end{tikzpicture}}
		\caption{The situation described in point $i)$ of Proposition \ref{prop:no_excess_intesections_singular}. Since $\gamma$ collides with the centres $c_{j_1}$, $c_{j_2}$ and $c_{j_3}$, the curve $\eta$ keeps trace of the homotopy class of $\gamma$. In the first case the point $\gamma(a)$ coincides with one centre, while in the other not.}\label{fig:1gon_singular}
	\end{figure}
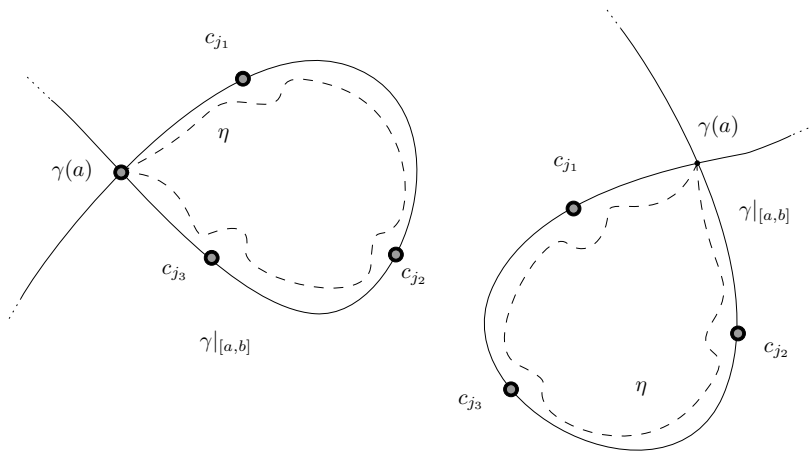
	 
	\begin{figure}[t]
		\centering
		\resizebox{.75\textwidth}{!}{\input{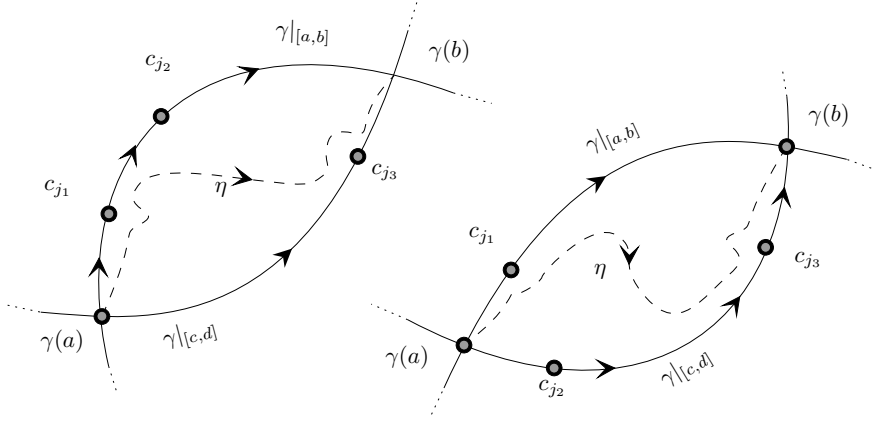}}
		\caption{The pathological situations described in points $ii)$ and $iii)$ of Proposition \ref{prop:no_excess_intesections_singular}.}\label{fig:2gon_singular}
	\end{figure}

	\section{Colliding trajectories, geodesics and obstacles}
	\label{sec:obstacle}
		
	In this section we introduce the main variational tools to exclude collisions: the obstacle problem and the blow-up analysis of collision solutions. The main idea is to study the qualitative properties of near collision solutions, going gradually closer to a singularity.  More precisely, the proof of Theorem \ref{thm:main_theorem} heavily relies on the properties of a suitable sequence of geodesics with obstacle, which approximates a collision solution in a small neighbourhood of a centre. After a rescaling, we identify the limit as a zero energy solution of a Kepler problem and we exploit some of its known properties.

	\subsection{The obstacle problem on surfaces}

	Assume that $\gamma$ is a minimiser of the Maupertuis functional \eqref{eq:def_Maupertuis} on $\mathcal{H}_\Delta(\tau)$ and that there exist a time $\bar{t} \in J$ and a centre $c$ such that $\gamma(\bar{t}) =c$. Proposition \ref{prop:collision_isolated} guarantees that such a collision instant is isolated and so we can switch to a local analysis of the minimiser.  In particular, we can find a subinterval $[a,b]\subset J$ such that
	\begin{itemize}
		\item $\bar{t}\in[a,b]$ and $\mathcal{I}_c\cap[a,b]=\{\bar{t}\}$;
		\item the function $I(t)=d_g(\gamma(t),c)^2$ is strictly convex in $[a,b]$, attaining its minimum in $\bar{t}$ (see Remark \ref{rem:lagrange_jacobi}).
	\end{itemize}	
	We call $p=\gamma(a)$ and $q=\gamma(b)$ and, without loss of generality, we assume that $p,q\in\partial B_r(c)$ for some $r>0$, where $B_r(c)$ is a metric ball with respect to $d_g$. The main idea here is to pass from a global analysis of the loop $\gamma$ to a local analysis of the path $\gamma|_{[a,b]}$, which is entirely contained in $B_r(c)$. For this reason, from now on we will lighten the notation on the homogeneity degrees $\al_j$ which appear in the definition of $V$ (see \eqref{eq:potential}), and we write
	\[
	V(q)\sim -\dfrac{m_j}{\al d_g(q,c)^\al}+U(q),
	\]
	for a smooth function $U$, whenever $q\in B_r(c)$. Indeed, since $\gamma$ is continuous, we can also suppose that
	\[
	d_g(\gamma(t),c_k)\geq C>2r,\quad\text{for any}\ t\in[a,b],\ c_k\neq c\ \text{and some}\ C>0,
	\]
	so that we can focus our efforts on a unique singularity $c$. A first important property required on $\gamma|_{[a,b]}$ is that it minimises the Maupertuis functional among all those paths which, concatenated with $\gamma|_{J\setminus[a,b]}$, belong to the space $\mathcal{H}_\Delta(\tau)$. Recalling the definition \eqref{eq:def_banach_manifold} of the Banach manifold $\mathcal{H}$, with a slight abuse of notation on the interval $J$, we introduce the space of paths
	\[
	\hat{K}=\{\eta\in\mathcal{H}:\eta(t)\in B_r(c)\setminus\{c\},\ \forall\,t\in[a,b],\ \eta(a)=p,\ \eta(b)=q\}
	\]
	and its weak $H^1$-closure $K$. Thanks to Lemma \ref{lem:restriction}, $\gamma|_{[a,b]}$ is a minimiser of $\mathcal{M}_h$ in the space
	\[
	\mathcal{K}(\tau)=\{\eta\in K:\,\eta\#\gamma|_{J\setminus[a,b]}\in\mathcal{H}_\Delta(\tau)\}.
	\]
	Following \cite{SoaTer2012}, for $\ve>0$ we introduce the function
	\[
	d(\ve)=\min\left\lbrace\mathcal{M}_h(\eta):\,\eta\in\mathcal{K}(\tau),\ \min_{t\in[a,b]}d_g(\eta(t),c)=\ve\right\rbrace,
	\]
	which is well-defined because the space
	\[
	\left\lbrace \eta\in\mathcal{K}(\tau):\,\min_{t\in[a,b]}d_g(\eta(t),c)=\ve\right\rbrace
	\]
	is weakly closed in $H^1$. \begin{lem}
		\label{lemma:continuity_d}
		The value $d(0)$ is achieved by $\gamma|_{[a,b]}$ and the function $d(\ve)$ is continuous in $\ve=0$. 
	\end{lem}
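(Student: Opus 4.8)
The plan is to treat the two assertions separately: first show $d(0)=\mathcal{M}_h(\gamma|_{[a,b]})$, and then prove the two one‑sided inequalities $\liminf_{\varepsilon\to0^+}d(\varepsilon)\ge d(0)$ and $\limsup_{\varepsilon\to0^+}d(\varepsilon)\le d(0)$.

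\emph{The value $d(0)$.} By Lemma \ref{lem:restriction}, $\gamma|_{[a,b]}$ minimises $\mathcal{M}_h$ on the whole of $\mathcal{K}(\tau)$; since $\gamma$ collides with $c$ at $\bar t\in[a,b]$ we have $\min_{t\in[a,b]}d_g(\gamma(t),c)=0$, so $\gamma|_{[a,b]}$ is admissible in the problem defining $d(0)$ and $d(0)\le\mathcal{M}_h(\gamma|_{[a,b]})$. Conversely every competitor for $d(0)$ belongs to $\mathcal{K}(\tau)$, hence has $\mathcal{M}_h$ at least $\mathcal{M}_h(\gamma|_{[a,b]})$. Thus $d(0)=\mathcal{M}_h(\gamma|_{[a,b]})$ and the infimum is attained by $\gamma|_{[a,b]}$.

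\emph{Lower bound.} We may assume $\liminf_{n}d(\varepsilon_n)<+\infty$ along a sequence $\varepsilon_n\to0^+$, and, passing to a subsequence, that $d(\varepsilon_n)$ converges; let $\eta_n\in\mathcal{K}(\tau)$ attain $d(\varepsilon_n)$ (the minimum exists since the constraint set is weakly closed, $\mathcal{M}_h$ is weakly lower semicontinuous and coercive on curves with range in $B_r(c)$). Then $(\eta_n)$ is bounded in $H^1$, so up to a subsequence $\eta_n\rightharpoonup\eta^*$ weakly in $H^1$ and uniformly. Since $\mathcal{K}(\tau)$ is weakly closed, $\eta^*\in\mathcal{K}(\tau)$; by uniform convergence and \eqref{hyp:bounds_metric}, $\min_{t\in[a,b]}d_g(\eta^*(t),c)=\lim_n\varepsilon_n=0$, so $\eta^*$ is admissible for $d(0)$. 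Using \eqref{hyp:energy_bound} (so $h-V>0$), Fatou's lemma for the potential term and weak lower semicontinuity of the Dirichlet integral give $\mathcal{M}_h(\eta^*)\le\liminf_n\mathcal{M}_h(\eta_n)$, hence $d(0)\le\mathcal{M}_h(\eta^*)\le\liminf_n d(\varepsilon_n)$.

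\emph{Upper bound (the technical core).} By Remark \ref{rem:lagrange_jacobi} the function $t\mapsto d_g(\gamma(t),c)^2$ is strictly convex on $[a,b]$ with minimum $0$ at $\bar t$, so for $\varepsilon<r$ the set $\{t\in[a,b]:d_g(\gamma(t),c)\le\varepsilon\}$ is a single interval $[t_\varepsilon^-,t_\varepsilon^+]\ni\bar t$, which shrinks to $\{\bar t\}$ as $\varepsilon\to0$, with $\gamma(t_\varepsilon^\pm)\in\partial B_\varepsilon(c)$. Define $\eta_\varepsilon$ to equal $\gamma$ on $J\setminus[t_\varepsilon^-,t_\varepsilon^+]$ and a constant–speed parametrisation of an arc of $\partial B_\varepsilon(c)$ joining $\gamma(t_\varepsilon^-)$ to $\gamma(t_\varepsilon^+)$ on $[t_\varepsilon^-,t_\varepsilon^+]$, so that $\min_{[a,b]}d_g(\eta_\varepsilon,c)=\varepsilon$. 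Since $\gamma$ has no singular $1$–gon in $\widehat M\cup\{c\}$ (Proposition \ref{prop:no_excess_intesections_regular}), $\gamma|_{[t_\varepsilon^-,t_\varepsilon^+]}$ does not wind around $c$ and is embedded in the topological disc $\overline{B_\varepsilon(c)}$; as any two arcs in a disc with the same endpoints are homotopic rel endpoints, the replacing boundary arc can be chosen so that the surgery preserves the homotopy class, whence $\eta_\varepsilon\in\mathcal{K}(\tau)$ and $d(\varepsilon)\le\mathcal{M}_h(\eta_\varepsilon)$. Writing $L_\varepsilon=t_\varepsilon^+-t_\varepsilon^-$ and $\ell_\varepsilon\le2\pi\varepsilon(1+o(1))$ for the length of the replacing arc, the estimate $d_g(\gamma(t),c)^2\le|t-\bar t|\int_{|s-\bar t|\le|t-\bar t|}|\dot\gamma|_g^2\to0$ yields $\varepsilon^2/L_\varepsilon\to0$, hence $\ell_\varepsilon^2/L_\varepsilon\to0$, so $\int_J|\dot\eta_\varepsilon|_g^2\to\int_J|\dot\gamma|_g^2$. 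For the potential factor the singular part of $V$ only decreases on $[t_\varepsilon^-,t_\varepsilon^+]$, and
\[
\left|\int_J\big(h-V(\eta_\varepsilon)\big)-\int_J\big(h-V(\gamma)\big)\right|\ \le\ \frac{m_j}{\alpha}\int_{t_\varepsilon^-}^{t_\varepsilon^+}d_g(\gamma(t),c)^{-\alpha}\,dt+O(L_\varepsilon)\ \longrightarrow\ 0,
\]
the first term vanishing because $h-V(\gamma)\in L^1([a,b])$ (finiteness of $\mathcal{M}_h(\gamma)$) and the integration is over a shrinking interval. Hence $\mathcal{M}_h(\eta_\varepsilon)\to\mathcal{M}_h(\gamma|_{[a,b]})=d(0)$, i.e. $\limsup_{\varepsilon\to0^+}d(\varepsilon)\le d(0)$, and continuity at $0$ follows.

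\emph{Main difficulty.} The delicate step is the upper bound: one has to push $\gamma$ off the centre at a uniformly controlled Maupertuis cost despite the singularity of $V$. This rests on the a priori integrability $\int_{[a,b]}d_g(\gamma(\cdot),c)^{-\alpha}<\infty$, which forces simultaneously $L_\varepsilon/\varepsilon^{\alpha}\to0$ and $\varepsilon^2/L_\varepsilon\to0$, and on the absence of winding of the minimiser around $c$, needed to keep the modified loop inside the class $\mathcal{K}(\tau)$.
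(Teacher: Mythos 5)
Your identification of $d(0)=\mathcal{M}_h(\gamma|_{[a,b]})$ via Lemma \ref{lem:restriction} agrees with the paper, and your analytic estimates in the upper bound (kinetic term $\ell_\varepsilon^2/L_\varepsilon\to0$ via Cauchy--Schwarz and absolute continuity, potential term controlled by $\int_{t_\varepsilon^-}^{t_\varepsilon^+}d_g(\gamma,c)^{-\alpha}\to0$ since $h-V(\gamma)\in L^1$) are correct and are exactly the kind of "asymptotic estimates" the paper delegates to a modification of \cite[Lemma 17]{TerVen2007}. Incidentally, your lower-bound paragraph is superfluous: since $\gamma|_{[a,b]}$ minimises $\mathcal{M}_h$ on all of $\mathcal{K}(\tau)$, every competitor for $d(\varepsilon)$ satisfies $\mathcal{M}_h\ge d(0)$, so $d(\varepsilon)\ge d(0)$ for free.

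The genuine gap is the topological admissibility of your competitor, i.e.\ the claim $\eta_\varepsilon\in\mathcal{K}(\tau)$. You justify it by saying that any two arcs in the disc $\overline{B_\varepsilon(c)}$ with the same endpoints are homotopic rel endpoints; but the constraint lives in the \emph{punctured} surface $\widehat M$ (via membership of the concatenation in $\mathcal{H}_\Delta(\tau)$), and in the punctured disc the two simple boundary arcs joining $\gamma(t_\varepsilon^-)$ to $\gamma(t_\varepsilon^+)$ are \emph{not} homotopic rel endpoints --- they differ by the generator of $\pi_1(B_\varepsilon(c)\setminus\{c\})$ --- so the choice of arc matters and a homotopy in the unpunctured disc says nothing about it. Moreover $\gamma$ itself passes through $c$, hence has no homotopy class in $\widehat M$ at all: "preserving the homotopy class" must be phrased as showing that $\gamma|_{J\setminus[t_\varepsilon^-,t_\varepsilon^+]}\#A_\varepsilon$ lies in the weak closure $\mathcal{H}_\Delta(\tau)$, which requires exhibiting collision-free loops in $[\tau]$ converging to the surgered loop, with the circular arc chosen according to the local rel-endpoint class (in the punctured ball) of the approximating loops near the collision; your appeal to point $iii)$ of Proposition \ref{prop:no_excess_intesections_regular} ("no winding") concerns sub-loops of $\gamma$, not the rel-endpoint class of the ejection--collision arc, and gives no criterion for which of the two arcs to take nor a proof that the chosen surgery stays in the closure. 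This is precisely the delicate point hidden behind the paper's citation (and its remark about describing the collision--ejection asymptotics in exponential coordinates as a perturbation of the Euclidean case), so as written your upper bound does not yet yield $d(\varepsilon)\le\mathcal{M}_h(\eta_\varepsilon)$.
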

	\begin{proof}
		The first part of this lemma is again a consequence of Lemma \ref{lem:restriction}, while the continuity follows from a slight modification of \cite[Lemma 17]{TerVen2007}, once similar asymptotic estimates are provided (see also the proof of Proposition \ref{prop:convergence_blowups}). Indeed, using exponential coordinates around $c$, it is possible to describe the asymptotic behaviour of a collision solution as a small perturbation of the Euclidean case described in \cite{TerVen2007}.
	\end{proof}
	
	For $0<\ve_1<\ve_2$ define the space
	\[
	\mathcal{K}_{\ve_1,\ve_2}(\tau)=\left\lbrace \eta\in\mathcal{K}(\tau):\,\min_{t\in[a,b]}d_g(\eta(t),c)\in[\ve_1,\ve_2]\right\rbrace,
	\]
	which is weakly closed in $H^1$ too and so $\mathcal{M}_h$ admits a minimiser therein. For this reason, the following set of paths is well defined too:
	\[
	\tilde{\mathcal{K}}_{\ve_1,\ve_2}(\tau)=\left\lbrace \eta\in\mathcal{K}_{\ve_1,\ve_2}(\tau):\,\mathcal{M}_h(\eta)=\min_{\mathcal{K}_{\ve_1,\ve_2}(\tau)}\mathcal{M}_h,\ \min_{t\in[a,b]}d_g(\eta(t),c)<\ve_2\right\rbrace.
	\]
	Since we have assumed that $\gamma$ collides in $c$, it is reasonable to expect that the previous set of paths is definitely non-empty. This is the content of the next result. 
	\begin{lem}
		Assume that $\gamma$ collides with $c$. Then, for any $\ve>0$, there exist $0<\ve_1<\ve_2<\ve$ such that
		\[
		\tilde{\mathcal{K}}_{\ve_1,\ve_2}(\tau)\neq\emptyset.
		\]
	\end{lem}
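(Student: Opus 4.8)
The plan is to argue by contradiction: suppose that for some $\ve>0$ the set $\tilde{\mathcal{K}}_{\ve_1,\ve_2}(\tau)$ is empty for \emph{every} choice $0<\ve_1<\ve_2<\ve$. I would first fix attention on the function $d(\cdot)$ restricted to $[0,\ve]$. Since $\gamma$ collides with $c$, Lemma~\ref{lemma:continuity_d} tells us that $d(0)=\mathcal{M}_h(\gamma|_{[a,b]})$ is attained and that $d$ is continuous at $0$; hence for $\delta>0$ small we have $d(\ve')<d(0)+\delta$ for all $\ve'\in[0,\ve]$ after possibly shrinking $\ve$. The point of the emptiness assumption is that it forces a strict monotonicity statement on $d$: if no minimiser of $\mathcal{M}_h$ over $\mathcal{K}_{\ve_1,\ve_2}(\tau)$ has $\min_t d_g(\eta(t),c)<\ve_2$, then every such minimiser attains its minimal distance exactly at the value $\ve_2$, which (by a standard comparison, sliding the level set inward) says that $d$ is non-decreasing on $[0,\ve_2]$ and in fact that $d(\ve_2)\le d(\ve_1)$ cannot be improved — more precisely that for every level $\ve'\in(0,\ve)$ one has $d(\ve')\ge d(\ve'')$ whenever $\ve''<\ve'$, i.e. $d$ is non-increasing on $(0,\ve)$.

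The key step is then to contradict this monotonicity by a local competitor construction near the centre. Working in exponential coordinates around $c$, where by \eqref{eq:potential} the potential looks like $-\tfrac{m}{\al d_g(\cdot,c)^\al}+U$ with $\al\in[1,2)$ and $U$ smooth, I would take the collision arc $\gamma|_{[a,b]}$ and replace a small piece of it near the collision instant $\bar t$ by an arc that stays on the sphere $\{d_g(\cdot,c)=\ve'\}$ for a short time and is connected to $p,q$ by near-radial segments. Because $\al<2$, the singular part of $-V$ is \emph{integrable} along such a detour, while the kinetic cost of the detour is controlled: this is exactly the Gordon-type estimate that distinguishes the weak-force regime, and it shows that for $\ve'$ small the value $\mathcal{M}_h$ of this competitor is \emph{strictly smaller} than $d(0)$, and moreover decreases as $\ve'$ decreases only up to a point — the balance between the (bounded) gain in the potential integral $\int(h-V)$ as $\ve'\to 0^+$ and the kinetic term produces a value of $d$ that must have an interior local behaviour incompatible with $d$ being monotone non-increasing all the way down to $0$ with $d(0)$ attained. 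Concretely: $d(0)$ is attained by the colliding arc, $d(\ve')<d(0)$ for small $\ve'>0$ by the detour competitor, hence $d$ cannot be non-increasing on $(0,\ve)$ — there must be $\ve_1<\ve_2$ with $d(\ve_1)>\min_{[\ve_1,\ve_2]}d$ attained at an interior point, which is precisely the assertion that $\tilde{\mathcal{K}}_{\ve_1,\ve_2}(\tau)\ne\emptyset$.

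I would formalise this as follows. By Lemma~\ref{lemma:continuity_d}, $d(0)=\min_{\mathcal K(\tau)}\{\mathcal M_h:\ \text{arc collides}\}$ is attained. Pick $\ve>0$ small. If $d(\ve')\ge d(0)$ for all $\ve'\in(0,\ve)$, then the colliding arc $\gamma|_{[a,b]}$ already minimises $\mathcal{M}_h$ over $\mathcal{K}_{\ve',\ve}(\tau)$ for every $\ve'$, with minimal distance $0<\ve'$ — wait, that is impossible since colliding arcs are excluded from $\mathcal K_{\ve',\ve}(\tau)$; by continuity of $d$ at $0$ and $d(0)$ being attained, there is a sequence $\ve_n\downarrow0$ with $d(\ve_n)\to d(0)$, and by the detour construction above $d(\ve_n)\le d(0)-c\,\ve_n^{\,2-\al}$ for some $c>0$ (weak-force gain), contradicting $d(\ve_n)\to d(0)$ from above. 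Hence $d$ must attain an interior minimum on some $[\ve_1,\ve_2]\subset(0,\ve)$, and any $\mathcal{M}_h$-minimiser over $\mathcal{K}_{\ve_1,\ve_2}(\tau)$ realising that value has minimal distance strictly inside $(\ve_1,\ve_2)$, so it belongs to $\tilde{\mathcal K}_{\ve_1,\ve_2}(\tau)$.

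The main obstacle, and the step I expect to require the most care, is the quantitative detour estimate in exponential coordinates: one must show that in the perturbed metric the radial/spherical competitor really does beat $d(0)$ by an amount that is a definite power of $\ve'$, with all the error terms from the curvature of $g$, from $U$, and from the parametrisation (via $\omega^2$ in \eqref{eq:omega}) absorbed. This is essentially the non-Euclidean version of the asymptotic analysis the authors attribute to \cite{TerVen2007, SoaTer2012}, and the excerpt already signals (in Lemma~\ref{lemma:continuity_d} and Remark~\ref{rem:lagrange_jacobi}) that the needed asymptotics are available; I would invoke exactly those expansions rather than redo them.
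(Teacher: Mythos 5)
There is a genuine gap, and it sits exactly at what you call the key step. The claimed detour estimate $d(\ve_n)\le d(0)-c\,\ve_n^{2-\al}$ cannot hold: every competitor with minimal distance $\ve_n$ from $c$ still belongs to $\mathcal{K}(\tau)$, and $d(0)=\mathcal{M}_h(\gamma|_{[a,b]})=\min_{\mathcal{K}(\tau)}\mathcal{M}_h$ by Lemma~\ref{lem:restriction} (this is the first assertion of Lemma~\ref{lemma:continuity_d}), so $d(s)\ge d(0)$ for \emph{every} $s$. If a local Gordon-type deformation strictly beat the colliding arc, collisions of minimisers would be excluded outright, the whole obstacle/blow-up machinery of Section~\ref{sec:obstacle} would be superfluous, and one would contradict Theorem~\ref{thm:technical_thm}~$ii)$, where collision--reflection arcs between two Newtonian centres genuinely minimise in their class. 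Your own parenthetical ("wait, that is impossible\dots") is the symptom of this tension, and your two monotonicity statements ("$d(\ve')\ge d(\ve'')$ whenever $\ve''<\ve'$" versus "$d$ non-increasing") point in opposite directions, so the formalised version does not close.

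What you missed is that the emptiness assumption already hands you strictness for free, so no competitor construction is needed. If $\tilde{\mathcal{K}}_{\ve_1,\ve_2}(\tau)=\emptyset$, then every minimiser over the weakly closed set $\mathcal{K}_{\ve_1,\ve_2}(\tau)$ has minimal distance exactly $\ve_2$, whence $d(\ve_2)<d(s)$ for all $s\in[\ve_1,\ve_2)$ (equality would yield a minimiser of $\mathcal{M}_h$ on $\mathcal{K}_{\ve_1,\ve_2}(\tau)$ with minimal distance $s<\ve_2$, since the level set $\{\min_t d_g(\eta(t),c)=s\}$ is weakly closed). Assuming emptiness for all $0<\ve_1<\ve_2<\ve$ therefore makes $d$ strictly decreasing on $(0,\ve)$; letting the smaller radius tend to $0$ and using continuity of $d$ at $0$ (Lemma~\ref{lemma:continuity_d}) gives $d(t)\le d(0)$, which together with $d\ge d(0)$ forces $d\equiv d(0)$ on $(0,\ve)$, contradicting the strict decrease. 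This elementary monotonicity-plus-continuity argument is the content of the proof the paper cites (\cite[Lemma 5.3]{Cas2017}); the asymptotic expansions of \cite{TerVen2007,SoaTer2012} enter only in proving continuity of $d$ at $0$, not here. (A minor further point: the conclusion only requires the minimum over $[\ve_1,\ve_2]$ of $d$ to be attained at some $s<\ve_2$, not at an interior point of the interval.)
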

	\begin{proof}
		The proof goes exactly as in \cite[Lemma 5.3]{Cas2017}.
	\end{proof}
	
    As a consequence of the previous lemma, we can find two sequences of positive numbers $(\ve_n),(\bar{\ve}_n)$ such that
    \[
    0<\ve_n<\bar{\ve}_n,\quad \ve_n,\bar\ve_n\to 0^+,
    \]
    and a sequence of paths $(\eta_n)\subset\mathcal{K}(\tau)$ such that
	\[
	\eta_n \in \tilde{\mathcal{K}}_{\ve_n,\bar{\ve}_n}(\tau), \quad \mathcal{M}_h(\eta_n) = d(\ve_n),\ \forall\,n\in\N.
	\]	
	In particular, since by Lemma \ref{lemma:continuity_d} the function $d$ is continuous at $0$, we see that
	\[
	\lim_{n\to+\infty} \mathcal{M}_h(\eta_n) = \mathcal{M}_h(\gamma|_{[a,b]}).
	\]	
	Since the paths $\eta_n$ are minimisers of a geodesic with obstacle problem (or geodesics on a surface with boundary problem, see for instance \cite{geodesic_with_obstacle_I,geodesic_with_obstacle_II}), they share some nice regularity properties which are summarised in the following result.

	\begin{prop}[Regularity of obstacle minimisers]
		\label{prop:regularity_obstacle}
		For any $n\in\N$:
		\begin{enumerate}[label=\roman*)]
			\item $\eta_n \in \mathscr{C}^1(a,b)$;
			\item  $\eta_n$ is of class $\mathscr{C}^2$ on any sub-interval of $[a,b] \setminus T_n \uguale  \eta_n^{-1}(\partial B_{\ve_n}(c))$ and solves the following second order system:
			\[
				\omega^2_n\frac{D \dot {\eta}_n}{d t} = - \nabla V(\eta_n), \quad \omega_n^2 = \frac{ \int_a^b\left[h-V(\eta_n)\right]}{\frac12\int_a^b \vert\dot \eta_n\vert_g^2};
			\]
			\item the total energy of $\eta_n(t)$ is constant on $[a,b]$. In particular:
			\[
				\frac{\omega_n^2}{2}\vert\dot \eta_n(t)\vert_g^2 + V(\eta_n(t)) = h,\ \forall t \in [a,b];
			\]
			\item  the set $T_n =  \eta_n^{-1}(\partial B_{\ve_n}(c))$ is an interval;
			\item using exponential coordinates centred at $c$,  we can write $\eta_n(t) = \exp_c(r_n(t) e^{i\vt_n(t)})$. The angular part $\vt_n(t)$ is strictly monotone and $\mathscr C^2$ on $T_n$.
		\end{enumerate}
		\begin{proof}
			Point $i)$ follows from \cite[Theorem 1]{geodesic_with_obstacle_I}. Point $ii)$ is basically a Maupertuis principle and thus follows by direct differentiation of the Maupertuis functional. Point $iii)$ is a consequence of the same argument used in Proposition \ref{prop:conservation_energy_through_collisions}. Point $iv)$ can be proved using a version of \eqref{eq:lagrange_jacobi_exponential_coordinates} for $\eta_n$. Indeed, using point $iii)$ one has
			\[
				\frac12\frac{d^2}{dt^2} d_g(c,\eta_n(t))^2 = g(X,\dot\eta_n(t))^2- \frac{m_i}{\omega_n^2d_g(c,\eta_n(t))^{\alpha}} + d_g(c,\eta_n(t))(k(\eta_n)g(Y,\dot{\eta}_n)^2+g( \nabla U(\eta_n(t)), X))
			\]
			on any interval contained in $[a,b] \setminus T_n$. Notice that, by the coercivity of $\mathcal{M}_h$, $\omega_n^2$ is a bounded sequence (see also \cite[Lemma 4.30]{SoaTer2012}). It follows that $\frac{d^2}{dt^2} d_g(c,\eta_n(t))^2$ can be uniformly bounded from below on $[a,b]$ for all $n$ sufficiently large, in a small ball centred at $c$. This implies that $d_g(c,\eta_n(t))^2$ are definitely convex near the obstacle, thus if there exist two instants $t_1<t_2$ such that $d_g(c,\eta_n(t_i)) = \ve_n$, then the same holds for all intermediate times. 
			
			Point $v)$ follows easily from the conservation of energy. In fact, introducing exponential coordinates on the obstacle and using $iv)$, we have:
			\[
			\begin{aligned}
			\dot \eta_n(t) &=\dot \vt_n(t) \ve_n \,d_{\ve_ne^{i\vt_n}}\exp_c(ie^{i\vt_n(t)}), \\ \frac{2(h-V(\eta_n(t))}{\omega_n^2} &=\vert d_{\ve_ne^{i\vt_n}} \exp_c(ie^{i\vt_n(t)})\vert_g^2 \dot\vt_n(t)^2 \ve_n^2.
			\end{aligned}
			\]			
			One easily deduces monotonicity of $\vt_n(t)$ from the last formula. Set $Y_t = d_{\ve_ne^{i\vt_n}} \exp_c (i e^{i \vt_n(t)})$, differentiating the energy identity yields an equation for $\vt_n(t)$:
			\begin{equation}
				\label{eq:ODE_angular_part_obstacle}
				\ddot{\vt}_n(t) =- \frac{1}{\vert Y_t \vert_g }\left(g \left(\frac{\nabla V (\eta_n(t))}{\omega_n^2 \ve_n}+\dot\vt_n(t) D_tY_t,\frac{Y_t}{\vert Y_t\vert_g}\right)\right).
			\end{equation}
		\end{proof}
	\end{prop}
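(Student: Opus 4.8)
The plan is to establish the five assertions one at a time, combining the variational characterisation of $\eta_n$ as a constrained minimiser of $\mathcal{M}_h$ with classical regularity for geodesics with a smooth obstacle. For (i), recall that, up to affine reparametrisation, minimising $\mathcal{M}_h$ with fixed endpoints $p=\gamma(a)$, $q=\gamma(b)$ over curves avoiding the open ball $B_{\ve_n}(c)$ is equivalent to minimising the Jacobi--Maupertuis length $\int_a^b\sqrt{h-V(\eta)}\,|\dot\eta|_g\,dt$. Since $V$ is smooth on $B_r(c)\setminus\{c\}$ (see \eqref{eq:potential}) and $h-V>0$ there by \eqref{hyp:energy_bound}, the conformal metric $g_J=(h-V)g$ is smooth on $B_r(c)\setminus\{c\}$ and $\overline{B_{\ve_n}(c)}$ is a smooth domain; thus $\eta_n$ is, locally, a $g_J$-length minimiser with a smooth obstacle, and the regularity theory of \cite{geodesic_with_obstacle_I,geodesic_with_obstacle_II} gives $\eta_n\in\mathscr{C}^1(a,b)$, the curve being a concatenation of $g_J$-geodesic arcs in $\widehat M\setminus B_{\ve_n}(c)$ and arcs of $\partial B_{\ve_n}(c)$.

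Next I would treat (ii) and (iii) by localisation. On any open $I\subset[a,b]\setminus T_n$ the curve lies in the interior of the admissible set, so compactly supported variations are admissible and $\eta_n|_I$ is a free critical point of $\mathcal{M}_h$; Proposition \ref{prop:maupertuis_principle}, applied on $I$, yields the second-order system with the \emph{global} constant $\omega_n^2$ read off from $\eta_n$ on all of $[a,b]$, and the usual bootstrap gives $\mathscr{C}^2$ regularity there. For the conservation of energy I would repeat the argument of Proposition \ref{prop:conservation_energy_through_collisions} verbatim: the inner variations $f_\lambda(t)=t+\lambda\varphi(t)$ with $\varphi\in\mathscr{C}_c^\infty(\mathring{[a,b]})$ do not change the image of $\eta_n$, hence preserve the homotopy class and the obstacle constraint $\min_t d_g(\eta_n,c)=\ve_n$, so each $\eta_{n,\lambda}$ remains a competitor; minimality forces $\frac{d}{d\lambda}\mathcal{M}_h(\eta_{n,\lambda})|_{\lambda=0}=0$, which gives $\frac{\omega_n^2}{2}|\dot\eta_n|_g^2+V(\eta_n)=h$ a.e., hence everywhere by the $\mathscr{C}^1$ regularity from (i).

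Assertion (iv) is the heart of the matter and rests on a Lagrange--Jacobi estimate that is \emph{uniform in $n$}. In exponential coordinates centred at $c$, combining the computation leading to \eqref{eq:lagrange_jacobi_exponential_coordinates} with the energy identity from (iii) gives, on $[a,b]\setminus T_n$, the relation $\frac12\frac{d^2}{dt^2}d_g(c,\eta_n)^2=\frac{2-\alpha}{\alpha}\,\frac{m_i}{\omega_n^2\,d_g(c,\eta_n)^\alpha}+O(1)$. The key point is that $(\omega_n^2)$ is bounded above by the coercivity of $\mathcal{M}_h$ (cf. \cite[Lemma 4.30]{SoaTer2012}) and $\alpha\in[1,2)$, so the right-hand side is bounded below by a positive constant once $d_g(c,\eta_n)$ is small, for all $n$ large; hence there is a fixed ball $\mathcal{N}\ni c$ on which $t\mapsto d_g(c,\eta_n(t))^2$ is strictly convex along free arcs, constant $(=\ve_n^2)$ along $T_n$, and everywhere $\mathscr{C}^1$. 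If $t_1<t_2$ both satisfy $d_g(c,\eta_n(t_i))=\ve_n$, then a continuation argument — convexity forbids $d_g(c,\eta_n)^2$ from exceeding $\ve_n^2$ on $[t_1,t_2]$ as long as the curve stays in $\mathcal{N}$, which then equals $\ve_n^2$ and the curve cannot leave $\mathcal{N}$ — forces $[t_1,t_2]\subset T_n$; thus $T_n$ is an interval. I expect this uniformity, namely controlling $\omega_n^2$ from above and the $O(1)$ remainder so that the convexity, and hence the maximum-principle-type continuation, holds for all $n$ large, to be the main technical obstacle of the proof.

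Finally, for (v): on the interval $T_n$ we have $\eta_n(t)=\exp_c(\ve_n e^{i\vt_n(t)})$ with $\ve_n$ below the injectivity radius, so $\vt_n$ is obtained by composing $\eta_n$ with a smooth inverse of $\exp_c|_{\partial B_{\ve_n}(c)}$ and is therefore $\mathscr{C}^1$ by (i). Differentiating, $\dot\eta_n=\dot\vt_n\,\ve_n\,Y_t$ with $Y_t=d_{\ve_n e^{i\vt_n}}\exp_c(ie^{i\vt_n(t)})\neq0$, and the energy identity from (iii) reads $\frac{2(h-V(\eta_n))}{\omega_n^2}=|Y_t|_g^2\,\dot\vt_n^2\,\ve_n^2$; since $h-V(\eta_n)>0$ by \eqref{hyp:energy_bound}, $\dot\vt_n$ never vanishes, giving strict monotonicity. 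Differentiating this identity once more yields a first-order ODE for $\dot\vt_n$, of the form $\ddot\vt_n=-|Y_t|_g^{-1}\,g\!\left(\frac{\nabla V(\eta_n)}{\omega_n^2\ve_n}+\dot\vt_n D_tY_t,\,\frac{Y_t}{|Y_t|_g}\right)$, whose right-hand side is continuous in $t$ once $\vt_n\in\mathscr{C}^1$; hence $\vt_n\in\mathscr{C}^2$ (and smoother if $V$ is), which completes the outline.
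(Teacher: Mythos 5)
Your proposal is correct and follows essentially the same route as the paper's proof: point $i)$ by the geodesics-with-obstacle regularity theory, point $ii)$ by differentiating $\mathcal{M}_h$ along variations supported away from the obstacle, point $iii)$ by the inner time-reparametrisation argument of Proposition \ref{prop:conservation_energy_through_collisions}, point $iv)$ by the Lagrange--Jacobi convexity estimate made uniform in $n$ through the coercivity bound on $\omega_n^2$, and point $v)$ via the energy identity on the obstacle and the resulting ODE \eqref{eq:ODE_angular_part_obstacle}. The uniformity issue you flag as the main obstacle is exactly what the paper resolves in the same way, so no further work is needed.
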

	
	\subsection{Blow-up analysis}
	
	In the previous section we introduced the obstacle technique and we provided a sequence of curves $\eta_n(t)$ on $[a,b]$ and a sequence of radii $\ve_n$ converging to zero, which satisfy:
	\begin{enumerate}[label = \roman*)]
		\item $\eta_n(a) = p\in \partial B_r(c)$, and $\eta_n(b) = q \in \partial B_r(c)$,
		\item $\eta_n^{-1}(\partial B_{\ve_n}(c)) = T_n \uguale [t_n^-,t_n^+]$.
	\end{enumerate}
	Now, we want to investigate the behaviour of $\eta_n$ as the parameter $\ve_n$ goes to zero. In the flat case, we would define a blow-up sequence of the form $\ve_n^{-1}( \eta_n( \ve_n^\nu t)-c)+c$ for a suitable value of $\nu$. In this way we would map an open ball around $c$ to larger and larger balls contained in $\mathbb{R}^2$ and the obstacle $\partial B_{\ve_n}(c)$ to an euclidean sphere of radius 1. Since we are working on a surface $M$, we have to slightly modify the argument using exponential coordinates centred at $c$. In any case, the rate $\nu$ depends on the time spent by $\eta_n$ on the obstacle, i.e., on the quantity  $t_n^+-t_n^-$. The next lemma gives an estimate for this quantity.
	
	\begin{lem}
		Let $[t_n^-,t_n^+]$ be the interval on which $\eta_n(t)$ lies on the obstacle. There exists a positive constant $C_1$ such that:
		\[
		0\le t_n^+-t_n^- \le C_1 \ve_n^{\frac{\alpha+2}{2}}.
		\]
		\begin{proof}
			We use the same notation as in the proof of Proposition \ref{prop:regularity_obstacle}, referring to the proof of point $v)$. Assuming that $\vt_n$ is strictly increasing, when $t\in[t_n^-,t_n^+]$ and $\ve_n\to 0^+$ we have:
			\begin{equation}
				\label{eq:angular_velocity_expansion}
				\dot\vt_n(t) = \sqrt{\frac{2(h-V(\eta_n(t))}{\omega_n^2 \ve_n^2 \vert Y_t \vert_g^2}} =\sqrt{\frac{1}{\ve_n^{\alpha+2}}\left(\frac{2m_j}{\alpha\omega_n^2 \vert Y_t \vert_g^2} + o(\ve^{\alpha}_n)\right)}.
			\end{equation}
			Moreover, using the fundamental theorem of calculus:
			\[
			\vt_n(t_n^+)-\vt_n(t_n^-) = \ve_n^{-\frac{\alpha+2}{2}} \int_{t_n^-}^{t_n^+} \sqrt{\frac{2m_j}{\alpha \omega_n^2 \vert Y_t \vert_g^2} + o(\ve_n^{\alpha})} \ge C_n \ve_n^{-\frac{\alpha+2}{2}}(t_n^+-t_n^-).
			\]
			Notice that $C_n>0$ is uniformly bounded in $n$ and that the angular variation of $\theta_n$ is bounded by $2 \pi k$, $k \in \mathbb{N}.$ The number $k$ depends solely on the prescribed homotopy class  $[\tau]$ and can be estimated a priori.
		\end{proof}
	\end{lem}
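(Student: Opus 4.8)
The plan is to exploit the conservation of energy from point $iii)$ of Proposition~\ref{prop:regularity_obstacle}, together with the precise form \eqref{eq:potential} of the potential near $c$. On the obstacle interval $T_n = [t_n^-, t_n^+]$ the radial coordinate is frozen, $r_n(t) \equiv \ve_n$, so in the exponential chart used in point $v)$ one has $\dot\eta_n(t) = \dot\vt_n(t)\,\ve_n\,Y_t$ with $Y_t := d_{\ve_n e^{i\vt_n}}\exp_c\!\big(i e^{i\vt_n(t)}\big)$. First I would substitute this into the energy identity $\tfrac{\omega_n^2}{2}|\dot\eta_n(t)|_g^2 + V(\eta_n(t)) = h$ and use that on $\partial B_{\ve_n}(c)$ the expansion \eqref{eq:potential} gives $h - V(\eta_n(t)) = \tfrac{m_j}{\al \ve_n^\al} + O(1)$ as $\ve_n \to 0^+$; this produces
\[
\dot\vt_n(t)^2 = \frac{2\big(h - V(\eta_n(t))\big)}{\omega_n^2\,\ve_n^2\,|Y_t|_g^2} = \frac{1}{\ve_n^{\al+2}}\left( \frac{2 m_j}{\al\,\omega_n^2\,|Y_t|_g^2} + o(\ve_n^\al) \right).
\]

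The second step is to turn this into a lower bound for $|\dot\vt_n|$ that is uniform in $n$. Two ingredients are needed: the coercivity of $\mathcal{M}_h$ forces the sequence $\omega_n^2$ to stay bounded (cf.\ \cite[Lemma 4.30]{SoaTer2012}), and, since $\exp_c$ is a diffeomorphism near $c$ with differential tending to the identity at $c$, the norm $|Y_t|_g$ is bounded above and below by positive constants, uniformly in $t$ and for $\ve_n$ small. Hence there is $c_0 > 0$, independent of $n$, such that $|\dot\vt_n(t)| \ge c_0\,\ve_n^{-(\al+2)/2}$ on $T_n$ for all $n$ large. Integrating over $T_n$ and using the monotonicity of $\vt_n$ from point $v)$, one gets
\[
\big| \vt_n(t_n^+) - \vt_n(t_n^-) \big| = \int_{t_n^-}^{t_n^+} |\dot\vt_n(t)|\,dt \ge c_0\,\ve_n^{-(\al+2)/2}\,(t_n^+ - t_n^-).
\]

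The last step, and the only slightly delicate one, is to bound the total angular excursion $\big|\vt_n(t_n^+) - \vt_n(t_n^-)\big|$ by a constant independent of $n$. Here minimality of $\eta_n$ in $\mathcal{K}_{\ve_n,\bar\ve_n}(\tau)$ is what matters: a curve winding too many times around $c$ would admit a competitor of strictly smaller Maupertuis value, along the lines of Propositions~\ref{prop:no_excess_intesections_regular}--\ref{prop:no_excess_intesections_singular}, so the number of full turns is controlled a priori by the self-intersection number of a taut representative of $[\tau]$; thus $\big|\vt_n(t_n^+) - \vt_n(t_n^-)\big| \le 2\pi k$ for some $k = k([\tau]) \in \N$. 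Combining with the previous inequality yields $t_n^+ - t_n^- \le \tfrac{2\pi k}{c_0}\,\ve_n^{(\al+2)/2}$, while the bound $0 \le t_n^+ - t_n^-$ is immediate from $T_n$ being an interval (point $iv)$); setting $C_1 = 2\pi k / c_0$ finishes the argument. I expect the genuine content to sit in the extraction of the uniform constant $c_0$ --- that is, the two-sided control of $|Y_t|_g$ together with the boundedness of $\omega_n^2$ --- and in the a priori bound on the winding number; the rest is the algebra already displayed above.
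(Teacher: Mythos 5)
Your proposal is correct and follows essentially the same route as the paper: derive the expansion for $\dot\vt_n$ on the obstacle from energy conservation, integrate over $T_n$, and bound the total angular variation a priori in terms of the homotopy class $[\tau]$. The only difference is that you spell out explicitly the ingredients the paper leaves implicit (boundedness of $\omega_n^2$ and the two-sided control of $\vert Y_t\vert_g$ giving the uniform constant), which is a faithful elaboration rather than a different argument.
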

	
	We now define the following family of continuous functions on $\mathbb{R}$ with values in $(T_cM,g) \sim (\mathbb{R}^2,g_e)$:
	\[
		u_n(s) = \begin{cases}
			\ve_n^{-1}\exp_c^{-1}(p) &\text{if } s< \ve_n^{-\frac{\alpha+2}{2}}(a-\frac{t_n^++t_n^-}{2}) \\
			\ve_n^{-1}\left(\exp_c^{-1}\eta_n(\ve_n^{\frac{\alpha+2}{2}} s+ \frac{t_n^++t_n^-}{2})\right) &\text{elsewhere} \\
			\ve_n^{-1} \exp_c^{-1}(q)  &\text{if } s> \ve_n^{-\frac{\alpha+2}{2}}(b-\frac{t_n^++t_n^-}{2})
		\end{cases}.
	\]
	
	\begin{prop}[Uniform convergence on compacts of $u_n$]\label{prop:convergence_blowups}
		There exists a subsequence of the sequence $u_n$ which converges to a limit $u$ in the $\mathscr C^1$ norm, on any compact subset of $\mathbb{R}$.
		
		There exists $s_0 \ge0$ such that the limit $u\in\mathscr C^2(\mathbb{R}\setminus\{\pm s_0\})$. Moreover, on $\mathbb{R} \setminus [-s_0,s_0]$, $u$ is a solution of an $\alpha-$Kepler problem centred at 0, which has constant velocity on the boundary of a ball of radius $1$, for any $s \in [-s_0,s_0].$
		
		Furthermore, up to subsequences, set $\omega^2 = \lim_n \omega_n^2$. For all $s\in \mathbb{R}$, $u$ has zero energy and constant angular momentum equal to $\frac{ 2m }{\alpha \omega^2}$.
		\begin{proof}
			The first step of the proof is to write the differential equation satisfied by $u_n(s)$ on growing intervals centred at $0$. Recall that $t_n^\pm$ are defined as the extrema of the interval on which $\eta_n$ lies on the obstacle, a metric ball of radius $\ve_n$. Set 
			\[
			s_n = \ve_n^{-\frac{\alpha+2}{2}}\frac{t_n^+-t_n^-}{2};
			\]
			notice that, for $s \in [-s_n,s_n]$, $u_n(s)$ lies on the unit sphere and satisfies a different motion equation than on $[-s_n,s_n]^c$. If we put $t(s)=\ve_n^{\frac{\al+2}{2}}s+\frac{t_n^++t_n^-}{2}$, the velocity of $u_n(s)$ reads:
			\begin{equation}
				\label{eq:velocity_u_and_eta}	
				\dot{u}_n(s) = \ve_n^{\frac{\alpha}{2}}d_{\eta_n(t)} \exp_c^{-1} \dot \eta_n(t(s)) \iff \dot\eta_n(t(s))  = \ve_n^{-\frac{\alpha}{2}}d \exp_c \dot u_n(s).
			\end{equation}			
			We now compute the second derivative of $u_n(s)$ differentiating the above equation in local coordinates. Let us denote by $\Gamma(\cdot,\cdot)$ the operator $\frac{D}{dt}-\frac{d^2}{dt^2}$, i.e., the part of the covariant derivative involving the curvature. We have:
			\begin{equation}
				\label{eq:second_derivative_un}
			\begin{aligned}
				\ddot{u}_n(s) &= \ve_n^{\frac{\alpha}{2}}\frac{d}{ds} (d_{\eta_n(t(s))}\exp_c^{-1})\dot\eta_n(t(s))+\ve_n^{\alpha+1} d_{\eta_n(t(s))}\exp_c^{-1} \ddot \eta_n(t(s))  \\	&= \ve_n^{\alpha+1}\left(d^2_{\eta_n} \exp_c^{-1} (\dot\eta_n,\dot \eta_n) +d_{\eta_n} \exp_c^{-1}\ddot \eta_n\right) \\&= \ve_n \left( d^2_{\exp_c(\ve_n u_n)} \exp^{-1}_c(d \exp_c \dot u_n,d \exp_c \dot u_n)- \Gamma (d \exp_c \dot u_n,d \exp_c \dot u_n)\right) \\
				&\quad+ \ve_n^{\alpha+1}d_{\eta_n}\exp_c^{-1}\left(\frac{D\dot\eta_n}{dt}\right).
			\end{aligned}
		    \end{equation}
			Now, using point $iii)$ of Proposition \ref{prop:regularity_obstacle}, we specialise this formula to the case when $s$ is outside the interval $[-s_n,s_n]$. Recall that the potential splits as a sum of a singular and regular part, namely:
			\begin{equation}
				\label{eq:motion_sing_plus_reg}
				\frac{D\dot\eta_n}{dt} = -\frac{\nabla V(\eta_n)}{\omega_n^2} = \frac{m_j X}{\omega_n^2 d_g(c,\eta_n)^{\alpha+1}} + \nabla U(\eta_n),
			\end{equation}    
			where $X$ stands for the dual of the differential of $ d_g(c,\eta_n)$ and coincides with the direction of the unique unit-speed radial geodesic connecting $c$ and $\eta_n$.
			Now, by definition of $u_n$, we can rewrite $d_g(c,\eta_n)$ in terms of the Euclidean norm $\vert u_n\vert$, namely:
			\[
			d_g(c,\eta_n) = d_g(c,\exp_c(\ve_n u_n)) = \ve_n \vert u_n\vert.
			\]			
			Plugging \eqref{eq:motion_sing_plus_reg} into \eqref{eq:second_derivative_un} and collecting all the higher order terms in $\ve_n$, we obtain the following perturbed Kepler problem equation for $u_n$:
		    \begin{equation}
				\label{eq:ODE_blowup_outside_obstacle}
				\ddot{u}_n(s) =- \frac{m_i u_n(s)}{{\omega_n^2}\vert u_n(s) \vert^{\alpha+2}} + O(\ve_n).
			\end{equation}
			
			To get an equation for $u_n$ on $[-s_n,s_n]$, we introduce polar coordinates as in Proposition \ref{prop:regularity_obstacle}. Since $\eta_n$ in this case takes values on the boundary of the ball $B_{\ve_n}(c)$, we can express $u_n$ as $u_n(s) = e^{i\vt_n(t(s))}$. Explicit differentiation yields:
			\[
			\ddot{u}_n(s) = \ve_n^{\alpha+2} \left(\ddot{\vt}_n(t(s)) i e^{i \vt_n(t(s))}-\dot{\vt}^2_n(t(s))e^{i\vt_n(t(s))}\right).
			\]			
			Reasoning as above, we want to single out the leading order term in $\ve_n$. We start considering the second order derivative $\ddot{\vt}_n$. Setting $Y_t = d_{\ve_ne^{i\vt_n(t)}} \exp_c(ie^{i \vt_n(t)})$ and using \eqref{eq:ODE_angular_part_obstacle}, we obtain:
			\[
			\ve_n^{\alpha+2} \ddot\vt_n(t(s))=- \frac{\ve_n^{\alpha+2} }{\vert Y_{t(s)} \vert_g }\left(g\left(\frac{\nabla V (\eta_n(t(s)))}{\omega_n^2 \ve_n}+\dot\vt_n(t(s)) D_{t(s)}Y_{t(s)},\frac{Y_{t(s)}}{\vert Y_{t(s)}\vert_g}\right)\right).
			\]		
	    	We have already expanded $\nabla V(\eta_n(t))$ in \eqref{eq:motion_sing_plus_reg}. Notice that, by Gauss Lemma, $X$ and $Y_{t}$ are always orthogonal. Thus, the only contribution of $\nabla V$ that survives is the one coming from the regular part of $V$ which, however, is of order $\ve_n^{\alpha+2}$. 
			
	    	Let us consider the term $\vert Y_t \vert^{-2}_g g(D_tY_t,Y_t)$. A straightforward computation using the explicit expression of $Y_t$ shows that:
		    \[
		    \begin{aligned}
			    g\left(\frac{D_t Y_t}{\vert Y_t \vert_g},\frac{ Y_t}{\vert Y_t \vert_g}\right) &= \frac{1}{\vert Y_t\vert_g}  \frac{d}{dt}\vert Y_t\vert_g, \\ \frac{d}{dt}\vert Y_t\vert_g &= 2 \dot\vt_n(t)\left[ g\left(Y_t, \ve_n d^2_{u_n}\exp_c(ie^{i\vt_n(t)} ,ie^{i\vt_n(t)})- d_{u_n}\exp_c(e^{i\vt_n(t)})\right)\right].
		    \end{aligned} 
	        \]
		
		    Again, notice that $Y_t$ is orthogonal to $d_{u_n} \exp_c(e^{i\vt_n(t)})$ and that $\vert Y_t\vert_g \to 1$ as $\ve_n\to 0$. Plugging the above equation in the expression for $\ve_n^{\alpha+2}\ddot\vt_n$ and using \eqref{eq:angular_velocity_expansion} we find that $\ve_n^{\alpha+2}\ddot\vt_n = O(\ve_n)$.
			
	    	To get an asymptotic estimate for $\dot \vt_n$ on the obstacle we use again \eqref{eq:angular_velocity_expansion}, from which it is clear that $\dot \vt_n \sim \sqrt{\frac{2 m_j}{\alpha \omega_n^2}} \ve_n^{-\frac{\alpha+2}{2}}$. Thus, the counterpart of \eqref{eq:ODE_blowup_outside_obstacle} on the obstacle reads:
			\begin{equation}  	\label{eq:ODE_blowup_on_the_obstacle}
				\ddot{u}_n(s) = - \frac{2 m_i}{\alpha \omega_n^2} u_n(s) + O(\ve_n).
			\end{equation}     
			
			Now, we want to show the existence of a converging subsequence of $u_n$. We do this by a straightforward application of Ascoli-Arzelà theorem. We have to show that $(u_n)$ is bounded and equi-continuous. To do so, we give a uniform bound on the velocities $\dot{u}_n$ using the conservation of energy for $\eta_n$ (see $iii)$ in Proposition \ref{prop:regularity_obstacle}). From \eqref{eq:velocity_u_and_eta} and \eqref{hyp:bounds_metric}, we have:
			\[
			\frac{2(h-V(\eta_n))}{\omega_n^2} =\vert \dot \eta_n\vert_g^2 = \ve_n^{-\alpha} \vert d\exp_c \dot u_n \vert^2_g \ge \ve_n^{-\alpha} \lambda \vert \dot u_n \vert^2,
			\]
			for some $\lambda>0$. Notice that the quantity $ \ve_n^{\alpha}\frac{2(h-V(\eta_n))}{\lambda \omega_n^2}$ is uniformly bounded in $n$ since the sequence $(\eta_n)$ lives in an annulus of inner radius $\ve_n$. It follows that:
			\[
			\vert u_n(s) \vert = \begin{cases}
					1 &\text{ if } s \in [-s_n,s_n],\\
					\vert \int_{\pm s_n}^s\dot{u}_n(x)dx\vert \le C|s\mp s_n| &\text{ otherwise}
			\end{cases}
			\]			
			This settles equi-boundedness, once a compact interval is fixed. Similarly, the uniform bound on $\dot{u}_n$ implies equi-continuity of $u_n$, again by the fundamental theorem of calculus. Notice that the estimates on $\dot u _n$ hold everywhere since, from point $iv)$ of Proposition \ref{prop:regularity_obstacle}, the energy is always conserved. Thus, $(u_n)$ admits a subsequence converging uniformly on any compact interval.  
			
			Now we apply again Ascoli-Arzelà theorem to $(\dot u_n)$. We have already seen that $(\dot{u}_n)$ is bounded in the $\mathscr{C}^0$ topology. To ensure equi-continuity, it is sufficient to use  \eqref{eq:ODE_blowup_outside_obstacle} and \eqref{eq:ODE_blowup_on_the_obstacle} to get a uniform bound on $\ddot{u}_n(s).$  This is possible since the above equations express the second derivative $\ddot{u}_n$ in term of continuous functions on a bounded domain, $u_n$ and $\dot u_n$, which we already know to be uniformly bounded in $n$. 
			
			At this point, choosing intervals $[-k,k]$ for $k \in \mathbb{N}$, we can build a subsequence of $(u_n)$ converging to a function $v$ in $\mathscr{C}^1$-norm on compact intervals. Looking at the definition of $s_n$ at the beginning of this proof, up to subsequences we can assume that $s_n \to s_0\ge0$. Iterating the application of Ascoli-Arzelà given in the previous steps, we get that the limit $v$ is actually $\mathscr{C}^2$ (or $\mathscr{C}^k$, if that is the regularity of $(\eta_n)$) on $\mathbb{R}\setminus \{-s_0,s_0\}$. 
			
			Moreover, \eqref{eq:ODE_blowup_outside_obstacle} implies that $v$ is a classical solution of an $\alpha-$Kepler problem on $\mathbb{R}\setminus [-s_0,s_0]$. It is well known that $\alpha-$Kepler problems are integrable and first integrals are the total energy and the angular momentum. We now compute these quantities for the limit $v$ and prove that they are actually conserved, even on $[-s_0,s_0]$, where $v$ moves with constant angular speed on the ball of radius 1.
			
			Consider the energy first. Since $u_n$ solves a perturbed $\alpha-$Kepler problem, we can define:
			\[
			h_n(s) = \frac12 \vert \dot u_n (s) \vert^2-\frac{m_i}{\alpha \omega_n^2|u_n (s)|^{\alpha}},
			\]
			which converges to the corresponding quantity $h$ for $v$ for all $s \in \mathbb{R}\setminus [-s_0,s_0]$
			\[
			h(s) = \frac12 \vert \dot v (s) \vert^2-\frac{m_i}{\alpha \omega^2|v (s)|^{\alpha}}.
			\]
			On the other hand, using the conservation of energy for $\eta_n$, we see that $h_n(s) = O(\ve_n)$ and thus converges to $0$. We have already pointed out that, thanks to \eqref{eq:ODE_blowup_on_the_obstacle}, $u_n$ is almost a circular motion of constant angular velocity $\pm\sqrt{\frac{2 m_i}{\alpha \omega^2_n}}$ for $s \in [-s_n,s_n]$ and thus $h(s) = 0$ for all $s \in \mathbb{R}$.     
			The proof of the conservation of the angular momentum is completely analogous.
		\end{proof}
	\end{prop}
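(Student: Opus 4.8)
The plan is to produce the limit curve $u$ by a compactness argument on the rescaled trajectories $u_n$, after first extracting the differential equations they satisfy in the two regimes: \emph{on the obstacle}, $s\in[-s_n,s_n]$ with $s_n\uguale\ve_n^{-(\al+2)/2}(t_n^+-t_n^-)/2$, and \emph{off the obstacle}, $|s|>s_n$. First I would differentiate twice the defining relation $u_n(s)=\ve_n^{-1}\exp_c^{-1}\bigl(\eta_n(t(s))\bigr)$, where $t(s)=\ve_n^{(\al+2)/2}s+(t_n^++t_n^-)/2$, in local coordinates, using the second order equation of Proposition \ref{prop:regularity_obstacle}, point $ii)$, for $\eta_n$ and the splitting $V=-m_j/(\al\,d_g(c,\cdot)^\al)+U$ valid near $c$. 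Collecting powers of $\ve_n$ — and checking that the purely Riemannian correction terms (the curvature operator $\frac{D}{dt}-\frac{d^2}{dt^2}$, the deviation of $d\exp_c$ from the identity, the mean curvature term already met in Remark \ref{rem:lagrange_jacobi}) all carry an extra factor $\ve_n$ on bounded sets — this yields the perturbed $\al$-Kepler equation $\ddot u_n=-m_j\,u_n/(\omega_n^2|u_n|^{\al+2})+O(\ve_n)$ off the obstacle. On the obstacle I would instead use polar coordinates $u_n(s)=e^{i\vt_n(t(s))}$, insert the angular ODE \eqref{eq:ODE_angular_part_obstacle} together with the expansion \eqref{eq:angular_velocity_expansion} for $\dot\vt_n$, and extract the leading order to get the harmonic equation $\ddot u_n=-\tfrac{2m_j}{\al\omega_n^2}u_n+O(\ve_n)$, whose unit-circle solutions are uniform rotations.

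Second, I would establish the a priori bounds needed for Ascoli--Arzel\`a. Conservation of energy for $\eta_n$ (Proposition \ref{prop:regularity_obstacle}, point $iii)$), the metric comparison \eqref{hyp:bounds_metric}, and the fact that on the relevant time interval $\eta_n$ stays in an annulus of inner radius $\ve_n$, give a uniform bound $|\dot u_n(s)|\le C$; combined with $|u_n|\equiv1$ on $[-s_n,s_n]$ this yields equiboundedness of $(u_n)$ on compact $s$-intervals. The two ODEs above then bound $\ddot u_n$ uniformly on compacts, since their right-hand sides are continuous functions of the already controlled $u_n,\dot u_n$ (and $|u_n|\ge1$ near $s=0$, so the singular term does not blow up). A diagonal extraction over the intervals $[-k,k]$, $k\in\N$, produces a subsequence with $u_n\to u$ in $\mathscr C^1$ on compacts; passing to further subsequences I also take $s_n\to s_0\ge0$ and $\omega_n^2\to\omega^2$ (both bounded away from $0$ and $\infty$ by coercivity of $\mathcal M_h$, cf. \cite[Lemma 4.30]{SoaTer2012}). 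Bootstrapping Ascoli--Arzel\`a, now applied to $\dot u_n$, upgrades the convergence and shows $u\in\mathscr C^2(\R\setminus\{\pm s_0\})$; letting $n\to\infty$ in the two equations shows that $u$ solves the $\al$-Kepler problem on $\R\setminus[-s_0,s_0]$ and is a uniform rotation of the unit circle, hence of constant speed, on $[-s_0,s_0]$.

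Finally, for the conserved quantities I would transport the first integrals of $\eta_n$ through the blow-up. Writing $h_n(s)=\tfrac12|\dot u_n(s)|^2-m_j/(\al\omega_n^2|u_n(s)|^\al)$ and using the scaling \eqref{eq:velocity_u_and_eta} together with conservation of energy for $\eta_n$, one gets $h_n(s)=O(\ve_n)$, hence $h_n\to0$; since on $[-s_0,s_0]$ the explicit uniform rotation of the unit circle also has vanishing $\al$-Kepler energy, the limit $u$ has zero energy for all $s$. An entirely analogous computation for the scalar angular momentum $u_n\wedge\dot u_n$ shows it converges to the constant stated in the proposition. The main obstacle is the first step: one must verify carefully, in exponential coordinates around $c$, that every non-flat correction genuinely carries an extra power of $\ve_n$, so that the limiting equations are \emph{exactly} the flat $\al$-Kepler and harmonic equations; the secondary points requiring care are the matching of the two regimes at $s=\pm s_n$ and the a priori control of $\omega_n^2$ and of the total angular variation $2\pi k$ of $\vt_n$, which is bounded in terms of the fixed homotopy class $[\tau]$.
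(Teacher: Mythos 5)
Your proposal follows essentially the same route as the paper's proof: differentiating the rescaled curve twice in exponential coordinates to obtain the perturbed $\alpha$-Kepler equation off the obstacle and the harmonic equation (via the angular ODE and the expansion of $\dot\vt_n$) on it, then a double Ascoli--Arzel\`a extraction with the energy-based velocity bounds to get $\mathscr{C}^1$ convergence and the $\mathscr{C}^2$ limit off $\pm s_0$, and finally passing the first integrals $h_n$ and the angular momentum to the limit to obtain zero energy and the stated constant angular momentum. The argument is correct and matches the paper's proof in structure and in all key estimates.
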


	From the proposition just proved we can deduce the following lemma. The proof is completely analogous to the one in \cite[Section 4.4]{SoaTer2012}.
	\begin{lem}[Total angular variation]
		\label{lemma:total_angular_variation_limit_blowup}
		The total angular variation of $u = \lim_n u_n$ is grater or equal than $\frac{2 \pi}{2-\alpha}$. Equality can hold if and only if $s_0=0$ and $u$ touches the boundary of the ball in just one point. In particular, if $\alpha>1$ the sequence $(\eta_n)$ is definitely non simple. 
	\end{lem}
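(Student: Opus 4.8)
The plan is to read off everything from the explicit description of the blow-up limit $u$ furnished by Proposition~\ref{prop:convergence_blowups}: the curve $u$ is defined on all of $\R$, stays in $\{|x|\ge 1\}\subset(T_cM,g_e)$, has zero energy, and there is $s_0\ge 0$ such that $u$ runs along $\{|x|=1\}$ with constant angular speed for $s\in[-s_0,s_0]$ while solving the zero-energy $\al$-Kepler equation $\ddot u=-\mu\,u|u|^{-\al-2}$ (with $\mu\uguale m/\omega^2$) on $\R\setminus[-s_0,s_0]$. Writing $u=re^{i\vt}$, zero energy reads $\tfrac12\dot r^2+\tfrac12 L^2r^{-2}=\tfrac{\mu}{\al}r^{-\al}$ with $L=r^2\dot\vt$; since at any instant with $|u|=1$ the velocity is purely tangential, so $\dot r=0$ there, one reads off $L^2=\tfrac{2\mu}{\al}$, in particular $L\neq 0$. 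Hence each of the two half-lines $(-\infty,-s_0]$ and $[s_0,+\infty)$ carries the unbounded half of a zero-energy Kepler orbit with pericentre radius $1$, and $|u(s)|\to+\infty$ as $s\to\pm\infty$; this is exactly what makes the angular integral below run over $(1,+\infty)$.

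Next I would compute the angular variation piece by piece. Along the outgoing branch one has $\dot r^2=\tfrac{2\mu}{\al}(r^{-\al}-r^{-2})$ and $\dot\vt=L/r^2$, so, using $L^2=2\mu/\al$,
\[
\Delta\vt_{\mathrm{out}}=\int_1^{+\infty}\frac{L\,dr}{r^2|\dot r|}=\int_1^{+\infty}\frac{dr}{r\sqrt{r^{2-\al}-1}}=\frac{1}{2-\al}\int_1^{+\infty}\frac{dw}{w\sqrt{w-1}}=\frac{\pi}{2-\al},
\]
via the substitution $w=r^{2-\al}$ and the elementary identity $\int_1^\infty\frac{dw}{w\sqrt{w-1}}=\pi$. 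By time reversal the incoming branch contributes the same $\tfrac{\pi}{2-\al}$, while the portion on $[-s_0,s_0]$ contributes $2s_0\sqrt{2\mu/\al}\ge 0$. Since $u\in\mathscr C^1(\R)$ with $\dot u$ nowhere vanishing (zero energy forces $|\dot u|^2=\tfrac{2\mu}{\al}|u|^{-\al}>0$ off the circle, and $|\dot u|=\sqrt{2\mu/\al}$ on it), $\dot\vt$ keeps a constant sign along $u$, so the three contributions add up and the total angular variation is at least $\tfrac{2\pi}{2-\al}$; equality occurs precisely when the middle term vanishes, i.e.\ $s_0=0$, in which case $u$ is a genuine zero-energy Kepler orbit meeting $\{|x|=1\}$ only at its unique pericentre, that is, in a single point.

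Finally I would handle the $\al>1$ statement, where $\tfrac{2\pi}{2-\al}>2\pi$. Since $\vt$ is strictly monotone along $u$ and $r(\vt)\to+\infty$ at both ends of its (finite) range of angles, the map $\vt\mapsto r(\vt)-r(\vt-2\pi)$ changes sign, and the intermediate value theorem produces two parameters at which $u$ takes the same value with opposite radial velocities, hence a transversal self-intersection of $u$. Because $u_n\to u$ in $\mathscr C^1$ uniformly on compact sets, and $\eta_n$ coincides, on the subinterval where it lies in $B_r(c)$, with $\exp_c$ composed with an affine time rescaling of $u_n$, this transversal crossing is inherited by $\eta_n$ for all $n$ large; thus $(\eta_n)$ is eventually non-simple. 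I expect this last transfer to be the only delicate point: one must arrange the self-crossing of $u$ to sit on a fixed compact parameter interval — so that the $\mathscr C^1$-closeness of $u_n$ applies despite the ``active'' interval growing with $n$ — and control $\dot\vt_n$ away from the obstacle (monotonicity comes from point $v)$ of Proposition~\ref{prop:regularity_obstacle}), exactly as in \cite[Section~4.4]{SoaTer2012}, to which the remaining routine verifications are deferred.
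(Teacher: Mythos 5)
Your proposal is correct and follows essentially the route the paper intends: the paper defers this lemma to the computation in \cite[Section 4.4]{SoaTer2012}, and your argument is precisely that apsidal-angle computation for the zero-energy $\alpha$-Kepler limit (using $L^2=2m/(\alpha\omega^2)$ and the substitution $w=r^{2-\alpha}$ to get $\pi/(2-\alpha)$ per unbounded branch, plus the non-negative obstacle contribution $2s_0\sqrt{2\mu/\alpha}$), together with the standard $\mathscr{C}^1$-on-compacts transfer of the self-crossing to $u_n$ and hence to $\eta_n$. The only point to watch is the degenerate case in which both crossing parameters produced by your intermediate-value argument lie on the circular arc (possible when $s_0>0$ and the arc exceeds a full turn); there the crossing is tangential rather than transversal, but then $\vt_n$ is strictly monotone on the obstacle with angular variation exceeding $2\pi$, so $\eta_n$ still self-intersects and the conclusion is unaffected.
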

	
	\section{Proof of Theorem  \ref{thm:main_theorem}}
	\label{sec:proof_thm}
	
	This section is devoted to the proof of Theorem \ref{thm:main_theorem}, which concerns the existence of infinitely many collision-less and periodic solutions. Actually, we will first prove a more technical result, and the proof of Theorem \ref{thm:main_theorem} will follow as a consequence. In particular, we will see that we are able to avoid collisions with the centres in many situations, except for some particular homotopy classes when we are dealing with Newtonian centres ($\al_j=1$). In this case, some peculiar collision solutions in which the particle bounces back and front between two centres may arise. We give a rigorous definition of these \emph{generalised solutions} in the following 
	\begin{defn}\label{def:collision_reflection}
		We say that $u\colon J\to M$ is a \emph{collision-reflection solution} of the motion and energy equations \eqref{eq:Newton}-\eqref{eq:energy} if:
		\begin{itemize}
			\item $u$ collides with two different centres $c_j,c_k$;
			\item $u$ solves \eqref{eq:Newton} outside collision instants;
			\item $u$ has constant energy $h$;
			\item $u$ reflects after any collision instant $\bar{t}\in J$, namely:
			\[
			u(\bar{t}-t)=u(\bar{t}+t),\quad\text{for any}\ t\in J.
			\]
		\end{itemize}
	\end{defn}
	
	\begin{thm}\label{thm:technical_thm}
		Recalling the definition of the potential $V$ given in \eqref{eq:def_potential}, we have two results:
		\begin{enumerate}[label = \roman*)]
		\item Assume that there exists at most one element $j\in\{1,\ldots,N\}$ such that $\alpha_j=1$ (so $\al_k>1$ whenever $k\neq j$) and that $[\tau]$ is an admissible class as in Definition \ref{def:admissible-class}. Then, any minimiser of the Maupertuis functional \eqref{eq:def_Maupertuis} in the space $\mathcal{H}_\Delta(\tau)$ is collision-less.	 
		
		\item Assume that there exist at least two distinct elements $j,k\in\{1,\ldots,N\}$ such that $\al_j=\al_k=1$ and $[\tau]$ is an admissible class as in Definition \ref{def:admissible-class}. Then, either there is a collision-less minimiser of the Maupertuis functional \eqref{eq:def_Maupertuis} in the space $\mathcal{H}_\Delta(\tau)$, or there exists a collision-reflection solution as in Definition \ref{def:collision_reflection}. In this latter case, the  minimiser $\gamma$ parametrises an arc joining two centres. In particular, this can happen if and only if there exists a representative $\xi$ of $[\tau]$ which is contained in a disk bounding the two Newtonian centres $c_j$ and $c_k$.
     	\end{enumerate}
	\end{thm}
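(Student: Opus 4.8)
The plan is a proof by contradiction built on the obstacle machinery of Section~\ref{sec:obstacle}. By Proposition~\ref{prop:existence} a minimiser $\gamma$ of $\mathcal{M}_h$ on $\mathcal{H}_\Delta(\tau)$ exists; if it is collision-less there is nothing to prove, so assume it meets $\mathcal{C}$. By Proposition~\ref{prop:collision_isolated} the collision set is finite, so we may localise at a single collision with a centre $c$, of homogeneity $\al=\al_c$. Following Section~\ref{sec:obstacle} we produce the obstacle minimisers $\eta_n$ on $[a,b]$ with radii $\ve_n\to 0^+$, and by Proposition~\ref{prop:convergence_blowups} the rescaled curves $u_n$ converge, in $\mathscr{C}^1$ on compacts, to a zero-energy solution $u$ of an $\al$-Kepler problem whose total angular variation is at least $\frac{2\pi}{2-\al}$ by Lemma~\ref{lemma:total_angular_variation_limit_blowup}.

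\emph{Weak-force centres.} Suppose first $\al=\al_c>1$. Then $\frac{2\pi}{2-\al}>2\pi$ strictly, so for $n$ large $\eta_n$ winds around $c$ by more than a full turn inside $B_r(c)$. I would argue that this forces $\eta_n$ to contain an innermost null-homotopic sub-loop — a singular $1$-gon in $\widehat M$ — which, by the super-additivity of $\mathcal{M}_h$ (Lemma~\ref{lem:restriction}) and the argument of Proposition~\ref{prop:no_excess_intesections_regular}, can be excised without leaving the obstacle class and while strictly decreasing $\mathcal{M}_h$, contradicting the minimality of $\eta_n$. (Equivalently: splicing $\eta_n$ back into $\gamma$ produces $\tilde\gamma_n\in\mathcal{H}_\Delta(\tau)$ with a removable self-intersection, contradicting minimality of $\gamma$ via Propositions~\ref{prop:no_excess_intesections_regular}--\ref{prop:no_excess_intesections_singular}.) Hence $\gamma$ cannot collide with any centre of exponent $>1$.

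\emph{Newtonian centres and the reflection alternative.} It remains to treat a collision with a Newtonian centre $c$ ($\al=1$). If the blow-up limit $u$ has angular variation $>2\pi$ (i.e.\ $s_0>0$, so $u$ spends time on the obstacle), the same $1$-gon argument gives a contradiction; thus we are reduced to the equality case of Lemma~\ref{lemma:total_angular_variation_limit_blowup}, namely $s_0=0$ and $u$ a genuine parabola touching the unit ball at one point, with total angular variation exactly $2\pi$. This is the crux: because a zero-energy Keplerian parabola has the \emph{same} incoming and outgoing asymptotic ray, the directions along which $\gamma$ enters and leaves $c$ must coincide; combined with the strict convexity of $d_g(c,\gamma(\cdot))^2$ near the collision (Remark~\ref{rem:lagrange_jacobi}) and conservation of energy, this forces $\gamma\vert_{[a,b]}$ to be the radial segment run from $\gamma(a)$ to $c$ and back, i.e.\ $\gamma$ \emph{reflects} at $c$ in the sense of Definition~\ref{def:collision_reflection}. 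I expect making this last implication rigorous — upgrading the $\mathscr{C}^1$ convergence of the $u_n$ to the statement that the entry and exit rays of $\gamma$ at $c$ agree, and ruling out intermediate windings — to be the main technical obstacle, to be handled with the asymptotic estimates already developed for Proposition~\ref{prop:convergence_blowups}.

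\emph{Globalising and concluding.} Once $\gamma$ is known to reflect at $c$, I would follow the trajectory: away from collisions $\gamma$ is a classical solution, and since $h>\sup_M V$ one has $|\dot\gamma|_g>0$ everywhere, so $\gamma$ cannot ``turn back'' at a regular point; hence, being a closed curve, after reflecting at $c$ it must reflect again, necessarily at another centre $c'$, which by the weak-force step is Newtonian as well. By the reflection symmetry $u(\bar t-t)=u(\bar t+t)$ the consecutive classical arcs are time-reverses of one another, so the collision sequence alternates between exactly two Newtonian centres $c_j,c_k$ and $\gamma$ traces the arc between them run back and forth. If there is at most one Newtonian centre this is impossible unless $c_j=c_k$, which would make $\gamma$ a contractible ``there-and-back'' segment, excluded because $[\tau]$ is non-trivial; this proves \emph{(i)}. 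If there are at least two Newtonian centres we are in the collision-reflection alternative of \emph{(ii)}, and the loop described by such a bouncing arc is freely homotopic to a simple loop enclosing exactly $\{c_j,c_k\}$, hence contained in a disk bounding these two centres — giving the ``only if'' part of the stated characterisation; conversely, when $[\tau]$ admits such a representative one checks directly that the degenerate bouncing arc has the smallest Maupertuis value in the class, so this is exactly when the alternative occurs. Throughout, admissibility of $[\tau]$ (Definition~\ref{def:admissible-class}) is what guarantees that all competitors built by the above surgeries remain in $\mathcal{H}_\Delta(\tau)$ and, in the borderline two-Newtonian-centre case, pins down precisely which classes are exceptional.
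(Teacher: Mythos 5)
Your skeleton (obstacle approximation, blow-up, dichotomy on the angular variation) matches the paper's, but the central step for weak-force centres is wrong as written. A sub-loop created by the extra winding of $\eta_n$ inside $B_r(c)\setminus B_{\ve_n}(c)$ encircles the puncture $c$, so it is \emph{not} null-homotopic in $\widehat M$ and is not a singular $1$-gon there; excising it changes the free homotopy class of $\eta_n\#\gamma\vert_{J\setminus[a,b]}$, i.e.\ it takes you out of $\mathcal{K}(\tau)$, so neither super-additivity nor minimality of $\eta_n$ yields a contradiction at that point. This is exactly why the paper does not argue on $\eta_n$ alone: it grafts $\eta_n$ back into $\gamma$, passes to a general-position desingularisation $\tilde\gamma_n$, uses admissibility to conclude that $\tilde\gamma_n$ is not taut (Lemma \ref{lem:self_intersections_eta}), invokes Theorem \ref{thm:hass_scott} to obtain a singular $1$-gon or $2$-gon, and then runs an exhaustive case analysis translating every possible $1$-gon or $2$-gon of $\tilde\gamma_n$ into a configuration of the original minimiser $\gamma$ excluded by Propositions \ref{prop:no_excess_intesections_regular} and \ref{prop:no_excess_intesections_singular}. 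The genuinely delicate case --- the $2$-gon whose second vertex lies in another ball $B_r(c_j)$, where the winding around $c$ could be compensated homotopically by the rest of $\gamma$ --- is precisely where the dichotomy $\al>1$ versus $\al=1$ is used (angular variation $>2\pi$ forces at least $4\pi$, incompatible with admissibility and minimality). Your parenthetical one-line alternative (``removable self-intersection, contradicting minimality of $\gamma$'') skips all of this; that case analysis is the proof, and without it item $i)$ is not established.

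For the Newtonian case your route differs from the paper's and is incomplete at the decisive point: you deduce reflection from the coincidence of the asymptotic rays of the zero-energy parabola, and you yourself flag the passage from the rescaled limit to the claim that $\gamma$ enters and leaves $c$ along the same ray (and is a ``radial segment'', which is neither needed nor true in general, since $V$ is not radially symmetric near $c$) as an unproved technical obstacle. The paper never proves such a local statement: the collision-reflection solution in item $ii)$ is produced by the arc-exchange argument of Proposition \ref{prop:no_excess_intesections_singular}, point $iii)$ (the two edges of a $2$-gon joining two centres carry the same Maupertuis value, so running one of them back and forth is again a minimiser), and the restriction on the exceptional homotopy classes follows because a neighbourhood of such a degenerate minimiser is either a disk or an immersed annulus, the latter being forbidden by Definition \ref{def:admissible-class}. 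Your concluding converse claim (that the bouncing arc minimises whenever the class has a representative in a disk around $c_j,c_k$) is likewise asserted without argument. In short, the overall strategy is recognisable, but the two mechanisms the paper actually relies on --- the Hass--Scott case analysis on $\tilde\gamma_n$ and the arc-exchange construction of the reflection minimiser --- are missing or replaced by claims that do not hold as stated.
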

	
	To prove this theorem, we will assume as in Section \ref{sec:obstacle} that a minimiser $\gamma$ of \eqref{eq:def_Maupertuis} in $\mathcal{H}_{\Delta}(\tau)$ has a collision with a centre $c$ at time $\bar{t}$. As before, take a sufficiently small metric ball $B_r(c)$ around the collision centre and let $[a,b]$ be the connected component of $\gamma^{-1}B_r(c)$ containing $\bar{t}$; call $p = \gamma(a)$ and $q = \gamma(b)$. Unless $\gamma\vert_{[a,\bar{t})}$ is a re-parametrisation of $\gamma\vert_{(\bar{t},b]}$, we can always assume that $p\ne q$. Indeed, we already know by point $iii)$ of Proposition \ref{prop:no_excess_intesections_regular} that there can be no singular $1-$gons, even having a centre on the image of $\gamma$. Assume that $\al\in[1,2)$ is the homogeneity degree of $V$ in $B_r(c)$ (see \eqref{eq:def_potential}). The separated study of the cases $\alpha>1$ and $\alpha=1$ as presented in the statement of Theorem \ref{thm:technical_thm} will be deepened later. Again, as in Section \ref{sec:obstacle}, we will suppose that the following conditions hold true:
	\begin{enumerate}[label = \emph{\roman*)}]
		\item  $d_g(c_j,c_k)>2 r $ for all $j\ne k$;
		\item  if $\gamma(t) \in B_r(c_j)$ for some $j$, then $\gamma$ has a collision with $c_j$ before leaving the ball $B_r(c_j)$.
	\end{enumerate} 
	From the discussion of the previous section (see in particular Proposition \ref{prop:convergence_blowups} and Lemma \ref{lemma:total_angular_variation_limit_blowup}), we know that there is a sequence of functions $(\eta_n)$ defined on $[a,b]$ and a sequence $\ve_n \to 0^+$ such that:
	\begin{enumerate}[label = \emph{\roman*)}]
		\item  $\eta_n(a) = p$, $\eta_n(b) = q$, $\eta_n([a,b]) \sset B_r(c)\setminus \mathring{ B}_{\ve_n}(c)$ and $\gamma\vert_{J\setminus[a,b]}\#\eta \in \mathcal{H}_{\Delta}(\tau)$;
		\item $\mathcal{M}_h(\eta_n) \to \mathcal{M}_h(\gamma\vert_{[a,b]})$;
		\item $\eta_n \wconv \eta$ in the $H^1$ topology and $\eta$ is a collision solution;
		\item $\eta_n$ are $\mathscr{C}^1$ and definitely non simple.
	\end{enumerate}	
	The last property turns out to be crucial. It is not compatible with our admissibility condition on the homotopy class $\tau$ given in Definition \ref{def:admissible-class}. The fact that $\eta_n$ is not simple suggests that the curve $\gamma\vert_{J\setminus[a,b]} \# \eta_n$ should not be taut. However, if we want to count the number of self intersections properly, we have to be sure that $\gamma\vert_{J\setminus[a,b]} \# \eta_n$ is contained in $M\setminus\mathcal{C}$ and it is in general position. To do so, it is enough to first perturb $\gamma$ in a neighbourhood of every collision centre preserving the homotopy class constraint (except for the collision at time $\bar{t}$, which has already been dealt with defining $\eta_n$) and then replace any arc which is run twice with two transversal arcs.  A detailed construction in the case $M= \mathbb{R}^2$ can be found in \cite{Cas2017}, however the argument is standard and local, so it will be omitted. A visual explanation is give in $b)$ and $c)$ of Figure \ref{fig:desingularization_proof}. We will denote by $\tilde{\gamma}_n $ a curve in general position obtained form $\gamma\vert_{J \setminus [a,b]} \# \eta_n$, applying the procedure just described.
	\begin{figure}[ht]
		\resizebox{.9\textwidth}{!}{\input{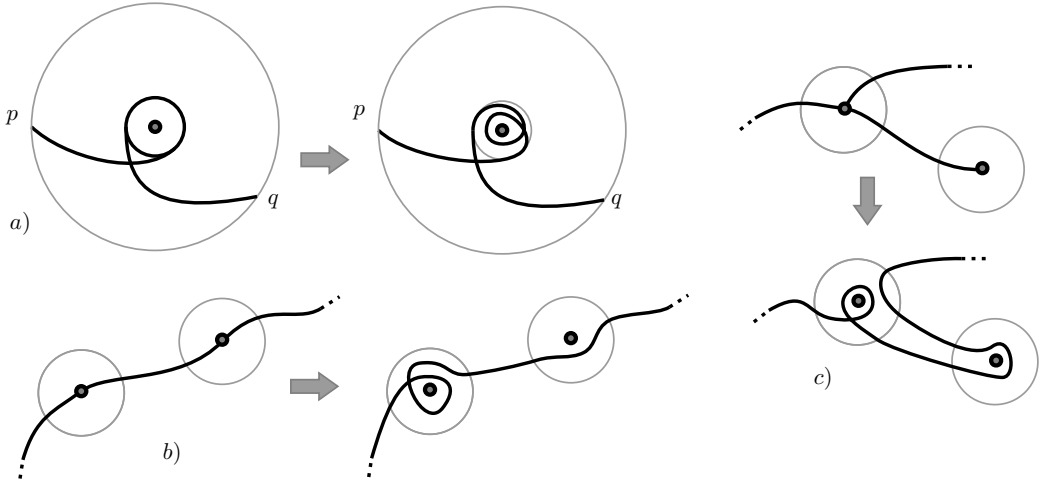}}		
		\caption{In $a)$ the arc $\zeta_n$ on the obstacle is replaced by $\tilde \zeta_n$. In $b)$ and $c)$ the desingularization process described is illustrated.}\label{fig:desingularization_proof}
	\end{figure}

	Let us introduce a little bit of notation. For any $n\in\N$, we want to split $\eta_n$ into three pieces, which we call $\eta^1_n$, $\eta^2_n$ and $\zeta_n$. We know by Proposition \ref{prop:regularity_obstacle} that the time spent on the obstacle $\eta_n^{-1}(\partial B_{\ve_n} (c)) = T_n$ is an interval, so we put $T_n=[t_n^1,t_n^2]$. We define:
	\begin{itemize}
	\item $\zeta_n$ as the portion of $\eta_n$ on the obstacle $\partial B_{\ve_n}(c)$, namely $\zeta_n\uguale\eta_n|_{T_n}=\eta_n|_{[t_n^1,t_n^2]}$;
	\item $\eta_n^1$ as the portion of $\eta_n$ before entering the obstacle, $\eta_n^1 = \eta_n \vert_{[a,t_n^1]}$;
	\item $\eta_n^2$ as the portion of $\eta_n$ after visiting the obstacle, $\eta_n^2 = \eta_n \vert_{[t_n^2,b]}$. 
    \end{itemize}
	Notice that in general $\zeta_n$ may have non transversal self-intersections, i.e., it may run over the obstacle multiple times, but this is not an issue. Indeed, we can replace $\zeta_n$ with a curve $\tilde\zeta_n$ contained in an annulus of  outer radius $\ve_n$ and inner radius smaller than $\ve_n$ as depicted in $a)$ of Figure \ref{fig:desingularization_proof}. This is made in order to ensure that $\tilde\gamma_n$ is in general position.  
	
	\begin{lem}\label{lem:self_intersections_eta}
		\label{lemma:self_intersection_eta_n}
		For $n$ sufficiently large, we have that:
		\begin{enumerate}[label = \roman*)]
			\item $\eta_n^1$ and $\eta_n^2$ are simple curves;
			\item if $\eta^1_n$ intersects $\eta_n^2$, the intersection is transversal;
			\item $\eta_n$ cannot have neither singular $1-$gon or $2-$gon;
			\item $\zeta_n$ is \emph{monotone} on the obstacle;
			\item $\tilde \gamma_n$ is not taut.
		\end{enumerate}
		\begin{proof}
			The proofs of $i)-iii)$ are straightforward adjustments of Proposition \ref{prop:no_excess_intesections_regular}. Point $iv)$ follows from the blow-up analysis provided in Proposition \ref{prop:convergence_blowups}. Point $v)$ holds since $\tilde{\gamma}_n$ belongs to an admissible homotopy class and thus there can be no innermost sub-loop enclosing just one centre.
		\end{proof}
	\end{lem}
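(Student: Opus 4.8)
The plan is to replay, in the obstacle framework, the surgery and regularity arguments already used for genuine Maupertuis minimisers in Propositions \ref{prop:no_excess_intesections_regular} and \ref{prop:no_excess_intesections_singular}, now combined with the fine structure of the obstacle minimisers recorded in Propositions \ref{prop:regularity_obstacle} and \ref{prop:convergence_blowups} and with the angular estimate of Lemma \ref{lemma:total_angular_variation_limit_blowup}. Two facts will make every surgery work: each $\eta_n$ minimises $\mathcal{M}_h$ in $\mathcal{K}_{\ve_n,\bar\ve_n}(\tau)$, so $\mathcal{M}_h$ cannot be strictly lowered by a competitor in that class; and on $[a,b]\setminus T_n$ the curve $\eta_n$ is $\mathscr{C}^2$, solves the Kepler-type system and conserves energy. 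Throughout I will use the standing assumption $p\neq q$, so that the degenerate reflection situation (treated separately in Theorem \ref{thm:technical_thm} \emph{ii)}) is excluded.

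For \emph{i)} I would observe that $d_g(c,\eta_n(\cdot))^2$ is strictly convex on $[a,b]$ --- the same Lagrange-Jacobi estimate of Remark \ref{rem:lagrange_jacobi}, uniform in $n$ for $n$ large since $\eta_n([a,b])\subset B_r(c)$ --- and attains its minimum value $\ve_n^2$ exactly on $T_n$; hence $t\mapsto d_g(c,\eta_n(t))$ is strictly decreasing on $[a,t_n^1]$ and strictly increasing on $[t_n^2,b]$. A radially strictly monotone arc is injective, so $\eta_n^1$ and $\eta_n^2$ are simple. For \emph{ii)}, at any point where $\eta_n^1$ meets $\eta_n^2$ the two velocities have equal norm (energy conservation) and opposite signs of the radial component (by the monotonicity just proved), hence cannot be equal; were they anti-parallel, reversibility of $\Ddt\dot\eta=-\nabla V$ together with uniqueness for the Cauchy problem would force the two simple arcs to coincide geometrically and therefore $p=q$, a contradiction. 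So the velocities are not parallel and the intersection is transversal, exactly as in Proposition \ref{prop:no_excess_intesections_regular} \emph{v)}; this also rules out a non-isolated (overlapping) intersection. For \emph{iii)}, a singular $1$-gon of $\eta_n$ is excised: the excised sub-arc is disjoint from $T_n$ (since $\eta_n$ stays outside $B_{\ve_n}(c)$ and $d_g(c,\eta_n(\cdot))=\ve_n$ only on $T_n$), so the surgered curve still lies in the annulus, still concatenates with $\gamma|_{J\setminus[a,b]}$ into a loop in $[\tau]$, still has minimum distance $\ve_n$ to $c$, hence still lies in $\mathcal{K}_{\ve_n,\bar\ve_n}(\tau)$, and super-additivity of $\mathcal{M}_h$ strictly lowers the functional --- contradiction. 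A singular $2$-gon is removed by exchanging the two sub-arcs: the new curve is homotopic to $\eta_n$, so it stays in the constrained class, and has the same $\mathcal{M}_h$ value but a corner, contradicting $\mathscr{C}^2$-regularity off $T_n$.

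Point \emph{iv)} is essentially a restatement of Proposition \ref{prop:regularity_obstacle} \emph{v)}: on $T_n$ one has $\eta_n(t)=\exp_c(\ve_n e^{i\vt_n(t)})$ with $\vt_n$ strictly monotone, so $\zeta_n$ winds monotonically around $\partial B_{\ve_n}(c)$; the hypothesis that $n$ be large is used only to guarantee, via the convexity estimate, that $T_n$ is an interval. Finally, for \emph{v)}: whenever $\eta_n$ fails to be simple --- which by Lemma \ref{lemma:total_angular_variation_limit_blowup} is automatic for $\al>1$ and for $n$ large --- the blow-up analysis of Proposition \ref{prop:convergence_blowups} localises the offending self-intersection inside $B_r(c)$, where no centre other than $c$ lies. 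Then $\tilde\gamma_n$ carries a self-intersection bounding a region contained in $B_r(c)$ and containing at most the single centre $c$: if that region misses $c$ it is a singular $1$-gon, so $\tilde\gamma_n$ is not taut by Theorem \ref{thm:hass_scott}; if it encloses $c$, then --- after passing to a simple sub-loop if necessary --- it is an innermost sub-loop, contractible in $M$, whose contractible component contains exactly one centre, which contradicts the admissibility of $[\tau]=[\tilde\gamma_n]$ (Definition \ref{def:admissible-class}) as soon as $\tilde\gamma_n$ is taut. In either case $\tilde\gamma_n$ cannot be taut.

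The main obstacle here is not conceptual but organisational: the entire argument consists in verifying that each cut-and-paste keeps the competitor inside the obstacle annulus and preserves the constraint $\min_{[a,b]} d_g(\eta(\cdot),c)=\ve_n$, so that minimality in $\mathcal{K}_{\ve_n,\bar\ve_n}(\tau)$ and the $\mathscr{C}^2$-regularity off $T_n$ may legitimately be invoked; the only genuinely delicate points are the interaction of these surgeries with the $\mathscr{C}^1$-only regularity of $\eta_n$ on $T_n$, and making sure that the homotopy class of the full loop $\gamma|_{J\setminus[a,b]}\#\eta_n$ is unchanged by each modification.
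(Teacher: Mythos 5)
Your argument follows essentially the same route as the paper's: points \emph{i)}--\emph{iii)} are the cut-and-paste and regularity arguments of Proposition \ref{prop:no_excess_intesections_regular} adapted to minimality in the constrained class $\mathcal{K}_{\ve_n,\bar{\ve}_n}(\tau)$, point \emph{iv)} is the monotonicity of $\vt_n$ already recorded in Proposition \ref{prop:regularity_obstacle} (the paper cites the blow-up analysis, but the content is the same), and point \emph{v)} rests on the admissibility of $[\tau]$ exactly as in the paper. The one place where you assert more than you verify --- that an excised singular $1$-gon of $\eta_n$ is disjoint from $T_n$, so that the surgered competitor stays in the constrained class --- is covered for $n$ large by the angular estimate of Lemma \ref{lemma:total_angular_variation_limit_blowup} (a sub-loop containing the whole obstacle passage winds around $c$ and is therefore not contractible in $\widehat{M}$), which is no more than what the paper itself leaves implicit in its ``straightforward adjustments''.
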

	
	\begin{proof}[Proof of Theorem \ref{thm:technical_thm}]
		To reach a contradiction, we show that if $[\tau]$ is admissible then the sequence $\eta_n$ is made of simple curves. The proof we are going to present now essentially consists in excluding all the possible ways in which $\eta_n$ can self-intersect.
		
		Thanks to Lemma \ref{lemma:self_intersection_eta_n}, we know that self-intersections can occur in two instances alone: if $\eta_n^1$ meets $\eta_n^2$ or if $\zeta_n$ (or $\tilde\zeta_n$) self-intersects.
		
		Assume first that $\zeta_n$ has some self-intersections. We know by the blow-up analysis (see point $v)$ of Proposition \ref{prop:regularity_obstacle} and equation \eqref{eq:angular_velocity_expansion}) that the angular velocity of $\eta_n$ on the obstacle is never vanishing. Thus, we can assume that $\tilde \zeta_n$ is a path winding around $c$ several times in the same direction, having transversal intersections. Pick a point $t_n$ such that $\tilde\zeta_n (t_n)$ is a non vertex point of the innermost loop formed by $\tilde \zeta_n$. It is not hard to see that $\eta_n\vert_{[a,t_n]}$ and $\eta_n\vert_{[t_n,b]}$ have the same properties of the $\eta_n^i$ listed in Lemma \ref{lemma:self_intersection_eta_n}. Since we are now dealing only with transversal intersections, it makes sense to rename $\eta_n^1=\eta_n|_{[a,t_n]}$ and $\eta_n^2=\eta_n|_{[t_n,b]}$. In this way, the only step needed to conclude is to show that $\eta_n^1$ and $\eta_n^2$ do not intersect.
		
		Assume then that $\eta_n^1$ meets $\eta_n^2$. This can occur only in a finite number of points $p_1,\dots,p_k$, ordered increasingly with respect to the distance from the centre $c$. From Lemma \ref{lem:self_intersections_eta}, we know that $\tilde \gamma_n$ is not taut and thus, from Theorem \ref{thm:hass_scott}, it must have either a $1-$gon or a $2-$gon. We are then left to rule out these two situations:
		\begin{itemize}
		\item Assume that there is a $1-$gon. Since $1-$gons are contractible, the vertex must coincide with $p_k$. The $1-$gon can be either contained in $\left(\cup_{c_j \in \mathcal{C}}B_r(c_j)\right)^c$ or pass through some of the balls. By construction, if $\gamma$ enters any of those balls it must have a collision with the associate centre. Thus $\tilde \gamma_n$ has a singular $1-$gon if and only if $\gamma$ is as in point $i)$ of Proposition \ref{prop:no_excess_intesections_regular} or as in point $i)$ of Proposition \ref{prop:no_excess_intesections_singular}. Neither of these cases agrees with the minimality of $\gamma$ in $\mathcal{H}_\Delta(\tau)$.
		
		\item Assume that there is a $2-$gon. By point $iii)$ of Lemma \ref{lemma:self_intersection_eta_n} any such $2-$gon cannot be completely contained in the ball $B_r(c)$. Hence we have again two possibilities: either there is an edge of the $2-$gon contained in $B_r(c)$, or both edges lie partially outside the said ball. 
		
		Assume that there is one edge of the $2-$gon completely contained in the ball $B_r(c)$. One of the two vertices must be $p_k$, denote  the other one by $q_k$. Recall that, by construction, if $\tilde\gamma_n$ comes back to the ball $B_r(c)$ then $\gamma$ must have a collision with $c$. This means that the singular $2-$gon of $\tilde \gamma_n$ corresponds to a singular $1-$gon of $\gamma$ with vertex in $c$. This possibility cannot occur for minimisers of $\mathcal{M}_h$ in $\mathcal{H}_\Delta(\tau)$, as stated in point $iii)$ of Proposition \ref{prop:no_excess_intesections_regular} and point $i)$ of Proposition \ref{prop:no_excess_intesections_singular}.
		
		Assume now that both edges of the $2-$gon lie partially outside of the ball $B_r(c)$. We have again two possibilities: the vertex $q_k$ can be contained in one of the balls $B_r(c_j)$ (including $B_r(c)$) or not.
		
		Assume first that $q_k \notin B_r(c_j)$ for all $c_j \in \mathcal{C}$. Arguing as before, the $2-$gon of $\tilde \gamma_n$ must correspond to one of the configurations listed in Proposition \ref{prop:no_excess_intesections_regular} and Proposition \ref{prop:no_excess_intesections_singular}. In this case, we have either the situation in point $iv)$ of Proposition \ref{prop:no_excess_intesections_regular} if the edges do not touch any of the balls $B_r(c_j)$, or we are in the case of point $ii)$ of Proposition \ref{prop:no_excess_intesections_singular} if they do. Both cases are not possible.
		
		Assume that $q_k \in B_r(c_j)$ for some $c_j$ or $q_k\in B_r(c)$. This is the first situation in which we have to treat separately the case $\alpha =1$ and $\alpha>1$.
		
		Suppose first that $\alpha>1$. Arguing as before, we see that we are in case $iii)$ of Proposition \ref{prop:no_excess_intesections_singular}. Without loss of generality we can assume that $\gamma$ collides with $c$ and has a collision-reflection arc. Since, by Lemma \ref{lemma:total_angular_variation_limit_blowup}, the total angular variation is  strictly greater than $2\pi$, it must be at least $4\pi$. Since $[\tau]$ is an admissible class, if $\tilde \gamma$ winds at least twice around $c$ it must also wind once around it in the opposite direction, after leaving the ball $B_r(c_j)$. A visual explanation of the argument is given in Figure \ref{fig:proof_2gon_1} below.
		
		\begin{figure}[ht]
			\centering
			\resizebox{.75\textwidth}{!}{\tikzset{every picture/.style={line width=0.75pt}} 

\begin{tikzpicture}[x=0.75pt,y=0.75pt,yscale=-1,xscale=1]
	
	\draw [line width=1.5]    (77.5,97.5) .. controls (116.33,99) and (142.33,95) .. (157.5,45.5) ;
	\draw [line width=1.5]    (77.5,97.5) .. controls (85.33,45) and (78.33,29) .. (40.33,41) ;
	\draw [line width=1.5]    (77.5,97.5) .. controls (102.33,130) and (66.33,146) .. (113.33,162) ;
	\draw  [fill={rgb, 255:red, 0; green, 0; blue, 0 }  ,fill opacity=1 ] (73,97.5) .. controls (73,95.01) and (75.01,93) .. (77.5,93) .. controls (79.99,93) and (82,95.01) .. (82,97.5) .. controls (82,99.99) and (79.99,102) .. (77.5,102) .. controls (75.01,102) and (73,99.99) .. (73,97.5) -- cycle ;
	\draw  [fill={rgb, 255:red, 155; green, 155; blue, 155 }  ,fill opacity=1 ] (74.67,97.5) .. controls (74.67,95.94) and (75.94,94.67) .. (77.5,94.67) .. controls (79.06,94.67) and (80.33,95.94) .. (80.33,97.5) .. controls (80.33,99.06) and (79.06,100.33) .. (77.5,100.33) .. controls (75.94,100.33) and (74.67,99.06) .. (74.67,97.5) -- cycle ;
	\draw  [fill={rgb, 255:red, 0; green, 0; blue, 0 }  ,fill opacity=1 ] (153,45.5) .. controls (153,43.01) and (155.01,41) .. (157.5,41) .. controls (159.99,41) and (162,43.01) .. (162,45.5) .. controls (162,47.99) and (159.99,50) .. (157.5,50) .. controls (155.01,50) and (153,47.99) .. (153,45.5) -- cycle ;
	\draw  [fill={rgb, 255:red, 155; green, 155; blue, 155 }  ,fill opacity=1 ] (154.67,45.5) .. controls (154.67,43.94) and (155.94,42.67) .. (157.5,42.67) .. controls (159.06,42.67) and (160.33,43.94) .. (160.33,45.5) .. controls (160.33,47.06) and (159.06,48.33) .. (157.5,48.33) .. controls (155.94,48.33) and (154.67,47.06) .. (154.67,45.5) -- cycle ;
	\draw [line width=1.5]  [dash pattern={on 1.69pt off 2.76pt}]  (22.33,47) -- (35.23,42.7) -- (40.33,41) ;
	\draw [line width=1.5]  [dash pattern={on 1.69pt off 2.76pt}]  (113.33,162) -- (132.33,167) ;
	\draw [line width=1.5]    (321.33,113) .. controls (340.33,103) and (339.94,79.06) .. (356.33,68) .. controls (372.73,56.94) and (406.33,69.5) .. (409.33,58) .. controls (412.33,46.5) and (387.33,29.8) .. (390.33,52.8) .. controls (393.33,75.8) and (433.33,37) .. (404.33,33) .. controls (375.33,29) and (388.33,52) .. (389.33,66) .. controls (390.33,80) and (338.33,105) .. (326.33,93) ;
	\draw [line width=1.5]    (321.33,113) .. controls (292.33,123) and (331.33,161) .. (355.33,167) ;
	\draw [line width=1.5]    (282.33,46) .. controls (332.33,34) and (300.33,69) .. (326.33,93) ;
	\draw  [fill={rgb, 255:red, 0; green, 0; blue, 0 }  ,fill opacity=1 ] (315,102.5) .. controls (315,100.01) and (317.01,98) .. (319.5,98) .. controls (321.99,98) and (324,100.01) .. (324,102.5) .. controls (324,104.99) and (321.99,107) .. (319.5,107) .. controls (317.01,107) and (315,104.99) .. (315,102.5) -- cycle ;
	\draw  [fill={rgb, 255:red, 155; green, 155; blue, 155 }  ,fill opacity=1 ] (316.67,102.5) .. controls (316.67,100.94) and (317.94,99.67) .. (319.5,99.67) .. controls (321.06,99.67) and (322.33,100.94) .. (322.33,102.5) .. controls (322.33,104.06) and (321.06,105.33) .. (319.5,105.33) .. controls (317.94,105.33) and (316.67,104.06) .. (316.67,102.5) -- cycle ;
	\draw  [fill={rgb, 255:red, 0; green, 0; blue, 0 }  ,fill opacity=1 ] (395,50.5) .. controls (395,48.01) and (397.01,46) .. (399.5,46) .. controls (401.99,46) and (404,48.01) .. (404,50.5) .. controls (404,52.99) and (401.99,55) .. (399.5,55) .. controls (397.01,55) and (395,52.99) .. (395,50.5) -- cycle ;
	\draw  [fill={rgb, 255:red, 155; green, 155; blue, 155 }  ,fill opacity=1 ] (396.67,50.5) .. controls (396.67,48.94) and (397.94,47.67) .. (399.5,47.67) .. controls (401.06,47.67) and (402.33,48.94) .. (402.33,50.5) .. controls (402.33,52.06) and (401.06,53.33) .. (399.5,53.33) .. controls (397.94,53.33) and (396.67,52.06) .. (396.67,50.5) -- cycle ;
	\draw [line width=1.5]  [dash pattern={on 1.69pt off 2.76pt}]  (264.33,52) -- (277.23,47.7) -- (282.33,46) ;
	\draw [line width=1.5]  [dash pattern={on 1.69pt off 2.76pt}]  (355.33,167) -- (374.33,172) ;
	\draw  [color={rgb, 255:red, 74; green, 74; blue, 74 }  ,draw opacity=1 ][dash pattern={on 4.5pt off 4.5pt}] (294.5,102.5) .. controls (294.5,88.69) and (305.69,77.5) .. (319.5,77.5) .. controls (333.31,77.5) and (344.5,88.69) .. (344.5,102.5) .. controls (344.5,116.31) and (333.31,127.5) .. (319.5,127.5) .. controls (305.69,127.5) and (294.5,116.31) .. (294.5,102.5) -- cycle ;
	\draw  [color={rgb, 255:red, 74; green, 74; blue, 74 }  ,draw opacity=1 ][dash pattern={on 4.5pt off 4.5pt}] (132.5,45.5) .. controls (132.5,31.69) and (143.69,20.5) .. (157.5,20.5) .. controls (171.31,20.5) and (182.5,31.69) .. (182.5,45.5) .. controls (182.5,59.31) and (171.31,70.5) .. (157.5,70.5) .. controls (143.69,70.5) and (132.5,59.31) .. (132.5,45.5) -- cycle ;
	\draw  [color={rgb, 255:red, 74; green, 74; blue, 74 }  ,draw opacity=1 ][dash pattern={on 4.5pt off 4.5pt}] (52.5,97.5) .. controls (52.5,83.69) and (63.69,72.5) .. (77.5,72.5) .. controls (91.31,72.5) and (102.5,83.69) .. (102.5,97.5) .. controls (102.5,111.31) and (91.31,122.5) .. (77.5,122.5) .. controls (63.69,122.5) and (52.5,111.31) .. (52.5,97.5) -- cycle ;
	\draw  [color={rgb, 255:red, 74; green, 74; blue, 74 }  ,draw opacity=1 ][dash pattern={on 4.5pt off 4.5pt}] (374.5,50.5) .. controls (374.5,36.69) and (385.69,25.5) .. (399.5,25.5) .. controls (413.31,25.5) and (424.5,36.69) .. (424.5,50.5) .. controls (424.5,64.31) and (413.31,75.5) .. (399.5,75.5) .. controls (385.69,75.5) and (374.5,64.31) .. (374.5,50.5) -- cycle ;
	\draw  [color={rgb, 255:red, 128; green, 128; blue, 128 }  ,draw opacity=1 ][fill={rgb, 255:red, 155; green, 155; blue, 155 }  ,fill opacity=1 ] (208,101.29) -- (224.4,101.29) -- (224.4,97.38) -- (235.33,105.19) -- (224.4,113) -- (224.4,109.1) -- (208,109.1) -- cycle ;
	
	\draw (168,29.4) node [anchor=north west][inner sep=0.75pt]    {$c$};
	\draw (99,111.4) node [anchor=north west][inner sep=0.75pt]    {$c_{k}$};
	\draw (335,118.4) node [anchor=north west][inner sep=0.75pt]    {$c_{k}$};
	\draw (429,40.4) node [anchor=north west][inner sep=0.75pt]    {$c$};

\end{tikzpicture}}		
			\caption{The profile of the minimiser on the left and the desingularised version $\tilde{\gamma}_n$ on the right.}
			\label{fig:proof_2gon_1}
		\end{figure}
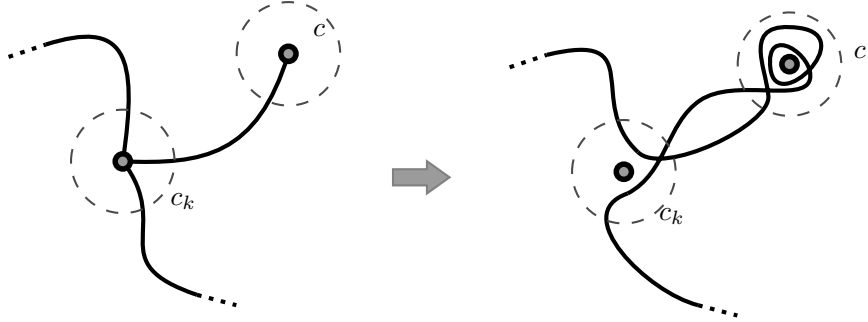
		
		Assume, without loss of generality, that $\tilde{\gamma}_n\vert_{J\cap (-\infty,a]}$ does so and suppose that this segment corresponds to a collision arc of $\gamma$, i.e., $\tilde{\gamma}_n$ crosses again $B_r(c)$. It follows that $\gamma$ has a singular $1-$gon as in point $i)$ of Proposition \ref{prop:no_excess_intesections_singular}, made by the collision arcs between $c$ and $c_j$. This is not possible if $\gamma$ is a minimiser of $\mathcal{M}_h$. See $a)$ in Figure \ref{fig:proof_2gon_2} below.
		
		Assume then that $\gamma$ does not collide with $c$. Then, it must intersect the collision-reflection arc issuing from $c$, and thus form a 2-gon as in point $iv)$ of Proposition \ref{prop:no_excess_intesections_regular}. This fact, again, is not compatible with minimality. See also $b)$ in Figure \ref{fig:proof_2gon_2} below.
		\begin{figure}[ht]
			\centering
			\resizebox{.85\textwidth}{!}{\input{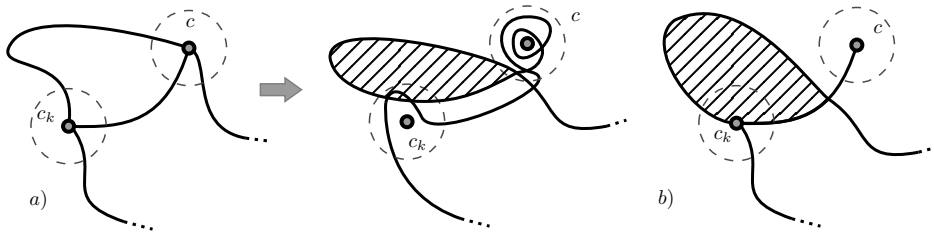}}		
			\caption{Case $a)$ illustrates the situation in which $\gamma\vert_{J\cap(-\infty,a]}$ collides again with $c$. Case $b)$ depicts the case in which $\gamma\vert_{J\cap(-\infty,a]}$ intersects the collision-reflection arc. }
			\label{fig:proof_2gon_2}
		\end{figure}
	    Thus, if $\alpha>1$, there are not intersections between $\eta_n^1$ and $\eta_n^2$, and so $\gamma$ is a collision-less minimiser. Note that, if the other centre $c_j$ is a Newtonian singularity  ($\al_j=1$), the proof does not change and thus we have completely proved the assertion in situation $i)$ of this theorem.
	    
		To prove assertion $ii)$, assume that $\alpha=1$ and there exists another centre $c_j\neq c$ such that $\al_j=1$. Then, reasoning as above, it could be that the angular variation is exactly $2\pi$. After bouncing back form $c$, it must collide with another centre $c_k$. If $\al_k>1$, we argue as before and no collisions for $\gamma$ can occur between these two centres. However, if $c_k=c_j$, we apply the same argument to the collision centre $c_j$ (since $\al_j=1$) and we can conclude that $\gamma$ must collide and be reflected back also in this case. Thus this periodic trajectory bounces between the centres $c$ and $c_j$. This fact provides an explicit restriction on the homotopy classes in which minimisers can have collisions. If $\gamma$ is in the boundary of $\mathcal{H}_\Delta(\tau)$, there should be a representative arbitrarily close to it. Since $\gamma$ is either a segment (when $c\ne c_j$) or a loop (if $c=c_j$), small neighbourhoods around it are either disks or immersed annuli. The second case, however, is not compatible with the admissibility condition of Definition \ref{def:admissible-class}. See also Figure \ref{fig:remark_admissible}.
		\end{itemize}
	\end{proof}
	
	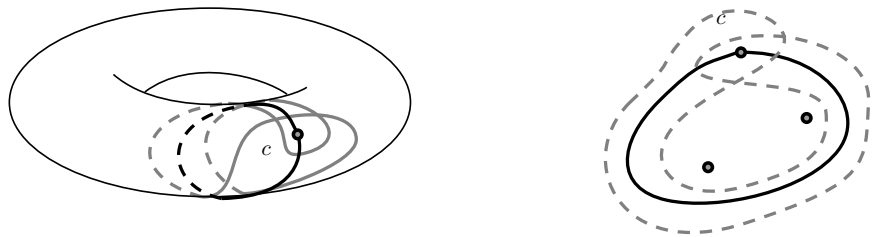
\begin{figure}[t]
	\centering
	\resizebox{.75\textwidth}{!}{\tikzset{every picture/.style={line width=0.75pt}} 

\begin{tikzpicture}[x=0.75pt,y=0.75pt,yscale=-1,xscale=1]
	
	\draw [color={rgb, 255:red, 128; green, 128; blue, 128 }  ,draw opacity=1 ][line width=1.5]    (154.33,65) .. controls (207.33,68) and (174.33,103) .. (202.33,95) .. controls (230.33,87) and (210.83,64) .. (179.33,63) ;
	\draw [color={rgb, 255:red, 128; green, 128; blue, 128 }  ,draw opacity=1 ][line width=1.5]  [dash pattern={on 5.63pt off 4.5pt}]  (163.33,120) .. controls (137.33,108) and (123.33,70) .. (179.33,63) ;
	\draw [color={rgb, 255:red, 128; green, 128; blue, 128 }  ,draw opacity=1 ][line width=1.5]  [dash pattern={on 5.63pt off 4.5pt}]  (138.33,122) .. controls (87.33,108) and (101.33,69) .. (154.33,65) ;
	\draw [color={rgb, 255:red, 128; green, 128; blue, 128 }  ,draw opacity=1 ][line width=1.5]    (138.33,122) .. controls (164.33,123) and (150.33,84) .. (178.33,76) .. controls (206.33,68) and (232.33,72) .. (232.33,87) .. controls (232.33,102) and (179.08,120.5) .. (163.33,120) ;
	\draw [line width=1.5]    (196.62,83.93) .. controls (200.49,95.49) and (197.33,122) .. (150.33,123) ;
	\draw [line width=1.5]    (166.33,66) .. controls (189.33,63) and (192.33,75) .. (196.62,83.93) ;
	\draw [line width=1.5]    (466.62,33.93) .. controls (515.33,33) and (544.33,68) .. (526.33,93) .. controls (508.33,118) and (453.33,131) .. (421.33,124) .. controls (389.33,117) and (391.33,89) .. (416.33,64) .. controls (441.33,39) and (445.33,41) .. (464.61,33.93) ;
	\draw   (21,64.41) .. controls (21,32.67) and (75.7,6.94) .. (143.17,6.94) .. controls (210.64,6.94) and (265.33,32.67) .. (265.33,64.41) .. controls (265.33,96.16) and (210.64,121.89) .. (143.17,121.89) .. controls (75.7,121.89) and (21,96.16) .. (21,64.41) -- cycle ;
	\draw    (103.55,58.03) .. controls (131.93,36.74) and (175.45,48.8) .. (190.12,59.45) ;
	\draw    (84.39,47.38) .. controls (117.74,76.48) and (188.93,64.41) .. (202.18,55.19) ;
	\draw  [fill={rgb, 255:red, 0; green, 0; blue, 0 }  ,fill opacity=1 ] (193.43,83.93) .. controls (193.43,82.16) and (194.86,80.73) .. (196.62,80.73) .. controls (198.39,80.73) and (199.82,82.16) .. (199.82,83.93) .. controls (199.82,85.69) and (198.39,87.12) .. (196.62,87.12) .. controls (194.86,87.12) and (193.43,85.69) .. (193.43,83.93) -- cycle ;
	\draw  [fill={rgb, 255:red, 155; green, 155; blue, 155 }  ,fill opacity=1 ] (194.61,83.93) .. controls (194.61,82.82) and (195.51,81.92) .. (196.62,81.92) .. controls (197.73,81.92) and (198.63,82.82) .. (198.63,83.93) .. controls (198.63,85.04) and (197.73,85.94) .. (196.62,85.94) .. controls (195.51,85.94) and (194.61,85.04) .. (194.61,83.93) -- cycle ;
	\draw  [fill={rgb, 255:red, 0; green, 0; blue, 0 }  ,fill opacity=1 ] (443.43,103.93) .. controls (443.43,102.16) and (444.86,100.73) .. (446.62,100.73) .. controls (448.39,100.73) and (449.82,102.16) .. (449.82,103.93) .. controls (449.82,105.69) and (448.39,107.12) .. (446.62,107.12) .. controls (444.86,107.12) and (443.43,105.69) .. (443.43,103.93) -- cycle ;
	\draw  [fill={rgb, 255:red, 155; green, 155; blue, 155 }  ,fill opacity=1 ] (444.61,103.93) .. controls (444.61,102.82) and (445.51,101.92) .. (446.62,101.92) .. controls (447.73,101.92) and (448.63,102.82) .. (448.63,103.93) .. controls (448.63,105.04) and (447.73,105.94) .. (446.62,105.94) .. controls (445.51,105.94) and (444.61,105.04) .. (444.61,103.93) -- cycle ;
	\draw  [fill={rgb, 255:red, 0; green, 0; blue, 0 }  ,fill opacity=1 ] (463.43,33.93) .. controls (463.43,32.16) and (464.86,30.73) .. (466.62,30.73) .. controls (468.39,30.73) and (469.82,32.16) .. (469.82,33.93) .. controls (469.82,35.69) and (468.39,37.12) .. (466.62,37.12) .. controls (464.86,37.12) and (463.43,35.69) .. (463.43,33.93) -- cycle ;
	\draw  [fill={rgb, 255:red, 155; green, 155; blue, 155 }  ,fill opacity=1 ] (464.61,33.93) .. controls (464.61,32.82) and (465.51,31.92) .. (466.62,31.92) .. controls (467.73,31.92) and (468.63,32.82) .. (468.63,33.93) .. controls (468.63,35.04) and (467.73,35.94) .. (466.62,35.94) .. controls (465.51,35.94) and (464.61,35.04) .. (464.61,33.93) -- cycle ;
	\draw  [fill={rgb, 255:red, 0; green, 0; blue, 0 }  ,fill opacity=1 ] (503.43,73.93) .. controls (503.43,72.16) and (504.86,70.73) .. (506.62,70.73) .. controls (508.39,70.73) and (509.82,72.16) .. (509.82,73.93) .. controls (509.82,75.69) and (508.39,77.12) .. (506.62,77.12) .. controls (504.86,77.12) and (503.43,75.69) .. (503.43,73.93) -- cycle ;
	\draw  [fill={rgb, 255:red, 155; green, 155; blue, 155 }  ,fill opacity=1 ] (504.61,73.93) .. controls (504.61,72.82) and (505.51,71.92) .. (506.62,71.92) .. controls (507.73,71.92) and (508.63,72.82) .. (508.63,73.93) .. controls (508.63,75.04) and (507.73,75.94) .. (506.62,75.94) .. controls (505.51,75.94) and (504.61,75.04) .. (504.61,73.93) -- cycle ;
	\draw [color={rgb, 255:red, 128; green, 128; blue, 128 }  ,draw opacity=1 ][line width=1.5]  [dash pattern={on 5.63pt off 4.5pt}]  (398.33,65) .. controls (431.33,34) and (412.33,46) .. (435.33,21) .. controls (458.33,-4) and (499.33,14) .. (492.33,30) .. controls (485.33,46) and (439.33,59) .. (423.33,82) .. controls (407.33,105) and (430.33,126) .. (467.33,116) .. controls (504.33,106) and (521.33,95) .. (518.33,70) .. controls (515.33,45) and (418.33,61) .. (443.33,36) .. controls (468.33,11) and (546.33,26) .. (544.33,64) .. controls (542.33,102) and (546.33,113) .. (469.33,138) .. controls (392.33,163) and (364.33,100) .. (398.33,65) -- cycle ;
	\draw [color={rgb, 255:red, 0; green, 0; blue, 0 }  ,draw opacity=1 ][line width=1.5]  [dash pattern={on 5.63pt off 4.5pt}]  (150.33,123) .. controls (115.33,116) and (110.33,73) .. (166.33,66) ;
	
	\draw (173,89.4) node [anchor=north west][inner sep=0.75pt]    {$c$};
	\draw (450,9.4) node [anchor=north west][inner sep=0.75pt]    {$c$};

\end{tikzpicture}}		
	\caption{Instances of collision-reflection solutions. A representative of the original homotopy class (which would not be admissible according to Definition  \ref{def:admissible-class}) is dashed in grey. }
	\label{fig:remark_admissible}
    \end{figure}

    \section{Construction of symbolic dynamics}\label{sec:symbolic_dynamics}
    
    In the previous sections we provided a huge set of periodic orbits for the $N$-centre problem on a generic surface $M$. In the configuration space, the corresponding trajectories are always collision-less, except for some precise homotopy classes when $V$ has at least two Newtonian singularities, as described in Theorem \ref{thm:technical_thm}. Such paths can be distinguished with respect to their topological properties and this fact enriches the dynamics on $T^*M$, so that we are naturally motivated to investigate the existence of a topological conjugation with a chaotic dynamical system.

    \subsection{Existence of infinitely many distinct admissible classes}
    
    The aim of this first subsection is to show that there are infinitely many distinct independent admissible homotopy classes in any of the situations listed in Theorem \ref{thm:main_theorem}. Despite the global nature of our results, the construction of such classes turns out to be purely local. In the first paragraph we will consider the case in which $M = \mathbb{R}^2$ and $N = 3$. This is, of course, completely analogous to considering a small disk bounding three centres on any surface $M$. In particular, our local approach will imply that there are infinitely many distinct admissible homotopy classes on  $\mathbb{R}^2\setminus\mathcal{C}$ with $N\ge3$ and on $\TT^2\setminus\mathcal{C}$ with $N\ge5$. Indeed, concerning the sphere, it is enough to treat $\TT^2$ as the union of two disks, one containing $3$ centres and the other the remaining $N-3\ge2$. 
     
    The second paragraph focuses on the case of genus $g>1$ and on the non compact case. The construction of the homotopy classes is completely explicit in all these cases.

    \subsubsection{$M = \mathbb{R}^2$ and $N =3$}\label{subsec:plane_sphere}
    
    Consider three centres $c_1,c_2,c_3\in\R^2$. Define $\beta_i$ as a simple loop that winds once around the centres $\{c_1,c_2,c_3\}\setminus\{c_i\}$ and $\eta$ as a simple loop winding once around all the centres, both counter-clockwise. For any non zero natural numbers $m,n \in \N$, define the following homotopy classes:
    \begin{equation}
    	\label{eq:def_classes_3_centres}
    	\omega^i_{n,m} \uguale[\beta_i^n\eta^m].
    \end{equation}
    
    \begin{figure}[ht]
    		\resizebox{.9\textwidth}{!}{\input{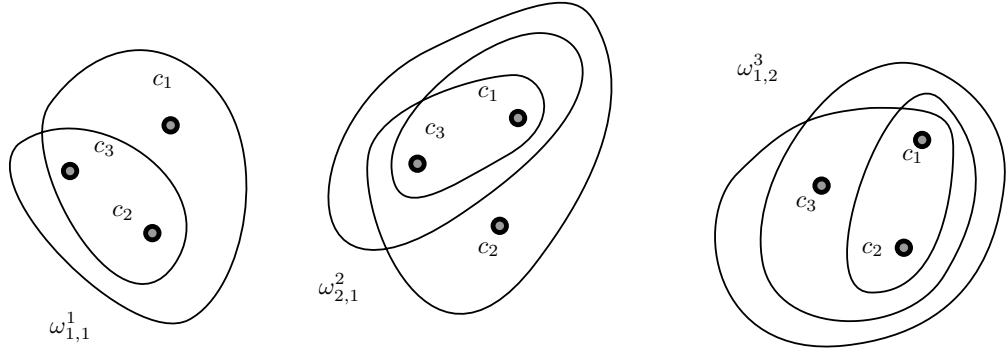}}		
    	\caption{Representatives for the homotopy classes $\omega_{1,1}^1$, $\omega_{2,1}^2$ and $\omega_{2,1}^3$ respectively.}\label{fig:letters}
    \end{figure}

    \begin{lem}
    	\label{lem:infinite_class_3centre}
    	The classes $\omega_{n,m}^i$ are admissible in the sense of Definition \ref{def:admissible-class}. Moreover, so are they products.
    	
        In particular, all the elements contained in the semigroup generated by the  $\omega_{n,m}^i$ are admissible.
    	\begin{proof}
    		The first assertion follows directly by definition of $\omega_{n,m}^i$ (see also Figure \ref{fig:letters}). Instead, the second assertion follows by induction. Take a product of  $k$ homotopy classes $\omega_{n_j,m_j}^{i_j}$, $j=1, \dots,k$. By induction hypothesis, assume that all the homotopy classes that can be written as a product of at most $k-1$ factors are admissible. Notice that, multiplying to one of this shorter words another letter $\omega^j_{l,r}$ does not create any sub-loop containing only one centre. If this was the case, also concatenating a taut representative of $\eta$ (the class of loops winding once around all three centres) with $\omega_{l,r}^j$ would. This is clearly not the case.
    	\end{proof}
    \end{lem}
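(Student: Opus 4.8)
\textbf{Proof proposal for Lemma \ref{lem:infinite_class_3centre}.}

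The plan is to verify the two assertions separately, using the characterisation of admissibility given in Definition \ref{def:admissible-class}. Recall that since $M = \mathbb{R}^2 \not\sim \TT^2$, a non-trivial class $[\tau]$ is admissible precisely when some taut representative $\gamma$ has the property that every innermost (hence simple) sub-loop $\tilde\gamma$ of $\gamma$ that is contractible in $\mathbb{R}^2$ encloses at least two centres in its bounded (contractible) component; the compactness condition $\gamma \cap K \ne \emptyset$ is automatic here because any loop winding around a nonempty set of centres cannot be pushed to infinity, so it suffices to control the sub-loops. First I would observe that for each generator $\omega^i_{n,m} = [\beta_i^n \eta^m]$ one can write down an explicit taut representative: take a representative that winds $n$ times tightly around the pair $\{c_1,c_2,c_3\}\setminus\{c_i\}$ and then $m$ times tightly around all three centres, arranged so that all self-intersections are transversal and minimal (as in Figure \ref{fig:letters}). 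Its innermost sub-loops are then either (a) a simple loop encircling the two centres $\{c_1,c_2,c_3\}\setminus\{c_i\}$, or (b) a simple loop encircling all three; in both cases the contractible side contains at least two centres, so $\omega^i_{n,m}$ is admissible. This is the easy half and essentially a picture-chasing argument; the only care needed is to argue that the chosen representative is genuinely taut, which follows because its self-intersection number is forced by the winding numbers about the various subsets of $\{c_1,c_2,c_3\}$ and cannot be lowered by Theorem \ref{thm:hass_scott} (there is no $1$-gon or $2$-gon to remove).

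For the second assertion I would argue by induction on the word length $k$ of a product $\omega^{i_1}_{n_1,m_1}\cdots\omega^{i_k}_{n_k,m_k}$, as the sketch in the excerpt indicates. The base case $k = 1$ is the first assertion. For the inductive step, assume every product of at most $k-1$ generators is admissible, and consider a product $w = w' \cdot \omega^{j}_{\ell,r}$ where $w'$ has length $k-1$. Choose taut representatives $\gamma'$ of $w'$ and $\beta$ of $\omega^{j}_{\ell,r}$ sharing a common basepoint, put them in general position relative to each other, and let $\gamma = \gamma' \# \beta$. After removing any $1$-gons and $2$-gons via Theorem \ref{thm:hass_scott} one obtains a taut representative $\tilde\gamma$ of $w$; the point is that the reduction moves do not create a new innermost sub-loop enclosing a single centre. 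The key geometric input — and I expect this to be the main obstacle — is a \emph{no-bad-bigon} statement: concatenating the word $w'$ with an additional letter $\omega^j_{\ell,r}$ cannot produce an innermost contractible sub-loop containing exactly one centre. The excerpt's justification is by a monotonicity/reduction principle: if such a one-centre sub-loop appeared after appending $\omega^j_{\ell,r}$, then appending $\omega^j_{\ell,r}$ to a taut representative of the class $[\eta]$ (the loop winding once around all three centres) would already produce one, and this can be checked directly to be impossible since $\eta$ is simple and its union with any representative of $\omega^j_{\ell,r}$ in minimal position only creates sub-loops enclosing $\{c_1,c_2,c_3\}\setminus\{c_j\}$ or all of $\{c_1,c_2,c_3\}$.

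To make the obstacle precise I would formalise the reduction as follows. A putative innermost one-centre sub-loop $\tilde\gamma_0$ of $\tilde\gamma$ is a simple closed curve bounding a disk $D$ containing a single $c_s$; $\tilde\gamma_0$ is built from arcs of $\gamma'$ and arcs of $\beta$. Since $w'$ is admissible, no sub-loop made purely of arcs of $\gamma'$ can be such a one-centre loop; since $\omega^j_{\ell,r}$ is admissible, neither can a sub-loop made purely of arcs of $\beta$. Hence $\tilde\gamma_0$ must genuinely mix arcs from both factors, and one shows this forces a $2$-gon between $\gamma'$ and $\beta$ (a disk bounded by one arc of each with no centre inside), which contradicts that both were already in minimal position \emph{and} that the bigon, if it contained a centre, could be pushed across to reduce the winding-number data — a contradiction with tautness in either case. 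This bigon-removal argument, combined with the fact that $[\eta]$ is simple so that its interaction pattern with $\omega^j_{\ell,r}$ is completely explicit, closes the induction. Since the semigroup generated by the $\omega^i_{n,m}$ is exactly the set of finite products of generators, the final statement follows, and these classes are pairwise non-homotopic because their images under the abelianisation $\pi_1(\widehat M) \to \Z^3$ (winding numbers about $c_1,c_2,c_3$) are pairwise distinct, giving infinitely many geometrically distinct admissible classes.
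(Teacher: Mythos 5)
Your proposal takes the same route as the paper: admissibility of the generators $\omega^i_{n,m}$ by inspection of a taut representative, then induction on word length, with the inductive step reduced to the interaction of the appended letter with the class $\eta$. Up to that point the two arguments coincide, and the $\eta$-interaction can in fact be closed more cleanly than either you or the paper do: $\eta\,\beta_j^{\ell}\eta^{r}$ is conjugate to $\beta_j^{\ell}\eta^{r+1}$, so as a free homotopy class $[\eta\,\omega^j_{\ell,r}]=\omega^j_{\ell,r+1}$, which is admissible by the first assertion; no further picture argument about "the union of $\eta$ with a representative of $\omega^j_{\ell,r}$" is needed.

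Where your formalisation goes beyond the paper's sketch it is not sound as written. First, the bigon step: two curves that are individually taut need not be in minimal position with respect to each other, so a bigon between $\gamma'$ and $\beta$ contradicts nothing; such bigons do occur and are exactly what the reduction of $\gamma'\#\beta$ to a taut representative removes. The substance of the inductive step is that these reduction moves cannot create an innermost one-centre loop, and your decomposition of a putative bad sub-loop into "arcs of $\gamma'$" and "arcs of $\beta$" presupposes a curve that has not yet been modified by the moves, so the argument is circular at the decisive point and you are thrown back on the paper's reduction to the $\eta$-case. Second, your justification of the compactness requirement in Definition \ref{def:admissible-class} ("a loop winding around a nonempty set of centres cannot be pushed to infinity") is false: any representative of $\eta^m$ winds around all three centres and can be taken to be a huge circle lying outside any fixed compact set. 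The correct criterion is that the class is not conjugate to a power of $[\eta]$ (the loop around the end at infinity); for the generators this follows from the winding numbers, since $n\ge 1$ makes them unequal, while for general products a short separate argument is needed — a point the paper's proof also passes over in silence. Finally, the closing claim that the semigroup elements are pairwise distinguished by their abelianised winding numbers is false in general (e.g. $[(\beta_1\eta)^2]$ and $[\beta_1^2\eta^2]$ have identical winding numbers but are distinct conjugacy classes); this is harmless for the lemma, which asserts no distinctness, but it should not stand as written.
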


    \subsubsection{Genus $g$, $g\ge1$}
    \label{subsec:genus_greater_1}
    
    Now we consider the case of genus grater than or equal to $1$. This is essentially simpler than the case stressed in the previous section. Recall that any orientable compact surface is diffeomorphic to a sphere with $g-$handles. Pick a meridian and longitude of one of the handles in such a way that they avoid all the centres $c_j$. The semigroup of the homotopy classes generated by these loops is again made up of admissible classes. 
    
    \begin{lem}
      Let $M$ be a compact surface with genus $g\ge1$ and assume $N\ge 1$. There are infinitely many admissible homotopy classes in the sense of Definition \ref{def:admissible-class}.
    \begin{proof}
    	Let us consider first the case in which $M$ is a torus with only one centre $c$. Since $M \setminus\{c\}$ retracts to a bouquet of two loops, the fundamental group $\pi_1(M\setminus\{c\})$ is isomorphic to $\mathbb{Z}*\mathbb{Z}$ . Pick two generators of $\pi_1(M\setminus \{c\})$ and consider the semigroup generated by them. All these classes are admissible since no sub-loop of taut representatives disconnects the torus (the generators do not!). Thus, there is a bijection between the set of words of two letters and elements of this semigroup, i.e., a set of admissible classes.
    	
    	Now, let $M$ be a surface of genus greater than or equal to 1 with $N\ge2$ centres. There exists $T\subset M$ homeomorphic to a torus with a disk removed such that:
    	\begin{itemize}
    		\item $T$ is a smooth submanifold with boundary of $M$;
    		\item $\{c_1,\dots,c_N\}\sset T^c$.  
    	\end{itemize}
        Clearly, now we can reason as for the torus with one centre in order to produce the desired family of admissible classes.               
    \end{proof}
    \end{lem}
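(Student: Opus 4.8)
Recall that in Definition~\ref{def:admissible-class} the admissibility condition only restricts the innermost (hence simple) sub-loops of a taut representative that are contractible in $M$, and that since $M$ is compact the final clause about an absorbing compact set is vacuous here. Thus it suffices to exhibit infinitely many conjugacy classes whose taut representatives possess \emph{no} simple sub-loop contractible in $M$: such classes are then admissible for free. I would produce these classes from the ``positive'' sub-semigroup of the subgroup generated by a single handle of $M$.

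\emph{Step 1: the torus-type case $M\sim\TT^2$, $N\ge 1$.} First I would fix a ``meridian'' $a$ and a ``longitude'' $b$ --- a pair of simple closed curves dual in one handle of $M$ --- chosen disjoint from $\mathcal C$ (for $g=1$ these are the standard generators of $\TT^2$ missing the finitely many punctures; a generic choice works). Since $\widehat M=M\setminus\mathcal C$ is an open surface, $\pi_1(\widehat M)$ is free and $a,b$ generate a free subgroup $F(a,b)\sset\pi_1(\widehat M)$ of rank $2$. I would then take $\mathcal S$ to be the set of non-empty \emph{positive} words in $a,b$; it contains $a,a^2,a^3,\dots$, which are pairwise non-conjugate, so $\{[w]:w\in\mathcal S\}$ is infinite. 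To see that each $[w]$ is admissible, let $\gamma$ be a taut representative and suppose it had a simple sub-loop $\delta$ contractible in $M$, bounding an embedded disk $D\sset M$. If $D$ meets no centre, then $\delta$ is null-homotopic in $\widehat M$, so one may excise it to obtain a loop in $[\tau]$ with strictly fewer self-intersections (a singular $1$-gon in the sense of Definition~\ref{def:monogon-bigon}), contradicting tautness. If $D$ contains some centres, then $\delta$ is freely homotopic in $\widehat M$ to a product of loops encircling those punctures, hence to a peripheral element, which is conjugate to a word using both some generator and its inverse; but the free homotopy classes of the sub-loops of a taut curve are precisely those of the cyclic subwords of its cyclically reduced word (a standard fact in the combinatorial theory of curves on surfaces, cf.\ \cite{HasSco1985}), and a cyclic subword of a positive word is positive, so it cannot be conjugate to a peripheral class. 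Both cases are impossible, so $[w]$ satisfies the first bullet of Definition~\ref{def:admissible-class} vacuously.

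\emph{Step 2: general genus $g\ge 1$ with $N\ge 1$ centres.} The case $g=1$ is Step~1, so assume $g\ge 2$ (in particular $M\not\sim\TT^2$). Here I would isotope a small collar of one handle of $M$ away from the finite set $\mathcal C$, obtaining an embedded once-punctured torus $T\sset M$ with $\mathcal C\sset M\setminus T$. Taking $a,b$ to be core curves of $T$ --- non-separating in $M$ and generating a rank-$2$ free subgroup of $\pi_1(\widehat M)$ --- I would run the argument of Step~1 verbatim: for a positive word $w$ in $a,b$, a taut representative has no simple sub-loop contractible in $M$ (the null-homotopic-in-$\widehat M$ case is excluded by tautness, the peripheral case by positivity of the cyclic subwords of $w$). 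Hence the second bullet of Definition~\ref{def:admissible-class} holds vacuously, and $\{[w]:w\in\mathcal S\}$ is the required infinite family of admissible classes.

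\emph{Main obstacle.} The only non-formal ingredient is the identification of the homotopy types of the sub-loops of a taut representative with the cyclic subwords of its reduced word; once that is in place, everything reduces to the observation that ``positivity'' is inherited by sub-arcs and is incompatible with being peripheral. If one wishes to bypass the combinatorial machinery, an equivalent route is to fix an auxiliary complete hyperbolic metric on $\widehat M$ and use the closed geodesic in $[w]$ as the taut representative: a geodesic has no contractible sub-loop at all, and a sub-loop bounding a disk around some punctures would be freely homotopic to a product of cusp loops, which cannot occur as a sub-arc class of the axis of a positive word.
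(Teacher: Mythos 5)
Your proposal is correct and follows essentially the same route as the paper: positive words in a dual pair of handle curves chosen away from the centres, with the higher-genus/many-centre case reduced to an embedded one-holed torus avoiding $\mathcal{C}$, and admissibility obtained because taut representatives of positive words admit no sub-loop contractible in $M$. The differences are only in presentation: you spell out the justification the paper compresses into one line (via the cyclic-subword description of sub-loops of taut curves, which is really Birman--Series/Chas-type combinatorics rather than \cite{HasSco1985}), and beware that in this paper's notation $\TT^2$ denotes the sphere rather than the torus --- harmless here, since you exclude \emph{all} contractible sub-loops and so either bullet of Definition \ref{def:admissible-class} holds vacuously.
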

    
    The same kind of reasoning applies to the case of non-compact surfaces $M$. Indeed, a similar classification theorem holds in the non-compact case as well. Loosely speaking, an orientable surface $M$ is homeomorphic to a sphere with $g$ handles glued to it and minus a certain number of points. In general, orientable non-compact surfaces are homeomorphic if and only if they have the same genus and homeomorphic ideal boundary. For further details we refer to \cite{classification_non_compact}.
    
    We can easily extend the argument of the compact case to the non-compact one. Notice that choosing a meridian and a longitude (that this time avoids the \emph{holes} as well) satisfy also the third condition of Definition \ref{def:admissible-class}. As a compact set $K$, it suffices to take a torus (with some open disks removed) which contains the two curves.

    \subsection{Construction of symbolic dynamics}
        
    We begin our study of symbolic dynamics introducing some notations. We recall that the configuration space is $\widehat{M}=M\setminus\{c_1,\ldots,c_N\}$ and that we are studying a fixed energy problem with $h$ satisfying \eqref{hyp:energy_bound}. For this reason, the periodic orbits given by Theorem \ref{thm:main_theorem} live in the energy shell
    \[
    \mathcal{E}_h\uguale\left\lbrace (q,v)\in T_q^*\widehat{M}:\,\frac12|v|_g^2+V(q)=h\right\rbrace.
    \]
    In this section we work with three centres $c_1,c_2$ and $c_3$; this is enough to succeed in our construction of symbolic dynamics. Working with more centres requires just a slight modification of our argument and would only make our notation heavier. The idea here is to associate to each periodic orbit a bi-infinite sequence of symbols, in order to investigate the presence of a topological conjugation with the prototypical chaotic system: the Bernoulli shift. For more details on this dynamical system we refer the reader to the Introduction (see \eqref{eq:def_bernoulli_shift}).
    
    The first step is to define a set of symbols and a \emph{rule} which produces distinct sequences for each distinct homotopy class at our disposal. This is achieved describing homotopy classes via intersection numbers. Note that we will often refer to some technical results contained in the forthcoming Subsection \ref{sec:technical_lemmas} (in particular, Lemma \ref{lem:construnction_segments_solutions} and Lemma \ref{lemma:approximating_minimisers}).
    
    Following \eqref{eq:def_classes_3_centres}  and recalling Lemma \ref{lem:infinite_class_3centre}, let us denote by $\beta_i$ the minimisers of \eqref{eq:def_Maupertuis} in the homotopy class enclosing the centres $\{c_1,c_2,c_3\}\setminus \{c_i\}$. By Theorem \ref{thm:technical_thm}, they always exist provided that the classes are admissible. If one of the centres is \emph{not} Newtonian, they are collision-less and enclose a region $D_i$ homeomorphic to a disk. On the other hand, they may be a collision-reflection solution if both centres are Newtonian. In the latter case, they do not bound any disk and $D_i$ stands for the support of $\beta_i$. 
    
    Now, we will assume that the homotopy class of paths enclosing the three centres $c_1,c_2$ and $c_3$ is admissible and we denote by $\gamma$ a minimiser of the Maupertuis functional \eqref{eq:def_Maupertuis} in the said class. Notice that, if for instance  $M=\mathbb{S}^2$, we would need at least 5 centres on $M$ to fulfil the admissibility condition (see also Definition \ref{def:admissible-class}).
    
    Denote the disk which has $\gamma$ as boundary by $D$.  The next result shows that the minimisers of the Maupertuis functional in the classes $\omega_{n,m}^i$ described in \eqref{eq:def_classes_3_centres} are confined in $D$.     
    
    \begin{prop}
    	\label{prop:bound_on_minimisers}
    	Assume that $\eta_1$ and $\eta_2$ are collision-less minimisers belonging to two different homotopy classes. Then, $\eta_1$ and $\eta_2$ can form no $2-$gon.
    	
    	Consider the homotopy classes $\omega_{n,m}^i$ introduced in \eqref{eq:def_classes_3_centres} and let $\gamma_{n,m}^i$ be any minimiser of the Maupertuis functional \eqref{eq:def_Maupertuis} therein. Then:
    	\begin{enumerate}[label = \roman*)]
    		\item the support of $\gamma_{n,m}^i$ is contained in $D$, for all $i,n,$ and $m$;
    		\item the support of $\gamma_{n,m}^i$ is contained in $D_i^c$, for all $i,n,$ and $m$;
    		\item any minimisers in the classes $[\gamma_{n_1,m_1}^{i_1} \dots \gamma_{n_k,m_k}^{i_k}]$, for all $i_k,n_k,m_k$ and $k \in \mathbb{N}$, is contained in $D$.
    	\end{enumerate}
    	\begin{proof}
    		The first assertion is a slight modification of Proposition \ref{prop:no_excess_intesections_regular} and follows by regularity of minimisers.    	
    		Items $i)$, $ii)$ and $iii)$ follow by a straightforward application of the above property to $\gamma_i$ and $\gamma$. 
    	\end{proof}
    \end{prop}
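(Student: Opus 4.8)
Everything hinges on the no-$2$-gon assertion, which is the two-curve analogue of Proposition~\ref{prop:no_excess_intesections_regular}(ii). Suppose, after putting them in general position, that the collision-less minimisers $\eta_1\in\mathcal H_\Delta(\tau_1)$ and $\eta_2\in\mathcal H_\Delta(\tau_2)$ with $[\tau_1]\ne[\tau_2]$ bounded a singular $2$-gon: there are sub-arcs $\sigma_1=\eta_1|_{[a,b]}$ and $\sigma_2=\eta_2|_{[c,d]}$ with $\eta_1(a)=\eta_2(c)=:p$, $\eta_1(b)=\eta_2(d)=:q$ and $\sigma_1\#\sigma_2^{-1}$ null-homotopic in $\widehat M$. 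By Lemma~\ref{lem:restriction}, $\sigma_1$ minimises $\mathcal M_h$ among all paths $\zeta$ from $p$ to $q$ with $\zeta\#\eta_1|_{J_1\setminus[a,b]}\in[\tau_1]$; since $\sigma_1\#\sigma_2^{-1}$ is trivial, $\sigma_2$ is such a path, so $\mathcal M_h(\sigma_1)\le\mathcal M_h(\sigma_2)$, and by symmetry equality holds. Using affine time-rescaling I would reparametrise $\sigma_1,\sigma_2$ to have equal kinetic and equal potential integrals; exchanging $\sigma_1$ with $\sigma_2$ (and vice versa) then yields collision-less loops $\tilde\eta_1\in[\tau_1]$, $\tilde\eta_2\in[\tau_2]$ with $\mathcal M_h(\tilde\eta_i)=\mathcal M_h(\eta_i)$, hence again minimisers. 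By Proposition~\ref{prop:regularity_outside_collision_set} each $\tilde\eta_i$ is a reparametrised $\mathscr C^2$ solution of \eqref{eq:Newton}, in particular $\mathscr C^1$ at $p$ and $q$; since the two arcs have equal kinetic and potential integrals the Maupertuis multipliers $\omega$ are unchanged, and conservation of energy (Proposition~\ref{prop:conservation_energy_through_collisions}) fixes the speed at $p$. Hence the trajectories of $\eta_1$ and $\eta_2$ pass through $p$ with the same velocity, and uniqueness for the Cauchy problem forces them to coincide as trajectories. Thus two distinct minimisers with \emph{distinct supports} can form no $2$-gon, and the same argument shows they intersect only transversally (cf.\ Proposition~\ref{prop:no_excess_intesections_regular}(v)).

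\textbf{Items (i) and (iii).} The minimiser $\gamma$ lies in a class with zero self-intersection number (a large circle is a simple representative), so $\gamma$ is taut (Proposition~\ref{prop:no_excess_intesections_regular} and Theorem~\ref{thm:hass_scott}), hence simple, and $\mathbb R^2\setminus\gamma=D\sqcup D^{\mathrm{ext}}$ with the three centres in $D$ and $D^{\mathrm{ext}}$ centre-free. Now $\gamma^i_{n,m}$ and $\gamma$ are distinct minimisers with distinct supports (the class $\omega^i_{n,m}$ carries a $\beta_i$-factor, so it is not a power of $[\eta]$), hence meet transversally; if $\gamma^i_{n,m}$ entered $D^{\mathrm{ext}}$, an outermost sub-arc of it in $\overline{D^{\mathrm{ext}}}$ together with the sub-arc of $\gamma$ it cuts off would bound a centre-free disk in $\overline{D^{\mathrm{ext}}}$ — a singular $2$-gon — contradicting the previous paragraph; a tangency to $\gamma$ is likewise excluded since it would again force equal supports, i.e.\ $[\gamma^i_{n,m}]=[\eta^{k}]$. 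Therefore $\gamma^i_{n,m}\subset D$, proving (i). Item (iii) is verbatim the same: a minimiser in $[\gamma^{i_1}_{n_1,m_1}\cdots\gamma^{i_k}_{n_k,m_k}]$ still lies in a class that is not a power of $[\eta]$ and is admissible (Lemma~\ref{lem:infinite_class_3centre}), so the $2$-gon-with-$\gamma$ argument confines it to $D$.

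\textbf{Item (ii) and the main obstacle.} When $\beta_i$ is collision-less it is simple and bounds a disk $D_i$ containing exactly the two centres $c_j,c_k$ with $j,k\ne i$. Applying the no-$2$-gon property to the distinct minimisers $\gamma^i_{n,m}$ and $\beta_i$: if $\gamma^i_{n,m}$ met $\mathrm{int}\,D_i$ it would cross $\beta_i$ transversally, and an innermost sub-arc $\sigma$ of $\gamma^i_{n,m}$ in $\overline{D_i}$ would cut off a disk $R\subset D_i$; as $R$ cannot be centre-free and cannot contain both $c_j$ and $c_k$ (either would make $\sigma$ an edge of a $2$-gon with $\beta_i$), $\sigma$ must separate $c_j$ from $c_k$ inside $D_i$. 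The delicate point — and the place where a careful argument is required, in the spirit of the tautness arguments of \cite{Cas2017} — is to show that such a centre-separating sub-arc is impossible for a minimiser: a taut representative of $[\beta_i^{n}\eta^{m}]$ can be drawn entirely outside $\mathrm{int}\,D_i$ (all $\beta_i$- and $\eta$-windings may be realised there), so the presence of $\sigma$ must either allow a reduction of the self-intersection number of $\gamma^i_{n,m}$ (contradicting minimality via Theorem~\ref{thm:hass_scott}) or create an innermost sub-loop enclosing a single centre, contradicting the admissibility of $\omega^i_{n,m}$ (Lemma~\ref{lem:infinite_class_3centre}). Unlike in (i) and (iii), here entering the forbidden region does not produce an immediate centre-free $2$-gon, so one genuinely has to exploit the algebraic shape of $[\beta_i^{n}\eta^{m}]$; once this is settled, $\gamma^i_{n,m}\cap\mathrm{int}\,D_i=\emptyset$, i.e.\ $\gamma^i_{n,m}\subset D_i^{c}$. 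A recurring technical point throughout is the transversality of the intersections of a minimiser with $\gamma$ and with $\beta_i$, handled by the uniqueness-of-Cauchy-problem argument of Proposition~\ref{prop:no_excess_intesections_regular}(v) together with conservation of energy.
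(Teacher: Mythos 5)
Your treatment of the first assertion follows the paper's intended route: a mutual singular $2$-gon lets you exchange the two arcs (Lemma \ref{lem:restriction} plus time-rescaling invariance gives equal Maupertuis values on the swapped arcs), the swapped loops are again minimisers in their classes, regularity of minimisers (Proposition \ref{prop:regularity_outside_collision_set}) then forces $\mathscr{C}^1$ matching at the vertices, and uniqueness for the Cauchy problem collapses the two trajectories onto one support; this is exactly the ``slight modification'' the paper has in mind. Your bigon-with-$\gamma$ argument for i) and iii) is likewise the intended one (modulo the minor case of a non-simple excursion outside $D$ whose sub-loop winds around $D$, which your argument does not literally cover, and the observation you make that coincident supports are excluded for the pairs at hand).

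The genuine gap is item ii), which you flag but do not close, and neither of the two escape routes you sketch can work as stated. Tautness is an isotopy-invariant property of the class: there do exist taut representatives of $\omega^i_{n,m}$ whose strands cross the \emph{fixed} curve $\beta_i$ and separate $c_j$ from $c_k$ inside $D_i$, so the separating sub-arc forces no excess self-intersection of $\gamma^i_{n,m}$ and hence no contradiction with Theorem \ref{thm:hass_scott}; likewise that sub-arc is an arc, not a loop, so it creates no innermost one-centre sub-loop and admissibility is untouched. What is missing is a minimal-position statement for the \emph{pair} $(\gamma^i_{n,m},\beta_i)$: since $\omega^i_{n,m}$ admits a representative disjoint from $\beta_i$ (all $\beta_i$- and $\eta$-windings can be drawn outside $\overline{D_i}$, as you note), the geometric intersection number of the two classes is zero; both curves intersect transversally (Cauchy uniqueness), have no singular $1$-gons, and by the first assertion form no singular $2$-gon, so the two-curve statement of \cite{HasSco1985} (curves meeting in excess of their geometric intersection number produce a singular $1$-gon or $2$-gon) forces $\gamma^i_{n,m}\cap\beta_i=\emptyset$. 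Containment in $D_i$ is then impossible because $[\beta_i^n\eta^m]$ is not conjugate into the subgroup of $\pi_1(\widehat M)$ generated by the loops around $c_j$ and $c_k$ (any cyclically reduced representative contains a winding around $c_i$), whence $\gamma^i_{n,m}\subset D_i^c$. The same scheme (zero intersection number with $[\gamma]$, disjointness, $\pi_1$ confinement) also yields i) and iii) and disposes of the winding-excursion case left open in your direct argument; note finally that when both centres in $D_i$ are Newtonian, $\beta_i$ may be a collision-reflection arc, a degenerate case that neither your argument nor the first assertion (stated for collision-less minimisers) addresses.
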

    
    As a consequence of this result, all the minimisers $\gamma_{n,m}^i$ in the homotopy class $\omega_{n,m}^i$ (see the definition \eqref{eq:def_classes_3_centres}) are confined to the topological disk $D$ bounded by $\gamma$. Consider now any simple curve that does not meet the boundary of $D$, passes through the centres $c_1,c_2$ and $c_3$ and encloses a disk $D'$. Draw a segment $\ell_i$ from $c_i$ to $\partial D$ in such a way that:
    \begin{itemize}
    	\item $\ell_i\cap\ell_j = \emptyset$ if $i\ne j$;
    	\item $\ell_i \sset D \setminus D'$ for all $i$.
    \end{itemize}
    A qualitative picture is given in Figure \ref{fig:half_lines}.

    \begin{figure}[t]
    	\centering
    	\resizebox{.9\textwidth}{!}{\input{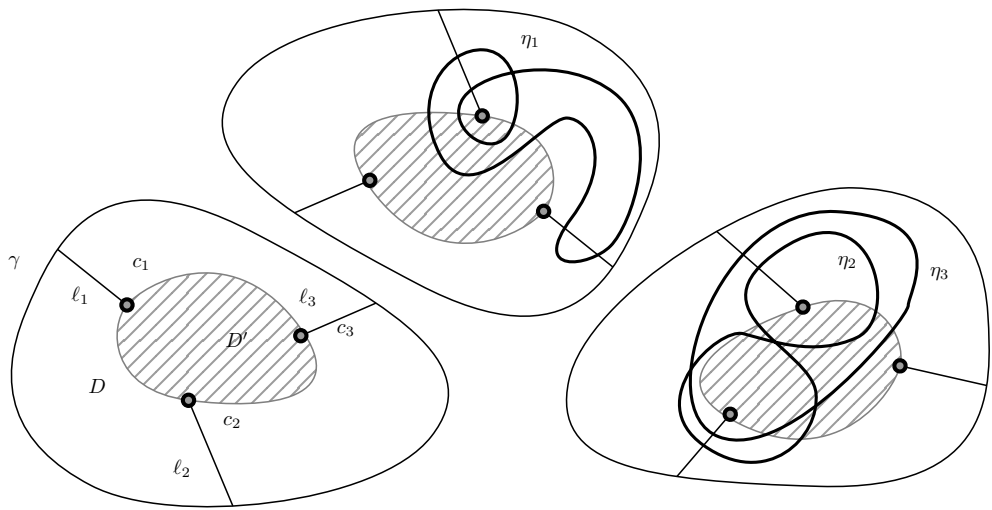}}		
    	\caption{On the left, a qualitative picture of the segments $\ell_i$. In the middle, a curve $\eta_1$ with removable intersection and, on the right, two non homotopic curves $\eta_2,\eta_3$ having the same (unsigned) itinerary.}
    	\label{fig:half_lines}
    \end{figure}
    
    \begin{rem}\label{rem:choice_ell}
    It is possible to choose the segments $\ell_i$ as reparametrisations of (collision) solutions of the motion equation \eqref{eq:Newton}. Indeed, in a forthcoming technical result (see Lemma \ref{lem:construnction_segments_solutions}), we minimise the Maupertuis functional $\mathcal{M}_h$ among paths joining one of the centres and a generic boundary point in $D\cap \ell_i$. Recalling the properties of Maupertuis minimisers proved in Section \ref{sec:variational_frame}-\ref{sec:obstacle}-\ref{sec:proof_thm}, we can do this in such a way that any of such minimisers do not meet the centres it is not emanating from. Moreover, because of the same properties, these minimisers do not intersect. Thus, without loss of generality, we can choose the segments $\ell_i$ among collision solutions of \eqref{eq:Newton} at energy $h$ too. As a consequence of this choice, we can parametrise any of the segment $\ell_i$ with respect to the time variable $t$, and for instance we can consider the curve $(\ell_i(t),\dot{\ell}_i(t))$ as a curve which lies on the energy shell $\mathcal{E}_h$. Moreover, if a solution of \eqref{eq:Newton} intersects any of the segments $\ell_i$, then it does so transversally.
    \end{rem}

    Let us introduce the set of all initial data in the energy shell $\mathcal{E}_h$ which lie in one of the segments $\ell_i$, namely
    \[
    \mathcal{T}\uguale\{(q,v)\in\mathcal{E}_h:\,q\in \ell_j,\ \text{for some}\ j\in\{1,2,3\}\}.
    \]
    Note that, since the energy $h$ is fixed, $\mathcal{T}$ is the union of $3$ open cylinders in $T\widehat{M}$. We can decompose each connected component of $\mathcal{T}$ into two  disconnected sets $C_i^\pm$  using the curves $(\ell_i(\pm t),\pm\dot\ell_i(\pm t))$. Following Remark \ref{rem:choice_ell}, the velocity $v$ of each solution of \eqref{eq:Newton} intersecting $\ell_i$ necessarily satisfies $g(v,\dot\ell_i^\perp) \ne0$ at intersection points. Thus, it always belongs to one of the two connected components $C_i^\pm$. We say that a solution \emph{crosses positively} if $v \in C_i^+$  and \emph{negatively} otherwise.

    Denote by $\Phi_t: \mathcal{E}_h\to \mathcal{E}_h$ the flow determined by the motion equation \eqref{eq:Newton}. We will denote the image of the initial data $(q,v)$ after a time $t$ by $\Phi^t(q,v)$. We can now introduce a subset of $\mathcal{T}$ which collects all the initial conditions which come back to $\mathcal{T}$ in finite time without leaving $D$, namely:
    \[
    	\Sigma\uguale\left\lbrace (q,v)\in\mathcal{T}:\,\inf\left\lbrace t>0:\,\Phi^t(q,v)\in\mathcal{T}\right\rbrace<+\infty, \,\Phi^t(q,v) \in D,\  \forall t\right\rbrace
    \]
    and, for any $(q,v)\in\Sigma$, we can define its \emph{first return time} $T(q,v)$ as the infimum in the above formula, which is actually a minimum on $\Sigma$. 
    
    At this point, we can inductively define the set 
    \[\Sigma^n \uguale \left\lbrace (q,v)\in \Sigma^{n-1}:\,\inf\left\lbrace t>0:\,\Phi^t(q,v)\in\mathcal{T}\right\rbrace<+\infty, \,\Phi^t(q,v) \in D\, \forall t\right\rbrace, 
    \] 
    where we have set $\Sigma^1\uguale \Sigma$. Let $\mathcal{J}$ denote the map $(q,v) \mapsto \mathcal{J}(q,v)=(q,-v)$ and define the collection of trajectories coming back infinitely many times, both in the future and in the past, as the following intersection:
    \[
    \Sigma^\infty \uguale \cap_{n \ge1} \left(\Sigma^n\cap \mathcal{J}\left(\Sigma^n\right)\right).
    \]    
    Note that, as a consequence of the existence of (collision-less) periodic solutions (see Theorem \ref{thm:technical_thm}), $\Sigma^{\infty}$ is non empty, since all periodic trajectories belonging to the admissible homotopy classes listed in Lemma \ref{lem:infinite_class_3centre} return to $\mathcal{T}$ infinitely many times. 
    
    Now, we define the \emph{itinerary} of a point of $\Sigma^{\infty}$. Take $(q,v) \in \Sigma^{\infty}$ and let us look at the segment $t \mapsto \Phi^t(q,v)$ with $t\in[0,T(q,v)]$. By definition, it connects two segments, say $\ell_i$ to $\ell_j$ (see Figure \ref{fig:half_lines}, picture in the middle). By construction, any trajectory with initial condition $(q,v)\in\Sigma^\infty$ is defined on the whole $\mathbb{R}$ and meets transversally the segments $\ell_i$ infinitely many times. Thus, we can associate to it an infinite sequence
    \[
    (s_k)_{\in \mathbb{Z}},\quad s_k \in \{\pm1,\pm2,\pm3\},
    \]
    as the sequence of signed intersections with the segments $\ell_i$. In particular, we set $s_k = i$ if the trajectory starting from $(q,v)$ hits $\ell_i$ after $k$ iterations and the crossing is positive. Similarly, we set $s_k = -i$ if the crossing is negative. For negative $k$, we can use a time inversion and define positive and negative intersections accordingly. We have thus defined an equivariant map: 
    \begin{equation}
    	\label{eq:def_semiconj}
    	\pi \colon \Sigma^{\infty} \to S^\mathbb{Z}, \quad\text{such that}\ \pi \circ \Phi^{T(\cdot,\cdot)}= \sigma \circ \pi.
    \end{equation} 
    Notice that, for minimisers in the classes defined in \eqref{eq:def_classes_3_centres}, not all the possible sequences are allowed. In fact, for those minimisers, the associated sequence cannot contain any piece of the form:
    \[
    \dots, -i_1,-i_2,\dots, \boxed{-i_k,i_k}, \dots i_2,i_1 \dots.
    \]
    Indeed, the presence of two consecutive intersections with the segment $\ell_{i_k}$ would correspond to the rise of a $2-$gon between the minimiser and $\ell_{i_k}$, which is impossible for the main assertion of Proposition \ref{prop:bound_on_minimisers}.    
    
    Following this rule, it is easy to guess what kind of (periodic) sequences corresponds to the minimisers $\gamma_{n,m}^i$ introduced in Proposition \ref{prop:bound_on_minimisers}:
    \[
    \begin{aligned}
    	\gamma_{n,m}^3 \Rightarrow \dots \underbrace{1,2,1,2\dots , 1,2}_{n \mathrm{\,\, times}},\underbrace{1,2,3,\dots, 1,2,3}_{m \mathrm{\,\, times}},\dots \\
    	\gamma_{n,m}^2 \Rightarrow \dots \underbrace{1,3,1,3\dots , 1,3}_{n \mathrm{\,\, times}},\underbrace{1,2,3,\dots, 1,2,3}_{m \mathrm{\,\, times}},\dots \\ 
    	\gamma_{n,m}^1 \Rightarrow \dots \underbrace{2,3,2,3\dots ,2,3}_{n \mathrm{\,\, times}},\underbrace{1,2,3,\dots, 1,2,3}_{m \mathrm{\,\, times}},\dots \\
    \end{aligned}
    \]
    Notice that, since the system is reversible, reversing time changes all the signs. As a shorthand notation, we will write $(i,j)^k$ to denote the string $i,j$ repeated $k$ times. Thus, for instance, a periodic trajectory which  minimises $\mathcal{M}_h$ in the class of point $iii)$ of Proposition \ref{prop:bound_on_minimisers} corresponds to a periodic sequence of the form:
    \begin{equation}
    	\label{eq:periodic_sequences}
    	\ldots(i_1,j_1)^{k_1}, (1,2,3)^{w_1},(i_2,j_2)^{k_2},\dots, (1,2,3)^{w_r},(i_r,j_r)^{k_r},\ldots,
    \end{equation}
    for $i_l< j_l$, $w_l,k_l>0$, and for any $l =1, \dots r$.
    
    To be precise, one should index any string of the latter form and work with the resulting sequence. Hoping it causes no confusion, we will sometimes use the word \emph{sequence} to denote the plain string too. This is equivalent to replacing an element with its orbit under the Bernoulli shift map $\sigma$ (recall that $\sigma$ acts by shifting the whole sequence on the right (see \eqref{eq:def_bernoulli_shift})). 
    
    We denote by $X$ the closure, with respect to the metric defined in \eqref{eq:def_metric_bernoulli}, of the set of elements of the form \eqref{eq:periodic_sequences}. 
    \begin{lem}
	    The set $(X,\sigma)$ is a chaotic sub-system of $(S^\Z,\sigma)$ and it is homeomorphic to a Cantor set.
	    \begin{proof}
		    Since $X$ is the closure of an invariant set and $\sigma$ is continuous, $X$ is invariant. To show that $X$ is chaotic, we have to check the three items of Definition \ref{def:chaos}. By construction, periodic trajectories are dense. To see that the system is transitive, pick any two sequences $x_1$ and $x_2$ in $X$. By construction, they are limits of periodic sequences $(x_i^n)$, i.e., $x_i^n\to x_i$ as $n\to +\infty,$ $i=1,2$. Define a new sequence $y$ as the concatenation of larger and larger portions of $x_1^n$ and $x_2^n$.  The orbit of $y$ is arbitrarily close to both $x_1$ and $x_2$. Sensitivity with respect to initial conditions can be proved by similar arguments. 
		
            Now we show that $X$ is homeomorphic to a Cantor set. We will make use of the following result contained in \cite{BrouwerOnTS}:
            \begin{center}
        	    \emph{Suppose that $X$ is a non-empty, compact, Hausdorff space without isolated points and that there is a countable base consisting of open and closed sets. Then $X$ is homeomorphic to a Cantor set.}
            \end{center}        
            By definition, $X$ is closed and, being a closed subset of the compact space $S^\mathbb{Z}$ (with respect to the \emph{product} topology), it is compact itself. Moreover, since it is metrizable, it is also Hausdorff. Pick now a finite string of symbols as in \eqref{eq:periodic_sequences}, say $f = (f_{-k},f_{-k+1},\dots,f_{l-1},f_{l})$ with $k , l \in \mathbb{N}$. If we define the sets
            \[
        	U_f \uguale\{s\in X:\, s_j = f_j,\ \forall\, j =- k,\dots,l\}
            \]
            they are closed and also open. In fact, they are open since any point in $U_f$ contains a ball with respect to the metric defined in \eqref{eq:def_metric_bernoulli}, for sufficiently small radius. On the other hand, they are closed since the evaluation maps $\mathrm{ev}_k: s \to s_k$, $k \in \Z$ are continuous. Notice that these sets are a basis for the product topology (and also for the one defined by the metric \eqref{eq:def_metric_bernoulli}). Lastly, using the same arguments, one sees that there can be no isolated points in $X$ and thus $X$ is a Cantor set.  
	    \end{proof} 
    \end{lem}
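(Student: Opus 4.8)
The plan is to check, one at a time, the three requirements in Definition~\ref{def:chaos}, and then to invoke the Brouwer characterisation of the Cantor set recalled above. First, I would note that $X$ is $\sigma$-invariant: the set $P$ of bi-infinite periodic sequences of the form \eqref{eq:periodic_sequences} is itself $\sigma$-invariant, since a shift of such a sequence merely moves the base point within the periodic pattern and so yields another sequence of the same type; as $\sigma$ is continuous, $\sigma(\overline P)\subseteq\overline{\sigma(P)}=\overline P$, so $X=\overline P$ is invariant. Density of periodic points in $X$ is then automatic, $X$ being by construction the closure of the set $P$ of periodic points of $\sigma$.

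The substantive point is transitivity, which I would extract from an elementary \emph{gluing} lemma: if $f$ and $g$ are finite words, each occurring as a central block of some sequence in $P$, then they can be joined by a connecting word of the shape $(1,2,3)^{w}(i,j)^{k}(1,2,3)^{w'}$, with $i<j$ and suitable $w,k,w'>0$, in such a way that $\ldots f\,(1,2,3)^{w}(i,j)^{k}(1,2,3)^{w'}\,g\ldots$ extends to a sequence again of the form \eqref{eq:periodic_sequences}; the complete $(1,2,3)$-syllables act as universal connectors, so nothing obstructs the splicing. Granting this, I would enumerate the countably many finite words $f^{(1)},f^{(2)},\ldots$ that occur as central blocks of elements of $P$ and, applying the gluing lemma repeatedly, build a single bi-infinite word $y$ of the form \eqref{eq:periodic_sequences}, hence $y\in X$, containing each $f^{(m)}$ as a sub-block. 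Since every nonempty basic open subset of $X$ contains a cylinder $U_f=\{s\in X:\ s_j=f_j,\ -k\le j\le l\}$, and such a cylinder is nonempty exactly when $f$ is one of the $f^{(m)}$, a suitable iterate of $y$ lies in it; thus the $\sigma$-orbit of $y$ is dense and $\sigma|_X$ is transitive.

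For sensitive dependence I would either cite the Banks--Brooks--Cairns--Davis--Stacey theorem (transitivity together with dense periodic points on an infinite metric space forces sensitivity), or argue directly: given $x\in X$ and $\delta>0$, pick a periodic $p\in X$ agreeing with $x$ on a long central block; replacing a faraway $(1,2,3)^{w}$-syllable of $p$ by $(1,2,3)^{w+1}$ and re-periodising produces $p'\in X$ with $d_1(p,p')$ as small as desired, while after enough shifts the orbits of $p$ and $p'$ disagree at the $0$-th coordinate, giving a definite separation. Finally, for the Cantor set assertion I would apply the criterion stated above: $X$ is nonempty (it contains periodic points), compact (a closed subset of $S^{\mathbb Z}$, which is compact for the product topology, the topology induced by $d_1$), metrisable hence Hausdorff, admits a countable base of clopen sets (the cylinders $U_f$, open as small $d_1$-balls and closed as intersections of preimages of points under the continuous coordinate maps $\mathrm{ev}_k$), and is perfect — $P$ is dense-in-itself, since any element of $P$ may be perturbed arbitrarily far from its centre to give another element of $P$, and the closure of a dense-in-itself set has no isolated points.

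The step I expect to cost the most effort is the gluing lemma behind transitivity: one must make sure that the rigid alternating structure of \eqref{eq:periodic_sequences}, together with the syntactic restriction excluding reflected patterns $\ldots,-i_k,i_k,\ldots$, genuinely permits any two admissible central blocks to be concatenated inside a single admissible sequence. Everything else is routine symbolic bookkeeping combined with the quoted topological characterisation of the Cantor set.
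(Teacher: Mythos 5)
Your proposal is correct and follows essentially the same route as the paper: one verifies Devaney's three conditions by splicing admissible words of the form \eqref{eq:periodic_sequences} and then applies the Brouwer characterisation of the Cantor set, using exactly the clopen cylinders $U_f$ for compactness, the countable clopen base and the absence of isolated points. The only real difference is presentational: you obtain transitivity by building a single dense orbit through a gluing lemma (and optionally invoke the Banks--Brooks--Cairns--Davis--Stacey theorem for sensitivity), whereas the paper concatenates longer and longer central blocks of periodic approximants of the two given points; both arguments rest on the same concatenation property of admissible strings, which the paper likewise leaves implicit.
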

    
    Now we are finally ready to prove the semi-conjugation with the Bernoulli shift as stated in Theorem \ref{thm:symb_dyn}, which is equivalent to the following result.
    \begin{thm}\label{thm:technical_symbolic_dyn}
	    The map $\pi$ is continuous and its image contains $X$. Therefore, there exists a compact invariant subset $\Sigma'\subset \Sigma^{\infty}$ which is semi-conjugated to the chaotic system $(X,\sigma).$ 
	    \begin{proof}
		    The proof is a slight modification of the argument given in \cite[Proposition 6.1]{SoaTer2012} or in \cite[Theorem 1.7]{BarCanTer2021}. 
		
		    According to Definition \ref{def:semi_conj}, we have to show that $\pi$ is a continuous and surjective map. First of all, we prove continuity. This is essentially a reformulation of continuous dependence on initial data for regular flows. For $(q_0,v_0)\in \Sigma^\infty$, we have to show that, for every $\ve >0$, there exists $\delta>0$ such that, for every point in $B_{\delta}(q_0,v_0)$, we have
		    \[
		    d(\pi(q,v),\pi(q_0,v_0))<\ve,
		    \]
		    with $d$ as in \eqref{eq:def_metric_bernoulli}. The condition that $\pi(q,v)$ is close to $\pi(q_0,v_0)$ can be equivalently formulated in this way: for some $m_0>0$, the images $\pi(q,v)$ and $\pi(q_0,v_0)$ have the same $-m_0,\dots,m_0$ components. By continuous dependence on data, there exists a small neighbourhood of $(q_0,v_0)$ such that the image of any $(q,v)$ via the flow $\Phi^t$ stay $\ve-$close to $\Phi^t(q_0,v_0)$ on any fixed compact time interval. This means that, if we choose an interval big enough to reach the first $2m_0 +1$ intersections with the segments $\ell_i$, sufficiently close initial conditions will intersect the same $\ell_i$ in the same order (notice that we are actually close in the $\mathscr{C}^1$ topology for the configuration surface).  Thus $\pi$ is continuous.
		
		    Now we have to show now that $\pi(\Sigma^{\infty}) \supset X$. Indeed, once this is done, we can define $\Sigma' \uguale \pi^{-1}(X)$, which is a closed (with respect to the subspace topology) and invariant subset of $\Sigma^{\infty}$. To this aim, consider $x \in X$ and fix a sequence of periodic minimisers whose itinerary approximates $x$. Note that here we are going to use some results from the next section, which have been postponed being rather technical (see in particular the forthcoming Lemma \ref{lemma:approximating_minimisers}). As we will do before proving Lemma \ref{lemma:approximating_minimisers}, we can define a sequence $a_r>0$ with $r\in \mathbb{N}$, such that $a_r\to+\infty$ (cf the forthcoming  \eqref{eq:def_minimal_period}). Applying Lemma \ref{lemma:approximating_minimisers}, we can refine this sequence in such a way that it converges on compact intervals of the form $[-a_r+\ve,a_r-\ve]$ for $\ve>0$ in the $\mathscr{C}^1$ topology. The resulting limit is a minimiser defined on $[-a_r,a_r]$, collision-less on $(-a_r,a_r)$ and whose itinerary coincides with $x_{-r},\dots,x_r.$ Then, we iterate the procedure, we fix a longer piece of $x$ and we apply the previous construction to the refined sequence obtained before, obtaining a collision-less limit on $(-a_{r+1},a_{r+1})$. By point $ii)$ of Lemma \ref{lemma:approximating_minimisers}, the intervals $[-a_r,a_r]$ are increasing and arbitrarily large. Thus, a diagonal argument produces a sequence which converges $\mathscr{C}^1$ on arbitrarily large compact intervals. Denote by $\gamma$ this limit. By construction, it immediately follows that  $\pi(\gamma) = x$ and $\pi$ is surjective.
      	\end{proof}
    \end{thm}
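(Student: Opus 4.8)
The plan is to establish the two assertions of Theorem~\ref{thm:technical_symbolic_dyn} --- continuity of $\pi$ and the inclusion $X\subseteq\pi(\Sigma^\infty)$ --- and then to read off the semi-conjugation as a formal consequence. For continuity, I would fix a point $(q_0,v_0)\in\Sigma^\infty$ and an $\ve>0$, and choose $m_0\in\N$ large enough that $\sum_{|k|>m_0}2\cdot 4^{-|k|}<\ve$ with respect to the metric \eqref{eq:def_metric_bernoulli}; it then suffices to make $\pi(q,v)$ agree with $\pi(q_0,v_0)$ on the coordinates $-m_0,\dots,m_0$. Since $(q_0,v_0)\in\Sigma^\infty$, its trajectory is globally defined, never hits a centre, stays in $D$, and crosses the segments $\ell_i$ transversally (by Remark~\ref{rem:choice_ell}); hence there is a compact time interval $[-T,T]$ on which $\Phi^t(q_0,v_0)$ realises at least the first $2m_0+1$ crossings, and each crossing is transversal and occurs away from the centres. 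By continuous dependence on initial data for the (regular, collision-free) flow $\Phi^t$ on this compact interval --- combined with the fact that transversal crossings of a fixed curve are stable under $\mathscr C^1$-small perturbations of the orbit --- there is $\delta>0$ such that every $(q,v)\in B_\delta(q_0,v_0)$ produces exactly the same ordered list of signed crossings on $[-T,T]$. This gives $d(\pi(q,v),\pi(q_0,v_0))<\ve$, proving continuity.

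For the inclusion $X\subseteq\pi(\Sigma^\infty)$, recall that $X$ is the closure of the set of periodic strings of the form \eqref{eq:periodic_sequences}, so by continuity of $\pi$ and compactness it is enough to realise every such periodic string, and indeed every truncation thereof, by an honest orbit in $\Sigma^\infty$. I would proceed by a diagonal/exhaustion argument exactly as sketched. Given $x\in X$, fix periodic minimisers whose itineraries approximate $x$; for each finite window $x_{-r},\dots,x_r$ one invokes the forthcoming Lemma~\ref{lemma:approximating_minimisers} to obtain, after passing to a subsequence, a limit curve defined on an interval $[-a_r,a_r]$ (with $a_r\to+\infty$, cf.\ \eqref{eq:def_minimal_period}), which is collision-less on the open interval $(-a_r,a_r)$, converges in $\mathscr C^1$ on every compact subinterval $[-a_r+\ve,a_r-\ve]$, and whose itinerary on that window equals $x_{-r},\dots,x_r$. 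Iterating --- at each stage passing to a further subsequence adapted to a longer window --- and then extracting a diagonal subsequence yields a curve $\gamma$ defined on all of $\R$, collision-less, contained in $D$, crossing the $\ell_i$ transversally, with $\pi(\gamma)=x$. In particular $\gamma$ (more precisely its lift to $\mathcal E_h$, taken at a crossing time) lies in every $\Sigma^n\cap\mathcal J(\Sigma^n)$, hence in $\Sigma^\infty$, so $x\in\pi(\Sigma^\infty)$.

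Finally, set $\Sigma'\uguale\pi^{-1}(X)\subseteq\Sigma^\infty$. Since $X$ is closed and $\pi$ is continuous, $\Sigma'$ is closed in $\Sigma^\infty$; and since $\pi\circ\Phi^{T(\cdot,\cdot)}=\sigma\circ\pi$ (the equivariance recorded in \eqref{eq:def_semiconj}) while $X$ is $\sigma$-invariant, $\Sigma'$ is invariant under the first-return map $\Phi^{T(\cdot,\cdot)}$. One should check here that $\Sigma'$ is moreover compact: it is a closed subset of $\mathcal T\cap\overline D$, and the uniform bounds coming from conservation of energy \eqref{eq:energy} together with confinement in the compact region $D$ (Proposition~\ref{prop:bound_on_minimisers}) keep it bounded away from the centres and bounded in the ambient $H^1$/phase-space sense, so it is sequentially compact. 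The restriction $\pi|_{\Sigma'}\colon\Sigma'\to X$ is then continuous and surjective with $\pi|_{\Sigma'}\circ\Phi^{T}=\sigma\circ\pi|_{\Sigma'}$, which is precisely a topological semi-conjugation of $(\Sigma',\Phi^T)$ onto the chaotic system $(X,\sigma)$, as claimed.

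The main obstacle I anticipate is the realisation step, i.e.\ making rigorous that the $\mathscr C^1$-limits of the approximating periodic minimisers genuinely have the prescribed itinerary and remain collision-less on the open limiting intervals --- this is where Lemma~\ref{lemma:approximating_minimisers} does the heavy lifting, and the delicate point is controlling the behaviour near the endpoints $\pm a_r$ (where a priori a collision with one of the $\ell_i$-endpoint centres could appear) and ensuring that the crossings survive the limit transversally rather than degenerating to tangencies; the confinement to $D$ and the no-$2$-gon rigidity from Proposition~\ref{prop:bound_on_minimisers} are exactly what rule out the forbidden patterns $\dots,-i_k,i_k,\dots$ and keep the symbol sequence well-defined in the limit.
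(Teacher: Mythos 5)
Your proof is correct and follows essentially the same route as the paper: continuity via continuous dependence on initial data over a compact time window covering the first $2m_0+1$ transversal crossings of the $\ell_i$, surjectivity onto $X$ via the diagonal argument built on Lemma~\ref{lemma:approximating_minimisers} and the times $a_r$ from \eqref{eq:def_minimal_period}, and finally $\Sigma'\uguale\pi^{-1}(X)$ with invariance coming from the equivariance \eqref{eq:def_semiconj}. Your extra sketch of compactness of $\Sigma'$ (a point the paper's proof leaves implicit) is a sensible addition, though the assertion that the relevant initial conditions stay bounded away from the centres should be tied to the collision-less realisation of itineraries in $X$ rather than to confinement in $D$ alone.
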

    The next result is the most important application of Theorem \ref{thm:main_theorem} and establishes the presence of chaotic invariant subsets of the phase space of the $N$-centre problem. This result, which is stronger than Theorem \ref{thm:technical_symbolic_dyn}, requires some further hypotheses on the scalar curvature of the Jacobi-Maupertuis metric, which is defined in this way:
    \[
    g_J(v,v)\uguale (h-V(x))g(v,v),\quad v\in T_xM.
    \]
    \begin{thm} \label{thm:conjugation_neg_curvature}
	    Assume that the scalar curvature $\kappa_J$ of the Jacobi metric $g_J$ is negative. Then, the set $\Sigma'$ is conjugate to $(X,\sigma)$ and is chaotic. 
		\begin{proof}
			The core of the proof is an application of Gauss-Bonnet Theorem together with the computation of the scalar curvature $\kappa_J$ of the Jacobi metric $g_J$ of the forthcoming Lemma \ref{lemma:curvature_jacobi_metric}. This will imply a \emph{strong form} of uniqueness for geodesics contained in the disk $D$. Recall that minimisers of the Maupertuis functional \eqref{eq:def_Maupertuis} are reparametrised geodesics of $g_J$ (see for instance \cite{abraham2008foundations}). It is known that, for (sectionally) negatively curved (complete) manifolds, geodesics are unique in any homotopy class. In our case, $g_J$ is not complete; however, this is more than an existence issue. Suppose that in one of the classes appearing in Lemma \ref{lem:infinite_class_3centre} there are  two distinct geodesics $\gamma_1,\gamma_2$. Pick two points, $p$ and $q$, belonging to $\gamma_1$ and $\gamma_2$ respectively. Join them using a length minimising curve and lift the three curves to the universal cover of $D\setminus\{c_1,c_2,c_3\}$. By construction, the three curves bound a region homeomorphic to a disk $\Omega$. Moreover, the sum of the internal angles at the corner points is $2\pi$ by construction. An application of Gauss-Bonnet  theorem yields:
			\[
				0>\int_{\Omega} \kappa_J dA  = 2\pi (\chi(\Omega)-1) =0.
			\]
	   		For those minimisers which are not periodic, we can argue as follows.  We can assume, without loss of generality, that $\gamma_1$ and $\gamma_2$ have the same itinerary, intersect some $\ell_i$ at time $t=0$ and $\gamma_1(0)$ is closest to $\partial D$. Lift them to the universal cover. By construction, they bound a region homeomorphic to a strip. Denote by $\eta_t$ the geodesics issuing from $\gamma_1(t)$ with an angle of $\pi/2$. Since the curvature is negative, no $\eta_t$ can intersect $\eta_\tau$ for $t \ne \tau$ before it intersects $\gamma_2$. If that were the case, we would have a geodesic triangle with sum of the internal angles greater than $\pi$.
	   
	   		For $t<\tau$, define $\Omega_{t,\tau}$ as the disk bounded by segments of $\gamma_1$,$\gamma_2$, $\eta_t$ and $\eta_\tau$. Denote by $\alpha_\tau$ and $\beta_t$ the turning angle between $\eta_\tau$ and $\gamma_2$ and $\gamma_2$ and $\eta_t$ respectively. Applying again Gauss-Bonnet  Theorem to $\Omega_{t,\tau}$, we find:
	  		 \[
	   			\int_{\Omega_{t,\tau}} \kappa_J dA+ \pi+\alpha_t+\beta_{\tau}  = 2\pi \, \Rightarrow \,\alpha_t+\beta_\tau>\pi.
	  		\]
       		Since $\beta_t = \pi-\alpha_t$, we get that the function $t\mapsto \beta_t$ is strictly decreasing. On the other hand, since the curvature satisfy $-c^2>k_J>-C^2$ and the integral $-\int_{\Omega_{t,\tau}} k_J dA$ is bounded by $\pi$, the area of $\Omega_{t,\tau}$ is bounded for all $t,\tau \in \mathbb{R}$. In particular, the length of the segments $\eta_\tau$ must go to zero as $\tau\to +\infty$. However, this would imply that the corresponding angle $\beta_\tau$ is arbitrarily close to $\pi/2$. A contradiction.
       
       		It follows that the map $\pi: \Sigma'\to X$ is one-to-one whenever the curvature $k_J$ is negative. To prove that $\pi$ it is actually an homomorphism, we will show that the  restriction $\pi\vert_{\Sigma'}\to X$ is closed.
       		
       		Let $C\sset \Sigma'$ be a closed subset. Pick a sequence $(x_n)\sset \pi(C)$ which converges to a point $x_0\in X$. We have to show that there exists $c_0 \in C$ such that $\pi(c_0) = x_0$. By definition and thanks to the first part of the proof, if $x_n \in \pi(C),$ there exists a unique minimiser $\gamma_n$ of \eqref{eq:def_Maupertuis} with initial conditions $c_n \in C$ and having $x_n$ as itinerary. Since $x_n \to x$, we can apply again Lemma \ref{lemma:approximating_minimisers} and the argument of Theorem \ref{thm:technical_symbolic_dyn} used to prove continuity to the family $\gamma_n$. We thus obtain a subsequence which converges uniformly in the $\mathscr{C}^1$ topology on compacts intervals of $\mathbb R$. Its limit is a solution of \eqref{eq:Newton} having $x$ as itinerary. Moreover, since the convergence is uniform, the corresponding initial conditions converge to a point $c_0\in \Sigma'$. Since $C$ is closed, $c_0 \in C$ and $x \in \pi(C)$.  
		\end{proof}
	\end{thm}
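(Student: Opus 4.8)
\emph{Proof proposal.} The plan is to upgrade the topological semi-conjugation $\pi\colon\Sigma'\to X$ produced in Theorem \ref{thm:technical_symbolic_dyn} to a genuine topological conjugation; once $\pi$ is shown to be a homeomorphism, the chaoticity of $\Sigma'$ follows automatically, since chaos in the sense of Definition \ref{def:chaos} is a conjugacy invariant and $(X,\sigma)$ has already been shown to be a chaotic system. As $\pi$ is already continuous and surjective and satisfies $\sigma\circ\pi=\pi\circ\Phi^{T}$, the two missing ingredients are: (a) $\pi$ is injective, and (b) $\pi^{-1}$ is continuous, equivalently $\pi$ is a closed map.

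For (a) the crucial point is a strong uniqueness statement for $g_J$-geodesics confined to the punctured disk $D\setminus\{c_1,c_2,c_3\}$: in each of the admissible homotopy classes of Lemma \ref{lem:infinite_class_3centre} there is at most one minimiser of $\mathcal M_h$, and two minimisers sharing the same itinerary must coincide. This will be deduced from the negativity of $\kappa_J$ via the Gauss--Bonnet theorem, using the explicit expression for $\kappa_J$ of Lemma \ref{lemma:curvature_jacobi_metric}, which in particular provides two-sided bounds $-C^2<\kappa_J<-c^2<0$ on the relevant region away from the collisions. I would argue in two steps. First, for two distinct \emph{periodic} minimisers $\gamma_1\ne\gamma_2$ in the same class, pick $p\in\gamma_1$, $q\in\gamma_2$, join them by a $g_J$-length minimising arc, and lift the three curves to the universal cover of $D\setminus\{c_1,c_2,c_3\}$; by Proposition \ref{prop:no_excess_intesections_regular}$(v)$ distinct minimisers meet transversally, so the lifts bound a region $\Omega$ homeomorphic to a disk with corner angles summing to $2\pi$, and Gauss--Bonnet gives the contradiction $0>\int_\Omega\kappa_J\,dA=2\pi(\chi(\Omega)-1)=0$. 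Second, for \emph{non-periodic} minimisers $\gamma_1,\gamma_2$ with the same itinerary I would normalise so that both meet some $\ell_i$ at $t=0$ with $\gamma_1(0)$ closer to $\partial D$, lift them to the universal cover where they bound a strip, and slide the geodesics $\eta_t$ issuing orthogonally from $\gamma_1(t)$; negative curvature prevents two such geodesics from meeting before reaching $\gamma_2$ (otherwise a geodesic triangle with angle excess), and applying Gauss--Bonnet to the quadrilateral $\Omega_{t,\tau}$ bounded by $\gamma_1,\gamma_2,\eta_t,\eta_\tau$ shows that the turning angle $\beta_t$ between $\gamma_2$ and $\eta_t$ is strictly monotone, while the uniformly bounded total curvature forces $\mathrm{length}(\eta_\tau)\to0$ and hence $\beta_\tau\to\pi/2$, a contradiction.

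For (b) I would prove directly that $\pi$ maps closed sets to closed sets. Let $C\subset\Sigma'$ be closed and let $(x_n)\subset\pi(C)$ with $x_n\to x_0\in X$. By part (a), each $x_n$ is the itinerary of a \emph{unique} minimiser $\gamma_n$ of $\mathcal M_h$ with initial datum $c_n\in C$; since $x_n\to x_0$, Lemma \ref{lemma:approximating_minimisers} together with the $\mathscr C^1$-continuous dependence argument used in the proof of Theorem \ref{thm:technical_symbolic_dyn} yields a subsequence of $(\gamma_n)$ converging in the $\mathscr C^1$ topology on compact intervals to a solution of \eqref{eq:Newton} whose itinerary is $x_0$. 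Uniform convergence on compacts then gives $c_n\to c_0$ for some $c_0\in\Sigma'$, and $c_0\in C$ since $C$ is closed; hence $x_0=\pi(c_0)\in\pi(C)$. Therefore $\pi$ is a continuous closed bijection, i.e.\ a homeomorphism, so $\Phi^{T}|_{\Sigma'}$ is topologically conjugate to $(X,\sigma)$, and consequently chaotic.

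I expect the main obstacle to be the non-periodic uniqueness case: the Jacobi--Maupertuis metric is incomplete and carries a singular conformal factor at the centres, so one must ensure that the orthogonal geodesics $\eta_t$ and the regions $\Omega_{t,\tau}$ stay inside the portion of $D\setminus\{c_1,c_2,c_3\}$ where Lemma \ref{lemma:curvature_jacobi_metric} supplies a uniform negative upper bound and a finite lower bound on $\kappa_J$, and that $\mathrm{Area}(\Omega_{t,\tau})$ genuinely remains bounded as $t,\tau\to+\infty$. Controlling this geometry, rather than the formal Gauss--Bonnet bookkeeping, is the delicate point of the argument.
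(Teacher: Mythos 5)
Your proposal follows essentially the same route as the paper's own proof: injectivity via the two Gauss--Bonnet arguments (the disk argument for periodic minimisers and the strip/orthogonal-geodesics argument with the monotone angle $\beta_t$ for non-periodic ones), and then closedness of $\pi$ via Lemma \ref{lemma:approximating_minimisers} and the $\mathscr{C}^1$-convergence argument from Theorem \ref{thm:technical_symbolic_dyn}, exactly as in the paper. The argument is correct, and your closing remark about confining $\Omega_{t,\tau}$ to the region where the two-sided curvature bounds hold correctly identifies the delicate point that the paper treats only implicitly.
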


    \begin{rem}
        In the next section, we are going to prove a result (see Lemma \ref{lemma:curvature_jacobi_metric}) which detects some conditions on the metric $g$ and the energy $h$ under which the hypotheses of Theorem \ref{thm:conjugation_neg_curvature} are satisfied. For instance, we will see that this is the case if $g$ is the flat metric on $\mathbb{R}^2$ or if $g$ has strictly negative curvature and the energy level $h$ is high enough. As a consequence, the statement of Theorem \ref{thm:chaotic_sym_dyn} presented in the Introduction will follows as an application of Theorem \ref{thm:conjugation_neg_curvature} above.
    \end{rem}

    \subsection{Some technical lemmas}\label{sec:technical_lemmas}
    
    As announced, this section collects and proves some technical statements that were used in the proof of Theorem \ref{thm:technical_symbolic_dyn} and \ref{thm:conjugation_neg_curvature}.

    The next lemma proves the existence of collision solutions connecting the boundary of $D$ with the centres $c_j$. These solutions are actually the segments $\ell_i$ introduced in the previous section and are fundamental to define the alphabet of our symbolic dynamics (see also Remark \ref{rem:choice_ell}). 
     \begin{lem}
    	\label{lem:construnction_segments_solutions}
    	There exist 3 points $p_1,p_2,p_3\in\partial D$ and 3 arcs $\ell_1,\ell_2,\ell_3$ inside $D$ such that, for any $i\in\{1,2,3\}$:
    	\begin{itemize}
    		\item $\ell_i$ connects $c_i$ and $p_i$;
    		\item $\ell_i\cap D=\{p_i\}$, $\ell_i\cap D'=\{c_i\}$;    		
    		\item $\ell_i$ is the support of a solution of \eqref{eq:Newton} at energy $h$;   
    		\item $\ell_i\cap \ell_j=\emptyset$, for any $i\neq j$.
    	\end{itemize}
    	\begin{proof}
    		Choose three curves joining $c_i$ and the boundary of $D$. Without loss of generality, we can choose them in such a way that they do not intersect. 
    		Denote by $p_i$ the intersection between the segment emanating from $c_i$ and $\partial D$. 
    		
    		Now, we wish to minimise the Maupertuis functional \eqref{eq:def_Maupertuis} among all the $H^1$ paths which connect $c_1$ and $p_1$ and are homotopic rel boundary in $\widehat{M}$. Following the same procedure of Section \ref{sec:variational_frame}, we start by allowing for collisions with the other two centres, so that the minimisation space becomes weakly closed and $\mathcal{M}_h$ is coercive therein. In this way, a minimiser $\gamma_1$ is provided (cf. Proposition \ref{prop:existence}).  Assume by contradiction that $\gamma_1$ collides with $c_j$ for some $j\neq 1$. Arguing exactly as in Section \ref{sec:obstacle}, we find a contradiction:
    		\begin{itemize}
    			\item if $\al_j>1$, then the obstacle minimisers would have to self-intersect forming a loop bounding $c_j$ (obtaining a similar version of Lemma \ref{lemma:total_angular_variation_limit_blowup}), but this incompatible with the homotopy class chosen;
    			\item if $\al_j=1$, then the minimiser would have to reflect after hitting $c_j$. This is again impossible since it has to connect $c_1$ and $p_1$.
    		\end{itemize} 
    	    Moreover, $\gamma_1$ collides with $c_1$ only once in the endpoint, since the Maupertuis functional is super-additive. Finally, $\gamma_1$ is contained in $D$. If this were not the case, $\gamma_1$ and $\partial D$ would form a $2-$gon contradicting the  usual regularity argument (see Propositions \ref{prop:no_excess_intesections_regular} and \ref{prop:bound_on_minimisers}).  Now, let us define $\ell_1$ as the image of $\gamma_1$ in $D$. An analogous of Proposition \ref{prop:regularity_outside_collision_set} guarantees that $\gamma_1$ solves \eqref{eq:Newton}-\eqref{eq:energy} except at the unique instant $t$ where $\gamma_1(t)=c_1$. With a similar construction, we can build $\ell_2$ and $\ell_3$, proving the first three points of the statement.
    	    
    	    To show the last property, notice that any intersection between two of them would generate a $2-$gon thanks to the homotopy constraint. This is not possible because of the usual regularity argument employed several times in the previous sections (see Propositions \ref{prop:no_excess_intesections_regular} and \ref{prop:bound_on_minimisers}).   
    	\end{proof}
    \end{lem}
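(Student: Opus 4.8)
The plan is to realise each $\ell_i$ as a Maupertuis minimiser with a prescribed endpoint on $\partial D$ and a single collision at $c_i$, transcribing the variational scheme of Sections \ref{sec:variational_frame}--\ref{sec:proof_thm} to the fixed-endpoint setting. First I would fix, for each $i\in\{1,2,3\}$, a reference arc joining $c_i$ to a point $p_i\in\partial D$, chosen so that the three arcs are pairwise disjoint, meet $\partial D$ only at the $p_i$, and touch the closed inner disk $\overline{D'}$ only at the $c_i$; each such arc singles out a homotopy class rel endpoints, in $M\setminus(\{c_1,c_2,c_3\}\setminus\{c_i\})$, of simple arcs enclosing no sub-loop around any centre. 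Then, following Proposition \ref{prop:existence}, I would minimise $\mathcal{M}_h$ over the weak $H^1$-closure of the set of paths from $c_i$ to $p_i$ in the chosen class, allowing a priori collisions with the other two centres so that the admissible set is weakly closed; coercivity is even easier than in the free-loop case, since a fixed-endpoint path of large $H^1$-norm necessarily has large length and hence large Dirichlet integral, while the potential part is uniformly bounded below as in Section \ref{sec:variational_frame}. This yields a minimiser $\gamma_i$; super-additivity of $\mathcal{M}_h$ (Lemma \ref{lem:restriction}) shows it reaches $c_i$ only at its endpoint, and Proposition \ref{prop:regularity_outside_collision_set} shows it is, off that instant, a reparametrised solution of \eqref{eq:Newton}--\eqref{eq:energy}.

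The core of the argument is ruling out collisions of $\gamma_i$ with the other centres $c_j$. Here I would run the obstacle/blow-up machinery of Section \ref{sec:obstacle} essentially unchanged: around such a collision one extracts near-collision obstacle minimisers whose blow-up is a zero-energy $\alpha_j$-Kepler arc, and Lemma \ref{lemma:total_angular_variation_limit_blowup} forces its total angular variation to be at least $2\pi/(2-\alpha_j)$. If $\alpha_j>1$, this renders the desingularised competitor non-simple, with a sub-loop winding once around $c_j$, which is incompatible with a homotopy class of simple arcs; if $\alpha_j=1$, one is driven to a collision-reflection arc at $c_j$ as in Theorem \ref{thm:technical_thm}, but a reflected orbit retraces its own support and therefore cannot terminate at the boundary point $p_i$, a contradiction. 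In either case minimality is violated, so $\gamma_i$ is collision-free except at $c_i$. Confinement to $D$ then follows from the usual surgery argument: if $\gamma_i$ left $D$ it would cross $\gamma=\partial D$ and produce a $2$-gon, excluded by Proposition \ref{prop:no_excess_intesections_regular} and Proposition \ref{prop:bound_on_minimisers}. Taking $\ell_i$ to be the image of $\gamma_i$ establishes the first three properties.

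Finally, for disjointness, any intersection of $\ell_i$ and $\ell_j$ with $i\neq j$ would, because the two classes are represented by disjoint simple arcs, bound a $2$-gon formed by the two minimisers; switching branches at such a point keeps the Maupertuis value fixed but destroys the $\mathscr{C}^2$ regularity guaranteed by Proposition \ref{prop:regularity_outside_collision_set}, contradicting minimality exactly as in Proposition \ref{prop:no_excess_intesections_regular} and Proposition \ref{prop:bound_on_minimisers}. Hence $\ell_i\cap\ell_j=\emptyset$. The step I expect to be the main obstacle is precisely the transfer of the blow-up analysis to the fixed-endpoint obstacle problem, together with, in the Newtonian case $\alpha_j=1$, the careful verification that a reflection at $c_j$ is genuinely incompatible with reaching $p_i\in\partial D$ inside the prescribed class; the remaining points are transcriptions of arguments already proved above.
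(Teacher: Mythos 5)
Your proposal is correct and follows essentially the same route as the paper: fix disjoint reference arcs from $c_i$ to $p_i\in\partial D$, minimise $\mathcal{M}_h$ rel endpoints in the corresponding class while allowing collisions with the other centres, exclude those collisions via the obstacle/blow-up analysis (angular variation for $\alpha_j>1$, collision-reflection incompatibility for $\alpha_j=1$), use super-additivity for the single collision at $c_i$, and rule out exits from $D$ and mutual intersections by the $2$-gon/regularity arguments of Propositions \ref{prop:no_excess_intesections_regular} and \ref{prop:bound_on_minimisers}. No substantive differences from the paper's proof.
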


    Now we prove some results needed in the proof of Theorem \ref{thm:technical_symbolic_dyn} to show that the map $\pi$ which assigns to each solution its itinerary is surjective. Consider the collision solutions arcs $\ell_i$ ($i=1,2,3$) just constructed in Lemma \ref{lem:construnction_segments_solutions}. For any solution $\gamma$ of \eqref{eq:Newton} with energy $h$ such that there exists $t_0$ for which $\gamma(t_0) \in \Sigma^{\infty}$, define the following quantity:
     \begin{equation}
     	\label{eq:def_shortest_time}
     	T_{i,j}(\gamma) = \inf_{t<s} \{s-t:\, \gamma(t) \in \ell_i \text{ and }\gamma(s) \in \ell_j\}.
     \end{equation}
    
    \begin{lem}
    	\label{lem:estimates_time}
    	Assume that $\gamma$ is a solution of \eqref{eq:Newton} and that there exists $t_0$ such that $\gamma(t_0)\in\Sigma^\infty$. Moreover, assume that, for any $a,b \in \mathbb{R}$, $\gamma|_{[a,b]}$ is a minimiser of \eqref{eq:def_Maupertuis} in the space of paths joining $\gamma(a)$ to $\gamma(b)$ homotopic to $\gamma\vert_{[a,b]}$. Then,  there exists positive constants $C_1$ and $C_2$ not depending on $\gamma$ such that 
    	\[
    	C_1\le T_{i,j}(\gamma) \le C_2.
    	\]
    	\begin{proof}
    		Consider $\eta = \gamma\vert_{[t,s]}$, where $s,t$ are such that $\gamma(t) \in \ell_i$ and  $\gamma(s) \in\ell_j$. Then, recalling that $\ell(\eta)$ is the Riemannian length of $\eta$, we easily get:
    		\[
    			\mathcal{M}_h(\eta)=\frac12\int_t^s|\dot{\gamma}|_g^2\int_s^t\left[h-V(\eta)\right]\ge \frac{h}{2} \int_t^s\vert\dot {\gamma}\vert_g^2 \ge \frac{h}{2} \left(\int_t^s \frac{\vert \dot{\gamma} \vert_g}{\sqrt{s-t}}\right)^2 \ge \frac{h \,\ell(\eta)^2}{2(s-t)}
    		\]
    	Now, since $\ell_i$ and $\ell_j$ are compact and at a bounded distance, the value of $\mathcal{M}_h$ on minimisers can be bounded below and above by positive constants $c_2\ge\mathcal{M}_h(\eta) \ge c_1$.   This  implies that 
    	\[
    	s-t\ge \frac{h \, d_g(\ell_i,\ell_j)}{2c_1}.
    	\] 
    	On the other hand, since we are working with fixed energy $h$ we have:
    	\[
    		\vert \dot \eta \vert_g^2\ge 2h \Rightarrow\int_t^s \vert\dot \eta \vert_g^2 \ge2 h (s-t). 
    	\]
        And, since $\eta$ is a segment of solution, we have: 
        \[
        \sqrt{c_2} \ge \sqrt{\mathcal{M}_h(\eta)} \ge \frac{h \,(s-t)}{2}.
        \]
    	\end{proof}    	
    \end{lem}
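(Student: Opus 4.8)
The plan is to prove the two-sided bound $C_1 \le T_{i,j}(\gamma) \le C_2$ by exploiting the two defining features of $\gamma$: it has fixed energy $h$, and on every subinterval it is a local Maupertuis minimiser. The upper and lower bounds on the length-time ratio will come from trapping the Maupertuis value $\mathcal{M}_h(\eta)$ between two positive constants $c_1 \le \mathcal{M}_h(\eta) \le c_2$ that depend only on the geometry of the segments $\ell_i$, not on the particular orbit $\gamma$.

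First I would fix $t<s$ with $\gamma(t) \in \ell_i$, $\gamma(s) \in \ell_j$ realizing (or nearly realizing) the infimum in \eqref{eq:def_shortest_time}, and set $\eta = \gamma|_{[t,s]}$. Using hypothesis \eqref{hyp:energy_bound}, the Cauchy--Schwarz inequality, and the definition of Riemannian length $\ell(\eta) = \int_t^s |\dot\gamma|_g$, I would derive the chain
\[
\mathcal{M}_h(\eta) = \frac12 \int_t^s |\dot\gamma|_g^2 \int_t^s [h - V(\eta)] \ge \frac{h}{2} \int_t^s |\dot\gamma|_g^2 \ge \frac{h}{2}\cdot\frac{\left(\int_t^s |\dot\gamma|_g\right)^2}{s-t} = \frac{h\,\ell(\eta)^2}{2(s-t)},
\]
so that $s-t \ge \tfrac{h}{2\mathcal{M}_h(\eta)}\ell(\eta)^2 \ge \tfrac{h}{2c_2}\,d_g(\ell_i,\ell_j)^2$, giving the lower bound $C_1$ once we know $\mathcal{M}_h(\eta) \le c_2$ and that $d_g(\ell_i,\ell_j)>0$ (the latter holding since by Lemma \ref{lem:construnction_segments_solutions} the arcs $\ell_i$ are disjoint compact sets — for $i=j$ one argues instead with the two ``sides'' $C_i^\pm$, or notes the orbit must leave a neighbourhood of $\ell_i$ before returning, which is where the separation from $\partial D'$ matters). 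For the upper bound, I would use that on a fixed energy shell $|\dot\eta|_g^2 = 2(h - V(\eta)) \ge 2(h - \sup_M V) > 0$, so $\int_t^s |\dot\eta|_g^2 \ge 2(h-\sup_M V)(s-t)$; combined with $V \le \sup_M V$ in the second factor this yields $\mathcal{M}_h(\eta) \ge (h - \sup_M V)(s-t) \cdot \big(h - \sup_M V\big)$, i.e. $s - t \le \mathcal{M}_h(\eta)/(h-\sup_M V)^2 \le c_2/(h-\sup_M V)^2 =: C_2$, provided $\mathcal{M}_h(\eta)$ is bounded above by $c_2$.

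The crux is therefore establishing the uniform two-sided bound $c_1 \le \mathcal{M}_h(\eta) \le c_2$ on the Maupertuis value of the minimizing arc $\eta$ between $\ell_i$ and $\ell_j$, with constants independent of $\gamma$. For the upper bound, since $\eta$ minimizes $\mathcal{M}_h$ among paths from $\gamma(t)$ to $\gamma(s)$ homotopic (rel boundary) to $\eta$, and since Proposition \ref{prop:bound_on_minimisers} confines all the relevant minimizers to the compact disk $D$, one can compare $\eta$ with an explicit competitor path of bounded length staying in $D$ away from the centres (the homotopy type of the arc between $\ell_i$ and $\ell_j$ realizing the \emph{first} return is the simplest one, so a fixed reference arc works); its Maupertuis value is a finite constant $c_2$ depending only on $D$, $V$, $h$. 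For the lower bound, $\mathcal{M}_h(\eta) \ge \frac{h}{2}\ell(\eta)^2 \ge \frac{h}{2}d_g(\ell_i,\ell_j)^2 = c_1 > 0$ when $i \ne j$; when $i = j$ one uses that $\eta$, starting and ending on $\ell_i$ but with opposite crossing sides, must wind around some centre or at least leave and re-enter a fixed neighbourhood, so $\ell(\eta)$ is bounded below by a positive geometric constant. I expect the main obstacle to be precisely making the ``independent of $\gamma$'' part of the upper bound airtight: one must be sure that the homotopy class of the first-return arc $\gamma|_{[t,s]}$ is one of finitely many (indeed, essentially unique up to the labels $i,j$), so that a single competitor suffices; this is where the confinement to $D$ from Proposition \ref{prop:bound_on_minimisers} and the transversality of crossings with the $\ell_i$ (Remark \ref{rem:choice_ell}) are used. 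Once that is in place, the displayed inequalities above close the argument.
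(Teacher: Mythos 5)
Your proposal follows essentially the same route as the paper's proof: trap the Maupertuis value of the connecting arc between uniform constants $c_1\le\mathcal{M}_h(\eta)\le c_2$ (using minimality and the compactness/disjointness of the arcs $\ell_i$), then get the lower time bound from the Cauchy--Schwarz estimate $\mathcal{M}_h(\eta)\ge \frac{h\,\ell(\eta)^2}{2(s-t)}$ and the upper time bound from the fixed-energy relation $\vert\dot\eta\vert_g^2=2(h-V)$; your version is in fact slightly more careful than the paper's on the constants (using $h-\sup_M V$ and $d_g(\ell_i,\ell_j)^2/c_2$) and in flagging how the uniform upper bound $c_2$ rests on the first-return arc lying in finitely many homotopy classes inside $D$. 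The only blemish is a bookkeeping slip in the upper bound: the correct chain gives $\mathcal{M}_h(\eta)\ge(h-\sup_M V)^2(s-t)^2$, hence $s-t\le\sqrt{c_2}/(h-\sup_M V)$, which changes nothing structurally.
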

    
    Let us consider a bi-infinite non periodic sequence $s =(s_n)_{n \in\mathbb{Z}}$ of the form \eqref{eq:periodic_sequences}. Let $\gamma_k$ be  a sequence of periodic minimisers whose itinerary approximates the sequence $s$. For $r\in \N$, define $x_r$ to be the following truncation of $s$:
    \[
    x_r \uguale (s_{-r},s_{-r+1},\dots,s_{r-1},s_r).
    \]
    Up to a shift on the parametrisation of the elements $\gamma_k$, let us define 
    \begin{equation}
    	\label{eq:def_minimal_period}
    	a_r \uguale \lim_{k\to +\infty}\inf_{T>0}\{T: \text{the itinerary of }\gamma_k\vert_{[-T,T]} \text{ contains }x_r\}.
    \end{equation}

    \begin{lem}
    	\label{lemma:approximating_minimisers}
    The following assertions hold:
    \begin{enumerate}[label = \roman*)]
    	\item if $2T_k$ is the period of  $\gamma_k$, then $\lim_{k\to +\infty} T_k = +\infty$;
    	\item $\lim_{r\to +\infty} a_r = +\infty$; moreover, $a_r<a_{r+1}<+\infty$  for all $r$;
    	\item there exists a sequence $\ve_n\to0$ and a subsequence of the $\gamma_k$ such that, defining 
    	\[
    	\eta_n \uguale\gamma_{k_n}\vert_{[-a_r-\ve_{n_k},a_r+\ve_{n_k}]},
    	\]
    	then $\eta_n(a_r+\ve_n) \in \ell_{s_r}$ and $\eta_n(-a_r-\ve_n) \in \ell_{s_{-r}}$;
    	\item  each of the $\eta_n$ is a minimiser of the Maupertuis functional \eqref{eq:def_Maupertuis} between its endpoints;
    	\item the family $(\eta_n)$ is bounded in $H^1$ and admits a uniformly convergent subsequence;
    	\item the uniform limit $\eta$ is a minimiser and has no collision. Up to subsequence, the convergence is in $\mathscr{C}^1(I)$ for any $I\subset[-a_r,a_{r}]$.
    \end{enumerate}
    \begin{proof}
	    The proof of points $i)-iv)$ follows form a straightforward application of definitions and Lemma \ref{lem:estimates_time}.
	
	    Point $v)$ is essentially a consequence of the compactness of $\ell_{s_r}$ and $\ell_{s_{-r}}$.  Clearly, the $\eta_n$ are bounded in $L^2$ since they lie in a common compact set. To provide a bound on the velocities, using the conservation of the energy, for any $\eta_n$ we can write
     	\[\sqrt{\mathcal{M}_h(\eta_n)} = \frac12\int \dot \eta_n^2.
    	\]
    	 It follows from the definition that  $\eta_n$  joins $\ell_{-r}$ to $\ell_r$.  This implies that $\mathcal{M}_h(\eta_n)$ is uniformly bounded, for the space of initial conditions is compact and solutions having collisions at the endpoints have finite and uniformly bounded Maupertuis energy.
	
    	To prove point $vi)$ we have to show that $\eta$ is collision less. Assume that $\eta$ has a collision with a centre $c_j$ at a certain instant. We can use the sequence $(\eta_n)$ to approximate $\eta$. If $\alpha_j>1$, the blow-up argument of Section \ref{sec:obstacle} would imply that the paths $\eta_n$ are definitely tied around $c_j$. This is a contradiction, for the initial sequence $\gamma_k$ has an itinerary of the form \eqref{eq:periodic_sequences}. Instead, if  $\alpha_j=1$, this would imply that $\eta$ is a collision reflection arc. But this is again not compatible with the limit itinerary \eqref{eq:periodic_sequences}. Thus, $\eta$ is collision-less in any proper subinterval of $[-a_r,a_r]$. Now, using the motion and energy equations \eqref{eq:Newton}-\eqref{eq:energy} as in Proposition \ref{prop:convergence_blowups}, an application of Ascoli-Arzelà theorem to the derivatives $\dot \eta_n$ yields the $\mathscr{C}^1$ convergence.
    \end{proof}
    \end{lem}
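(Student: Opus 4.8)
The plan is to prove the six assertions in the stated order, the whole argument resting on the quantitative control of transit times given by Lemma~\ref{lem:estimates_time}, on the restriction property of Maupertuis minimisers (Lemma~\ref{lem:restriction}), and on the blow-up analysis of Section~\ref{sec:obstacle}. For $i)$ I would argue by contradiction: if $T_k\le T$ for all $k$, then since two consecutive crossings of the arcs $\ell_1,\ell_2,\ell_3$ are separated in time by at least the constant $C_1>0$ of Lemma~\ref{lem:estimates_time}, each $\gamma_k$ crosses the $\ell_i$'s at most $2T/C_1$ times per period, so its itinerary is a periodic symbolic sequence of period $\le 2T/C_1$. Such sequences form a finite set, hence a convergent sequence of them is eventually constant, so the sequence $s$ it approximates would be periodic, contradicting that $s$ has the form~\eqref{eq:periodic_sequences}. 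For $ii)$, each $a_r$ is finite because realising the central block $x_r$ requires $2r$ transitions between the $\ell_i$'s, each of duration at most the constant $C_2$ of Lemma~\ref{lem:estimates_time}, so the infima in~\eqref{eq:def_minimal_period} are bounded by $2rC_2$ uniformly in $k$ (passing to a subsequence in $k$ if one wants the limit itself to exist); the inequality $a_r<a_{r+1}$ holds because passing from $x_r$ to $x_{r+1}$ adds one crossing at each end, hence at least an extra time $C_1>0$, which also yields $a_r\ge a_1+(r-1)C_1\to+\infty$.

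For $iii)$ I would reparametrise each $\gamma_k$ by a time shift so that the midpoint of the first and last crossings realising the block $x_r$ sits at $t=0$; then the $(\pm r)$-th crossings of these curves occur at times $\pm\widetilde T_k\to a_r$, and setting $\ve_n\uguale\widetilde T_{k_n}-a_r\to0^+$ along a suitable subsequence, the curve $\eta_n\uguale\gamma_{k_n}\vert_{[-a_r-\ve_n,\,a_r+\ve_n]}$ has its two endpoints exactly at those crossings, hence on $\ell_{s_{-r}}$ and $\ell_{s_r}$. Assertion $iv)$ is then immediate from Lemma~\ref{lem:restriction}: a subarc of a minimiser of $\mathcal{M}_h$ is itself a minimiser among paths with the same endpoints homotopic rel boundary to it.

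For $v)$, all the $\eta_n$ lie in the compact disk $\overline D$ by Proposition~\ref{prop:bound_on_minimisers}, which bounds $\|\eta_n\|_\infty$ and $\|\eta_n\|_2$; to bound the velocities I would pass to a further subsequence along which the rel-boundary homotopy class of $\eta_n$ is constant, compare $\mathcal{M}_h(\eta_n)$ with the value of $\mathcal{M}_h$ on a \emph{fixed} competitor path in that class joining $\ell_{s_{-r}}$ to $\ell_{s_r}$ (uniformly bounded, since the endpoints converge along the fixed arcs), and observe that $\int(h-V(\eta_n))\ge(h-\sup_M V)\cdot(\text{length of }[-a_r-\ve_n,a_r+\ve_n])>0$ by~\eqref{hyp:energy_bound}; this gives a uniform bound on $\|\dot\eta_n\|_2$, whence Ascoli--Arzel\`a (or weak $H^1$ compactness) produces $\eta_n\wconv\eta$ with uniform convergence on the fixed interval. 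For $vi)$, weak lower semicontinuity of $\mathcal{M}_h$ together with $iv)$ shows that the limit $\eta$ is a minimiser between a point of $\ell_{s_{-r}}$ and one of $\ell_{s_r}$, in the stabilised class, and that $\mathcal{M}_h(\eta_n)\to\mathcal{M}_h(\eta)$. If $\eta$ collided with a centre $c_j$, the arguments of Sections~\ref{sec:obstacle}--\ref{sec:proof_thm} would apply to $\eta$ exactly as in the proof of Theorem~\ref{thm:technical_thm}: for $\al_j>1$ the obstacle-approximating minimisers would be forced to wind around $c_j$ (Lemma~\ref{lemma:total_angular_variation_limit_blowup}), and since $\eta$ inherits confinement in $D$ and the transversal crossing pattern $x_r$ from the $\eta_n$, such winding contradicts the admissibility of the enclosing class; for $\al_j=1$, $\eta$ would be a collision--reflection arc, incompatible with an itinerary of the form~\eqref{eq:periodic_sequences}. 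Hence $\eta$ is collision-less on $(-a_r,a_r)$, so it solves~\eqref{eq:Newton}--\eqref{eq:energy} there, and applying Ascoli--Arzel\`a to $(\dot\eta_n)$ through the second-order equation exactly as in Proposition~\ref{prop:convergence_blowups} upgrades the convergence to $\mathscr{C}^1$ on every compact $I\subset(-a_r,a_r)$.

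I expect the real work to be concentrated in $v)$ and $vi)$. The velocity bound becomes clean only once one knows the rel-boundary homotopy class of $\eta_n$ eventually stabilises and admits a competitor that stays away from the centres, and excluding a collision in the limit amounts to feeding the output of the blow-up analysis back into the topological constraint carried by the block $x_r$ — this is where the confinement in $D$ and the admissibility of the $\omega$-classes from Lemma~\ref{lem:infinite_class_3centre} genuinely enter. Points $i)$--$iv)$, by contrast, are — as the proof of Lemma~\ref{lem:estimates_time} already indicates — bookkeeping with transit times and the restriction lemma.
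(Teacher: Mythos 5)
Your proposal is correct and follows essentially the same route as the paper: points \emph{i)}--\emph{iv)} from the transit-time bounds of Lemma~\ref{lem:estimates_time} together with the restriction/minimality property, point \emph{v)} from confinement in $D$ plus a uniform bound on $\mathcal{M}_h(\eta_n)$ converted into an $H^1$ bound, and point \emph{vi)} by the blow-up versus collision--reflection dichotomy played against the itinerary \eqref{eq:periodic_sequences}, with Ascoli--Arzel\`a on $\dot\eta_n$ giving the $\mathscr{C}^1$ convergence. The only difference is cosmetic: you get the uniform Maupertuis bound by comparison with a fixed competitor (and the extraction of a subsequence with constant rel-boundary class is not actually needed there, since the signed block $x_r$ and the confinement in $D$ already determine that class), whereas the paper appeals directly to the compactness of the endpoint arcs $\ell_{s_{\pm r}}$.
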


    \subsection{The scalar curvature of the Jacobi-Maupertuis metric}
    
    It is well know that minimisers of \eqref{eq:def_Maupertuis} can be interpreted as geodesics for a (incomplete) Riemannian metric, the Jacobi-Maupertuis metric (see for instance \cite{abraham2008foundations}). As already observed in the Introduction, this metric is conformal to our reference metric $g$ and it is defined as follows: 
    \begin{equation}
    	\label{eq:def_Jacobi_metric}
    g_J(v,v)=(h-V(x))g(v,v), \quad v\in T_xM.
    \end{equation}
    This section is devoted to computing the curvature of the metric $g_J$. Using standard formulas for conformal changes of the metric  (see for instance \cite[Theorem  7.30]{lee_book}), one obtains that the scalar curvature of $g_J$ is
    \begin{equation}
    \kappa_J(x)=\dfrac{\kappa(x)-\Delta_g\log(h-V(x))}{h-V(x)},\quad\text{for}\ x\in M.
    \label{eq:curvature_JM}   
    \end{equation}
    On the other hand, we have:
    \[
    -\Delta_g\log(h-V(x))=\dfrac{\lvert\nabla_g V(x)\rvert^2}{(h-V(x))^2}+\dfrac{\Delta_g V(x)}{h-V(x)}.
    \]

     \begin{lem}
     	\label{lemma:curvature_jacobi_metric}
     	The curvature $\kappa_J$ is bounded and negative in the neighbourhood of every centre $c_j$. Moreover:
     	\begin{itemize}
     	\item if $\kappa<0$, then, for any fixed compact subset $\Omega$ of $M$, there exists $h(\Omega)>0$ such that $\kappa_J(x)<0$, for all $x \in \Omega$ and $h>h(\Omega)$;
     	\item if $M= \mathbb{R}^2$, $V$ is of the form \eqref{eq:def_potential} and there exist $C_1,C_2>0$ such that $-C_1^2<\kappa<-C_2^2$, then there exists $h_*\in \mathbb{R}$ such that $\kappa_J<0$, for all $h>h^*$. Alternatively, if $\kappa\equiv 0$, then $\kappa_J\le0$ for all $h>0.$
     	\end{itemize}
    \begin{proof}
 
        Let us compute first $\kappa_J(x)$ when $x$ is close to $c_j$ using the explicit expression for $V(x)$ given in \eqref{eq:potential}. As a shorthand notation denote $d_g(x,c_j)$ as $r$. A straightforward computation yields:
        \[
        \begin{aligned}
    	\vert \nabla_gV \vert^2 &= \vert \nabla_g W_j \vert^2+\frac{2 m_j g(\nabla_g r, \nabla_g W_j)}{r^{\alpha_j+1}} + \frac{m_j^2}{r^{2\alpha_j+2}} \\
    	\Delta_g V &= \Delta_g W_j - \frac{m_j(\alpha_j+1)}{r^{\alpha_j+2}}+ \frac{m_j \Delta_g r}{r^{\alpha_j+1}} 
  		\end{aligned}
    	\]
    	and, when $r\to 0^+$, we can write
    	\[
    	\frac{1}{h-V} = \frac{\alpha_j}{m_j}r^{\alpha_j}\left(1-\frac{\alpha_j(h-W_j)r^{\alpha_j}}{m_j}+\frac{\alpha_j^2(h-W_j)^2 r^{2\alpha_j}}{m_j^2}+ O(r^{3\alpha_j})\right).
        \]
        Fix geodesic normal coordinates around $c_j$ and let $S_r$ be the geodesic sphere of radius $r$. Denote by $H(x)$ the mean curvature of $S_r$ at $x$. For any smooth function $f$, the Laplacian reads:
        \[
        \Delta_g f = \dfrac{\partial^2 f}{\partial r^2 } + H\frac{\partial f}{\partial r} + \dfrac{\Delta_S f}{r^2}
        \]
        In particular, applying the formula to $f(x)=d_g(x,c_j)$, for $x \ne c_j$, one finds that:
        \[
        \Delta_g d_g(x,c_j) = H(x)
		\]
	    It is known that, on polar exponential coordinates $(r,\theta)$ centred at $c_j$, the mean curvature $H$ has the following asymptotic expansion 
		\[
		H(r, \theta) =  \frac{1}{r}+O(r),\  \text{ as } r \to 0,
		\] 
		from which we obtain that:
     	\[
     	\frac{\vert \nabla_g V\vert^2}{h-V}+\Delta_g V = - \frac{\alpha_j^2(h-W_j)}{r^2}+O(r^{-\alpha_j}),\  \text{ as } r \to 0.
        \]
   		Combining with the expression for $\kappa_J$ in \eqref{eq:curvature_JM}, we get:
   		\[
   		\kappa_J(x) = -\frac{\alpha_j^4(h-W_j(c_j))}{m_j^2} r^{2(\alpha_j-1)} + O(r^{\alpha_j}),\ \text{ as } r\to 0.
   		\]
   		This implies that, on small balls near the centres, $\kappa_J(x)$ is always negative and \emph{bounded}. 
   		
   		Moreover, expanding \eqref{eq:curvature_JM} with respect to $h$, for a certain function $F_V=F_V(h,x)$ we can write
   		\[
   		\kappa_J(x) = \frac{\kappa(x)}{h} +\frac{1}{h^2} F_V(h,x).
   		\] 
		Since we have just proved that $\kappa_J(x)$ is a bounded function on any compact subset of $M$, so is $F_V(h,x),$ uniformly with respect to $h$. Thus, for any subset $\Omega\Subset M$, if $\kappa$ is negative and $h = h(\Omega)$ is sufficiently large, $\kappa_J(x)$ is negative for all $x \in \Omega.$ 
    
		Assume now that $M = \mathbb{R}^2$ and that  $-c^2\le\kappa <-C^2$, for some constants $c,C>0$. Under these hypotheses, the distance function from a point $x$ is always smooth on $\mathbb{R}\setminus \{x\}.$ We can thus take $V$ of the form \eqref{eq:def_potential} and we have the following inequality
    	\[
    	\begin{aligned}
    	\vert\nabla_g V(x)\vert^2 &= \sum_{i,j} \dfrac{ m_i m_j g(\nabla_g d(x,c_i),\nabla_g d(x,c_j))}{d_g(x,c_i)^{\alpha_i+1}d_g(x,c_j)^{\alpha_j+1}} \\
    	 &\le \sum_{ij} \dfrac{m_i m_j}{d_g(x,c_i)^{\alpha_i+1}d_g(x,c_j)^{\alpha_j+1}} = \left(\sum_j \dfrac{m_j}{d(x,c_j)^{\alpha_j+1}}\right)^2.
    	\end{aligned}
    	\]
  	  	Now, if we define the Euclidean vectors
  	  	\[
  	  	v=\left(\dfrac{\sqrt{\al_1m_1}}{d_g(x,c_1)^{\frac{\al_1+2}{2}}},\ldots,\dfrac{\sqrt{\al_Nm_N}}{d_g(x,c_N)^{\frac{\al_N+2}{2}}}\right),\ w=\left(\dfrac{\sqrt{m_1}}{\sqrt{\al_1}d_g(x,c_1)^{\frac{\al_1}{2}}},\ldots,\dfrac{\sqrt{m_N}}{\sqrt{\al_N}d_g(x,c_N)^{\frac{\al_N}{2}}}\right),
  	  	\]
  	  	applying the Cauchy-Schwartz inequality, we obtain:
  	  	\[
   		\begin{aligned}
    	\vert \nabla_g V(x) \vert^2 &\le \left(\sum_j \dfrac{m_j}{d_g(x,c_j)^{\alpha_j+1}}\right)^2  = \langle v,w\rangle^2\le \|v\|^2\cdot\|w\|^2 \\
    	&= \left(\sum_j \dfrac{m_j}{\al_jd_g(x,c_j)^{\alpha_j}}\right)\left(\sum_j \dfrac{\al_jm_j}{d_g(x,c_j)^{\alpha_j+2}}\right) =  -V(x) \left(\sum_j \dfrac{\alpha_j \,m_j}{d_g(x,c_j)^{\alpha_j+2}}\right).
  		\end{aligned}
  		\]
  		Moreover, we can compute:
  		\[
  		\Delta_g V(x) = - \sum_j \dfrac{m_j}{\alpha_j} \Delta_g\left(\dfrac{1}{  d_g(x,c_j)^{\alpha_j}} \right)= -\sum_j \frac{ (\alpha_j+1) m_j\vert \nabla d_g(x,c_j)\vert^2}{d_g(x,c_j)^{\alpha_j+2}}+\dfrac{m_j\Delta_g d(x,c_j)}{d_g(x,c_j)^{\alpha_j+1}},
  		\]
  		with $|\nabla d_g(x.c_j)|^2=1$ for any $j$. Therefore, recalling \eqref{eq:curvature_JM}, we have obtained the following inequality:
        \[
    	\begin{aligned}
    	k_J(x) \le \dfrac{1}{{h-V(x)}}\left(\kappa(x)+\dfrac{1}{{h-V(x)}}\left( \sum_jm_j \dfrac{ d_g(x,c_j)\Delta_gd(x,c_j)-1}{d_g(x,c_j)^{\alpha+2}}\right)\right).
 	    \end{aligned}
 	    \]
	    Comparison theorems for the Laplacian of the distance (see for example\cite[Section 2]{Kasue1982ALC}) imply, for the same positive constant $C_1$ of the statement:
	     \[
	    \Delta_g d_g(x,c_j) \le C_1\, \mathrm{cotanh}( C_1 d_g(x,c_j)).
	    \]
	    This implies that the term $\dfrac{1}{{h-V(x)}}\left( \displaystyle\sum_jm_j \dfrac{ d_g(x,c_j)\Delta_gd(x,c_j)-1}{d_g(x,c_j)^{\alpha+2}}\right)$ decays at infinity and thus, for sufficiently high energies $h$, $k_J\le 0$ on the whole plane. 
	    
	    The case $k\equiv 0$ is easier, since $\Delta d_g(x,c_j) = d_g(x,c_j)^{-1}$ clearly gives $\kappa_J(x) \le0.$   
    \end{proof}
    \end{lem}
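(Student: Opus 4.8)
The plan is to exploit the conformal formula \eqref{eq:curvature_JM} together with the identity $-\Delta_g\log(h-V)=\tfrac{|\nabla_g V|^2}{(h-V)^2}+\tfrac{\Delta_g V}{h-V}$ already recorded above, and to analyse the right-hand side in three regimes: near the centres, on a fixed compact set with $\kappa<0$, and globally on $\mathbb R^2$ with pinched curvature. First I would treat the behaviour near a centre $c_j$. Writing $r=d_g(x,c_j)$ and substituting the explicit form \eqref{eq:potential} $V=-\tfrac{m_j}{\alpha_j r^{\alpha_j}}+W_j$ with $W_j$ smooth, I would expand $|\nabla_g V|^2$ and $\Delta_g V$ in powers of $r$, using that $|\nabla_g r|^2\equiv 1$, that $\Delta_g r = H(r,\theta)$ is the mean curvature of the geodesic sphere $S_r$, and the standard asymptotics $H(r,\theta)=\tfrac1r+O(r)$ as $r\to 0$ (which follows from the normal-coordinate expansion of the metric). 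The leading terms give $|\nabla_g V|^2 = \tfrac{m_j^2}{r^{2\alpha_j+2}}+O(r^{-\alpha_j-1})$ and $\Delta_g V = -\tfrac{m_j(\alpha_j+1)}{r^{\alpha_j+2}}+\tfrac{m_j}{r^{\alpha_j+1}}\bigl(\tfrac1r+O(r)\bigr)+O(1) = -\tfrac{m_j\alpha_j}{r^{\alpha_j+2}}+O(r^{-\alpha_j})$; dividing by the expansion $\tfrac{1}{h-V}=\tfrac{\alpha_j}{m_j}r^{\alpha_j}\bigl(1+O(r^{\alpha_j})\bigr)$ and combining, one obtains $\tfrac{|\nabla_g V|^2}{h-V}+\Delta_g V = -\tfrac{\alpha_j^2(h-W_j)}{r^2}+O(r^{-\alpha_j})$, and then $\kappa_J(x)=-\tfrac{\alpha_j^4(h-W_j(c_j))}{m_j^2}\,r^{2(\alpha_j-1)}+O(r^{\alpha_j})$. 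Since $h>\sup_M V\ge W_j(c_j)$ and $\alpha_j\in[1,2)$, this shows $\kappa_J$ is negative and bounded in a punctured ball around each $c_j$, hence (filling in the value at $c_j$ when $\alpha_j>1$, or noting the bound when $\alpha_j=1$) bounded near every centre.

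Next, for the first bullet, I would observe that away from the centres both $|\nabla_g V|$ and $\Delta_g V$ are bounded on any fixed compact $\Omega\subset M$, while the preceding step controls them near the centres; hence from \eqref{eq:curvature_JM} we may write $\kappa_J(x)=\tfrac{\kappa(x)}{h}+\tfrac{1}{h^2}F_V(h,x)$ where $F_V$ collects the $V$-dependent terms, and $F_V$ is bounded on $\Omega$ uniformly in $h$ (this uses that $h-V\ge h-\sup_M V>0$ stays bounded away from zero, and the near-centre analysis to bound $F_V$ there). If $\kappa<0$ on $M$, then $\sup_\Omega\kappa=:-\delta<0$, so for $h>h(\Omega):=\delta^{-1}\sup_\Omega|F_V(h,\cdot)|$ — which is finite and can be chosen independent of $h$ after one more look at the $h$-dependence of $F_V$ — we get $\kappa_J<0$ on $\Omega$.

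For the second bullet I would work globally on $\mathbb R^2$, where (no conjugate points, distance functions smooth off their base point) the potential \eqref{eq:def_potential} itself is admissible. I would bound $|\nabla_g V|^2=\sum_{i,j}\tfrac{m_im_j\,g(\nabla_g d(\cdot,c_i),\nabla_g d(\cdot,c_j))}{d_g(\cdot,c_i)^{\alpha_i+1}d_g(\cdot,c_j)^{\alpha_j+1}}\le\bigl(\sum_j \tfrac{m_j}{d_g(\cdot,c_j)^{\alpha_j+1}}\bigr)^2$, then apply Cauchy–Schwarz to the vectors $v,w$ displayed in the lemma to get $|\nabla_g V|^2\le -V\cdot\sum_j\tfrac{\alpha_j m_j}{d_g(\cdot,c_j)^{\alpha_j+2}}$; computing $\Delta_g V=-\sum_j\bigl(\tfrac{(\alpha_j+1)m_j}{d_g(\cdot,c_j)^{\alpha_j+2}}-\tfrac{m_j\Delta_g d(\cdot,c_j)}{d_g(\cdot,c_j)^{\alpha_j+1}}\bigr)$ and inserting both into \eqref{eq:curvature_JM} yields $\kappa_J(x)\le\tfrac{1}{h-V}\bigl(\kappa+\tfrac{1}{h-V}\sum_j m_j\tfrac{d_g(\cdot,c_j)\Delta_g d(\cdot,c_j)-1}{d_g(\cdot,c_j)^{\alpha_j+2}}\bigr)$. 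The Laplacian comparison theorem under $-C_1^2<\kappa$ gives $\Delta_g d_g(x,c_j)\le C_1\coth(C_1 d_g(x,c_j))$, so $d_g\,\Delta_g d_g - 1\le C_1 d_g\coth(C_1 d_g)-1$, which is bounded and, divided by $d_g^{\alpha_j+2}$ and multiplied by $\tfrac{1}{h-V}$, decays both near the centres (where $\tfrac{1}{h-V}\to 0$ like $d_g^{\alpha_j}$) and at infinity (where $h-V\to h$ and the summand $\to 0$); using $\kappa<-C_2^2<0$ we conclude that for $h$ larger than some $h^*$ the bracket is negative everywhere, so $\kappa_J\le 0$ on $\mathbb R^2$. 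The flat case $\kappa\equiv 0$ is immediate since then $\Delta_g d_g(x,c_j)=d_g(x,c_j)^{-1}$ makes the entire correction term vanish, giving $\kappa_J\le 0$ for all $h>0$. The main obstacle I anticipate is the bookkeeping in the near-centre expansion — ensuring the error terms in $|\nabla_g V|^2$, $\Delta_g V$ and $(h-V)^{-1}$ are tracked to the order needed so that the cancellation producing the $r^{2(\alpha_j-1)}$ leading term is genuine and the remainder is $O(r^{\alpha_j})$ — together with verifying that $h(\Omega)$ in the first bullet can be taken independent of $h$; both are routine but require care with the $\alpha_j$-dependent exponents.
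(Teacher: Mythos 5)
Your proposal follows essentially the same route as the paper's proof: the same near-centre expansion via $\Delta_g r = H = \tfrac1r+O(r)$ yielding the $-\tfrac{\alpha_j^4(h-W_j(c_j))}{m_j^2}r^{2(\alpha_j-1)}$ leading term, the same decomposition $\kappa_J=\tfrac{\kappa}{h}+\tfrac{1}{h^2}F_V$ for the compact-set statement, and the same Cauchy--Schwarz estimate with the vectors $v,w$ plus the Laplacian comparison $\Delta_g d\le C_1\,\mathrm{cotanh}(C_1 d)$ for the global $\mathbb R^2$ case. It is correct as it stands.
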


	\section{The unperturbed N-centre problem}
	\label{sec:true_kepler}
	
    If $M$ is a Riemannian manifold and $p\in M$ is an arbitrary point, it is well known that the function $q \mapsto d_g(q,p)$ may fail to be smooth, due to the presence of conjugate points or periodic geodesics (see for example \cite[Section 6.5]{book_berger}). The set of non smoothness points of $d_g(\cdot,p)$ is called \emph{cut locus} of $p$ and will be denoted by $C_p.$ As an example, consider the case of two homogeneous surfaces: the torus $\mathbb{T}^2$ and the sphere $\TT^2$, with constant curvature metrics. For these spaces, since the isometry group acts transitively, the cut locus $C_p$ is essentially the same for any point $p\in M$ and it is completely explicit. It consists of one point for the sphere and of the join of two circles for the torus.
    
    Despite this additional set of singularities, we can use Theorem \ref{thm:main_theorem} to produce $\mathscr{C}^1$ minimisers of the Maupertuis functional for the $N$-centre problem driven by potential \eqref{eq:def_potential}, which we rename here for the reader's convenience:
    \[
    	\tilde{V}(q) = -\sum_{j=1}^{N} \frac{m_i}{\al_jd_g(q,c_j)^{\alpha_j}}.
    \]
    In general, under some regularity assumptions on the cut locus (see Lemma \ref{lem:approx_distance} which conclude this section), we provide the following result:
    \begin{thm}
    	\label{thm:weak_solutions}
    	Consider the $N$-centre problem on a compact Riemannian surface $(M,g)$, driven by the potential \eqref{eq:def_potential} and with constant energy $h$ satisfying \eqref{hyp:energy_bound}. Assume that the cut locus $C_{c_j}$ admits a triangulation for every $j$ and that $[\tau]$ is an admissible homotopy class as in Definition \ref{def:admissible-class}. 
    	\begin{itemize}
    	\item If there exists at most one $j\in\{1,\ldots,N\}$ such that  $\alpha_j=1$ (i.e., $\al_k>1$ for any $k\neq j$), then any minimiser of the Maupertuis' functional \eqref{eq:def_Maupertuis} in the space $\mathcal{H}_\Delta(\tau)$ is collision-less, $\mathscr{C}^1$ and admits a reparametrisation which solves \eqref{eq:euler_lagrage_equations} weakly. 
    	\item If there exist at least two distinct elements $j,k\in\{1,\ldots,N\}$ such that $\alpha_j=\al_k=1$, the same result holds true, provided that $[\tau]$ is not in one of the homotopy classes listed in Theorem \ref{thm:technical_thm}, $ii)$.
    	\end{itemize}
    \begin{proof}
        Let us consider a centre $c_i$ and its cut locus $C_{c_i}$. The first step of the proof is to build a family $(\vp_\ve)$ of smoothings of the distance function $d_g(\cdot, c_i)$, with the following properties:
        \begin{itemize}
        \item $\text{supp}({\vp_\ve-d_g(\cdot,c_i)})\subset C_{c_i}^\ve\uguale\{q\in M:\, d(q,C_{c_i})<\ve\}$;
        \item $\vp_\ve\to d_g(\cdot,c_i)$ uniformly;
        \item $\|\nabla\vp_\ve\|_\infty\le C$ uniformly.
        \end{itemize}
        This approximating sequence is provided in Lemma \ref{lem:approx_distance} below. If we replace $d_g(\cdot,c_i)$ with $\vp_\ve$ in the definition of $\tilde{V}$ (see also \eqref{eq:def_potential}), we obtain a family of potentials $V_\ve$ which now satisfy the assumptions of Theorem \ref{thm:main_theorem}.  In particular, for any admissible class $[\tau]$ there exists a collision-less minimiser $\gamma_\ve$, for any $\ve>0$. At this point, we want to show that $(\gamma_\ve)$ is a minimising sequence in $\mathcal{H}_{\Delta}(\tau)$ for the Maupertuis functional:
        \[
        \tilde{\mathcal{M}}_h(\gamma) = \frac{1}{2} \int_J\vert \dot \gamma\vert^2_g \int_J(h-\tilde{V}(\gamma)).
        \]
        For any $\ve>0$, we also define the following \emph{approximating} Maupertuis functional on $\mathcal{H}_\Delta(\tau)$:
        \[
        \mathcal{M}_h^\ve(\gamma)\uguale\frac12\int_J\vert \dot\gamma\vert_g^2\int_J\left(h-V_\ve(\gamma)\right).
        \]
        First of all, we observe that $(\tilde{V}-V_\ve)$ is a family of functions,  each of which is compactly supported on $\bigcup_i C_{c_i}^\ve$, which converges uniformly to 0 as $\ve\to 0$.  Without loss of generality, we can also assume that $\Vert V-V_\ve \Vert_\infty\le\ve$, from which we deduce that
        \[
        	\vert \tilde{\mathcal{M}}_h(\gamma)-\mathcal{M}_h^\ve(\gamma) \vert \le \frac{1}{2} \int_J\vert \dot \gamma \vert^2_g \int_J\vert V(\gamma)-V_\ve(\gamma)\vert\le \frac{\ve}{2} \vert J \vert \int_J \vert \dot \gamma \vert_g^2,\quad\forall\gamma\in\mathcal{H}_\Delta(\tau).
        \]
        In particular, recalling that $\mathcal{M}_h^\ve(\gamma_\ve)=\min_{\mathcal{H}_\Delta(\tau)}\mathcal{M}_h^\ve$, we have that
        \begin{itemize}
        \item there exists $C>0$ such that $\mathcal{M}_h^\ve(\gamma_\ve) \le C$, for all $\ve>0$;
        \item $\vert\tilde{\mathcal{M}}_h(\gamma_\ve)-\mathcal{M}^\ve_h(\gamma_\ve)\vert \to 0,\ \text{as}\ \ve\to 0$;
        \item $\mathcal{M}_h^\ve(\gamma_\ve) \le \tilde{\mathcal{M}}_h(\gamma)+ \frac{\ve}{2} \vert J \vert \int_J \vert \dot \gamma \vert_g^2$, for any  $\gamma \in \mathcal{H}_\Delta(\tau).$
        \end{itemize}
        Thus $\lim\limits_{\ve \to 0} \tilde{\mathcal{M}}_h(\gamma_\ve)=\lim\limits_{\ve \to 0}\mathcal{M}^\ve_h(\gamma_\ve) \le \min\limits_{\mathcal{H}_\Delta(\tau)} \tilde{\mathcal{M}}_h.$ It follows that $(\gamma_\ve) $ is a minimising sequence and admits a weakly convergent subsequence $(\gamma_{\ve_n})$. Let us call $\gamma_0$ the weak (and uniform) limit.
        
        Notice that $\gamma_0$ is collision-less. In fact, we know that $\gamma_\ve$ is \emph{taut} for any $\ve$ (see Sections \ref{sec:top_framework}-\ref{sec:variational_frame}). In particular, it cannot contain a sub-loop which encloses only one centre since $[\tau]$ is admissible.  Assuming that $\gamma_0$ has a collision with a centre $c$, we see that its angular variation is no more than $2\pi$, since this is so for all the $\gamma_\ve$ thanks to the admissibility condition of the homotopy class. Thus, the only possibility here is to have collision reflection solutions, which is excluded by our assumptions (cf Section \ref{sec:proof_thm}).
        
        We have thus proved that $\gamma_\ve$ is definitely at a positive distance from the centres. We can then argue as in Proposition \ref{prop:convergence_blowups} to prove the regularity of $\gamma_0$. In fact, since the minimisers $\gamma_\ve$ have constant energy $h$ and are far from the singularities $c_i$, the conservation law
        \[
        	\frac{1}{2}\vert \dot\gamma_\ve\vert^2_g + V_\ve (\gamma_\ve) = h
        \]
         provides a uniform bound on the velocities $\dot \gamma_\ve$. Moreover, using the differential equation for $\Ddt \dot\gamma_\ve$ and the uniform bound on $\nabla \varphi_\ve$, we get equi-continuity of $\dot \gamma_\ve$  and thus $\mathscr{C}^1$ convergence.
    \end{proof}
    \end{thm}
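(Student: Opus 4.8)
The plan is to reduce Theorem~\ref{thm:weak_solutions} to Theorem~\ref{thm:main_theorem} (in its more technical form, Theorem~\ref{thm:technical_thm}) via an approximation scheme in which the non-smooth potential $\tilde V$ is replaced by a family of smooth potentials $V_\varepsilon$ obtained by smoothing each distance function $d_g(\cdot,c_j)$ only in a shrinking neighbourhood of its cut locus. First I would invoke the smoothing lemma (Lemma~\ref{lem:approx_distance}, stated below) to produce, for each centre $c_i$, a family $\varphi_\varepsilon$ agreeing with $d_g(\cdot,c_i)$ outside $C_{c_i}^\varepsilon$, converging uniformly to $d_g(\cdot,c_i)$, with uniformly bounded gradients; this requires the triangulability hypothesis on $C_{c_j}$, which is what lets one build such a smoothing with controlled gradient. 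Substituting $\varphi_\varepsilon$ into the definition of $\tilde V$ yields potentials $V_\varepsilon$ of the class \eqref{eq:potential}, so Theorem~\ref{thm:technical_thm} applies and, for each admissible $[\tau]$ satisfying the hypotheses, delivers a collision-less minimiser $\gamma_\varepsilon$ of the corresponding Maupertuis functional $\mathcal{M}_h^\varepsilon$ in $\mathcal{H}_\Delta(\tau)$ — using in the two-Newtonian-centre case the exclusion of the bad classes listed in Theorem~\ref{thm:technical_thm}~$ii)$.

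Next I would show $(\gamma_\varepsilon)$ is a minimising sequence for $\tilde{\mathcal{M}}_h$. The key estimate is the pointwise bound $\|\tilde V - V_\varepsilon\|_\infty \le \varepsilon$ (which we may assume by rescaling $\varepsilon$), giving
\[
\bigl|\tilde{\mathcal{M}}_h(\gamma) - \mathcal{M}_h^\varepsilon(\gamma)\bigr| \le \frac{\varepsilon}{2}\,|J|\int_J |\dot\gamma|_g^2
\]
for every $\gamma\in\mathcal{H}_\Delta(\tau)$. Combining this with the minimality of $\gamma_\varepsilon$ for $\mathcal{M}_h^\varepsilon$ and testing against a fixed competitor shows $\mathcal{M}_h^\varepsilon(\gamma_\varepsilon)$ is uniformly bounded, that $|\tilde{\mathcal{M}}_h(\gamma_\varepsilon) - \mathcal{M}_h^\varepsilon(\gamma_\varepsilon)|\to 0$, and finally that $\limsup_{\varepsilon\to0}\tilde{\mathcal{M}}_h(\gamma_\varepsilon) \le \min_{\mathcal{H}_\Delta(\tau)}\tilde{\mathcal{M}}_h$. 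The uniform bound on the Maupertuis values plus coercivity (which holds on admissible classes by the argument in Section~\ref{sec:variational_frame}) gives an $H^1$-bounded sequence, hence a subsequence $\gamma_{\varepsilon_n}\wconv\gamma_0$, weakly in $H^1$ and uniformly, and lower semicontinuity identifies $\gamma_0$ as a minimiser of $\tilde{\mathcal{M}}_h$.

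Then I would rule out collisions of $\gamma_0$. Here I would reuse the topological machinery of Sections~\ref{sec:top_framework}--\ref{sec:proof_thm}: each $\gamma_\varepsilon$ is taut (Propositions~\ref{prop:no_excess_intesections_regular}--\ref{prop:no_excess_intesections_singular} carry over since $V_\varepsilon$ is of the required form), so no innermost sub-loop of $\gamma_\varepsilon$ encloses a single centre, and therefore the angular variation of $\gamma_\varepsilon$ around any centre is at most $2\pi$, a property stable under uniform limits. By the blow-up analysis of Section~\ref{sec:obstacle} (Lemma~\ref{lemma:total_angular_variation_limit_blowup}) a collision of $\gamma_0$ with a centre $c$ with $\alpha>1$ forces angular variation strictly larger than $2\pi$ — contradiction; a collision with a Newtonian centre would force a collision-reflection arc, which is excluded either because at most one centre is Newtonian or because $[\tau]$ avoids the bad classes of Theorem~\ref{thm:technical_thm}~$ii)$. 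Hence $\gamma_0$ stays at a positive distance from $\mathcal{C}$. Finally, since the $\gamma_\varepsilon$ are eventually supported away from all the $c_i$ (so $\tilde V$ and $V_\varepsilon$ agree near their supports for small $\varepsilon$), the conservation law $\tfrac12|\dot\gamma_\varepsilon|_g^2 + V_\varepsilon(\gamma_\varepsilon) = h$ gives a uniform velocity bound, and the equation $\Ddt\dot\gamma_\varepsilon = -\nabla V_\varepsilon(\gamma_\varepsilon)$ together with the uniform bound $\|\nabla\varphi_\varepsilon\|_\infty\le C$ gives equicontinuity of $\dot\gamma_\varepsilon$; an Ascoli--Arzel\`a argument as in Proposition~\ref{prop:convergence_blowups} upgrades the convergence to $\mathscr{C}^1$, so $\gamma_0$ is $\mathscr{C}^1$ and, after reparametrisation, solves \eqref{eq:euler_lagrage_equations} weakly. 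The main obstacle is the collision-exclusion step: one must be careful that the tautness and angular-variation bounds, and the blow-up estimates, are genuinely uniform in $\varepsilon$ — in particular that the smoothing near the cut loci does not interfere with the local analysis near the centres, which it does not, precisely because $\varphi_\varepsilon \equiv d_g(\cdot,c_i)$ on a fixed ball $B_r(c_i)$ for $\varepsilon$ small.
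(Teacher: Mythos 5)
Your proposal follows essentially the same route as the paper's proof: smooth the distance functions near the cut loci via Lemma \ref{lem:approx_distance}, apply Theorem \ref{thm:technical_thm} to the regularised potentials $V_\ve$ to get collision-less minimisers $\gamma_\ve$, show via the uniform estimate $\Vert \tilde V - V_\ve\Vert_\infty\le\ve$ that these form a minimising sequence for $\tilde{\mathcal{M}}_h$, exclude collisions of the limit through tautness and the $2\pi$ angular-variation bound (with collision-reflection ruled out by the hypotheses), and upgrade to $\mathscr{C}^1$ convergence via energy conservation and the uniform gradient bound. The argument is correct and matches the paper's, with your closing remark that $\vp_\ve\equiv d_g(\cdot,c_i)$ on a fixed ball around each centre being exactly the (implicit) reason the local collision analysis is unaffected by the smoothing.
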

    As announced, below we prove the approximation lemma used in the previous proof.
    \begin{lem}[Approximation of Lipschitz functions]
   	\label{lem:approx_distance}
   	Let $\vp$ be a Lipschitz function on a smooth Riemannian surface $(M,g)$ and assume that the set of points $\Sigma$ where $\vp$ is not smooth admits a triangulation. Then, there exists a family of smooth functions $(\vp_\ve)$ such that:
   	\begin{itemize}
   		\item $\vp_\ve \to \vp$ uniformly as $\ve\to 0$;
   		\item $\mathrm{supp}(\vp-\vp_\ve) \sset T_\ve(\Sigma)$, where $T_{\ve}$ stands for a tubular neighbourhood of $\Sigma$ of radius $\ve$;
   		\item there exists a constant $C>0$ such that $\Vert \nabla \vp_\ve \Vert_{\infty} \le C$. 
   	\end{itemize}
    \begin{proof}
   	    Choose a partition of unity $\{\psi_\alpha\}_{\alpha}$ supported on some open subsets $U_{\alpha}$ diffeomorphic to $\mathbb{R}^2$. Consider the functions $\vp_\al\uguale\psi_\alpha \vp$ as functions on $\R^2$. It is easy to build a smoothing kernel $\eta_\ve$ such that:
   	    \begin{itemize}
   		    \item $\mathrm{supp}(\eta_\ve)\sset B_\ve(0)$ and $\eta_\ve\ge0$;
   		    \item $\eta_\ve \in \mathscr{C}^{\infty}(\mathbb{R}^2)$ and $\int_{\R^2} \eta_\ve =1$;
   		    \item $\eta_\ve(y) = \ve^{-2} \eta(\ve^{-1} y)$, for some smooth, positive function $\eta$ of mean $1$, and such that $\eta_\ve\to \delta_0$ as distributions.
   	    \end{itemize}
        Using the $\eta_\ve$, define $\vp_{\alpha, \ve}(x)\uguale \int_{\mathbb{R}^2}\eta_\ve(y-x)\vp_\alpha(y)dy$. Since $\vp_\alpha$ is compactly supported, say in a ball of radius $R_\alpha$, $\vp_{\alpha,\ve}$ is supported in a ball of radius $R_\alpha+\ve$. Moreover, if $C_\alpha$ is the Lipschitz constant of $\vp_\alpha$, we have:
        \[
        \begin{aligned}
   	    \vert \vp_{\alpha,\ve}(x_1)-\vp_{\alpha,\ve}(x_2)\vert &\le\int_{\mathbb{R}^2} \eta_\ve(y) \vert \vp_\alpha(y+x_1)-\vp_\alpha(y+x_2)\vert dy \le C_\alpha\vert x_1-x_2\vert, \\
   	    \vert \vp_\alpha(x)-\vp_{\alpha,\ve}(x)\vert  &\le \int_{\R^2}\eta_\ve(y)\vert \vp_\alpha(y+x)-\vp_\alpha(x)\vert dy \\ &\underset{z=\ve^{-1}y}{=} \int_{\R^2} \eta(z) \vert \vp_{\alpha}(\ve z+x)-\vp_\alpha(x)\vert\,dz \le C_\alpha \ve.
        \end{aligned}
        \]
        Thus, the $\vp_{\alpha,\ve}$ uniformly converge to $\vp_\alpha$, are smooth and have the same Lipschitz constant of $\vp_\alpha$. Now, we glue back the approximating functions to a function on $M$. Set $\tilde \vp_\ve = \sum_\alpha \psi_\alpha \vp_{\alpha,\ve}.$ 

        Since we are assuming that $\Sigma$ is triangulable, the $\ve-$neighbourhood 
        \[
        \Sigma_\ve \uguale \{p \in M : d_g(p,\Sigma)\le\ve\}
        \]
        is a smooth manifold with boundary. Pick a smooth function $\psi$ which satisfies $\psi\vert_{\Sigma}\equiv 1 $ and $\psi\vert_{\Sigma_\ve^c} \equiv 0$. Consider then the functions $\vp_\ve \uguale \vp(1-\psi)+ \psi \tilde \vp_\ve$. Clearly $\vert \vp_\ve-\vp\vert = \psi \vert \vp-\tilde \vp_\ve\vert$ and thus $\vp_\ve$ satisfies the first two properties in the statement. Similarly, the uniform bound on the gradient follows from this inequality:
        \[
	    \vert \vp_\ve(p)-\vp_\ve(q)\vert \le \vert \vp(p)-\vp(q)\vert + \vert \tilde \vp_\ve(p)-\tilde \vp_\ve(q) \vert\le 2 \sum_\alpha C_\alpha \vert p-q \vert. \qedhere
        \] 
        \end{proof}
   \end{lem}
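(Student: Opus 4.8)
The plan is to obtain $\vp_\ve$ by mollifying $\vp$ chart-by-chart, reassembling with a partition of unity, and then correcting the result only inside a thin collar of $\Sigma$, so that the modification stays confined to a tubular neighbourhood of $\Sigma$ while the gradient remains uniformly bounded in $\ve$.

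First I would fix a locally finite atlas of charts $U_\al$ diffeomorphic to $\R^2$ and a subordinate partition of unity $\{\psi_\al\}$, and set $\vp_\al\uguale\psi_\al\vp$, regarded as a compactly supported Lipschitz function on $\R^2$. Convolving $\vp_\al$ with a standard approximate identity $\eta_\ve(y)=\ve^{-2}\eta(\ve^{-1}y)$, with $\eta\ge0$, $\int\eta=1$, $\operatorname{supp}\eta\sset B_1(0)$, yields smooth $\vp_{\al,\ve}$ which (i) are supported in an $\ve$-enlargement of $\operatorname{supp}\vp_\al$; (ii) retain the Lipschitz constant $C_\al$ of $\vp_\al$, since convolution against a probability density does not increase the Lipschitz seminorm; and (iii) satisfy $\|\vp_{\al,\ve}-\vp_\al\|_\infty\le C_\al\ve$, by the elementary bound $|\vp_\al(\ve z+x)-\vp_\al(x)|\le C_\al\ve\,|z|$ after the change of variables $z=\ve^{-1}y$. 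Patching, $\tilde\vp_\ve\uguale\sum_\al\psi_\al\vp_{\al,\ve}$ is smooth on $M$, converges uniformly to $\vp$ at rate $O(\ve)$, and has Lipschitz constant bounded by $2\sum_\al C_\al$ uniformly in $\ve$; these sums are finite because the partition of unity is finite on the compact surface to which Theorem~\ref{thm:weak_solutions} is applied (for a non-compact $M$ one would instead require the atlas to be locally finite with a uniform bound on the multiplicity of overlaps).

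The triangulability of $\Sigma$ enters only in the localisation step. Since $\Sigma$ admits a triangulation, for all sufficiently small $\ve$ the set $\Sigma_\ve\uguale\{p\in M:\,d_g(p,\Sigma)\le\ve\}$ is a smooth manifold with boundary, so one may choose a smooth cutoff $\chi$ with $\chi\equiv1$ on $\Sigma_{\ve/2}$, $\chi\equiv0$ off $\Sigma_\ve$, and $\|\nabla\chi\|_\infty\le C/\ve$. I then set
\[
\vp_\ve\uguale(1-\chi)\,\vp+\chi\,\tilde\vp_\ve=\vp+\chi(\tilde\vp_\ve-\vp).
\]
Smoothness of $\vp_\ve$ is checked on the three regions separately: on $\Sigma_{\ve/2}$ it coincides with $\tilde\vp_\ve$; on $M\setminus\Sigma_\ve\sset M\setminus\Sigma$ it coincides with $\vp$; and on the collar $\Sigma_\ve\setminus\Sigma_{\ve/2}$, which is disjoint from $\Sigma$, all of $\vp$, $\chi$, $\tilde\vp_\ve$ are smooth. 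The first two bullets follow immediately from $|\vp_\ve-\vp|=\chi\,|\tilde\vp_\ve-\vp|\le(\sum_\al C_\al)\ve$ and $\operatorname{supp}(\vp_\ve-\vp)\sset\operatorname{supp}\chi\sset\Sigma_\ve\sset T_\ve(\Sigma)$.

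The one point requiring care — and the apparent obstacle — is the uniform gradient bound, since the localising cutoff contributes a term of size $|\nabla\chi|\sim\ve^{-1}$. It is absorbed by the product rule: $\nabla\vp_\ve=\nabla\vp+(\tilde\vp_\ve-\vp)\nabla\chi+\chi\,\nabla(\tilde\vp_\ve-\vp)$, where the first and last terms are bounded by $\operatorname{Lip}(\vp)$ and $\operatorname{Lip}(\vp)+2\sum_\al C_\al$ respectively, while the middle term obeys $\|\tilde\vp_\ve-\vp\|_\infty\,\|\nabla\chi\|_\infty\le(\textstyle\sum_\al C_\al)\ve\cdot(C/\ve)$, which is bounded independently of $\ve$ precisely because the $O(\ve)$ convergence rate of the mollification cancels the $O(\ve^{-1})$ growth of $\nabla\chi$. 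Hence $\|\nabla\vp_\ve\|_\infty\le C$, and all three claimed properties hold.
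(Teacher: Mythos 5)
Your proposal follows essentially the same route as the paper: mollify $\psi_\al\vp$ chart by chart, glue with the partition of unity, and interpolate with $\vp$ via a cutoff supported in $\Sigma_\ve$, using triangulability only to make that neighbourhood smooth. In fact your treatment of the gradient bound is more explicit than the paper's, since you isolate the problematic term $(\tilde\vp_\ve-\vp)\nabla\chi$ and cancel the $O(\ve^{-1})$ size of $\nabla\chi$ against the $O(\ve)$ uniform error of the mollification — a point the paper's final inequality passes over silently.
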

    
   \section*{Statements and Declarations}

   The authors have no relevant financial or non-financial interests to disclose. Data sharing not applicable to this article as no datasets were generated or analysed during the current study.

	\bibliography{references}

\begin{thebibliography}{10}

\bibitem{abraham2008foundations}
R.~Abraham and J.E. Marsden.
\newblock {\em Foundations of Mechanics}.
\newblock AMS Chelsea publishing. AMS Chelsea Pub./American Mathematical
  Society, 2008.

\bibitem{geodesic_with_obstacle_I}
R.~Alexander and S.~Alexander.
\newblock Geodesics in {R}iemannian manifolds-with-boundary.
\newblock {\em Indiana Univ. Math. J.}, 30(4):481--488, 1981.

\bibitem{geodesic_with_obstacle_II}
S.~B. Alexander, I.~D. Berg, and R.~L. Bishop.
\newblock The {R}iemannian obstacle problem.
\newblock {\em Illinois J. Math.}, 31(1):167--184, 1987.

\bibitem{AmbCotZel1993}
A.~Ambrosetti and V.~Coti~Zelati.
\newblock {\em Periodic solutions of singular {L}agrangian systems}.
\newblock Progress in Nonlinear Differential Equations and their Applications,
  10. Birkh\"auser Boston Inc., Boston, MA, 1993.

\bibitem{Anosov1969GeodesicFO}
D.~V. Anosov.
\newblock Geodesic flows on closed riemann manifolds with negative curvature.
\newblock {\em Proc. Steklov Inst. Math.}, 90, 1967.

\bibitem{BarCanTer2021}
V.~Barutello, G.M. Canneori, and S.~Terracini.
\newblock Symbolic dynamics for the anisotropic {$N$}-centre problem at
  negative energies.
\newblock {\em Arch. Ration. Mech. Anal.}, 242(3):1749--1834, 2021.

\bibitem{book_berger}
M.~Berger.
\newblock {\em A panoramic view of {R}iemannian geometry}.
\newblock Springer-Verlag, Berlin, 2003.

\bibitem{Bol1984}
S.~V. Bolotin.
\newblock Nonintegrability of the problem of {$n$} centers for {$n>2$}.
\newblock {\em Vestnik Moskov. Univ. Ser. I Mat. Mekh.}, (3):65--68, 1984.

\bibitem{BolKoz2017}
S.~V. Bolotin and V.~V. Kozlov.
\newblock Topological approach to the generalized {$n$}-centre problem.
\newblock {\em Uspekhi Mat. Nauk}, 72(3(435)):65--96, 2017.

\bibitem{BolNeg2001}
S.~V. Bolotin and P.~Negrini.
\newblock Regularization and topological entropy for the spatial {$n$}-center
  problem.
\newblock {\em Ergodic Theory Dynam. Systems}, 21(2):383--399, 2001.

\bibitem{BrouwerOnTS}
L.~E.~J. Brouwer.
\newblock On the structure of perfect sets of points.
\newblock {\em Proc. Koninklijke Akademie van Wetenschappen}, 12:785--794,
  1910.

\bibitem{Cas2017}
R.~Castelli.
\newblock Topologically distinct collision-free periodic solutions for the
  {$N$}-center problem.
\newblock {\em Arch. Ration. Mech. Anal.}, 223(2):941--975, 2017.

\bibitem{Dev_book}
R.L. Devaney.
\newblock {\em An introduction to chaotic dynamical systems}.
\newblock CRC Press, Boca Raton, FL, 2022.
\newblock Third edition [of 811850].

\bibitem{Gor1975}
W.B. Gordon.
\newblock Conservative dynamical systems involving strong forces.
\newblock {\em Trans. Amer. Math. Soc.}, 204:113--135, 1975.

\bibitem{Gor1977}
W.B. Gordon.
\newblock A minimizing property of {K}eplerian orbits.
\newblock {\em Amer. J. Math.}, 99(5):961--971, 1977.

\bibitem{HasSco1985}
J.~Hass and P.~Scott.
\newblock Intersections of curves on surfaces.
\newblock {\em Israel J. Math.}, 51(1-2):90--120, 1985.

\bibitem{hedlund_geodesic}
G.~A. Hedlund.
\newblock The dynamics of geodesic flows.
\newblock {\em Bull. Amer. Math. Soc.}, 45(4):241--260, 1939.

\bibitem{Kasue1982ALC}
A.~Kasue.
\newblock A {L}aplacian comparison theorem and function theoretic properties of
  a complete {R}iemannian manifold.
\newblock {\em Japan. J. Math. (N.S.)}, 8(2):309--341, 1982.

\bibitem{katok_1982}
A.~Katok.
\newblock Entropy and closed geodesics.
\newblock {\em Ergodic Theory and Dynamical Systems}, 2(3):339--365, 1982.

\bibitem{KatHas1995}
A.~Katok and B.~Hasselblatt.
\newblock {\em Introduction to the modern theory of dynamical systems},
  volume~54 of {\em Encyclopedia of Mathematics and its Applications}.
\newblock Cambridge University Press, Cambridge, 1995.
\newblock With a supplementary chapter by Katok and Leonardo Mendoza.

\bibitem{KleKna1992}
M.~Klein and A.~Knauf.
\newblock {\em Classical Planar Scattering by Coulombic Potentials}.
\newblock Springer, Berlin, 1992.

\bibitem{Kling_Anosov_type}
W.~P.~A. Klingenberg.
\newblock Riemannian manifolds with geodesic flow of anosov type.
\newblock {\em Annals of Mathematics}, 99(1):1--13, 1974.

\bibitem{Kli1995}
W.~P.~A. Klingenberg.
\newblock {\em Riemannian geometry}, volume~1 of {\em De Gruyter Studies in
  Mathematics}.
\newblock Walter de Gruyter \& Co., Berlin, second edition, 1995.

\bibitem{Kna1987}
A.~Knauf.
\newblock Ergodic and topological properties of {C}oulombic periodic
  potentials.
\newblock {\em Comm. Math. Phys.}, 110(1):89--112, 1987.

\bibitem{Kna2002}
A.~Knauf.
\newblock The {$n$}-centre problem of celestial mechanics for large energies.
\newblock {\em J. Eur. Math. Soc. (JEMS)}, 4(1):1--114, 2002.

\bibitem{lee_book}
J.~M. Lee.
\newblock {\em Introduction to {R}iemannian manifolds}, volume 176 of {\em
  Graduate Texts in Mathematics}.
\newblock Springer, Cham, second edition, 2018.

\bibitem{morse_recurrent}
H.M. Morse.
\newblock Recurrent geodesics on a surface of negative curvature.
\newblock {\em Trans. Amer. Math. Soc.}, 22(1):84--100, 1921.

\bibitem{classification_non_compact}
I.~Richards.
\newblock On the classification of noncompact surfaces.
\newblock {\em Trans. Amer. Math. Soc.}, 106:259--269, 1963.

\bibitem{SoaTer2012}
N.~Soave and S.~Terracini.
\newblock Symbolic dynamics for the {$N$}-centre problem at negative energies.
\newblock {\em Discrete Contin. Dyn. Syst.}, 32(9):3245--3301, 2012.

\bibitem{TerVen2007}
S.~Terracini and A.~Venturelli.
\newblock Symmetric trajectories for the {$2N$}-body problem with equal masses.
\newblock {\em Arch. Ration. Mech. Anal.}, 184(3):465--493, 2007.

\bibitem{Yu2016}
G.~Yu.
\newblock Periodic solutions of the planar {$N$}-center problem with
  topological constraints.
\newblock {\em Discrete Contin. Dyn. Syst.}, 36(9):5131--5162, 2016.

\end{thebibliography}
	\bibliographystyle{plain}
	
\end{document}